\documentclass[11pt]{article}
\usepackage[a4paper,margin=1in]{geometry}
\usepackage{amsmath,amssymb,amsthm,mathtools,mathrsfs}
\usepackage{enumitem}
\usepackage{microtype}
\usepackage[hidelinks]{hyperref}
\usepackage[T1]{fontenc}
\usepackage[utf8]{inputenc}

\newtheorem{theorem}{Theorem}[section]
\newtheorem{proposition}[theorem]{Proposition}
\newtheorem{lemma}[theorem]{Lemma}
\newtheorem{corollary}[theorem]{Corollary}
\newtheorem{definition}[theorem]{Definition}
\newtheorem{remark}[theorem]{Remark}
\newtheorem{example}[theorem]{Example}

\newcommand{\R}{\mathbb{R}}
\newcommand{\N}{\mathbb{N}}
\newcommand{\Dp}{\mathfrak D_p}
\newcommand{\dpair}{\mathfrak d_p}
\newcommand{\V}{\mathsf V}
\newcommand{\Def}{\mathsf R}
\newcommand{\RelDef}{\mathcal D}

\newcommand{\cE}{\mathcal E}
\newcommand{\cB}{\mathcal B}
\newcommand{\supp}{\operatorname{supp}}
\newcommand{\Tdyad}{\mathbb T}
\newcommand{\Rough}{\mathscr R}

\title{$p$-roughness of paths and \\
invariance of $p$-th variation}
\author{Rama Cont\\
Mathematical Institute, University of Oxford.}
\date{2026}

\begin{document}
\maketitle

\begin{abstract}
We introduce an intrinsic notion of $p$-roughness  for continuous paths,  for  $p>1$, defined by the uniform convergence of 
 discrete $p$-energies over all shifted sufficiently fine  uniform grids.  We prove that this self-averaging property is equivalent to mesoscopic cancellation of the coarse-graining error for discrete $p$-energy, yielding a characterization that can be verified on a single uniform multiresolution. $p-$roughness refines the finite $p-$th variation property and implies the invariance of $p$-th variation across  a class of partition sequences.  
 
We prove that Brownian motion is almost surely 2-rough and that fractional Brownian motion with Hurst parameter $H$ is almost surely $1/H$-rough. We also derive   criteria for $p$-roughness based  on Faber-Schauder coefficients.   
These results yield partition-robust formulations of higher-order pathwise calculus and energy occupation measures.

Finally, we interpret coarse-graining as a renormalization flow for the $p$-energy and show that the class of strictly $p$-rough paths is stable under critical time–amplitude scaling, with linear $p$-energy profiles as the nonzero fixed profiles.
\end{abstract}
MSC Classification: 60H05, 28A80, 26A45, 60E05, 60L99
\newpage
\tableofcontents
\newpage

\section{Introduction}
\label{sec:introduction}

F\"ollmer's pathwise It\^o calculus \cite{Follmer1981} and its extensions \cite{AnanovaCont2017,ChiuCont2022,CF10B,ContJin2024,ContPerkowski2019,DavisOblojSiorpaes2018,hirai2023} use as  a starting point the class of functions with finite $p-$th order variation along a prescribed sequence of partitions.  For $x\in C([0,T],\mathbb{R}), $ the $p$-th variation along a partition sequence  $\pi_n=(0=t^n_0<...<t^n_i<t^n_{i+1}<..)$ is defined as
$$ [x]^p_\pi(t)=\mathop{\lim}_{n\to\infty}\sum_{\pi_n\cap [0,t]}|x(t_{i+1}^n)-x(t^n_i)|^p.$$
This limit,  when it exists,  may depend on the sequence of partitions $\pi=(\pi_n)_{n\geq 1}$ \cite{DavisOblojSiorpaes2018,delaVega1974,Freedman1983} .
 The resulting constructions --
pathwise (F\"ollmer) integrals,  change of variable formulas and local times   \cite{ContPerkowski2019,DavisOblojSiorpaes2018}-- depend therefore  on the underlying sequence of partitions \cite{DavisOblojSiorpaes2018,Freedman1983} and are not  intrinsic to the path in general. 
This partition-dependence is well documented in the quadratic case \cite{ChiuCont2018,ContDas2023,DavisOblojSiorpaes2018}, with  constructive examples of partition sequences leading to different limits for the same path.  

On the other hand,  the $p$-th variation is known to be almost surely invariant across a large class of partition sequences for many classes of stochastic processes,  such as Gaussian processes  \cite{Dudley1973,qian2019} or semimartingales \cite{jacod2011}.  This suggests the existence of a large class of irregular functions, containing typical sample paths of these stochastic processes,  for which $p-$th variation is well-defined and invariant across a range of partition sequences.   

Previous studies have attempted to identify such a class of irregular functions by studying invariance of $p$-th variation, for integer $p$,  through    cancellation of cross-products of fine increments grouped according to a reference partition sequence  \cite{ContDas2023,Avilez2021}.  These approaches defined `roughness'  relative to a reference partition and are limited to integer $p$.

\subsection{Overview}
We define here an {\it intrinsic} concept of  $p$-roughness for continuous paths,  which does not involve a reference partition and does not require $p>1$ to be an integer. 
As in \cite{ContDas2023},  the idea is to   control how the discrete `$p$-energy' changes when the partition is coarse-grained.
Using  an iterative decomposition of this coarse-graining error for any $p>1$, we extend the  cross-increment cancellation mechanism from the integer case to arbitrary $p>1$ and  define the \(p\)-roughness of a path (Definition \ref{def:p-roughness-intrinsic} ) by examining the fine resolution behavior of the discrete \(p\)-energy  along all sufficiently fine shifted uniform grids. 
 

Multiresolution representations have been extensively used to characterize  pointwise regularity and local oscillatory behaviour of functions in terms of wavelet coefficients \cite{jaffard1991,jaffard2006,jaffard1996,triebel2006}, leading to regularity results governed by the {\it amplitude} of Faber-Schauder or wavelet coefficients. 
Here we focus, by contrast, on an {\it irregularity} property.  We show,  in Sections \ref{sec:intrinsic-roughness} and \ref{sec:schauder-representation} and through examples in Sec. \ref{sec:randomsign}, that $p-$roughness is governed by cancellation effects in the {\it signs} (or 'phase' in Jaffard's terminology) of Faber-Schauder coefficients.
A key result is Proposition~\ref{prop:microscope-free-characterization},  which  characterizes   $p$-roughness
as equivalent to the uniform cancellation of coarse-graining errors over  {\it mesoscopic} blocks of a dyadic partition.  This cancellation criterion is then expressed in terms of Faber-Schauder coefficients  in Section \ref{sec:schauder-representation}. Section \ref{sec:phase} explains why the {\it signs} of Faber-Schauder coefficients,  not the modulus alone, determine roughness (Proposition \ref{prop:TL-phase-dependence} and Theorem \ref{thm:rademacher-schauder-rough}), echoing Jaffard's constructions of pointwise irregular functions \cite{jaffard2006}.

Theorem \ref{thm:p-rough-invariance} then shows that the $p$-roughness property  implies invariance of $p-$th variation across   perturbations of uniform grid sequences satisfying a condition linked to the modulus of continuity of the path.  This invariance does not require H\"older regularity,  and can accommodate a range of moduli of continuity for the path.

\begin{equation*}
\boxed{
\begin{gathered}
\text{\large Uniform self-averaging}
\quad\Longleftrightarrow\quad
\begin{gathered}
\text{\large Mesoscopic cancellation of}\\[-0.5mm]
\text{\large coarse-graining errors}
\end{gathered}
\quad\Longrightarrow\quad
\begin{gathered}
\text{\large Partition}\\[-0.5mm]
\text{\large invariance}
\end{gathered}
\end{gathered}}
\label{eq:intro-intrinsic-logical-structure}
\end{equation*}
We then establish in Section~\ref{sec:examples} the $p$-roughness property for some important examples of irregular functions and stochastic processes.   We first show that Brownian motion is  \(2\)-rough almost surely,  with a quantitative maximal estimate of its roughness (Theorem \ref{thm:brownian-intrinsic-roughness}) and that  fractional Brownian motion with Hurst exponent \(H\) is   almost-surely \(1/H\)-rough (Theorem \ref{thm:fbm-intrinsic-p-roughness}).  Finally,  we give a criterion for $p$-roughness for random Faber-Schauder expansions.
These examples show that  the class of $p$-rough functions is large enough to contain  typical sample paths of   these processes. 

We then use these results to establish an intrinsic formulation of the pathwise F\"ollmer-Ito calculus \cite{Follmer1981,CF10B} and its higher-order extension \cite{ContJin2024,ContPerkowski2019}, study $p$-roughness for vector-valued paths and discuss the link with $\rho-$irregularity \cite{CatellierGubinelli2016,GaleatiGubinelli2024} and local time properties. 

Finally, the definition of $p$-roughness  through successive coarse-graining operations has a direct link with the renormalization group,  which we explore in Section \ref{sec:renormalization}.

\subsection{Outline}
The paper is organized in three parts: definition and  characterization of $p$-roughness (Sections \ref{sec:pth-variation}, \ref{sec:intrinsic-roughness} and \ref{sec:schauder-representation}), its implications for partition invariance (Section \ref{sec:invariance}) and applications to stochastic processes,  pathwise calculus and fine properties of paths.

Section~\ref{sec:pth-variation} introduces the   coarse-graining error and its
  block decomposition. Section~\ref{sec:intrinsic-roughness}
defines $p$-roughness, proves its characterization in terms of mesoscopic cancellation (Proposition~\ref{prop:microscope-free-characterization})
 and establishes stability of $p$-roughness under smooth 
transformations, perturbations with vanishing $p$-energy and pathwise integration.
Section~\ref{sec:schauder-representation}   characterizes $p-$roughness in  terms of Faber--Schauder
coordinates  and
shows that $p-$roughness is a fine structure property.

Section~\ref{sec:invariance} derives the principal consequence of $p$-roughness: the invariance of $p$-th variation across  perturbations of shifted
uniform grids. We  establish a stability estimate for nearby partitions and then derive the partition-invariance theorem. A separate   criterion characterizes equality of $p$-th variation along two prescribed partition sequences. We also discuss the case of balanced partitions and Hölder-continuous paths (Section \ref{subsec:holder}).

Section~\ref{sec:examples} contains the main probabilistic results and counterexamples.
Brownian motion is shown to be $2$-rough with a quantitative estimate
uniform in mesh, phase and time (Theorem~\ref{thm:brownian-intrinsic-roughness}), and fractional Brownian
motion with Hurst parameter $H\in(0,1)$ is shown to be $1/H$-rough
almost surely (Theorem~\ref{thm:fbm-intrinsic-p-roughness}). 
Random Faber-Schauder series are used to 
construct examples and counterexamples of $p$-roughness. Independent Rademacher Schauder coefficients
produce $2$-rough paths almost surely (Theorem~\ref{thm:rademacher-schauder-rough}), whereas
Example~\ref{ex:rademacher-p4} proves almost-sure failure of
$4$-roughness for the corresponding critical random-sign series.

The remaining sections examine consequences and ramifications of these results.  
Section~\ref{sec:fourier}  defines occupation-measure stability and studies Fourier roughness and
 local-time properties.
Section~\ref{sec:applications} discusses applications to higher-order and
fractional F\"ollmer--It\^o calculus and
$p$-roughness of vector-valued paths through
 $p$-th variation tensors.  Finally, Section~\ref{sec:renormalization} links the coarse-graining flow with the renormalization group, shows the invariance of the $p$-roughness class under the renormalization flow and describes the action of
critical time--amplitude scaling on the limiting $p$-energy profiles.
\section{$p$-th variation and coarse-graining of partitions}
\label{sec:pth-variation}

\subsection{$p$-th variation along a partition sequence}

Fix \(T>0\).  Consider a   partition 
$\lambda_n
=
\{0=u_0^n<u_1^n<\cdots<u_{N(\lambda_n)}^n=T\}$ of \([0,T]\), with
vanishing mesh \begin{equation}
|\lambda_n|
:=
\max_{0\le i<N(\lambda_n)}(u_{i+1}^n-u_i^n) \longrightarrow0.
\label{eq:vanishing-mesh}
\end{equation}
For \(x\in C([0,T];\R)\), define
\begin{equation}
\omega_x(h)
:=
\sup\{|x(t)-x(s)|:s,t\in[0,T],\ |t-s|\le h\}.
\label{eq:modulus}
\end{equation}
Then \(\omega_x(h)\to0\) as \(h\downarrow0\).
For \(p>1\), define the discrete  measure
\begin{equation}
\mu_{\lambda_n}^{p,x}
:=
\sum_{i=0}^{N(\lambda_n)-1}
|x(u_{i+1}^n)-x(u_i^n)|^p\,\delta_{u_i^n}.
\label{eq:discrete-time-energy}
\end{equation}
and, for $t\in [0,T]$,
\begin{equation}
\V^p_{\lambda_n}(x;t)
:=
\sum_{i=0}^{N(\lambda_n)-1}
|x(u_{i+1}^n\wedge t)-x(u_i^n\wedge t)|^p.
\label{eq:stopped-energy}
\end{equation}
We define the $p-$th variation of $x$ on $[0,t]$ along the sequence $\lambda=(\lambda_n)_{n\geq 1}$ as the limit
\begin{equation}
[x]^p_\lambda(t):= \lim_{n\to\infty}\sum_{\lambda_n}|x(u_{i+1}^n\wedge t)-x(u^n_i\wedge t)|^p
\label{eq:pth-variation-function}
\end{equation}
if it exists \cite{ContPerkowski2019}: 
\begin{definition}[Finite \(p\)-th variation along a partition sequence]
\label{def:pth-variation}
\(x\in C([0,T];\R)\) is said to have finite  \(p\)-th variation along the sequence $\lambda=(\lambda_n)_{n\geq 1}$ 
if \(\mu_{\lambda_n}^{p,x}\) converges weakly to a finite nonatomic Borel measure \(\mu_\lambda^{p,x}\).  In that case
\begin{equation}
[x]^p_\lambda(t):=\mu_\lambda^{p,x}([0,t]),
\qquad t\in[0,T]
\label{eq:pth-variation-function}
\end{equation}
and we denote $x\in V_p(\lambda).$
\end{definition}

\paragraph{Stopped-sum characterization.}
We shall use the following characterization \cite[Lemma~1.3]{ContPerkowski2019}:
\begin{equation}
x\in V_p(\lambda)
\quad\Longleftrightarrow\quad
\V^p_{\lambda_n}(x;t)\mathop{\longrightarrow}^{n\to\infty} A(t)
\text{ for every }t\in[0,T],
\label{eq:stopped-convergence}
\end{equation}
for some continuous nondecreasing \(A\) with \(A(0)=0\).  In this case \(A=[x]^p_\lambda\), and the convergence is uniform on \([0,T]\).  See \cite[Lemma~1.3]{ContPerkowski2019} and  \cite{ContJin2024} for $p>1$.
\begin{remark}[Relation with $p$-variation]
    Finite \(p\)-th variation along a partition sequence is distinct from finite \(p\)-variation in the Wiener-Young sense. Many examples of interest, such as fractional Brownian motion, have in fact infinite $p-$variation almost-surely while having finite $p-$th variation along a large family of partition sequences \cite{ContPerkowski2019,delaVega1974,Dudley1973,qian2019,Pratelli2011,Taylor1972}.
\end{remark}

\subsection{Coarse-graining errors}
The comparison of discrete $p-$energies along different partitions involves the computation of {\it coarse-graining error} terms involving terms of the type $|\Delta_1x+...+\Delta_mx|^p-|\Delta_1x|^p-...-|\Delta_mx|^p$ where $\Delta_jx$ represent different fine increments of $x$. As we shall see, such error terms admit a simple algebraic decomposition.
\begin{definition}[Coarse-graining error]
\label{def:coarse-graining-error}
For \(p>1\), define
\begin{eqnarray}
\dpair(a,b)
:=|a+b|^p-|a|^p-|b|^p,
\label{eq:two-increment-error}\\
\forall m\geq 2,\qquad \Dp(a_1,\ldots,a_m)
:=
\left|\sum_{j=1}^m a_j\right|^p-
\sum_{j=1}^m|a_j|^p.
\label{eq:block-coarse-graining-error}
\end{eqnarray}
with \(\Dp(\varnothing)=0\).
\end{definition}

\begin{lemma}[Decomposition of the coarse-graining error]
\label{lem:coarse-graining-error-decomposition}
Let \(S_k=a_1+\cdots+a_k\).  Then, for every \(m\ge2\),
\begin{equation}
\Dp(a_1,\ldots,a_m)
=
\sum_{k=2}^{m}\dpair(S_{k-1},a_k).
\label{eq:coarse-graining-error-decomposition}
\end{equation}
\end{lemma}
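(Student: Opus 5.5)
The identity is a telescoping sum, and I will prove it by rewriting each pair error in terms of partial sums. By definition \eqref{eq:two-increment-error}, with $S_k=S_{k-1}+a_k$,
\[
\dpair(S_{k-1},a_k)=|S_k|^p-|S_{k-1}|^p-|a_k|^p .
\]
Summing over $k=2,\dots,m$, the terms $|S_k|^p-|S_{k-1}|^p$ telescope to $|S_m|^p-|S_1|^p$. Hence
\[
\sum_{k=2}^m \dpair(S_{k-1},a_k)=|S_m|^p-|S_1|^p-\sum_{k=2}^m|a_k|^p .
\]

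Since $S_1=a_1$, we have $|S_1|^p=|a_1|^p$, and this term merges with the remaining sum. The right-hand side becomes $\left|\sum_{j=1}^m a_j\right|^p-\sum_{j=1}^m|a_j|^p$, which is exactly $\Dp(a_1,\ldots,a_m)$ by \eqref{eq:block-coarse-graining-error}.

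Equivalently, one can argue by induction on $m$. The base case $m=2$ is the definition of $\dpair$. For the inductive step, observe that
\[
\Dp(a_1,\ldots,a_{m+1})=\Dp(a_1,\ldots,a_m)+\dpair(S_m,a_{m+1}),
\]
which follows directly from the definitions. This gives a recursive form of the identity.

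There is no real obstacle here. The only points requiring care are the index bookkeeping, namely the start of the sum at $k=2$ and the identification $S_1=a_1$. No property of $p>1$ is used beyond $|\cdot|^p$ being a well-defined function.
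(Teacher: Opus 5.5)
Your telescoping argument is correct and is essentially the paper's proof: the paper also rewrites $\dpair(S_{k-1},a_k)=|S_k|^p-|S_{k-1}|^p-|a_k|^p$, sums over $k=2,\dots,m$, and uses $S_1=a_1$ to recover $\Dp(a_1,\ldots,a_m)$. Your inductive version, via $\Dp(a_1,\ldots,a_{m+1})=\Dp(a_1,\ldots,a_m)+\dpair(S_m,a_{m+1})$, is just a restatement of the same identity.
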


\begin{proof}
By \eqref{eq:two-increment-error},
$\dpair(S_{k-1},a_k)
=|S_k|^p-|S_{k-1}|^p-|a_k|^p.$
Summing  over $k=2..m$ yields \eqref{eq:coarse-graining-error-decomposition}.
\end{proof}

The identity is valid for every real \(p>1\) and requires no polynomial expansion.  At \(p=2\),
\begin{equation}
\Dp(a_1,\ldots,a_m)
=2\sum_{1\le i<j\le m}a_i a_j,
\qquad
\dpair(P,\delta)=2P\delta.
\label{eq:quadratic-coarse-graining-error}
\end{equation}
The following elementary estimate follows  from the fundamental theorem of calculus applied to \(z\mapsto |z|^p\), together with \((u+v)^{p-1}\le C_p(u^{p-1}+v^{p-1})\):
\begin{lemma}
\label{lem:power-increment}
For \(p>1\), there exists \(c_p<\infty\) such that
\begin{equation}
\left||a+b|^p-|a|^p\right|
\le
c_p\bigl(|a|^{p-1}|b|+|b|^p\bigr)
\label{eq:power-increment}
\end{equation}
for all \(a,b\in\R\).  Consequently,
\begin{equation}
|\dpair(a,b)|
\le
C_p\bigl(|a|^{p-1}|b|+|b|^p\bigr).
\label{eq:two-increment-error-bound}
\end{equation}
\end{lemma}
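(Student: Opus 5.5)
The plan is to write $|a+b|^p-|a|^p$ as an integral along the segment from $a$ to $a+b$ and then bound the integrand. Set $f(z)=|z|^p$. Since $p>1$, $f$ is $C^1$ on $\R$ with $f'(z)=p\,\mathrm{sgn}(z)|z|^{p-1}$, which is continuous also at $z=0$. The fundamental theorem of calculus then gives
\begin{equation*}
|a+b|^p-|a|^p=\int_0^1 f'(a+\theta b)\,b\,d\theta .
\end{equation*}
Taking absolute values yields
\begin{equation*}
\bigl||a+b|^p-|a|^p\bigr|\le p\,|b|\sup_{\theta\in[0,1]}|a+\theta b|^{p-1}\le p\,|b|\,(|a|+|b|)^{p-1}.
\end{equation*}

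Next I expand the power using the quasi-triangle inequality $(u+v)^{p-1}\le C_p(u^{p-1}+v^{p-1})$ for $u,v\ge0$. One can take $C_p=\max(1,2^{p-2})$: for $p-1\le1$ the map $s\mapsto s^{p-1}$ is subadditive, and for $p-1\ge1$ it is convex. With $u=|a|$ and $v=|b|$ this gives $p|b|(|a|+|b|)^{p-1}\le pC_p(|a|^{p-1}|b|+|b|^p)$. This proves \eqref{eq:power-increment} with $c_p=p\max(1,2^{p-2})$.

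For \eqref{eq:two-increment-error-bound}, I write $\dpair(a,b)=\bigl(|a+b|^p-|a|^p\bigr)-|b|^p$. Applying the first bound to the bracket and the triangle inequality gives $|\dpair(a,b)|\le c_p(|a|^{p-1}|b|+|b|^p)+|b|^p$. So one can take $C_p=c_p+1$.

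There is no real obstacle. The only point needing care is that $f$ is $C^1$ at the origin, which holds because $p>1$, and that the elementary inequality is handled separately in the two regimes $p<2$ and $p\ge2$.
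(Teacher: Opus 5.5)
Your proof is correct and follows the route the paper indicates: the fundamental theorem of calculus for $z\mapsto|z|^p$ combined with $(u+v)^{p-1}\le C_p(u^{p-1}+v^{p-1})$, and then the triangle inequality for $\dpair$. You also supply the explicit constants $c_p=p\max(1,2^{p-2})$ and $C_p=c_p+1$, which the paper leaves implicit.
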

We shall also use repeatedly the inequality
\begin{equation}
|a-b|^p\le2^{p-1}(|a|^p+|b|^p).
\label{eq:elementary-p-sum}
\end{equation}


Let $
\lambda=\{0=u_0<u_1<\cdots<u_M=T\}$
be a finite partition, and let
\begin{equation}
\sigma=\{u_{i_0},u_{i_1},\ldots,u_{i_K}\}\subseteq\lambda,
\qquad
0=i_0<i_1<\cdots<i_K=M,
\label{eq:finite-coarsening}
\end{equation}
be a coarsening of $\lambda$.  For \(u_j\in\lambda\), define  
\begin{equation}
\Delta_jx=x(u_{j+1})-x(u_j),\qquad\tau_\sigma(u_j)
:=
\max\{v\in\sigma:v\le u_j\}.
\label{eq:lookback-foot}
\end{equation}
 \begin{lemma}[$p$-energy perturbation estimate]
\label{lem:energy-perturbation}
Let \(p>1\). There exists a constant \(c_p<\infty\) such that, for every
finite index set \(I\) and every two families
\((a_k)_{k\in I},(e_k)_{k\in I}\subset\mathbb{R}\),
\begin{equation}
\left|
\sum_{k\in I}|a_k+e_k|^p-\sum_{k\in I}|a_k|^p
\right|
\le
c_p\left(
A^{(p-1)/p}F^{1/p}+F
\right),
\label{eq:energy-perturbation}
\end{equation}
where
\[
A:=\sum_{k\in I}|a_k|^p,
\qquad
F:=\sum_{k\in I}|e_k|^p.
\]
\end{lemma}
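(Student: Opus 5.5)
The plan is to reduce the estimate to a termwise inequality and then sum it using H\"older's inequality. For each $k\in I$ I will apply Lemma~\ref{lem:power-increment} with $a=a_k$ and $b=e_k$, which gives
\[
\bigl||a_k+e_k|^p-|a_k|^p\bigr|\le c_p\bigl(|a_k|^{p-1}|e_k|+|e_k|^p\bigr).
\]
Summing over $k\in I$ and using the triangle inequality then bounds the left-hand side of \eqref{eq:energy-perturbation} by
\[
c_p\Bigl(\sum_{k\in I}|a_k|^{p-1}|e_k|+F\Bigr).
\]

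The only remaining step is to control the cross sum $\sum_{k}|a_k|^{p-1}|e_k|$. I will use H\"older's inequality with conjugate exponents $q=p/(p-1)$ and $p$:
\[
\sum_{k\in I}|a_k|^{p-1}|e_k|\le\Bigl(\sum_{k\in I}|a_k|^{(p-1)q}\Bigr)^{1/q}\Bigl(\sum_{k\in I}|e_k|^p\Bigr)^{1/p}=A^{(p-1)/p}F^{1/p}.
\]
Here $(p-1)q=p$, which is why the first factor is exactly $A^{(p-1)/p}$. Combining this with the previous display gives \eqref{eq:energy-perturbation}, with the same constant $c_p$ as in Lemma~\ref{lem:power-increment}. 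Since $I$ is finite, no convergence issues arise.

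There is no real obstacle here, but one point needs care: Lemma~\ref{lem:power-increment} is taken as given. If its proof were needed, I would set $f(z)=|z|^p$ and write
\[
|a+b|^p-|a|^p=\int_0^1 p\,\mathrm{sgn}(a+sb)\,|a+sb|^{p-1}b\,ds.
\]
Since $|a+sb|\le|a|+|b|$, this gives the bound $p(|a|+|b|)^{p-1}|b|$. The inequality $(u+v)^{p-1}\le C_p(u^{p-1}+v^{p-1})$ then yields \eqref{eq:power-increment}. This inequality holds for both $p\ge2$ (by convexity) and $1<p<2$ (by subadditivity), so no separate case distinction is needed beyond the choice of $C_p$.
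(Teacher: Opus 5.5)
Your proposal is correct and follows essentially the same route as the paper: you apply Lemma~\ref{lem:power-increment} termwise, sum over $k\in I$, and bound the cross sum by H\"older's inequality with exponents $p/(p-1)$ and $p$. Your optional sketch of Lemma~\ref{lem:power-increment}, via the fundamental theorem of calculus and the inequality $(u+v)^{p-1}\le C_p(u^{p-1}+v^{p-1})$, is also sound.
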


\begin{proof}
By Lemma~\ref{lem:power-increment},
\[
\left||a_k+e_k|^p-|a_k|^p\right|
\le
c_p\left(
|a_k|^{p-1}|e_k|+|e_k|^p
\right).
\]
Summing over \(k\in I\) and applying H\"older's inequality gives
\[
\begin{aligned}
\left|
\sum_{k\in I}|a_k+e_k|^p-\sum_{k\in I}|a_k|^p
\right|
&\le
c_p\left(
\sum_{k\in I}|a_k|^{p-1}|e_k|
+\sum_{k\in I}|e_k|^p
\right)
\\
&\le
c_p\left[
\left(\sum_{k\in I}|a_k|^p\right)^{(p-1)/p}
\left(\sum_{k\in I}|e_k|^p\right)^{1/p}
+\sum_{k\in I}|e_k|^p
\right],
\end{aligned}
\]
which is \eqref{eq:energy-perturbation}.
\end{proof}

\begin{proposition}[Lookback representation of the block coarse-graining error]
\label{prop:lookback-representation}
For every coarsening \(\sigma\subseteq\lambda\),
\begin{equation}
\V^p_\sigma(x;T)-\V^p_\lambda(x;T)
=
\sum_{j=0}^{M-1}
\dpair\bigl(x(u_j)-x(\tau_\sigma(u_j)),\Delta_jx\bigr).
\label{eq:lookback-representation}
\end{equation}
The same identity holds for stopped sums.
\end{proposition}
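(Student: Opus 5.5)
The plan is to decompose the difference of energies block by block along the coarse partition $\sigma$ and then apply Lemma~\ref{lem:coarse-graining-error-decomposition} inside each block. Since $\sigma$ is a coarsening of $\lambda$, the fine points $u_{i_{r}},u_{i_r+1},\ldots,u_{i_{r+1}}$ tile the coarse interval $[u_{i_r},u_{i_{r+1}}]$ for each $r=0,\ldots,K-1$. Telescoping gives $x(u_{i_{r+1}})-x(u_{i_r})=\sum_{j=i_r}^{i_{r+1}-1}\Delta_jx$. Hence
\[
\V^p_\sigma(x;T)-\V^p_\lambda(x;T)=\sum_{r=0}^{K-1}\Dp\bigl(\Delta_{i_r}x,\ldots,\Delta_{i_{r+1}-1}x\bigr),
\]
where a block with a single fine increment contributes $0$. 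This is consistent with $\Dp$ of a single term being $|a|^p-|a|^p=0$.

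Next I apply Lemma~\ref{lem:coarse-graining-error-decomposition} to each block with $a_k=\Delta_{i_r+k-1}x$. The partial sum $S_{k-1}$ then equals $x(u_j)-x(u_{i_r})$ with $j=i_r+k-1$. For every $j$ with $i_r\le j<i_{r+1}$ we have $\tau_\sigma(u_j)=u_{i_r}$, so $S_{k-1}=x(u_j)-x(\tau_\sigma(u_j))$. The lemma therefore gives
\[
\Dp\bigl(\Delta_{i_r}x,\ldots,\Delta_{i_{r+1}-1}x\bigr)=\sum_{j=i_r+1}^{i_{r+1}-1}\dpair\bigl(x(u_j)-x(\tau_\sigma(u_j)),\Delta_jx\bigr).
\]
The first index $j=i_r$ is missing from this sum, but there the lookback increment is $0$ and $\dpair(0,b)=0$. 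So I may include it, and the sum runs over all $j\in[i_r,i_{r+1})$. Summing over $r$ covers every $j\in\{0,\ldots,M-1\}$ exactly once, which yields \eqref{eq:lookback-representation}.

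For stopped sums, I replace $x$ by the stopped path $x^t=x(\cdot\wedge t)$, which is again a function on $[0,T]$. Then $\V^p_\lambda(x;t)=\V^p_\lambda(x^t;T)$, and the same holds for $\sigma$. The identity for $x^t$ is exactly the stopped version, with $\Delta_j x^t$ and $x^t(u_j)-x^t(\tau_\sigma(u_j))$ in place of the unstopped increments.

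There is no real obstacle here. The only point needing care is the bookkeeping of the first index in each block, which is handled by $\dpair(0,\cdot)=0$. Everything else is algebraic.
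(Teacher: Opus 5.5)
Your proof is correct and follows the paper's argument: apply Lemma~\ref{lem:coarse-graining-error-decomposition} inside each $\sigma$-block and identify the preceding partial sum with $x(u_j)-x(\tau_\sigma(u_j))$. Replacing $x$ by the stopped path $x(\cdot\wedge t)$ is exactly the paper's ``inserting $t$'', and your use of $\dpair(0,\cdot)=0$ for the first index of each block (including single-increment blocks) is a bookkeeping step the paper leaves implicit.
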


\begin{proof}
Apply Lemma~\ref{lem:coarse-graining-error-decomposition} separately to the fine increments contained in each \(\sigma\)-block and sum over blocks.  Within the block starting at \(\tau_\sigma(u_j)\), the partial sum preceding \(\Delta_jx\) is exactly \(x(u_j)-x(\tau_\sigma(u_j))\).  Inserting \(t\) gives the stopped version.
\end{proof}
\section{\(p\)-roughness}
\label{sec:intrinsic-roughness}
Equation~\eqref{eq:lookback-representation}  shows that partition-dependence is entirely encoded by accumulated two-increment interactions. This motivates a definition of   roughness in terms of the {\it cancellation} of these two-increment interactions   across scales and phases.
The block geometry enters only through the left   endpoint; the nonlinear observable itself is always the   two-increment interaction \(\dpair(P,\delta)\).
\subsection{Definition and    relation with Besov regularity}
For \(0<\ell\le T\) and \(0\le a<\ell\), define the shifted uniform grid
\begin{equation}
\Pi(\ell,a)
:=
\{0,T\}\cup
\{a+k\ell:k\in\mathbb Z,\ 0<a+k\ell<T\}.
\label{eq:shifted-uniform-grid}
\end{equation}

\begin{definition}[\(p\)-roughness]
\label{def:p-roughness-intrinsic}
Let \(p>1\).  A path \(x\in C([0,T])\) is called \emph{\(p\)-rough} if
\begin{equation}
\boxed{
\Omega_p^A(x;\delta)
:=
\sup_{0<\ell\le\delta}
\sup_{0\le a<\ell}
\sup_{t\in[0,T]}
\left|
\V^p_{\Pi(\ell,a)}(x;t)-A(t)
\right|\mathop{\longrightarrow}^{\delta \to 0}0.}
\label{eq:intrinsic-p-roughness}
\end{equation}
for some continuous nondecreasing function \(A:[0,T]\to[0,\infty)\) with $A(T)>A(0)=0$.
We denote the class of such paths by \(\mathscr R_p([0,T])\). 
\end{definition}

The function \(A\) in Definition~\ref{def:p-roughness-intrinsic} is unique.  Indeed, if \(A\) and \(B\) both satisfy Equation~\eqref{eq:intrinsic-p-roughness}, then for every \(\delta>0\), every \(0<\ell\le\delta\), and every phase \(a\),
\begin{equation}
\|A-B\|_\infty
\le
\Omega_p^A(x;\delta)+\Omega_p^B(x;\delta) \mathop{\to}^{\delta \to 0} 0.
\label{eq:intrinsic-energy-uniqueness}
\end{equation}
 We call $A$ the \emph{intrinsic $p$-th variation} (or \(p\)-energy)  of \(x\) and denote
\begin{equation}
[x]^p(t):=A(t),
\qquad
\mu^{p,x}([0,t]):=[x]^p(t),
\label{eq:intrinsic-energy-measure}
\end{equation}
where \(\mu^{p,x}\) is the limit measure \eqref{eq:pth-variation-function} associated with \(A\).
This notation is compatible with the notation of Definition~\ref{def:pth-variation}: for every sequence \(\lambda_n=\Pi(\ell_n,a_n)\) with \(\ell_n\to0\) and \(0\le a_n<\ell_n\), Definition~\ref{def:p-roughness-intrinsic} gives
\begin{equation}
\sup_{t\in[0,T]}
\left|
\V^p_{\lambda_n}(x;t)-[x]^p(t)
\right|
\longrightarrow0.
\label{eq:all-shifted-uniform-sequences}
\end{equation}
Hence the stopped-sum characterization \eqref{eq:stopped-convergence} yields
\begin{equation}
x\in V_p(\lambda),
\qquad
[x]^p_\lambda=[x]^p.
\label{eq:intrinsic-compatible-partitionwise}
\end{equation}
In particular, \([x]^p_{\Tdyad}=[x]^p\) for the dyadic sequence.  However Definition~\ref{def:p-roughness-intrinsic} contains no reference to the dyadic or any other specific partition sequence.
We further denote
\begin{eqnarray}
  \mathscr R^+_p([0,T]) &=&\{ x\in {\mathscr R}_p([0,T]),\qquad \forall t\in (0,T],\qquad [x]^p(t)>0\} \label{def.strictlyrough}\\
   \mathscr R^{++}_p([0,T]) &=&\{ x\in {\mathscr R}_p([0,T]),\qquad  [x]^p:[0,T]\mapsto \mathbb{R}_+\ {\rm strictly\ increasing}\} \label{def.nondegenerate}
\end{eqnarray}
We   refer to paths in $\mathscr R^+_p([0,T])$ as {\it strictly} $p$-rough and paths in $\mathscr R^{++}_p([0,T])$ as non-degenerate $p$-rough.

 $p$-roughness  is distinct from the notion of `true roughness' used in rough-path theory \cite{frizhairer}, which is a local nondegeneracy condition on fine increments of a path.   $p$-roughness is a self-averaging property of the discrete $p$-energy, requiring its uniform convergence   over shifted fine grids.

Rosenbaum \cite{rosenbaum2009} relates Besov regularity with the finite $p-$th variation property. The following lemma identifies the critical Besov regularity implied by $p$-roughness:
\begin{lemma}[Critical Besov regularity]
\label{lem:p-rough-besov}
\[
\mathscr R_p([0,T])
\subset B^{1/p}_{p,\infty}([0,T])
\subset B^{1/p}_{1,\infty}([0,T])\qquad{\rm and}\qquad{\mathscr R}_p([0,T])\cap B^{1/p}_{p,q}([0,T])=\emptyset\quad{\rm for}\quad 1\leq q <\infty.
\]
\end{lemma}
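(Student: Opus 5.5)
The plan is to work with the difference (first-order) characterization of Besov spaces on an interval: for $0<s<1$,
\[
\|x\|_{B^s_{p,q}}\asymp \|x\|_{L^p}+\Bigl(\int_0^1 \bigl(h^{-s}\,\omega_p(x,h)\bigr)^q\,\frac{dh}{h}\Bigr)^{1/q},
\qquad
\omega_p(x,h)^p=\sup_{0<k\le h}\int_0^{T-k}|x(t+k)-x(t)|^p\,dt,
\]
with the usual sup modification when $q=\infty$. The key identity is that averaging the discrete $p$-energy over phases gives the integrated increment. For fixed $\ell$, every interior interval of $\Pi(\ell,a)$ is $[a+k\ell,a+(k+1)\ell]$, and only the two boundary intervals adjacent to $0$ and $T$ are shorter. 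Integrating over $a\in[0,\ell)$ therefore yields
\[
\frac1\ell\int_0^\ell \V^p_{\Pi(\ell,a)}(x;T)\,da=\frac1\ell\int_0^{T-\ell}|x(t+\ell)-x(t)|^p\,dt+O\bigl(\omega_x(\ell)^p\bigr),
\]
where the error accounts for the boundary intervals, at most two per phase, each bounded by $\omega_x(\ell)^p$. By Definition~\ref{def:p-roughness-intrinsic}, the left side tends to $[x]^p(T)=A(T)$ uniformly as $\ell\to0$, and $\omega_x(\ell)\to0$. Hence
\[
\ell^{-1}\int_0^{T-\ell}|x(t+\ell)-x(t)|^p\,dt\;\longrightarrow\;A(T)\in(0,\infty).
\]

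\textbf{First inclusion.} The previous limit gives $\sup_{0<\ell\le\delta}\ell^{-1/p}\,\|x(\cdot+\ell)-x\|_{L^p}<\infty$ for small $\delta$. For $\ell\ge\delta$ the bound is trivial by continuity of $x$. Taking the supremum over $k\le h$ inside $\omega_p$ only costs a constant. Together these give $x\in B^{1/p}_{p,\infty}$.

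\textbf{Second inclusion.} On a bounded interval, Jensen (or H\"older) gives $\|f\|_{L^1}\le T^{1-1/p}\|f\|_{L^p}$. Applied to the increments this yields $\omega_1(x,h)\le C\,\omega_p(x,h)$, so $B^{1/p}_{p,\infty}\subset B^{1/p}_{1,\infty}$. This step is standard; it also follows from the general embedding between Besov spaces with the same smoothness index on bounded domains.

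\textbf{Disjointness.} Since $A(T)>0$, there is $h_0>0$ such that $\omega_p(x,h)^p\ge\int_0^{T-h}|x(t+h)-x(t)|^p\,dt\ge \tfrac12A(T)\,h$ for all $h<h_0$. Then $h^{-1/p}\omega_p(x,h)\ge c>0$ near $0$, so $\int_0^{h_0}\bigl(h^{-1/p}\omega_p\bigr)^q\,dh/h=\infty$ for every $q<\infty$. Hence $x\notin B^{1/p}_{p,q}$.

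The main obstacle is bookkeeping rather than substance. One must fix a Besov norm convention on $[0,T]$ (difference norms versus intrinsic or extension definitions) and cite its equivalence. One must also justify the phase-averaging identity carefully, using Fubini over $(a,k)$ to tile $[0,T-\ell]$ exactly once, and check that the boundary terms are really $o(\ell)$ after dividing by $\ell$. This last point holds because the boundary contribution is at most $2\,\omega_x(\ell)^p$ per phase, and this is $o(1)$ relative to the limit $A(T)$.
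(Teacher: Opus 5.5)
Your proposal is correct and follows essentially the same route as the paper. You average the terminal energies of $\Pi(h,a)$ over $a\in[0,h)$, which recovers $\int_0^{T-h}|x(s+h)-x(s)|^p\,ds$ up to two boundary cells, each bounded by $\omega_x(h)^p$. This gives $h^{-1}\int_0^{T-h}|x(s+h)-x(s)|^p\,ds\to[x]^p(T)>0$, and from it you obtain the $B^{1/p}_{p,\infty}$ bound, the $B^{1/p}_{1,\infty}$ inclusion by H\"older, and divergence of the $B^{1/p}_{p,q}$ integral for $q<\infty$. The only cosmetic difference is ordering: you derive the exact limit first and read off all three conclusions from it, whereas the paper first uses a one-sided inequality for the upper bound and then the two-sided boundary estimate for the limit.
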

\begin{proof} Recall, for
\(0<\alpha<1\), the Besov--Nikolskii seminorm on \([0,T]\) \cite{triebel2006}:
\[
\|x\|_{B^\alpha_{1,\infty}}
:=
\sup_{0<h<T}
h^{-\alpha}
\int_0^{T-h}|x(s+h)-x(s)|\,ds.
\] Let \(x\in\mathcal R_p([0,T])\).
For \(0<h<T\), averaging the terminal \(p\)-energy of the
shifted grids \(\Pi(h,a)\) over \(a\in[0,h)\) gives
\begin{equation}
\int_0^{T-h}|x(s+h)-x(s)|^p\,ds
\le
\int_0^h \V^p_{\Pi(h,a)}(x;T)\,da.
\label{eq:phase-average-energy}
\end{equation}
To see this, note that each interval \([s,s+h]\subset[0,T]\)
appears exactly once, up to a null set of phases, as a complete
length-\(h\) cell of one of the shifted grids; the two boundary cells
only contribute additional nonnegative terms.
From Definition~\ref{def:p-roughness-intrinsic}
\[
\sup_{0\le a<h}
\V^p_{\Pi(h,a)}(x;T)
\longrightarrow [x]^p(T)
\qquad (h\downarrow0).
\]
Hence the right-hand side of
\eqref{eq:phase-average-energy} is bounded by \(C_x h\) for all
sufficiently small \(h\), which gives
$$
\int_0^{T-h}|x(s+h)-x(s)|^p\,ds
\le C_x h $$
 at small scales. Enlarging
\(C_x\), if necessary, gives the estimate for all \(0<h\le T\).
Thus
\[
\sup_{0<h\le T}
h^{-1/p}
\left(
\int_0^{T-h}|x(s+h)-x(s)|^p\,ds
\right)^{1/p}
<\infty,
\]
so \(x\in B^{1/p}_{p,\infty}([0,T])\).  H\"older's inequality then leads to
\[
\int_0^{T-h}|x(s+h)-x(s)|\,ds
\le
T^{1-1/p}
\left(
\int_0^{T-h}|x(s+h)-x(s)|^p\,ds
\right)^{1/p}
\le C_x h^{1/p},
\]
so $ x\in B^{1/p}_{1,\infty}([0,T]).$
Since
the difference between the phase-averaged grid energy and $L^p$ norm of 
the integrated increment consists precisely of the
two boundary-cell contributions,
\[
  0\le
  \frac1h\int_0^h V_{\Pi(h,a)}^p(x;T)\,da
  -\frac1h\int_0^{T-h}|x(s+h)-x(s)|^p\,ds
  \le 2 \omega_x(h)^p.
\]
The first term converges to $[x]^p(T)$ by the uniformity
in $a$ in Definition~\ref{def:p-roughness-intrinsic}, so
\begin{equation}
  \lim_{h\downarrow0}
  \frac1h\int_0^{T-h}|x(s+h)-x(s)|^p\,ds
  =[x]^p(T)>0.
  \label{eq:roughness-besov-energy-limit}
\end{equation}
For $1\le q<\infty$, the finite-difference Besov seminorm
at smoothness $1/p$ is
\[
  \|x\|_{B^{1/p}_{p,q}}^q
  =
  \int_0^T
  \left(
    \frac1h\int_0^{T-h}|x(s+h)-x(s)|^p\,ds
  \right)^{q/p}\frac{dh}{h}.
\]
 As the inner term converges to 
$[x]^p(T)>0$ by \eqref{eq:roughness-besov-energy-limit}, the integral  diverges  therefore $x\notin B^{1/p}_{p,q}([0,T])$ for $1\le q<\infty.$
\end{proof}
 \subsection{Cancellation mechanism and intrinsic characterization}
Underlying the roughness property is a {\it cancellation} mechanism across fine increments.
To expose this mechanism, consider the dyadic partition sequence
\begin{equation}
\Tdyad_m
:=
\{u_j^m=jh_m:0\le j\le2^m\},
\qquad
h_m:=T2^{-m}.
\label{eq:dyadic-grid}
\end{equation}
For \(0\le j<2^m\), define
\begin{equation}
\Delta_{m,j}x
:=x(u_{j+1}^m)-x(u_j^m),
\qquad
Z_{m,j}(x):=h_m^{-1/p}\Delta_{m,j}x.
\label{eq:normalized-dyadic-increments}
\end{equation}
Then
\begin{equation}
\V^p_{\Tdyad_m}(x;T)
=
h_m\sum_{j=0}^{2^m-1}|Z_{m,j}(x)|^p.
\label{eq:normalized-energy}
\end{equation}

For integers \(1\le b\le2^m\) and \(0\le r<b\), define the regular block coarsening
\begin{equation}
\Tdyad_m^{b,r}
:=
\{0,T\}
\cup
\{u_j^m:1\le j<2^m,\ j\equiv r\pmod b\}.
\label{eq:regular-block-coarsening}
\end{equation}
The first and last blocks may be shorter than \(b\) fine intervals; every interior block contains exactly \(b\) fine intervals.  Let \(\tau_m^{b,r}\) denote its left block endpoint map as in Equation~\eqref{eq:lookback-foot}.  Define
\begin{equation}
\mathcal C_m^p(x;b,r;t)
:=
\V^p_{\Tdyad_m^{b,r}}(x;t)
-
\V^p_{\Tdyad_m}(x;t).
\label{eq:mesoscopic-error-observable}
\end{equation}
By Proposition~\ref{prop:lookback-representation},
\begin{equation}
\mathcal C_m^p(x;b,r;t)
=
\sum_{u_j^m<t}
\dpair\left(
 x(u_j^m)-x(\tau_m^{b,r}(u_j^m)),
 x(u_{j+1}^m\wedge t)-x(u_j^m)
\right).
\label{eq:mesoscopic-lookback-formula}
\end{equation}
Equivalently, define
\begin{equation}
\beta_{b,r}(j)
:=
\max\left\{0,\ r+b\left\lfloor\frac{j-r}{b}\right\rfloor\right\},
\label{eq:regular-block-foot-index}
\end{equation}
and
\begin{equation}
Z_{m,j}(x;t)
:=h_m^{-1/p}\bigl(x(u_{j+1}^m\wedge t)-x(u_j^m\wedge t)\bigr).
\label{eq:stopped-normalized-increment}
\end{equation}
Then
\begin{equation}
\mathcal C_m^p(x;b,r;t)
=
h_m\sum_{u_j^m<t}
\dpair\left(
 \sum_{i=\beta_{b,r}(j)}^{j-1}Z_{m,i}(x),
 Z_{m,j}(x;t)
\right).
\label{eq:mesoscopic-normalized-lookback}
\end{equation}
The {\it mesoscopic} regime corresponds to
\begin{equation}
b\longrightarrow\infty,
\qquad
bh_m\longrightarrow0.
\label{eq:mesoscopic-regime}
\end{equation}
Define for \(L\ge2\),  the 'roughness modulus'
\begin{equation}
\mathfrak R_{p,L}(x)
:=
\limsup_{m\to\infty}
\sup_{\substack{L\le b\le \lfloor2^m/L\rfloor\\0\le r<b}}
\sup_{t\in[0,T]}
|\mathcal C_m^p(x;b,r;t)|.
\label{eq:roughness-modulus}
\end{equation}
$\mathfrak R_{p,L}$ measures the maximal asymptotic change in $p-$energy caused by coarse-graining over blocks with at least $L$ fine increments, over intervals at most $T/L$.
The dyadic mesoscopic cancellation condition is
\begin{equation}
\lim_{L\to\infty}\mathfrak R_{p,L}(x)=0
\label{eq:dyadic-block-cancellation}
\end{equation}

 \begin{lemma}[Finite-level stability of nearby shifted uniform grids]
\label{lem:finite-shifted-grid-stability}
Let \(0<\delta\le T\), and consider
$
\lambda=\Pi(\ell,a),
\lambda'=\Pi(\ell',a')$
with
\begin{equation}
\frac{\delta}{2}\le \ell,\ell'\le2\delta.
\label{eq:nearby-grid-shell}
\end{equation}
Assume   there exist subpartitions
\[
 \widetilde\lambda
=
\{0=\widetilde u_0<\widetilde u_1<\cdots<
\widetilde u_N=T\}\subseteq\lambda,
\qquad
\widetilde\lambda'
=
\{0=\widetilde u'_0<\widetilde u'_1<\cdots<
\widetilde u'_N=T\}\subseteq\lambda',
\]
such that
\begin{equation}
\begin{aligned}
&\lambda\setminus\widetilde\lambda
\subset (0,2\delta)\cup(T-2\delta,T),\qquad
\lambda'\setminus\widetilde\lambda'
\subset (0,2\delta)\cup(T-2\delta,T),
\end{aligned}
\label{eq:boundary-edit-windows}
\end{equation}
with at most one point deleted from each of the two boundary regions
for each partition, and $\widetilde\lambda,\widetilde\lambda'$    
  satisfy the endpoint-fixed matching condition
\begin{equation}
|\widetilde u_k-\widetilde u'_k|
\le\varepsilon,
\qquad
0\le k\le N.
\label{eq:edited-grid-matching}
\end{equation}
If
\begin{equation}
\sup_{t\in[0,T]}
\V^p_{\lambda'}(x;t)\le M,
\label{eq:finite-grid-energy-bound}
\end{equation}
then
\begin{equation}
\begin{aligned}
\sup_{t\in[0,T]}
\left|
\V^p_{\lambda}(x;t)-\V^p_{\lambda'}(x;t)
\right|
\le C_{p,T}\Bigg[
&M^{(p-1)/p}
\left(
\frac{\omega_x(2\varepsilon)^p}{\delta}
\right)^{1/p}
+\frac{\omega_x(2\varepsilon)^p}{\delta}
+\omega_x(4\delta)^p
\Bigg].
\end{aligned}
\label{eq:shifted-grid-deterministic-stability}
\end{equation}
\end{lemma}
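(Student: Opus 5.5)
The proof will go in three steps: remove the boundary edits, compare the two matched edited partitions term by term, and then control the result with Lemma~\ref{lem:energy-perturbation}.

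\emph{Step 1: removing the boundary edits.} Deleting one point $v$ from a partition merges two adjacent cells. Both cells have length at most $2\delta$, and $v$ lies in $(0,2\delta)\cup(T-2\delta,T)$. The merged cell therefore has length at most $4\delta$. The stopped energy changes by one term of the form $\Dp(\cdot,\cdot)$ involving increments over intervals of length at most $4\delta$. Each such term is bounded by $C_p\,\omega_x(4\delta)^p$, uniformly in $t$. At most two points are deleted from each partition, so
\[
\sup_t|\V^p_\lambda(x;t)-\V^p_{\widetilde\lambda}(x;t)|\le C\omega_x(4\delta)^p,
\]
and the same bound holds for $\lambda',\widetilde\lambda'$. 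As a consequence, $\sup_t \V^p_{\widetilde\lambda'}(x;t)\le M+C\omega_x(4\delta)^p$.

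\emph{Step 2: comparing the matched partitions.} Fix $t$. Write the stopped increments as
\[
a_k=x(\widetilde u'_{k+1}\wedge t)-x(\widetilde u'_k\wedge t),\qquad a_k+e_k=x(\widetilde u_{k+1}\wedge t)-x(\widetilde u_k\wedge t).
\]
Then
\[
e_k=\bigl[x(\widetilde u_{k+1}\wedge t)-x(\widetilde u'_{k+1}\wedge t)\bigr]-\bigl[x(\widetilde u_k\wedge t)-x(\widetilde u'_k\wedge t)\bigr].
\]
Stopping at $t$ does not increase distances, so each bracket is bounded by $\omega_x(\varepsilon)$ by \eqref{eq:edited-grid-matching}. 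Using \eqref{eq:elementary-p-sum}, this gives $|e_k|^p\le 2^{p-1}\cdot 2\,\omega_x(\varepsilon)^p\le C\omega_x(2\varepsilon)^p$.

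The number of cells satisfies $N\le T/(\delta/2)+2\le C_T/\delta$. Here I use that the interior cells have length at least $\ell,\ell'\ge\delta/2$, and that $\delta\le T$ absorbs the constant. Hence
\[
F=\sum_k|e_k|^p\le C_{p,T}\,\omega_x(2\varepsilon)^p/\delta .
\]

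\emph{Step 3: applying the perturbation estimate.} Apply Lemma~\ref{lem:energy-perturbation} with $A=\V^p_{\widetilde\lambda'}(x;t)\le M+C\omega_x(4\delta)^p$. This yields
\[
C\bigl[(M+C\omega_x(4\delta)^p)^{(p-1)/p}F^{1/p}+F\bigr].
\]
The extra $\omega_x(4\delta)^p$ inside $A$ has to be absorbed, and this is where I expect the main nuisance. I would use Young's inequality, $\omega^{p-1}F^{1/p}\le \omega^p+F$, to split it off. Alternatively, I could use that $\omega_x(4\delta)^p$ is itself controlled by $M$, since some cell energy is at most $M$. Either route gives the term $\omega_x(4\delta)^p+F$. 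Combining this with Step 1 through the triangle inequality gives \eqref{eq:shifted-grid-deterministic-stability}.

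The remaining care point is the stopped version. The truncation at $t$ must keep the indices matched. This holds because the map $s\mapsto s\wedge t$ is $1$-Lipschitz and monotone, so the same pairing of cells works for every $t$.
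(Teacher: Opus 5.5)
Your proposal is correct and follows the paper's proof: the boundary deletions cost $C_p\,\omega_x(4\delta)^p$, the matched stopped increments give $F\le C_{p,T}\,\omega_x(2\varepsilon)^p/\delta$ by counting the $O(T/\delta)$ cells, and Lemma~\ref{lem:energy-perturbation} combines them. The only difference is that the paper bounds the edited reference energy directly, $\sup_t\V^p_{\widetilde\lambda'}(x;t)\le C_pM$, using $|u+v|^p\le 2^{p-1}(|u|^p+|v|^p)$; this makes your Young's-inequality absorption of the extra $\omega_x(4\delta)^p$ unnecessary, although that absorption is also valid.
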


\begin{proof}
Deleting one interior point merges two adjacent cells. Since
\[
|u+v|^p\le2^{p-1}\bigl(|u|^p+|v|^p\bigr),
\]
and at most two interior points are deleted from \(\lambda'\),
Equation~\eqref{eq:finite-grid-energy-bound} implies
\begin{equation}
\sup_{t\in[0,T]}
\V^p_{\widetilde\lambda'}(x;t)
\le C_p M.
\label{eq:edited-reference-energy-bound}
\end{equation}

For \(0\le k<N\), define the corresponding stopped increments
\[
a_k(t)
:=
x(\widetilde u'_{k+1}\wedge t)
-
x(\widetilde u'_k\wedge t),
\]
\[
b_k(t)
:=
x(\widetilde u_{k+1}\wedge t)
-
x(\widetilde u_k\wedge t),
\qquad
e_k(t):=b_k(t)-a_k(t).
\]
Since \(r\mapsto r\wedge t\) is \(1\)-Lipschitz,
Equation~\eqref{eq:edited-grid-matching} gives
\[
|e_k(t)|
\le
2\omega_x(\varepsilon)
\le
2\omega_x(2\varepsilon).
\]
By Equation~\eqref{eq:nearby-grid-shell}, each of the original grids has
at most \(C_T/\delta\) intervals, and hence the same is true of the
edited grids. Therefore, uniformly in \(t\),
\begin{equation}
F(t):=\sum_{k=0}^{N-1}|e_k(t)|^p
\le
C_{p,T}\frac{\omega_x(2\varepsilon)^p}{\delta}.
\label{eq:finite-grid-error-energy}
\end{equation}

Apply Lemma~\ref{lem:energy-perturbation} with
\(a_k=a_k(t)\) and \(e_k=e_k(t)\). Using
\eqref{eq:edited-reference-energy-bound} and
\eqref{eq:finite-grid-error-energy}, we obtain
\[
\begin{aligned}
\sup_{t\in[0,T]}
\left|
\V^p_{\widetilde\lambda}(x;t)
-
\V^p_{\widetilde\lambda'}(x;t)
\right|
\le C_{p,T}\Bigg[
&M^{(p-1)/p}
\left(
\frac{\omega_x(2\varepsilon)^p}{\delta}
\right)^{1/p}
 +
\frac{\omega_x(2\varepsilon)^p}{\delta}
\Bigg].
\end{aligned}
\]

It remains to compare each original grid with its edited version.
A deleted point merges two adjacent cells, each of length at most
\(2\delta\), so all increments affected by that deletion are supported
on an interval of length at most \(4\delta\). This remains true for
stopped increments: if \(t\) lies inside the merged cell, both the
original and merged stopped increments are increments of \(x\) over
subintervals of an interval of length at most \(4\delta\).
Consequently each affected term is bounded by
\(\omega_x(4\delta)^p\). Since only a bounded number of cells is
affected,
\[
\begin{aligned}
&
\sup_{t\in[0,T]}
\left|
\V^p_{\lambda}(x;t)
-
\V^p_{\widetilde\lambda}(x;t)
\right|
 +
\sup_{t\in[0,T]}
\left|
\V^p_{\lambda'}(x;t)
-
\V^p_{\widetilde\lambda'}(x;t)
\right|
\le
C_p\,\omega_x(4\delta)^p.
\end{aligned}
\]
Combining the last two estimates proves
\eqref{eq:shifted-grid-deterministic-stability}.
\end{proof}

\begin{proposition}[Intrinsic characterization of $p$-roughness]
\label{prop:microscope-free-characterization}
Let \(p>1\) and \(x\in C([0,T])\).  Then 
\(x\in\mathscr R_p([0,T])\) in the sense of Definition~\ref{def:p-roughness-intrinsic} if and only if
\begin{equation}
 x\in V_p(\Tdyad),
\qquad
[x]^p_{\Tdyad}(T)>0,
\qquad
\lim_{L\to\infty}\mathfrak R_{p,L}(x)=0.
\label{eq:dyadic-characterization-condition}
\end{equation}
When these conditions hold,
\begin{equation}
[x]^p=[x]^p_{\Tdyad},
\label{eq:intrinsic-equals-dyadic-energy}
\end{equation}
and for every \(L\ge2\),
\begin{equation}
\lim_{\delta\downarrow0}\ \Omega_p^{[x]^p_{\Tdyad}}(x;\delta)
\le \mathfrak R_{p,L}(x)
\le
2\ \Omega_p^{[x]^p}(x;T/L),
\label{eq:Omega-by-R}
\end{equation}
Consequently the class \(\mathscr R_p([0,T])\) is unchanged if the dyadic multiresolution in condition~\eqref{eq:dyadic-characterization-condition} is replaced by any fixed \(q\)-adic partition sequence.
\end{proposition}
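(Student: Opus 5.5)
The proof has two directions, glued together by the observation that a regular block coarsening $\Tdyad_m^{b,r}$ is itself a shifted uniform grid. Away from its boundary cells it equals $\Pi(bh_m, rh_m)$.

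\emph{Necessity and the upper bound in \eqref{eq:Omega-by-R}.} Assume $x\in\mathscr R_p([0,T])$. Since $\Tdyad_m=\Pi(h_m,0)$, the compatibility statement \eqref{eq:intrinsic-compatible-partitionwise} gives $x\in V_p(\Tdyad)$ with $[x]^p_{\Tdyad}=[x]^p$. Definition~\ref{def:p-roughness-intrinsic} gives $[x]^p(T)>0$. For the modulus, fix $L\le b\le\lfloor 2^m/L\rfloor$ and $0\le r<b$. Then $\Tdyad_m^{b,r}=\Pi(bh_m,rh_m)$, possibly up to the convention on the first cell, which I will check is harmless since both grids contain $0$ and $T$. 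The block length satisfies $bh_m\le T/L$, so
\[
|\mathcal C_m^p(x;b,r;t)|\le |\V^p_{\Tdyad_m^{b,r}}(x;t)-[x]^p(t)|+|\V^p_{\Tdyad_m}(x;t)-[x]^p(t)|\le 2\,\Omega_p^{[x]^p}(x;T/L),
\]
using $h_m\le T/L$ for large $m$. Taking $\limsup_m$ gives the right inequality of \eqref{eq:Omega-by-R}. Letting $L\to\infty$ shows $\mathfrak R_{p,L}(x)\to0$.

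\emph{Sufficiency.} Assume \eqref{eq:dyadic-characterization-condition} and set $A=[x]^p_{\Tdyad}$. Convergence along $\Tdyad$ is uniform in $t$ by \eqref{eq:stopped-convergence}. Fix $L$ and a target grid $\Pi(\ell,a)$ with $\ell\le\delta$ small. Choose $m=m(\ell,L)$ so that $bh_m\approx\ell$ with $b\in[L,2L)$. Concretely, take the largest $m$ with $Lh_m\le\ell$, put $b=\lfloor \ell/h_m\rfloor$ and $r=\lfloor a/h_m\rfloor \bmod b$. Then $\lambda'=\Pi(bh_m,rh_m)=\Tdyad_m^{b,r}$ satisfies $|\ell-bh_m|\le h_m\le \ell/L$. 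The interior points of $\Pi(\ell,a)$ and $\lambda'$ drift apart linearly in $k$, so they cannot be matched directly. I will handle this with a second step. First compare $\Pi(\ell,a)$ with the grid $\Pi(bh_m,rh_m)$ of \emph{exactly dyadic-multiple} mesh, which is exactly the block coarsening. Do this not by matching the two grids but by applying the lookback representation (Proposition~\ref{prop:lookback-representation}) to a common coarsening of their common refinement.

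My plan for that step is to replace the pair $(m,b)$ by $(m',b')$ with $m'\gg m$. This makes $b'h_{m'}$ approximate $\ell$ to within $\varepsilon\le \ell\,2^{-(m'-m)}$. The matching error accumulated over $T/\ell$ cells is then at most $T\,2^{-(m'-m)}$, which is $\le \ell$ once $m'$ is large. Now Lemma~\ref{lem:finite-shifted-grid-stability} applies with $\varepsilon\le h_{m'}$ and $\delta\asymp\ell$, after editing at most one boundary point on each side. It bounds $\sup_t|\V^p_{\Pi(\ell,a)}-\V^p_{\Tdyad_{m'}^{b',r'}}|$ by
\[
C\bigl[M^{(p-1)/p}(\omega_x(2h_{m'})^p/\ell)^{1/p}+\omega_x(2h_{m'})^p/\ell+\omega_x(4\ell)^p\bigr],
\]
where $M$ is a uniform bound on the coarse-grid energies. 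That bound comes from $\sup_t\V^p_{\Tdyad_{m'}}+\mathfrak R_{p,L}+1$. For fixed $\ell$, let $m'\to\infty$: the first two terms vanish and $b'\to\infty$ (so $b'\ge L$ for any fixed $L$). This gives
\[
\sup_t|\V^p_{\Pi(\ell,a)}(x;t)-A(t)|\le \limsup_{m'}\sup_{b',r',t}|\mathcal C_{m'}^p|+\limsup_{m'}\sup_t|\V^p_{\Tdyad_{m'}}-A|+C\omega_x(4\ell)^p\le \mathfrak R_{p,L}(x)+C\omega_x(4\ell)^p.
\]
This requires $b'h_{m'}\le T/L$, i.e.\ $\ell\le T/L$. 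Taking the supremum over $\ell\le\delta$ and $a$, then letting $\delta\to0$, gives the left inequality of \eqref{eq:Omega-by-R}. Letting $L\to\infty$ gives $p$-roughness with $A=[x]^p_{\Tdyad}$, which is \eqref{eq:intrinsic-equals-dyadic-energy}.

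\emph{Main obstacle.} The delicate step is the uniform-in-$m'$ energy bound $M$ combined with the $\limsup$ interchange. The inequality $\mathfrak R_{p,L}<\infty$ controls coarse energies only asymptotically in $m'$. This is fine because $\ell$ is fixed while $m'\to\infty$. I also need that the grid-matching hypothesis \eqref{eq:edited-grid-matching} holds with equal cardinalities after boundary edits, which I will verify by a counting argument on $\lfloor (T-a)/\ell\rfloor$.

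\emph{$q$-adic statement.} The same argument goes through verbatim with $2$ replaced by $q$. Since both conditions are equivalent to membership in $\mathscr R_p$, the $q$-adic version of the class coincides with the dyadic one.
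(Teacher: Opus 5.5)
Your proposal is correct and follows essentially the paper's argument. Necessity comes from the identity $\Tdyad_m^{b,r}=\Pi(bh_m,rh_m)$. Sufficiency compares $\Pi(\ell,a)$ with a regular dyadic blocking at a level fine enough that the accumulated lattice drift is negligible, applies Lemma~\ref{lem:finite-shifted-grid-stability} after boundary edits, and then uses the triangle inequality through $\mathcal C^p_{m}$ and the dyadic energy.

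The one variation is that you fix $(\ell,a)$ and let $m'\to\infty$ instead of choosing an explicit $m(\ell)$. This is slightly cleaner and directly gives $\Omega_p^{A}(x;\delta)\le\mathfrak R_{p,L}(x)+C\,\omega_x(4\delta)^p$ for $\delta\le T/L$. One correction: the matching error entering the lemma is the accumulated drift, of order $(T/\ell)h_{m'}$, not $h_{m'}$. This does not affect the argument, since it still vanishes as $m'\to\infty$ with $\ell$ fixed.
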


\begin{proof}
Assume first that \(x\) satisfies Definition~\ref{def:p-roughness-intrinsic}.  Since \(\Tdyad_m=\Pi(h_m,0)\), Equation~\eqref{eq:intrinsic-p-roughness} gives \(x\in V_p(\Tdyad)\) and \([x]^p_{\Tdyad}=A\).  Moreover,
\begin{equation}
\Tdyad_m^{b,r}=\Pi(bh_m,rh_m).
\label{eq:block-grid-is-shifted-grid}
\end{equation}
If \(L\le b\le2^m/L\), then both \(h_m\) and \(bh_m\) are at most \(T/L\).  Hence
\begin{equation}
|\mathcal C_m^p(x;b,r;t)|
\le
2\Omega_p^A(x;T/L),
\label{eq:block-error-by-Omega}
\end{equation}
which proves the right inequality in ~\eqref{eq:Omega-by-R} and \eqref{eq:dyadic-characterization-condition}.

Conversely, assume \eqref{eq:dyadic-characterization-condition} and set \(A=[x]^p_{\Tdyad}\).  Fix \(L\ge2\).  If \(\mathfrak R_{p,L}(x)=\infty\), the left inequality in ~\eqref{eq:Omega-by-R} is automatic, so suppose from now on that
\begin{equation}
\mathfrak R_{p,L}(x)<\infty.
\label{eq:R-finite-fixed-L}
\end{equation}
For \(0<\ell\le T/(4L)\), put
\begin{equation}
N_\ell:=\left\lceil\frac{2T}{\ell}\right\rceil+4.
\label{eq:N-ell}
\end{equation} As
$\omega_x(h)\to0$ and necessarily \(m(\ell)\to\infty\) as \(\ell\downarrow0\) there exists \(m=m(\ell)\) such that
\begin{equation}
h_m
\le
\min\left\{\frac{\ell}{4L},\frac{\ell}{16N_\ell}\right\},
\qquad
N_\ell\,\omega_x(8N_\ell h_m)^p\le\varepsilon(\ell),
\label{eq:microscope-choice}
\end{equation}
where \(\varepsilon(\ell)\downarrow0\). 
Let
\begin{equation}
b:=\left\lfloor\frac{\ell}{h_m}\right\rfloor.
\label{eq:block-approximation-b}
\end{equation}
For the phase, let \(j_0=\lfloor a/h_m\rfloor\), and write its Euclidean division by \(b\) as
\begin{equation}
j_0=q b+r,
\qquad q\in\mathbb Z,
\qquad 0\le r<b.
\label{eq:block-approximation-phase}
\end{equation}
Then
\begin{equation}
|a-(r+qb)h_m|<h_m,
\qquad
|\ell-bh_m|<h_m.
\label{eq:block-approximation-one-step}
\end{equation}
The first inequality in Equation~\eqref{eq:microscope-choice} implies, for all sufficiently small \(\ell\),
\begin{equation}
L\le b\le\frac{2^m}{L},
\qquad
\frac{3\ell}{4}\le bh_m\le\ell.
\label{eq:block-approximation-window}
\end{equation}

We now compare the two \emph{infinite} lattices before truncating them to \([0,T]\).  Associate the point \(a+k\ell\) with
\[
(r+(q+k)b)h_m,
\qquad k\in\mathbb Z.
\]
Whenever \(a+k\ell\in[0,T]\), one has \(|k|\le N_\ell\), and Equation~\eqref{eq:block-approximation-one-step} yields
\begin{equation}
\left|
(a+k\ell)-(r+(q+k)b)h_m
\right|
\le
(1+|k|)h_m
\le
2N_\ell h_m.
\label{eq:block-approximation-drift}
\end{equation}
Moreover, because \(0\le a<\ell\) and \(b=\lfloor\ell/h_m\rfloor\), the quotient in Equation~\eqref{eq:block-approximation-phase} satisfies \(q\in\{0,1\}\).  Together with \(bh_m\ge3\ell/4\), this shows that any block-lattice point \((r+(q+k)b)h_m\in[0,T]\) also has \(|k|\le N_\ell\).  Hence the same displacement bound holds with the roles of the two lattices reversed.  By Equation~\eqref{eq:microscope-choice},
\begin{equation}
2N_\ell h_m\le\frac{\ell}{8},
\label{eq:block-approximation-small-drift}
\end{equation}
and both lattice spacings are at least \(3\ell/4\).  Consequently every point lying farther than \(2N_\ell h_m\) from \(\{0,T\}\) has its matched point inside \((0,T)\).  Any unmatched point of either truncated grid must therefore lie in one of the two boundary strips of width \(2N_\ell h_m\); since this width is smaller than half either lattice spacing, each strip contains at most one unmatched interior point from each grid.  Thus, after deleting at most one interior point adjacent to each endpoint from either grid, \(\Pi(\ell,a)\) and
\[
\Pi(bh_m,rh_m)=\Tdyad_m^{b,r}
\]
have matched cardinality and corresponding endpoints at distance at most \(2N_\ell h_m\).

Condition~\eqref{eq:R-finite-fixed-L}, the stopped-sum criterion \eqref{eq:stopped-convergence}, and the identity
\[
\V^p_{\Tdyad_m^{b,r}}(x;t)
=
\V^p_{\Tdyad_m}(x;t)+\mathcal C_m^p(x;b,r;t)
\]
provide a constant \(M_L<\infty\) such that, for all sufficiently large \(m\),
\begin{equation}
\sup_{\substack{L\le b\le2^m/L\\0\le r<b}}
\sup_{t\in[0,T]}
\V^p_{\Tdyad_m^{b,r}}(x;t)
\le M_L.
\label{eq:block-energy-uniform-bound}
\end{equation}
Indeed, the dyadic energies are uniformly bounded for large \(m\), while the finite-level coarse-graining error supremum is eventually bounded by \(\mathfrak R_{p,L}(x)+1\). Set
\[
\varepsilon_\ell:=2N_\ell h_m.
\]
By Equation~\eqref{eq:block-approximation-window},
\[
\frac{3\ell}{4}\le bh_m\le\ell,
\]
so the mesh sizes of
\(\Pi(\ell,a)\) and \(\mathbb T_m^{b,r}=\Pi(bh_m,rh_m)\)
both lie in \([\ell/2,2\ell]\).

The preceding lattice matching shows that, after truncation to
\([0,T]\), every unmatched interior point of either grid lies in one
of the boundary strips
\[
(0,\varepsilon_\ell)
\qquad\text{or}\qquad
(T-\varepsilon_\ell,T).
\]
Moreover, each strip contains at most one unmatched point from each
grid.  Deleting these points therefore produces subpartitions
\[
\widetilde\lambda\subseteq\Pi(\ell,a),
\qquad
\widetilde\lambda'\subseteq\mathbb T_m^{b,r},
\]
with the same number of intervals, whose endpoints \(0,T\) are fixed
and whose corresponding partition points are at distance at most
\(\varepsilon_\ell\).
By Equation~\eqref{eq:block-approximation-small-drift},
\[
\varepsilon_\ell=2N_\ell h_m\le\frac{\ell}{8},
\]
so in particular the deleted points lie in the boundary regions
\[
(0,2\ell)\cup(T-2\ell,T).
\]
Hence Lemma~\ref{lem:finite-shifted-grid-stability} applies with
\[
\delta=\ell,
\qquad
\varepsilon=\varepsilon_\ell,
\qquad
\lambda=\Pi(\ell,a),
\qquad
\lambda'=\mathbb T_m^{b,r}.
\]  
Equations~\eqref{eq:microscope-choice} and \eqref{eq:block-approximation-drift} give
\begin{equation}
\begin{aligned}
\sup_{0\le a<\ell}\sup_{t\in[0,T]}
\left|
\V^p_{\Pi(\ell,a)}(x;t)-
\V^p_{\Tdyad_{m(\ell)}^{b,r}}(x;t)
\right|
\le C_{p,M_L,T}\Bigg[
&\left(\frac{\varepsilon(\ell)}{N_\ell\ell}\right)^{1/p}
+\frac{\varepsilon(\ell)}{N_\ell\ell}
+\omega_x(4\ell)^p
\Bigg].
\end{aligned}
\label{eq:uniform-grid-dyadic-block-approximation-quant}
\end{equation}
Since \(N_\ell\ell\ge T\), the right-hand side tends to zero; hence
\begin{equation}
\sup_{0\le a<\ell}\sup_t
\left|
\V^p_{\Pi(\ell,a)}(x;t)-
\V^p_{\Tdyad_{m(\ell)}^{b,r}}(x;t)
\right|
=o(1)
\qquad(\ell\downarrow0).
\label{eq:uniform-grid-dyadic-block-approximation}
\end{equation}

Consequently,
\begin{equation}
\begin{aligned}
\sup_{0\le a<\ell}\sup_t
\left|
\V^p_{\Pi(\ell,a)}(x;t)-A(t)
\right|
\le{}&
o(1)
+
\sup_{\substack{L\le b\le2^{m(\ell)}/L\\0\le r<b}}
\sup_t|\mathcal C_{m(\ell)}^p(x;b,r;t)|
\\
&+
\left\|
\V^p_{\Tdyad_{m(\ell)}}(x;\cdot)-A
\right\|_\infty.
\end{aligned}
\label{eq:Omega-block-triangle}
\end{equation}
The last term tends to zero by the stopped-sum criterion \eqref{eq:stopped-convergence}.  Since \(m(\ell)\to\infty\), taking \(\limsup_{\ell\downarrow0}\) in Equation~\eqref{eq:Omega-block-triangle} gives the left inequality~\eqref{eq:Omega-by-R}.  Condition~\eqref{eq:dyadic-block-cancellation} provides arbitrarily large \(L\) for which \(\mathfrak R_{p,L}(x)<\infty\), and letting such \(L\to\infty\) yields Equation~\eqref{eq:intrinsic-p-roughness}.  This proves $x\in \Rough_p([0,T])$ and ~\eqref{eq:intrinsic-equals-dyadic-energy}.

The final assertion follows because Definition \ref{def:p-roughness-intrinsic} contains no reference to the dyadic partition, while the proof of the implication $x\in \Rough_p([0,T])$\(\Rightarrow\) \eqref{eq:dyadic-characterization-condition} applies verbatim to every fixed \(q\)-adic uniform multiresolution.
\end{proof}

\begin{remark}[Controls over all shifted uniform grids]
\label{rem:scale-dependent-microscope}
{\em In Proposition~\ref{prop:microscope-free-characterization} we are  not approximating all shifted uniform grids by one fixed dyadic level.  For each mesh \(\ell\), the dyadic level \(m(\ell)\) is chosen only after \(\Pi(\ell,a)\) has been specified, and is refined until the accumulated phase and mesh error over \(O(T/\ell)\) cells is negligible.  Lemma~\ref{lem:finite-shifted-grid-stability} then compares the arbitrary shifted grid with a regular dyadic blocking at that scale.  Mesoscopic block cancellation compares the latter with the dyadic energy, which converges to \([x]^p_{\Tdyad}\).  This scale-dependent choice of dyadic resolution is the reason a condition checked on one uniform multiresolution controls all shifted uniform grids and is therefore independent of the base.}
\end{remark}

\begin{corollary}[Mesoscopic block invariance]
\label{prop:roughness-block-invariance}
Let \(x\) be \(p\)-rough.  Let \(m_n\to\infty\), and choose integers \(b_n,r_n\) such that
\begin{equation}
b_n\to\infty,
\qquad
b_nh_{m_n}\to0,
\qquad
0\le r_n<b_n.
\label{eq:mesoscopic-block-sequence}
\end{equation}
Then
\begin{equation}
\sup_{t\in[0,T]}
\left|
\V^p_{\Tdyad_{m_n}^{b_n,r_n}}(x;t)-[x]^p(t)
\right|
\longrightarrow0.
\label{eq:roughness-block-invariance}
\end{equation}
In particular,
\begin{equation}
x\in V_p\bigl((\Tdyad_{m_n}^{b_n,r_n})_n\bigr),
\qquad
[x]^p_{(\Tdyad_{m_n}^{b_n,r_n})_n}=[x]^p.
\label{eq:roughness-block-variation}
\end{equation}
\end{corollary}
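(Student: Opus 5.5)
The proof is a direct consequence of the identity \eqref{eq:block-grid-is-shifted-grid}, $\Tdyad_m^{b,r}=\Pi(bh_m,rh_m)$, combined with the uniformity built into Definition~\ref{def:p-roughness-intrinsic}. The plan is to show that the block coarsenings in \eqref{eq:mesoscopic-block-sequence} are themselves shifted uniform grids with vanishing mesh, so their stopped energies converge uniformly to $[x]^p$ by definition. The partition-sequence statement \eqref{eq:roughness-block-variation} then follows from the stopped-sum characterization \eqref{eq:stopped-convergence}.

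\textbf{Step 1: identify the grids.} Set $\ell_n:=b_nh_{m_n}$ and $a_n:=r_nh_{m_n}$. Since $0\le r_n<b_n$, we have $0\le a_n<\ell_n$. Since $b_nh_{m_n}\to0$, we have $\ell_n\to0$. One should check that \eqref{eq:block-grid-is-shifted-grid} holds for every admissible $b\le 2^m$, not only in the range $L\le b\le 2^m/L$. For $b\le 2^m$, the interior points $u^m_j$ with $j\equiv r\pmod b$ and $0<j<2^m$ are exactly the points $rh_m+kbh_m$ lying in $(0,T)$. Together with $\{0,T\}$ this gives $\Pi(bh_m,rh_m)$. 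Moreover $b_n\le 2^{m_n}$ for large $n$, because $b_nh_{m_n}\to0<T$.

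\textbf{Step 2: apply the definition.} Given $\eta>0$, Definition~\ref{def:p-roughness-intrinsic} with $A=[x]^p$ gives some $\delta>0$ with $\Omega_p^{[x]^p}(x;\delta)<\eta$. For $n$ large we have $\ell_n\le\delta$, and then
\[
\sup_{t}\bigl|\V^p_{\Tdyad_{m_n}^{b_n,r_n}}(x;t)-[x]^p(t)\bigr|
=\sup_t\bigl|\V^p_{\Pi(\ell_n,a_n)}(x;t)-[x]^p(t)\bigr|\le\Omega_p^{[x]^p}(x;\delta)<\eta .
\]
This proves \eqref{eq:roughness-block-invariance}. This is exactly the reasoning behind \eqref{eq:all-shifted-uniform-sequences}.

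\textbf{Step 3: conclude \eqref{eq:roughness-block-variation}.} Since $[x]^p$ is continuous, nondecreasing and vanishes at $0$, pointwise convergence of the stopped sums together with \eqref{eq:stopped-convergence} yields $x\in V_p((\Tdyad_{m_n}^{b_n,r_n})_n)$ with limit $[x]^p$. Here the mesh of $\Tdyad_{m_n}^{b_n,r_n}$ is at most $\ell_n\to0$.

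\textbf{Main obstacle.} The argument has no real obstacle. The only point needing care is the bookkeeping in Step 1: verifying the grid identity, including the shortened first and last blocks, for general $b_n$ and $r_n$. The hypothesis $b_n\to\infty$ is in fact not needed for this direction; it only places the sequence in the mesoscopic regime \eqref{eq:mesoscopic-regime}.
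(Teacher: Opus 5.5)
Your proposal is correct and follows the paper's proof: identify $\Tdyad_{m_n}^{b_n,r_n}=\Pi(b_nh_{m_n},r_nh_{m_n})$ with $0\le r_nh_{m_n}<b_nh_{m_n}\to0$, then apply the uniform convergence \eqref{eq:all-shifted-uniform-sequences} from Definition~\ref{def:p-roughness-intrinsic}, and finally the stopped-sum criterion. You are also right that the hypothesis $b_n\to\infty$ plays no role in this argument.
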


\begin{proof}
Apply Equation~\eqref{eq:all-shifted-uniform-sequences} to \(\ell_n=b_nh_{m_n}\) and \(a_n=r_nh_{m_n}\).
\end{proof}

\subsection{Elementary structural properties}

\begin{remark}[Scaling]
\label{prop:roughness-scaling}
If \(x\in\mathscr R_p([0,T])\) and \(c\ne0\), then \(cx\in\mathscr R_p([0,T])\), with
\begin{equation}
[cx]^p=|c|^p[x]^p,
\qquad
\Omega_p^{[cx]^p}(cx;\delta)=|c|^p\Omega_p^{[x]^p}(x;\delta).
\label{eq:roughness-scaling}
\end{equation}
Moreover,
\begin{equation}
\mathfrak R_{p,L}(cx)=|c|^p\mathfrak R_{p,L}(x).
\label{eq:roughness-block-scaling}
\end{equation}
\end{remark}
All discrete \(p\)-energies and coarse-graining errors are homogeneous of degree \(p\).

\begin{remark}[Cancellation versus smallness]
\label{rem:cancellation-vs-smallness}
For a block transformation one may also consider the total coarse-graining error mass
\begin{equation}
\overline{\mathcal C}_m^p(x;b,r;t)
:=
\sum_{B}
\left|\Dp\bigl((\Delta_{m,j}x)_{j\in B}\bigr)\right|,
\label{eq:absolute-error-mass}
\end{equation}
where the sum is over the coarse blocks.  The characterization in Proposition~\ref{prop:microscope-free-characterization} requires cancellation of the signed total coarse-graining error, not vanishing of Equation~\eqref{eq:absolute-error-mass}.  In particular, when all fine increments within all blocks have the same sign, convexity gives nonnegative block coarse-graining errors, so no nontrivial cancellation is possible.
\end{remark}

\begin{remark}[The case $p=2$ and  relation with quadratic roughness]
{\em When $p=2$, 
a different criterion for invariance of quadratic variation across balanced partitions was proposed in \cite{ContDas2023}.
This property,  called {\it quadratic roughness} \cite[Definition~3.2]{ContDas2023}, is relative to a prescribed  reference sequence: fine dyadic increments are grouped according to the cells of that sequence and the resulting cross-products are required to cancel.  
By contrast,
Definition~\ref{def:p-roughness-intrinsic} is intrinsic to the path: it tests the  self-averaging across all fine shifted uniform grids.  
At \(p=2\), Equation~\eqref{eq:mesoscopic-lookback-formula} becomes
\begin{equation}
\mathcal C_m^2(x;b,r;t)
=
2\sum_{u_j^m<t}
\bigl(x(u_j^m)-x(\tau_m^{b,r}(u_j^m))\bigr)
\bigl(x(u_{j+1}^m\wedge t)-x(u_j^m)\bigr).
\label{eq:SR2-decorrelation}
\end{equation}
Thus the block characterization of \(2\)-roughness is a mesoscopic decorrelation condition between an increment accumulated from the most recent regular block boundary and the next microscopic increment.

Proposition~\ref{prop:microscope-free-characterization} shows that this is equivalent to regular mesoscopic block cancellation, but it does not imply quadratic roughness along every balanced sequence. Section~\ref{sec:invariance} identifies the perturbations of uniform grids for which \(p\)-roughness implies partition invariance. Balanced partitions are further discussed in Section \ref{subsec:holder}.}
\end{remark}

\subsection{Transformations preserving \(p\)-roughness}
\label{subsec:first-order-stability}

The $p-$roughness property  is stable under transformations whose increments have the same first-order part as a continuous multiple of the original increment.  This gives a common mechanism for smooth changes of coordinates, smoother perturbations, and pathwise integration.

For a shifted uniform grid \(\Pi(\ell,a)=\{0=v_0<\cdots<v_N=T\}\), continuous paths \(x,y\), and \(g\in C([0,T])\), define the first-order remainder energy
\begin{equation}
\mathcal Q_p(y\mid x,g;\ell,a)
:=
\sup_{t\in[0,T]}
\sum_{i=0}^{N-1}
\left|
 y(v_{i+1}\wedge t)-y(v_i\wedge t)
 -g(v_i)\bigl(x(v_{i+1}\wedge t)-x(v_i\wedge t)\bigr)
\right|^p.
\label{eq:first-order-remainder-energy}
\end{equation}

\begin{proposition}[First-order stability of \(p\)-roughness]
\label{prop:first-order-stability}
Let \(p>1\), let \(x\in\mathscr R_p([0,T])\), and write \(\mu^{p,x}\) for its intrinsic energy measure.  Suppose \(y\in C([0,T])\) and \(g\in C([0,T])\) satisfy
\begin{equation}
\lim_{\delta\downarrow0}
\sup_{0<\ell\le\delta}
\sup_{0\le a<\ell}
\mathcal Q_p(y\mid x,g;\ell,a)
=0.
\label{eq:first-order-negligible-remainder}
\end{equation}
Then, with
\begin{equation}
B_g(t):=\int_{[0,t]}|g(s)|^p\,\mu^{p,x}(ds),
\label{eq:first-order-transformed-energy}
\end{equation}
one has
\begin{equation}
\lim_{\delta\downarrow0}
\sup_{0<\ell\le\delta}
\sup_{0\le a<\ell}
\sup_{t\in[0,T]}
\left|
\V^p_{\Pi(\ell,a)}(y;t)-B_g(t)
\right|
=0.
\label{eq:first-order-stability-conclusion}
\end{equation}
Consequently, if \(B_g(T)>0\), then \(y\in\mathscr R_p([0,T])\) and
\begin{equation}
\mu^{p,y}=|g|^p\mu^{p,x},
\qquad
[y]^p(t)=B_g(t).
\label{eq:first-order-energy-transform}
\end{equation}
If \(B_g(T)=0\), the same conclusion holds with zero limiting \(p\)-energy, so only the nondegeneracy requirement in Definition~\ref{def:p-roughness-intrinsic} fails. In particular:
\begin{enumerate}[label=\textup{(\roman*)},leftmargin=2.1em]
\item \emph{Smooth transformations.}  If \(F\in C^1(U)\) on a neighbourhood \(U\) of \(x([0,T])\) and \(y=F\circ x\), then \eqref{eq:first-order-negligible-remainder} holds with
\begin{equation}
 g(t)=F'(x(t)),
\qquad
[y]^p(t)=\int_{[0,t]}|F'(x(s))|^p\,\mu^{p,x}(ds).
\label{eq:smooth-transform-energy}
\end{equation}
Thus \(F\circ x\) is \(p\)-rough whenever the right-hand side at \(T\) is positive; in particular this is automatic if \(F'\) is bounded away from zero on \(x([0,T])\).

\item \emph{Addition of a smoother path.}  If \(y=x+z\) and
\begin{equation}
\lim_{\delta\downarrow0}
\sup_{0<\ell\le\delta}
\sup_{0\le a<\ell}
\sup_{t\in[0,T]}
\sum_i
|z(v_{i+1}\wedge t)-z(v_i\wedge t)|^p
=0,
\label{eq:smoother-perturbation-condition}
\end{equation}
then \(y\in\mathscr R_p([0,T])\) and \(\mu^{p,y}=\mu^{p,x}\).  Condition~\eqref{eq:smoother-perturbation-condition} holds, for example, when \(z\) is continuous of finite variation, or when \(z\in C^\alpha\) for some \(\alpha>1/p\).

\item \emph{Pathwise integration.}  Let \(p\ge2\) be an even integer and \(f\in C^p(\R)\).  The compensated pathwise integral  \cite[Theorem~1.5]{ContPerkowski2019}, applied along any vanishing shifted-uniform sequence, defines the same path
\begin{equation}
Y(t)
:=\int_0^t f'(x(s))\,\mathrm d^{\mathrm{CP}}x(s)
=
 f(x(t))-f(x(0))
 -\frac1{p!}\int_{[0,t]}f^{(p)}(x(s))\,\mu^{p,x}(ds),
\label{eq:CP-integral-intrinsic}
\end{equation}
because \(x\) has the same \(p\)-energy measure along every such sequence.  Then \eqref{eq:first-order-negligible-remainder} holds with \(g=f'\circ x\), and hence
\begin{equation}
[Y]^p(t)
=
\int_{[0,t]}|f'(x(s))|^p\,\mu^{p,x}(ds).
\label{eq:CP-integral-energy}
\end{equation}
Thus the pathwise integral is \(p\)-rough whenever the energy in \eqref{eq:CP-integral-energy} is nonzero.
\end{enumerate}
\end{proposition}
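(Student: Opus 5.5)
The argument has two parts: an energy-perturbation step and a Riemann-sum step. Fix a shifted grid \(\Pi(\ell,a)=\{v_i\}\) and \(t\in[0,T]\). Write \(y\)-increments as
\[
\Delta_i y(t)=g(v_i)\Delta_i x(t)+e_i(t),
\]
where \(\sum_i|e_i(t)|^p\le\mathcal Q_p(y\mid x,g;\ell,a)\).

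\textbf{Perturbation step.} Apply Lemma~\ref{lem:energy-perturbation} with \(a_i=g(v_i)\Delta_i x(t)\). Since \(\sum_i|a_i|^p\le\|g\|_\infty^p\,\V^p_{\Pi(\ell,a)}(x;t)\), and this is uniformly bounded for small \(\ell\) by \(p\)-roughness of \(x\), we get
\[
\bigl|\V^p_{\Pi(\ell,a)}(y;t)-\textstyle\sum_i|g(v_i)|^p|\Delta_i x(t)|^p\bigr|\le c_p\bigl(M^{(p-1)/p}\mathcal Q^{1/p}+\mathcal Q\bigr)
\]
uniformly in \(a\) and \(t\). By \eqref{eq:first-order-negligible-remainder} this tends to zero.

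\textbf{Riemann-sum step.} It remains to show that \(\sum_i|g(v_i)|^p|\Delta_ix(t)|^p\to\int_{[0,t]}|g|^p\,d\mu^{p,x}\) uniformly in \(a\), \(t\) and \(\ell\le\delta\). The sum is \(\int_{[0,t]}|g|^p\,d\mu_\ell\), where \(\mu_\ell\) is the stopped discrete measure, so its distribution function is \(\V^p_{\Pi(\ell,a)}(x;\cdot)\). Let \(h=|g|^p\), which is uniformly continuous, and take a step approximation \(h_K\) on a fixed partition \(0=s_0<\dots<s_K=T\) with \(\|h-h_K\|_\infty\le\eta\). Replacing \(h(v_i)\) by \(h_K(v_i)\) costs at most \(\eta M\). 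For step functions, the sum becomes a finite combination of differences \(\V^p(x;s_{k+1}\wedge t)-\V^p(x;s_k\wedge t)\), up to the single cell straddling each \(s_k\). Each straddling cell contributes at most \(\|h\|_\infty\omega_x(\ell)^p\), and there are \(K\) of them. The differences converge uniformly to \(A(s_{k+1}\wedge t)-A(s_k\wedge t)\) by \eqref{eq:intrinsic-p-roughness}, giving an error at most \(2K\|h\|_\infty\,\Omega_p^A(x;\delta)\). Continuity and non-atomicity of \(A\) handle the matching on the limit side. Letting \(\delta\to0\) and then \(\eta\to0\) gives \eqref{eq:first-order-stability-conclusion}. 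If \(B_g(T)>0\), this is exactly Definition~\ref{def:p-roughness-intrinsic}, and \eqref{eq:first-order-energy-transform} follows by uniqueness of \(A\).

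\textbf{Applications.} In each case we verify \eqref{eq:first-order-negligible-remainder}.

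For (i), the mean value theorem gives a remainder \(|e_i|\le\rho(\omega_x(\ell))|\Delta_ix|\), where \(\rho\) is the modulus of continuity of \(F'\) on a compact neighbourhood of \(x([0,T])\). Hence \(\mathcal Q\le\rho(\omega_x(\ell))^pM\to0\).

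For (ii), take \(g\equiv1\), so \(e_i=\Delta_iz\) and the remainder is exactly \eqref{eq:smoother-perturbation-condition}. For finite variation \(z\), \(\sum|\Delta z|^p\le\omega_z(\ell)^{p-1}\mathrm{TV}(z)\). For \(z\in C^\alpha\) with \(\alpha>1/p\), \(\sum|\Delta z|^p\lesssim (T/\ell)\,\ell^{\alpha p}\to0\).

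For (iii), use \eqref{eq:CP-integral-intrinsic}. A Taylor expansion of \(f\) to order \(p\) gives
\[
\Delta_iY-f'(x(v_i))\Delta_ix=\sum_{k=2}^{p}\frac{f^{(k)}(x(v_i))}{k!}(\Delta_ix)^k-\frac1{p!}\int_{(v_i,v_{i+1}]}f^{(p)}(x)\,d\mu^{p,x}+o(|\Delta_ix|^p).
\]
The \(p\)-th powers of the terms with \(k\ge2\) sum to at most \(\omega_x(\ell)^{p(k-1)}M\to0\). The measure term has \(p\)-th power sum bounded by \(\max_i\mu^{p,x}(\text{cell})^{p-1}\cdot\mu^{p,x}([0,T])\to0\), because \(\mu^{p,x}\) is non-atomic.

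\textbf{Main obstacle.} The delicate point is the uniform-in-\((\ell,a,t)\) Riemann-sum convergence. I handle it through the step approximation of \(h\), which reduces everything to the uniform distribution-function convergence already built into Definition~\ref{def:p-roughness-intrinsic}, together with boundary-cell bounds via \(\omega_x\). The stopping at \(t\) is absorbed the same way.
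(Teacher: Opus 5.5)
Your proposal is correct and follows essentially the paper's route: a weighted Riemann-sum convergence for $\sum_i |g(v_i)|^p|\Delta_i x(t)|^p$, obtained from step approximation of $|g|^p$ with $O(\omega_x(\ell)^p)$ boundary cells, then a perturbation estimate absorbing the first-order remainder, then verification of \eqref{eq:first-order-negligible-remainder} in cases (i)--(iii). The differences are cosmetic: you use Lemma~\ref{lem:energy-perturbation} where the paper uses Minkowski's inequality, and in (iii) you Taylor-expand to order $p$ where the paper's first-order expansion with a quadratic remainder already suffices.
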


\begin{proof}
Fix \(h:=|g|^p\).  First note that \(p\)-roughness implies the following weighted version of Definition~\ref{def:p-roughness-intrinsic}:
\begin{equation}
\lim_{\delta\downarrow0}
\sup_{0<\ell\le\delta}
\sup_{0\le a<\ell}
\sup_{t\in[0,T]}
\left|
\sum_i h(v_i)
|x(v_{i+1}\wedge t)-x(v_i\wedge t)|^p
-
\int_{[0,t]}h\,d\mu^{p,x}
\right|
=0.
\label{eq:weighted-intrinsic-energy}
\end{equation}
Indeed, the cumulative functions of the discrete measures \(\mu_{\Pi(\ell,a)}^{p,x}\) differ from the stopped energies by at most \(2\omega_x(\ell)^p\), hence converge to \([x]^p\) uniformly in mesh, phase and time.  Approximation of the continuous function \(h\) by step functions on a fixed finite partition then yields \eqref{eq:weighted-intrinsic-energy}; the finitely many cells crossing the step boundaries contribute only \(O(\omega_x(\ell)^p)\).

For a fixed grid and stopping time, write
\begin{equation}
\Delta_i^t y
=
 g(v_i)\Delta_i^t x+r_i^t,
\qquad
R(t):=\sum_i|r_i^t|^p.
\label{eq:first-order-increment-splitting}
\end{equation}
Minkowski's inequality gives
\begin{equation}
\left|
\left(\sum_i|\Delta_i^t y|^p\right)^{1/p}
-
\left(\sum_i|g(v_i)\Delta_i^t x|^p\right)^{1/p}
\right|
\le R(t)^{1/p}.
\label{eq:first-order-minkowski}
\end{equation}
The weighted energies in the second term are uniformly bounded for all sufficiently small meshes by \eqref{eq:weighted-intrinsic-energy}; Equation~\eqref{eq:first-order-negligible-remainder} and \eqref{eq:first-order-minkowski} then imply, using \(|u^p-v^p|\le C_p(u^{p-1}+v^{p-1})|u-v|\), that the difference between the two energies tends to zero uniformly in \(\ell,a,t\).  Combining this with \eqref{eq:weighted-intrinsic-energy} proves \eqref{eq:first-order-stability-conclusion} and hence \eqref{eq:first-order-energy-transform}.

For (i), Taylor's formula at first order gives, uniformly for \(|v-u|\le\ell\),
\begin{equation}
|F(x(v))-F(x(u))-F'(x(u))(x(v)-x(u))|
\le
\omega_{F'}(\omega_x(\ell))\,|x(v)-x(u)|,
\label{eq:smooth-transform-remainder}
\end{equation}
where \(\omega_{F'}\) is the modulus of continuity of \(F'\) on a compact neighbourhood of \(x([0,T])\).  The corresponding remainder energy is therefore bounded by \(\omega_{F'}(\omega_x(\ell))^p\sup_{t}\V^p_{\Pi(\ell,a)}(x;t)\), which tends to zero uniformly.

For (ii), take \(g\equiv1\) and \(r_i^t=\Delta_i^t z\).  If \(z\) has finite variation, then
\begin{equation}
\sum_i|\Delta_i^t z|^p
\le
\omega_z(\ell)^{p-1}\operatorname{Var}_{[0,T]}(z)
\longrightarrow0.
\label{eq:finite-variation-zero-p-energy}
\end{equation}
If \(z\in C^\alpha\), the number of cells is at most \(T/\ell+2\), and
\begin{equation}
\sum_i|\Delta_i^t z|^p
\le
C\left(\frac{T}{\ell}+2\right)\ell^{\alpha p}
=O(\ell^{\alpha p-1}),
\label{eq:holder-zero-p-energy}
\end{equation}
which tends to zero when \(\alpha>1/p\).

For (iii), Equation~\eqref{eq:CP-integral-intrinsic} and first-order Taylor expansion give, for \(u<v\),
\begin{equation}
|Y(v)-Y(u)-f'(x(u))(x(v)-x(u))|
\le
C_f|x(v)-x(u)|^2
+
C_f\mu^{p,x}([u,v]).
\label{eq:CP-first-order-remainder}
\end{equation}
Hence the \(p\)-th powers of the first terms sum to at most
\(
C_f\omega_x(\ell)^p \sup_t\V^p_{\Pi(\ell,a)}(x;t)
\), uniformly in the stopping time.  For the measure term, set
\begin{equation}
 m_x(\ell)
:=
\sup_{\substack{0\le u<v\le T\\v-u\le\ell}}
\mu^{p,x}([u,v]).
\label{eq:energy-measure-modulus}
\end{equation}
Since \(\mu^{p,x}\) is nonatomic, \(m_x(\ell)\to0\), and
\begin{equation}
\sum_i\mu^{p,x}([v_i,v_{i+1}\wedge t])^p
\le
m_x(\ell)^{p-1}\mu^{p,x}([0,T])
\longrightarrow0.
\label{eq:CP-measure-remainder}
\end{equation}
Thus \eqref{eq:first-order-negligible-remainder} holds with \(g=f'\circ x\), completing the proof.
\end{proof}

\section{$p$-roughness via Faber-Schauder representation}
\label{sec:schauder-representation}

Multiresolution representations have been extensively used to characterize regularity properties of functions \cite{jaffard1991,jaffard2006,triebel2006}.
While the definition of the $p$-roughness property
(Definition~\ref{def:p-roughness-intrinsic}) does not refer to a specific   multiresolution representation, the multi-scale nature of cancellations underlying $p$-roughness leads to a natural characterization in terms of multiresolution representations, which were already used in
Proposition~\ref{prop:microscope-free-characterization}.

We use here a Faber--Schauder representation associated with a  refining partition sequence to give a characterization of the $p$-roughness property. The results are stated here for dyadic Faber-Schauder coefficients but similar results can be obtained in terms of other multiresolution representations.

\subsection{Faber-Schauder representation}

We use the notations of \cite[Section~3]{ContDas2022}. To any finitely refining sequence \(\pi\) on \([0,1]\), we can associate Haar functions, denoted by \(\psi_{m,k,i}\), their primitives  \(e_{m,k,i}^{\pi}\), and the reordered level-
\(m\) Schauder family  \(e_{m,k}^{\pi}\).  Every \(x\in C([0,1])\) has then a unique representation
\begin{equation}
x(t)=x(0)+(x(1)-x(0))t+
\sum_{m\ge0}\sum_k\theta_{m,k}^{x,\pi}e_{m,k}^{\pi}(t).
\label{eq:schauder-general-expansion}
\end{equation}
The coefficients \(\theta_{m,k}^{x,\pi}\) are the explicit three-point second differences given in \cite[Theorem~3.8]{ContDas2022}.  

For simplicity of exposition, we will focus here on the dyadic partition \(\Tdyad\) on \([0,1]\), but all results below may be transposed to other refining partitions.  
For $m\geq0$ and $0\leq k<2^m$, define the dyadic
Faber--Schauder functions by
\[
  e^{\mathbb T}_{m,k}(t)
  :=2^{-m/2}\varphi(2^m t-k),\qquad \varphi(u):=\max\{0,\min\{u,1-u\}\},
  \qquad t\in[0,1].
\]
Thus $e^{\mathbb T}_{m,k}$ is the piecewise linear tent
supported on $[k2^{-m},(k+1)2^{-m}]$, with peak height
$2^{-m/2-1}$ at $(2k+1)2^{-m-1}$. Equivalently,
\begin{eqnarray*}
  &e^{\mathbb T}_{m,k}(t)
  =\int_0^t\psi_{m,k}(s)\,ds,\qquad {\rm where}\\
  &\psi_{m,k}(s)
  :=2^{m/2}\left(
    \mathbf 1_{[k2^{-m},(2k+1)2^{-m-1})}(s)
    -
    \mathbf 1_{[(2k+1)2^{-m-1},(k+1)2^{-m})}(s)
  \right).
\end{eqnarray*}
Together with the functions $1$ and $t$, these tents
form the Faber--Schauder basis of $C([0,1])$,  ordered
by increasing level.
For \(0\le k<2^m\),
\begin{equation}
\supp(e_{m,k}^{\Tdyad})
=
\left[\frac{k}{2^m},\frac{k+1}{2^m}\right],
\qquad
t_2^{m,k}=\frac{2k+1}{2^{m+1}},
\label{eq:dyadic-schauder-support}
\end{equation}
and the general coefficient formula of \cite[Theorem~3.8]{ContDas2022} reduces to
\begin{equation}
\theta_{m,k}^{x,\Tdyad}
=
2^{m/2}
\left(
2x\left(\frac{2k+1}{2^{m+1}}\right)
-x\left(\frac{k}{2^m}\right)
-x\left(\frac{k+1}{2^m}\right)
\right).
\label{eq:dyadic-schauder-coefficient}
\end{equation}

For \(0\le \ell<m\) and \(0\le j<2^m\), let
\begin{equation}
\kappa(\ell;m,j)
:=
\left\lfloor\frac{j}{2^{m-\ell}}\right\rfloor
\label{eq:kappa-dyadic}
\end{equation}
be the unique level-\(\ell\) index whose support contains the fine interval \([j2^{-m},(j+1)2^{-m}]\), and define the Haar sign
\begin{equation}
\varepsilon_\ell(m,j)
:=
2^{-\ell/2}
\psi_{\ell,\kappa(\ell;m,j)}(t),
\qquad
t\in(j2^{-m},(j+1)2^{-m}).
\label{eq:haar-sign}
\end{equation}
Then \(\varepsilon_\ell(m,j)\in\{-1,1\}\).

\begin{proposition}[Schauder representation of the critical increment field]
\label{prop:schauder-increment-field}
For every \(m\ge1\) and \(0\le j<2^m\),
\begin{equation}
Z_{m,j}(x)
=
2^{-m(1-1/p)}
\left[
 a
 +
 \sum_{\ell=0}^{m-1}
 2^{\ell/2}\theta_{\ell,\kappa(\ell;m,j)}^{x,\Tdyad}
 \varepsilon_\ell(m,j)
\right].
\label{eq:normalized-increments-schauder}
\end{equation}
\end{proposition}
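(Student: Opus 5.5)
The plan is to read off the increment $\Delta_{m,j}x$ directly from the Faber--Schauder expansion \eqref{eq:schauder-general-expansion}, specialised to $\pi=\Tdyad$ on $[0,1]$. I read the constant in \eqref{eq:normalized-increments-schauder} as $a:=x(1)-x(0)$, the slope of the linear term. On $[0,1]$ we have $h_m=2^{-m}$, so $Z_{m,j}(x)=2^{m/p}\Delta_{m,j}x$. It therefore suffices to show
\[
\Delta_{m,j}x=2^{-m}\Bigl[a+\sum_{\ell=0}^{m-1}2^{\ell/2}\theta^{x,\Tdyad}_{\ell,\kappa(\ell;m,j)}\varepsilon_\ell(m,j)\Bigr],
\]
since multiplying by $2^{m/p}$ then gives the prefactor $2^{-m}\cdot 2^{m/p}=2^{-m(1-1/p)}$.

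\emph{Step 1 (truncation).} I would first show that only levels $\ell<m$ contribute to increments between level-$m$ dyadic points. For $\ell\ge m$, each tent $e^{\Tdyad}_{\ell,k}$ is supported on $[k2^{-\ell},(k+1)2^{-\ell}]$ and vanishes at every multiple of $2^{-\ell}$, hence at every point of $\Tdyad_m$. Consequently the partial sum
\[
x_{m}(t):=x(0)+at+\sum_{\ell<m}\sum_k\theta_{\ell,k}e_{\ell,k}(t)
\]
agrees with $x$ on $\Tdyad_m$. This is the standard interpolation property of the Schauder partial sums, which can also be checked by induction on $m$ from \eqref{eq:dyadic-schauder-coefficient}: the level-$(m-1)$ coefficient corrects exactly the midpoint value, because the peak height $2^{-(m-1)/2-1}$ times $\theta_{m-1,k}$ equals $x(\text{mid})-\tfrac12\bigl(x(\text{left})+x(\text{right})\bigr)$. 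Hence $\Delta_{m,j}x=x_m(u^m_{j+1})-x_m(u^m_j)$, and no convergence question for the infinite series arises.

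\emph{Step 2 (integration of Haar functions).} Since $e_{\ell,k}=\int_0^\cdot\psi_{\ell,k}$, the function $x_m$ is piecewise linear with derivative $a+\sum_{\ell<m}\sum_k\theta_{\ell,k}\psi_{\ell,k}$. For $\ell<m$, this derivative is constant on each open fine interval $(j2^{-m},(j+1)2^{-m})$. Indeed, the discontinuities of $\psi_{\ell,k}$ lie at multiples of $2^{-\ell-1}$, which belong to $\Tdyad_m$. On such a fine interval, only the index $k=\kappa(\ell;m,j)$ has $\psi_{\ell,k}\neq0$, by \eqref{eq:kappa-dyadic}, and there $\psi_{\ell,\kappa}=2^{\ell/2}\varepsilon_\ell(m,j)$ by \eqref{eq:haar-sign}. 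Integrating this constant slope over an interval of length $2^{-m}$ gives the displayed formula above, and therefore \eqref{eq:normalized-increments-schauder}.

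The only step needing care is Step 1, namely confirming that the coefficient normalisation \eqref{eq:dyadic-schauder-coefficient} matches the tent normalisation, so that the partial sums interpolate exactly at dyadic points. I would check this directly on one level as the induction step. Everything else is bookkeeping of supports and signs.
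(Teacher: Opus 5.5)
Your proposal is correct and follows essentially the same route as the paper: use that the Schauder partial sum through level $m-1$ interpolates $x$ on $\Tdyad_m$, then integrate its constant slope $a+\sum_{\ell<m}\theta_{\ell,\kappa(\ell;m,j)}\psi_{\ell,\kappa(\ell;m,j)}$ over the fine cell and multiply by $2^{m/p}$. Your inductive check that the peak height $2^{-\ell/2-1}$ times $\theta_{\ell,k}$ gives exactly the midpoint correction simply supplies the interpolation property that the paper invokes without proof.
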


\begin{proof}
Let \(x_m\) be the Schauder partial sum through level \(m-1\).  The interpolation property of the Schauder system gives \(x_m=x\) on \(\Tdyad_m\).  On the fine interval \((j2^{-m},(j+1)2^{-m})\), \(x_m\) is affine and
\begin{equation}
x_m'(t)
=
x(1)-x(0)
+
\sum_{\ell=0}^{m-1}
\theta_{\ell,\kappa(\ell;m,j)}^{x,\Tdyad}
\psi_{\ell,\kappa(\ell;m,j)}(t).
\label{eq:schauder-partial-derivative}
\end{equation}
Integrating \eqref{eq:schauder-partial-derivative} over the fine interval and using \eqref{eq:haar-sign} gives the formula for \(\Delta_{m,j}x\).  Multiplication by \(2^{m/p}\) yields \eqref{eq:normalized-increments-schauder}.
\end{proof}
\subsection{A Faber--Schauder criterion for \(p\)-roughness}
It is a classical result that H\"older regularity and other fine properties of functions may be identified from scaling behavior of Faber-Schauder coefficients across scales.
The Ciesielski  characterization \cite{Ciesielski1960},
in the normalization above, yields
\begin{equation}
x\in C^{\alpha}([0,1])
\quad\Longleftrightarrow\quad
\sup_{m,k}2^{m(\alpha-1/2)}|\theta_{m,k}^{x,\Tdyad}|<\infty,
\qquad 0<\alpha<1.
\label{eq:ciesielski-criterion}
\end{equation}
See also the non-uniform extensions and generalized \(p\)-variation coefficient criteria in \cite{ContDas2022,DasKim2025}.  In particular, at the critical exponent \(\alpha=1/p\),
\begin{equation}
\sup_{m,k}2^{m(1/p-1/2)}|\theta_{m,k}^{x,\Tdyad}|<\infty
\qquad \Rightarrow\qquad x\in C^{1/p}([0,1]).\label{eq:critical-schauder-bound}
\end{equation}

We now show that $p$-roughness may also be identified from these coefficients.
Define, from the data \(a:=x(1)-x(0)\) and \(\theta=(\theta_{m,k}^{x,\Tdyad})\),
\begin{equation}
\mathcal Z_{m,j}(a,\theta)
:=
2^{-m(1-1/p)}
\left[
 a
 +
 \sum_{\ell=0}^{m-1}
 2^{\ell/2}\theta_{\ell,\kappa(\ell;m,j)}^{x,\Tdyad}
 \varepsilon_\ell(m,j)
\right].
\label{eq:coefficient-Z}
\end{equation}
By Proposition~\ref{prop:schauder-increment-field}, \(\mathcal Z_{m,j}(a,\theta)=Z_{m,j}(x)\).  For a regular \((b,r)\)-blocking let
\begin{equation}
\mathcal P_{m,j}^{b,r}(a,\theta)
:=
\sum_{i=\beta_{b,r}(j)}^{j-1}\mathcal Z_{m,i}(a,\theta).
\label{eq:coefficient-P}
\end{equation}
\begin{proposition}[Faber-Schauder characterization of $p$-roughness]
\label{prop:schauder-roughness-criterion}
Let $p>1$ and $x\in C([0,1])$, with affine increment
$a=x(1)-x(0)$ and dyadic Faber--Schauder coefficients
$\theta=(\theta_{m,k})$. Write $Z_{m,j}=Z_{m,j}(a,\theta)$ and
$P_{m,j}^{b,r}=P_{m,j}^{b,r}(a,\theta)$ for the coefficient fields
defined above. For $m\ge1$, define $A_m$ at dyadic points by
\begin{equation}
 A_m(k2^{-m})
 :=2^{-m}\sum_{j=0}^{k-1}|Z_{m,j}(a,\theta)|^p,
 \qquad 0\le k\le2^m,
 \label{eq:schauder-cumulative-coefficient-energy}
\end{equation}
and extend $A_m$ linearly on each dyadic cell.
Then $x$ is $p$-rough if and only if the following three
coefficient conditions hold:
\begin{enumerate}
\renewcommand{\labelenumi}{(\roman{enumi})}
\item Uniform convergence of cumulative coefficient energies:
\begin{equation}
 \lim_{M\to\infty}\sup_{m,n\ge M}
 \max_{s\in\Tdyad_{\max(m,n)}}|A_m(s)-A_n(s)|=0.
 \label{eq:schauder-coefficient-energy-cauchy}
\end{equation}
\item Nondegeneracy of the terminal coefficient energy:
\begin{equation}
 \liminf_{m\to\infty}
 2^{-m}\sum_{j=0}^{2^m-1}|Z_{m,j}(a,\theta)|^p>0.
 \label{eq:schauder-coefficient-energy-positive}
\end{equation}
\item Mesoscopic cancellation of coefficient interactions:
\begin{equation}
 \lim_{L\to\infty}\limsup_{m\to\infty}
 \sup_{\substack{L\le b\le\lfloor2^m/L\rfloor\\0\le r<b}}
 \sup_{s\in\Tdyad_m}
 \left|
 2^{-m}\sum_{j2^{-m}<s}
 d_p\bigl(P_{m,j}^{b,r}(a,\theta),Z_{m,j}(a,\theta)\bigr)
 \right|=0.
 \label{eq:schauder-roughness-criterion}
\end{equation}
\end{enumerate}
When these conditions hold,
\begin{equation}
 [x]^p(t)=\lim_{m\to\infty}A_m(t)
 \qquad\text{uniformly for }t\in[0,1].
 \label{eq:schauder-intrinsic-energy-limit}
\end{equation}
\end{proposition}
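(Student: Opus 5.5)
The plan is to reduce everything to Proposition~\ref{prop:microscope-free-characterization}, with $T=1$, by translating each of its three conditions in \eqref{eq:dyadic-characterization-condition} into a statement about the coefficient fields. The translation rests on Proposition~\ref{prop:schauder-increment-field}, which gives $\mathcal Z_{m,j}(a,\theta)=Z_{m,j}(x)$ exactly. Consequently $\mathcal P^{b,r}_{m,j}(a,\theta)=\sum_{i=\beta_{b,r}(j)}^{j-1}Z_{m,i}(x)$. Two identities follow at dyadic points. First, $A_m(k2^{-m})=\V^p_{\Tdyad_m}(x;k2^{-m})$. Second, by \eqref{eq:mesoscopic-normalized-lookback} (with $h_m=2^{-m}$), $\mathcal C^p_m(x;b,r;s)=2^{-m}\sum_{j2^{-m}<s}\dpair(P^{b,r}_{m,j},Z_{m,j})$ for every $s\in\Tdyad_m$, because for such $s$ the stopped increments $Z_{m,j}(x;s)$ coincide with $Z_{m,j}(x)$.

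Condition (iii) versus $\lim_L\mathfrak R_{p,L}(x)=0$. The two quantities differ only in that (iii) takes the supremum over $s\in\Tdyad_m$ while $\mathfrak R_{p,L}$ takes it over all $t\in[0,1]$. For $t\in[s,s+2^{-m}]$, formula \eqref{eq:mesoscopic-lookback-formula} shows that only the single term at index $j$ with $u^m_j=s$ changes. Lemma~\ref{lem:power-increment} bounds this term by $C_p(|P|^{p-1}\omega_x(2^{-m})+\omega_x(2^{-m})^p)$, where $|P|\le\omega_x(b2^{-m})\le\omega_x(1/L)$. This tends to zero as $m\to\infty$, so the $\limsup_m$ of the two suprema coincide for each $L$. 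Hence (iii) holds if and only if $\lim_L\mathfrak R_{p,L}=0$.

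Condition (i) together with (ii) versus $x\in V_p(\Tdyad)$ with $[x]^p_\Tdyad(1)>0$. For the forward direction, if $x$ is $p$-rough then Proposition~\ref{prop:microscope-free-characterization} gives $\V^p_{\Tdyad_m}(x;\cdot)\to A$ uniformly with $A(1)>0$. Since $A_m$ coincides with $\V^p_{\Tdyad_m}$ on $\Tdyad_m$, the Cauchy condition (i) and condition (ii) follow. Here I use that $\Tdyad_{\max(m,n)}\supseteq\Tdyad_{\min}$. Between the points of $\Tdyad_m$, $A_m$ is linear, while $\V^p_{\Tdyad_m}$ moves by at most $\omega_x(2^{-m})^p$ and $A$ is uniformly continuous, so both are uniformly close to $A$ there as well.

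For the converse, (i) implies that $A_m$ converges uniformly on the dense set of dyadic points. I will extend the limit to a continuous nondecreasing $A$. Monotonicity is inherited from the $A_m$. Continuity needs an argument: I will use the interpolation $A_m$, the equicontinuity coming from the Cauchy property, and the bound $A_m(s+2^{-m})-A_m(s)\le\omega_x(2^{-m})^p$ for neighbouring points of level $m$. Having $A$, I then pass from dyadic $s$ to all $t$ again via $\omega_x(2^{-m})^p$. This gives $\V^p_{\Tdyad_m}(x;t)\to A(t)$ for all $t$, that is, $x\in V_p(\Tdyad)$ by \eqref{eq:stopped-convergence}, and (ii) gives $A(1)>0$. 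Proposition~\ref{prop:microscope-free-characterization} then yields $p$-roughness together with $[x]^p=[x]^p_\Tdyad=\lim A_m$, uniformly, which is \eqref{eq:schauder-intrinsic-energy-limit}.

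The main obstacle I expect is the continuity of the limit $A$ in this converse step. A uniform Cauchy condition restricted to the sets $\Tdyad_{\max(m,n)}$ does not by itself rule out jumps of $A$ at dyadic points. My first attempt will be a diagonal argument: for a fixed $s$ and a fine level $n$, compare $A_n(s+2^{-n})-A_n(s)\le\omega_x(2^{-n})^p$ with $A_m$ at the same points, using (i) with $M\le n$. This bounds any jump by $\sup_{m,n\ge M}\max|A_m-A_n|+\omega_x(2^{-n})^p\to0$, so $A$ has no jumps.
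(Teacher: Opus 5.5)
Your proposal is correct and follows essentially the paper's route. You reduce to Proposition~\ref{prop:microscope-free-characterization} via the exact identity $\mathcal Z_{m,j}=Z_{m,j}(x)$, identify (i)--(ii) with $x\in V_p(\Tdyad)$ and $[x]^p_{\Tdyad}(1)>0$, and identify (iii) with $\lim_L\mathfrak R_{p,L}(x)=0$ by comparing dyadic and arbitrary stopping times. Your bound there, via Lemma~\ref{lem:power-increment}, vanishes as $m\to\infty$ for each fixed $L$, which is sharper than the paper's $3\omega_x(1/L)^p$.

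The continuity ``obstacle'' you flag is not a real one. Since $A_m-A_n$ is affine on each cell of $\Tdyad_{\max(m,n)}$, condition (i) is exactly the uniform Cauchy condition on $[0,1]$, so the limit is automatically continuous; this is how the paper argues. Your diagonal argument is nonetheless valid and reaches the same conclusion.
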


\begin{proof}
Each $A_m$ is continuous and nondecreasing, with $A_m(0)=0$.
Since $A_m-A_n$ is affine on every cell of
$\Tdyad_{\max(m,n)}$,
\[
 \|A_m-A_n\|_\infty
 =\max_{s\in\Tdyad_{\max(m,n)}}|A_m(s)-A_n(s)|.
\]
Thus \eqref{eq:schauder-coefficient-energy-cauchy} is precisely
the uniform Cauchy condition for $(A_m)$.

Set $h_m=2^{-m}$. The exact Schauder increment representation gives
\[
 x((j+1)h_m)-x(jh_m)=h_m^{1/p}Z_{m,j}(a,\theta).
\]
Consequently, $A_m$ agrees with the stopped dyadic $p$-energy
at every dyadic point. For $t\in[kh_m,(k+1)h_m]$, the difference
between either quantity and the completed-cell energy at $kh_m$
lies in $[0,\omega_x(h_m)^p]$. Hence
\begin{equation}
 \sup_{t\in[0,1]}
 \left|A_m(t)-\mathcal V_{\Tdyad_m}^p(x;t)\right|
 \le\omega_x(h_m)^p\longrightarrow0.
 \label{eq:schauder-interpolated-energy-error}
\end{equation}
It follows from the stopped-sum characterization of $V_p$ that
\eqref{eq:schauder-coefficient-energy-cauchy} is equivalent to
$x\in V_p(\Tdyad)$. Indeed, the uniform limit of the $A_m$ is
continuous and nondecreasing, and conversely the stopped dyadic
energies of a path in $V_p(\Tdyad)$ converge uniformly to its
continuous cumulative variation. Under this condition,
\eqref{eq:schauder-coefficient-energy-positive} is exactly
$[x]^p_{\Tdyad}(1)>0$.

At a dyadic stopping time $s$, the exact block decomposition gives
\[
 C_m^p(x;b,r;s)
 =2^{-m}\sum_{j2^{-m}<s}
 d_p\bigl(P_{m,j}^{b,r}(a,\theta),Z_{m,j}(a,\theta)\bigr).
\]
To pass to arbitrary stopping times, let $s\in\Tdyad_m$ be the
largest dyadic point not exceeding $t$. All coarse-block boundaries
belong to $\Tdyad_m$, so passing from $s$ to $t$ changes only the
unfinished fine cell and the unfinished coarse block. Therefore,
for $L\le b\le2^m/L$,
\begin{equation}
 \left|C_m^p(x;b,r;t)-C_m^p(x;b,r;s)\right|
 \le\omega_x(h_m)^p+2\omega_x(bh_m)^p
 \le3\omega_x(1/L)^p.
 \label{eq:schauder-continuous-stopping-error}
\end{equation}
The last bound tends to zero as $L\to\infty$, solely by continuity
of $x$. Thus \eqref{eq:schauder-roughness-criterion} is equivalent
to $\lim_{L\to\infty}\mathfrak R_{p,L}(x)=0$.

The three coefficient conditions are therefore equivalent to
\[
 x\in V_p(\Tdyad),\qquad [x]^p_{\Tdyad}(1)>0,
 \qquad\lim_{L\to\infty}\mathfrak R_{p,L}(x)=0.
\]
Proposition~\ref{prop:microscope-free-characterization} proves
their equivalence with $p$-roughness and identifies the p-th variation as in \eqref{eq:schauder-intrinsic-energy-limit}.
\end{proof}
The Faber-Schauder decomposition does not play any special role here.  Any multiresolution or wavelet system that provides a stable reconstruction of the critical increment field may be used to formulate analogous coefficient criteria.  The Faber--Schauder system is  convenient because its coefficients are explicit second differences of the path and its partial sums interpolate the path at the partition points.

\subsection{A sign-discrepancy criterion for roughness}
\label{sec:phase}
A useful  case is when coefficient magnitudes are fixed and the variability of the class is carried by signs.  Write
\begin{equation}
\theta_{m,k}=c_m\sigma_{m,k},
\qquad
\sigma_{m,k}\in\{-1,1\},
\label{eq:fixed-amplitude-sign-array}
\end{equation}
where the amplitude profile \((c_m)\) is fixed.  Then the normalized increment field in Equation~\eqref{eq:coefficient-Z} is a functional of the sign array \(\sigma=(\sigma_{m,k})\) and the affine increment \(a=x(1)-x(0)\).

For a regular \((b,r)\)-blocking at level \(m\), let \(B_{m,q}^{b,r}\) denote its consecutive full blocks of exactly \(b\) fine increments.  Define the normalized block coarse-graining error observable
\begin{equation}
G_{m,q}^{p;b,r}(\sigma)
:=
\frac1b\,
\Dp\left(
(\mathcal Z_{m,j}(a,\theta))_{j\in B_{m,q}^{b,r}}
\right).
\label{eq:normalized-sign-block-error}
\end{equation}

\begin{proposition}[Power-discrepancy criterion for \(p\)-roughness]
\label{prop:sign-discrepancy}
Assume the coefficient bound in Equation~\eqref{eq:critical-schauder-bound}, \(x\in V_p(\Tdyad)\), and \([x]^p_{\Tdyad}(1)>0\).  Suppose that the coefficients have the fixed-amplitude form~\eqref{eq:fixed-amplitude-sign-array}.  If there exist constants \(C<\infty\) and \(0<\eta\le1\) such that for every \(m\), every \(1\le b\le2^m\), every \(0\le r<b\), and every consecutive family of \(M\ge1\) full \((b,r)\)-blocks,
\begin{equation}
\left|
\sum_{q=q_0}^{q_0+M-1}
G_{m,q}^{p;b,r}(\sigma)
\right|
\le
CM^{1-\eta},
\label{eq:sign-power-discrepancy}
\end{equation}
then \(x\) is \(p\)-rough.  More precisely,
\begin{equation}
\mathfrak R_{p,L}(x)
\le
C'\left(L^{-\eta}+L^{-1}\right),
\qquad L\ge2,
\label{eq:roughness-from-sign-discrepancy}
\end{equation}
for a constant \(C'\) depending on \(p\), the critical H\"older bound and the constant in ~\eqref{eq:sign-power-discrepancy}.
\end{proposition}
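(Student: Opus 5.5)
By Proposition~\ref{prop:microscope-free-characterization}, together with the standing assumptions \(x\in V_p(\Tdyad)\) and \([x]^p_{\Tdyad}(1)>0\), it is enough to prove the quantitative bound \eqref{eq:roughness-from-sign-discrepancy}. That bound forces \(\lim_{L\to\infty}\mathfrak R_{p,L}(x)=0\), and \(p\)-roughness then follows. So I fix \(L\ge2\), a level \(m\), a block size \(L\le b\le 2^m/L\), a phase \(0\le r<b\), and a stopping time \(t\in[0,1]\). The goal is to bound \(|\mathcal C_m^p(x;b,r;t)|\) uniformly in \(m\), \(b\), \(r\) and \(t\).

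The first step rewrites the coarse-graining error blockwise. By Lemma~\ref{lem:coarse-graining-error-decomposition} and Proposition~\ref{prop:lookback-representation}, the sum in \eqref{eq:mesoscopic-normalized-lookback} over indices \(j\) lying in a complete full block \(B^{b,r}_{m,q}\) is exactly \(h_m\,\Dp\bigl((Z_{m,j})_{j\in B^{b,r}_{m,q}}\bigr)=bh_m\,G^{p;b,r}_{m,q}(\sigma)\). Hence
\[
\mathcal C_m^p(x;b,r;t)=bh_m\sum_{q\in Q(t)}G^{p;b,r}_{m,q}(\sigma)+E_{\rm init}+E_{\rm fin},
\]
where \(Q(t)\) is the consecutive family of full blocks completed before \(t\). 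Here \(E_{\rm init}\) is the contribution of the initial block (shorter than \(b\)) and \(E_{\rm fin}\) is that of the unfinished block containing \(t\), stopped cell included. The number of blocks satisfies \(M=|Q(t)|\le 1/(bh_m)\).

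The second step treats the main term with \eqref{eq:sign-power-discrepancy}:
\[
bh_m\Bigl|\sum_{q\in Q(t)}G_{m,q}\Bigr|\le C\,bh_m\,M^{1-\eta}\le C\,(bh_m)^{\eta}\le C\,L^{-\eta},
\]
using \(bh_m\le 1/L\).

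The third step bounds the two boundary terms. Each is a single (possibly partial) block error, and by \eqref{eq:block-coarse-graining-error} it is at most
\[
\Bigl|\sum_{j\in B}\Delta_{m,j}x\Bigr|^p+\sum_{j\in B}|\Delta_{m,j}x|^p .
\]
The critical coefficient bound \eqref{eq:critical-schauder-bound} gives \(x\in C^{1/p}\), so the first piece is at most \(\|x\|_{C^{1/p}}^p\,bh_m\le C/L\). The same Hölder bound gives \(|\Delta_{m,j}x|^p\le Ch_m\), so the second piece is at most \(Cbh_m\le C/L\). Together these yield a contribution \(C'L^{-1}\).

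Summing the three steps and taking \(\limsup_m\) and the suprema gives \eqref{eq:roughness-from-sign-discrepancy}. The step needing most care is the first: I must check that the lookback index \(\beta_{b,r}\) in \eqref{eq:regular-block-foot-index} really resets at every block boundary, so that the full-block sums reproduce \(\Dp\) exactly. I must also handle the stopped increment inside the final block, which I do with \(\omega_x\) via the Hölder bound as above. I also need the first block (indices before \(r\)) to be counted as a boundary block rather than fed into \eqref{eq:sign-power-discrepancy}. Because the hypothesis is uniform over all consecutive families, no further averaging argument is required.
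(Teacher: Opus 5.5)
Your proof is correct and follows the paper's own argument. Like the paper, you use homogeneity of \(\Dp\) to write the complete-block part of \(\mathcal C_m^p\) as \(2^{-m}b\sum_q G^{p;b,r}_{m,q}(\sigma)\), apply the discrepancy hypothesis to that consecutive family, bound the at most two incomplete blocks by \(C/L\) using the critical H\"older bound, and conclude with Proposition~\ref{prop:microscope-free-characterization}. One small difference: your single estimate \(bh_mM^{1-\eta}\le(bh_m)^{\eta}\le L^{-\eta}\), from \(M\le 1/(bh_m)\), is a slightly cleaner replacement for the paper's case split \(M\le L\) versus \(M>L\).
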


\begin{proof}
Equation~\eqref{eq:critical-schauder-bound} gives \(x\in C^{1/p}\).  On a full block \(B\), homogeneity of \(\Dp\) yields
\begin{equation}
\Dp\bigl((\Delta_{m,j}x)_{j\in B}\bigr)
=
2^{-m}\,
\Dp\bigl((\mathcal Z_{m,j})_{j\in B}\bigr)
=
2^{-m}b\,G_{m,q}^{p;b,r}(\sigma).
\label{eq:block-error-sign-normalization}
\end{equation}
For a stopped sum, the complete interior blocks form a consecutive family.  If their number is \(M\), Equation~\eqref{eq:sign-power-discrepancy} gives a total complete-block contribution bounded by
\begin{equation}
C\,2^{-m}bM^{1-\eta}
\le
C'M^{-\eta}.
\label{eq:complete-block-sign-bound}
\end{equation}
In the mesoscopic window \(b\le2^m/L\), if the number \(M\) of complete blocks satisfies \(M\le L\), then \(2^{-m}bM^{1-\eta}\le L^{-\eta}\); if \(M>L\), then \(2^{-m}bM\le1+o(1)\) and the same term is bounded by \(CM^{-\eta}\le CL^{-\eta}\).  Each incomplete block has time length at most \(b2^{-m}\le L^{-1}\); critical \(1/p\)-H\"older regularity and Lemma~\ref{lem:power-increment} bound its coarse-graining error by \(C/L\).  Taking the supremum over the stopping time and phase therefore gives Equation~\eqref{eq:roughness-from-sign-discrepancy}.  Proposition~\ref{prop:microscope-free-characterization} then yields \(p\)-roughness.
\end{proof}


\subsection{$p$-roughness as a fine structure property}

\begin{proposition}[$p$-roughness is a fine structure property]
\label{prop:schauder-tail-roughness}
Let $p>1$, and let
\[
X(t)=X(0)+at+
\sum_{i=0}^{\infty}\sum_{j\in J_i}
\theta_{i,j}e_{i,j}(t),
\qquad t\in[0,T],
\]
be a random Faber--Schauder expansion, where each $J_i$ is finite
and the series converges uniformly almost surely. Define the {\it tail $\sigma-$field }
\[
F_\infty:=\bigcap_{n\ge1}\mathcal F_n\qquad{\rm where}\quad
\mathcal F_n
:=\sigma(\theta_{i,j}:i\ge n,\ j\in J_i).
\]
Then the event $\{X\in\mathscr R_p([0,T])\}$ agrees almost surely
with an event in $\mathcal F_\infty$. Thus $p$-roughness is a fine
structure property: changing finitely many coefficient levels,
 does not affect the $p-$roughness property.

If the random vectors $(\theta_{i,j})_{j\in J_i}$ are independent
across levels $i$, then
\[
\mathbb P\bigl(X\in\mathscr R_p([0,T])\bigr)\in\{0,1\}.
\]
\end{proposition}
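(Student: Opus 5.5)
The plan is to show that, for each fixed $n$, the event $\{X\in\mathscr R_p([0,T])\}$ coincides with $\{Y_n\in\mathscr R_p([0,T])\}$, where
\[
Y_n(t):=\sum_{i\ge n}\sum_{j\in J_i}\theta_{i,j}e_{i,j}(t)
\]
is the tail of the expansion. Indeed, $Y_n$ is (a.s. equal to) an $\mathcal F_n$-measurable random continuous function, being a uniform limit of $\mathcal F_n$-measurable finite sums. Write $X=P_n+Y_n$, where
\[
P_n(t)=X(0)+at+\sum_{i<n}\sum_{j\in J_i}\theta_{i,j}e_{i,j}(t).
\]
Each $P_n$ is a finite combination of piecewise linear tents plus an affine function. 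It is therefore Lipschitz, and in particular continuous of finite variation.

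The key deterministic step is an application of Proposition~\ref{prop:first-order-stability}(ii). For any Lipschitz $z$, condition~\eqref{eq:smoother-perturbation-condition} holds, by the bound~\eqref{eq:finite-variation-zero-p-energy}. Hence, if $Y_n\in\mathscr R_p$, then $X=Y_n+P_n\in\mathscr R_p$ with the same energy measure. Conversely, since $-P_n$ is also Lipschitz, $X\in\mathscr R_p$ implies $Y_n=X+(-P_n)\in\mathscr R_p$. Part (ii) is stated for a $p$-rough base path, and the nondegeneracy $A(T)>0$ transfers because the limiting energies coincide, so the argument is symmetric. Thus $\{X\in\mathscr R_p\}=\{Y_n\in\mathscr R_p\}$ pathwise on the full-measure set where the series converges uniformly.

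Next I must check that $\{Y_n\in\mathscr R_p\}$ is a measurable event in $\mathcal F_n$. I would use the countable characterization of Proposition~\ref{prop:microscope-free-characterization}. The conditions $Y_n\in V_p(\Tdyad)$, $[Y_n]^p_{\Tdyad}(T)>0$ and $\lim_L\mathfrak R_{p,L}(Y_n)=0$ each involve only countably many dyadic evaluations, suprema over finitely many $(b,r)$, and suprema over $t$. The suprema over $t$ can be restricted to rational $t$ by continuity of the stopped sums in $t$. Hence the event is a Borel function of the path in $C([0,T])$, and is $\mathcal F_n$-measurable. Since this holds for every $n$, the event $\{X\in\mathscr R_p\}$ agrees a.s. with an event in $\mathcal F_n$ for each $n$. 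To produce a single tail event, I would take $E:=\limsup_n\{Y_n\in\mathscr R_p\}$, which is in $\mathcal F_\infty$ because $\{Y_k\in\mathscr R_p\}\in\mathcal F_n$ for $k\ge n$. This event equals $\{X\in\mathscr R_p\}$ outside the null set where uniform convergence fails.

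For the zero--one law, independence of the level vectors across $i$ makes $\mathcal F_\infty$ the tail $\sigma$-field of the independent sequence $\bigl((\theta_{i,j})_{j\in J_i}\bigr)_{i\ge0}$. Kolmogorov's zero--one law then gives probability $0$ or $1$. The main obstacle I expect is the measurability bookkeeping: passing from the uncountable suprema in the definition to countable ones, and handling the a.s. convergence of the series so that the tail event is well defined. The roughness transfer itself is immediate from Proposition~\ref{prop:first-order-stability}(ii).
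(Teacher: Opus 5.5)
Your proposal is correct and follows the paper's proof essentially step by step. Both arguments split off the piecewise-linear part coming from the first $n$ levels, and use stability of $p$-roughness under finite-variation perturbations in both directions. Both then check that $\mathscr R_p$ is Borel via the dyadic characterization of Proposition~\ref{prop:microscope-free-characterization}, and conclude with Kolmogorov's zero--one law.

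The only difference is in the null-set bookkeeping. The paper notes that the uniform-convergence event $C$ lies in every $\mathcal F_n$ and sets the tail sum to $0$ off $C$. The events $\{X^{\ge n}\in\mathscr R_p\}$ then coincide identically, since the zero path is not $p$-rough, so their common value is in $\mathcal F_\infty$. Your $\limsup_n$ construction works equally well, provided you fix such genuinely $\mathcal F_n$-measurable versions of $Y_n$ (for instance, zero off $C$).
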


\begin{proof}
Adding a continuous finite-variation function preserves
$p$-roughness and its intrinsic energy. Indeed, for such a function
$z$,
\[
\sup_{\ell\le\delta}\sup_{0\le a<\ell}\sup_{t\in[0,T]}
\mathcal V_{\Pi(\ell,a)}^p(z;t)
\le
\omega_z(\delta)^{p-1}\operatorname{Var}_{[0,T]}(z)
\longrightarrow0.
\]
The power-increment inequality and H\"older's inequality therefore
show that, whenever $x$ is $p$-rough, the discrete energies of
$x+z$ and $x$ differ by a quantity tending to zero uniformly in
mesh, phase and time. Applying the same argument with $-z$ gives
\[
x+z\in\mathscr R_p
\quad\Longleftrightarrow\quad
x\in\mathscr R_p.
\]
Let $C$ be the event of uniform convergence of the Schauder
series. Removing finitely many levels does not change convergence,
so $C\in\mathcal F_n$ for every $n$. On $C$, define
\[
X^{\ge n}(t)
:=\sum_{i=n}^{\infty}\sum_{j\in J_i}
\theta_{i,j}e_{i,j}(t),
\]
and set $X^{\ge n}=0$ on $C^c$. This is an
$\mathcal F_n$-measurable random element of $C([0,T])$.
On $C$, the difference $X-X^{\ge n}$ is piecewise linear and
therefore has finite variation. Consequently,
\[
X\in\mathscr R_p
\quad\Longleftrightarrow\quad
X^{\ge n}\in\mathscr R_p.
\]
The class $\mathscr R_p$ is Borel in $C([0,T])$:
its dyadic characterization involves uniform convergence of
continuous stopped-energy functions, positivity of the limiting
terminal energy, and countable limits and suprema of the
mesoscopic coarse-graining errors. Suprema over time may be
taken over a countable dense subset.
Hence the events
\[
E_n:=\{X^{\ge n}\in\mathscr R_p\}
\]
belong to $\mathcal F_n$. They coincide on $C$, and are all false
on $C^c$, since the zero path is not $p$-rough. Their common value
is therefore an event in $\mathcal F_\infty$ agreeing almost
surely with $\{X\in\mathscr R_p\}$.

Under independence across levels, Kolmogorov's zero--one law
gives the final assertion.
\end{proof}

\section{From \(p\)-roughness to partition invariance}
\label{sec:invariance}

\subsection{Perturbations of shifted uniform grids}

Let
\begin{equation}
\widehat\pi_n:=\Pi(\ell_n,a_n),
\qquad
\ell_n\longrightarrow0,
\qquad
0\le a_n<\ell_n,
\label{eq:uniform-reference-sequence}
\end{equation}
and write
\begin{equation}
\widehat\pi_n
=
\{0=\widehat t_0^n<\cdots<\widehat t_{N_n}^n=T\}.
\label{eq:uniform-reference-points}
\end{equation}
A comparison sequence
\begin{equation}
\pi_n
=
\{0=t_0^n<\cdots<t_{N_n}^n=T\}
\label{eq:uniform-perturbed-points}
\end{equation}
with the same cardinality is called a \emph{shifted-uniform perturbation} if
\begin{equation}
\delta_n
:=
\max_{0\le k\le N_n}|t_k^n-\widehat t_k^n|
\longrightarrow0.
\label{eq:uniform-grid-perturbation}
\end{equation}
Denote this geometric class by \(\mathcal A_{\rm unif}\).  No relation such as \(\delta_n=o(\ell_n)\) is built into the class; the operative hypothesis for a given path will be the endpoint condition in Theorem~\ref{thm:p-rough-invariance}.

Since all interior cells of \(\Pi(\ell_n,a_n)\) have length \(\ell_n\), with at most two truncated boundary cells,
\begin{equation}
N_n\le \frac{T}{\ell_n}+2.
\label{eq:regular-block-count}
\end{equation}
In particular, \(\delta_n=o(\ell_n)\) implies \(N_n\delta_n\to0\).

\subsection{Alignment of partition sequences along a path}
\label{sec:coarsening-alignment}

For later comparison with arbitrary reference sequences, let
\begin{equation}
\rho=(\rho_m)_{m\ge1},
\qquad |\rho_m|\to0,
\label{eq:reference-sequence}
\end{equation}
and let \(\pi=(\pi_n)\) have vanishing mesh.  Choose a strictly increasing \(r:\N\to\N\) and set
\begin{equation}
\rho_n^\star:=\rho_{r(n)},
\qquad h_n:=|\rho_n^\star|.
\label{eq:selected-reference-sequence}
\end{equation}

\begin{definition}[Alignment of partition sequences along a path]
\label{def:alignment}
Let \(x\in C([0,T])\).  The subsequence \(\rho_n^\star=\rho_{r(n)}\) is an \emph{alignment of \(\rho\) with \(\pi\) along \(x\)} if
\begin{equation}
\Theta_n(x;\pi,\rho^\star)
:=
N(\pi_n)\omega_x(h_n)^p
\longrightarrow0.
\label{eq:alignment-condition}
\end{equation}
\end{definition}
Alignment is an auxiliary choice.  In particular, Proposition~\ref{thm:necessity} shows that once \(x\in V_p(\pi)\cap V_p(\rho)\) and \([x]^p_\pi=[x]^p_\rho\), the relative coarse-graining error converges to zero uniformly in time along \emph{every} alignment of \(\rho\) with \(\pi\) along \(x\).

Such an alignment always exists: since \(|\rho_m|\to0\) and \(x\) is uniformly continuous, one may choose \(r(n)>r(n-1)\) so that \(\omega_x(|\rho_{r(n)}|)^p\le (nN(\pi_n))^{-1}\).
Write
\begin{equation}
\pi_n=\{0=t_0^n<\cdots<t_{N_n}^n=T\},
\qquad
\rho_n^\star=\{0=s_0^n<\cdots<s_{M_n}^n=T\},
\label{eq:pi-d-points}
\end{equation}
and define
\begin{equation}
\kappa_n(0)=0,
\qquad \kappa_n(N_n)=M_n,
\qquad
\kappa_n(k)=\min\{j:s_j^n\ge t_k^n\}.
\label{eq:matching-indices}
\end{equation}
Then
\begin{equation}
0\le s_{\kappa_n(k)}^n-t_k^n\le h_n.
\label{eq:alignment-endpoint-distance}
\end{equation}
For stopped fine increments set
\begin{equation}
\delta_{j,n}(t)
:=x(s_{j+1}^n\wedge t)-x(s_j^n\wedge t)\quad{\rm and}\qquad
C_{k,n}(t)
:=
\sum_{j=\kappa_n(k)}^{\kappa_n(k+1)-1}\delta_{j,n}(t).
\label{eq:grouped-increment}
\end{equation}
Define the \(k\)-th coarse-block coarse-graining error by
\begin{equation}
\mathfrak D^p_{k,n}(x;t)
:=
|C_{k,n}(t)|^p
-
\sum_{j=\kappa_n(k)}^{\kappa_n(k+1)-1}|\delta_{j,n}(t)|^p,
\label{eq:block-error-kn}
\end{equation}
and the total relative coarse-graining error by
\begin{equation}
 \Def^p_{\pi,\rho^\star;n}(x;t)
:=
\sum_{k=0}^{N_n-1}\mathfrak D^p_{k,n}(x;t)\qquad A_n(t):=\sum_{k=0}^{N_n-1}|C_{k,n}(t)|^p.
\label{eq:total-error}
\end{equation}
Since the index blocks \(\{\kappa_n(k),\ldots,\kappa_n(k+1)-1\}\) partition the fine increments, summing the block coarse-graining errors gives the identity
\begin{equation}
A_n(t)=\V^p_{\rho_n^\star}(x;t)+\Def^p_{\pi,\rho^\star;n}(x;t).
\label{eq:exact-error}
\end{equation}

\begin{definition}[Cancellation of coarse-graining error]
\label{def:relative-error-cancellation}
Assume \(x\in V_p(\rho)\).  We say that \(x\) admits \emph{cancellation of \(p\)-coarse-graining error  along \(\pi\) relative  to \(\rho\)} if there exists an alignment \(\rho^\star\) such that
\begin{equation}
\Def^p_{\pi,\rho^\star;n}(x;t)\longrightarrow0
\qquad\text{for every }t\in[0,T].
\label{eq:relative-error-cancellation}
\end{equation}
We write
\begin{equation}
x\in\RelDef^p_{\pi\mid\rho}.
\label{eq:relative-error-class}
\end{equation}
\end{definition}
This  is a property of the triple \((x,\pi,\rho)\), so not intrinsic to $x$.
\subsection{Endpoint perturbation}

\begin{proposition}[Endpoint-perturbation stability for matched partitions]
\label{prop:endpoint-perturbation}
Let \(p>1\), \(x\in C([0,T])\), and let
\begin{equation}
\lambda_n=\{0=u_0^n<\cdots<u_{N_n}^n=T\},
\qquad
\pi_n=\{0=t_0^n<\cdots<t_{N_n}^n=T\}
\label{eq:endpoint-matched-partitions}
\end{equation}
be two vanishing-mesh partition sequences with the same number of intervals.  Define
\begin{equation}
\delta_n:=\max_{0\le k\le N_n}|t_k^n-u_k^n|.
\label{eq:endpoint-displacement}
\end{equation}
Assume \(x\in V_p(\lambda)\) and
\begin{equation}
N_n\omega_x(\delta_n)^p\longrightarrow0.
\label{eq:endpoint-perturbation-condition}
\end{equation}
Then
\begin{equation}
x\in V_p(\pi),
\qquad
[x]^p_\pi=[x]^p_\lambda.
\label{eq:endpoint-variation-stability}
\end{equation}
\end{proposition}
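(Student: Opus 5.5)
The plan is to compare the stopped energies $\V^p_{\pi_n}(x;t)$ and $\V^p_{\lambda_n}(x;t)$ term by term, using Lemma~\ref{lem:energy-perturbation}, and then invoke the stopped-sum characterization \eqref{eq:stopped-convergence}. Fix $t\in[0,T]$. For $0\le k<N_n$ set
\[
a_k(t):=x(u_{k+1}^n\wedge t)-x(u_k^n\wedge t),\qquad
e_k(t):=\bigl(x(t_{k+1}^n\wedge t)-x(t_k^n\wedge t)\bigr)-a_k(t).
\]
Since $r\mapsto r\wedge t$ is $1$-Lipschitz, $|t_k^n\wedge t-u_k^n\wedge t|\le\delta_n$ for every $k$. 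Hence
\[
|e_k(t)|\le |x(t_{k+1}^n\wedge t)-x(u_{k+1}^n\wedge t)|+|x(t_k^n\wedge t)-x(u_k^n\wedge t)|\le 2\omega_x(\delta_n),
\]
uniformly in $t$ and $k$. Summing over the $N_n$ cells gives
\[
F_n(t):=\sum_k|e_k(t)|^p\le 2^p N_n\,\omega_x(\delta_n)^p,
\]
which tends to $0$ uniformly in $t$ by \eqref{eq:endpoint-perturbation-condition}.

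Next, I control the reference energy $A_n(t):=\V^p_{\lambda_n}(x;t)$. Because $x\in V_p(\lambda)$, \eqref{eq:stopped-convergence} gives $A_n\to[x]^p_\lambda$ uniformly on $[0,T]$. So $\sup_n\sup_t A_n(t)\le M<\infty$ for all large $n$, using that $[x]^p_\lambda(T)$ is finite.

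Lemma~\ref{lem:energy-perturbation} then yields
\[
\sup_{t}\bigl|\V^p_{\pi_n}(x;t)-\V^p_{\lambda_n}(x;t)\bigr|\le c_p\bigl(M^{(p-1)/p}\sup_t F_n(t)^{1/p}+\sup_tF_n(t)\bigr)\to0.
\]
Consequently $\V^p_{\pi_n}(x;t)\to[x]^p_\lambda(t)$ for every $t$, and the limit is continuous and nondecreasing with value $0$ at $0$. The converse direction of \eqref{eq:stopped-convergence} then gives $x\in V_p(\pi)$ with $[x]^p_\pi=[x]^p_\lambda$. This step uses that $\pi$ has vanishing mesh, which is assumed.

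I expect no serious obstacle. The only point needing care is the stopped-increment bound: when $t$ falls between $u_k^n$ and $t_k^n$, some increments are truncated for one partition but not the other. The Lipschitz property of $r\mapsto r\wedge t$ handles this cleanly. Note that no assumption $\delta_n=o(|\lambda_n|)$ is needed, because the matching is by index and the error $e_k$ is controlled endpoint-wise rather than cell-wise.
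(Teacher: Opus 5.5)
Your proposal is correct and follows essentially the same route as the paper. You bound each matched-increment error by $2\omega_x(\delta_n)$ and sum to get $\sup_t F_n(t)\le 2^pN_n\omega_x(\delta_n)^p\to0$. You then apply Lemma~\ref{lem:energy-perturbation} using the uniform boundedness of the reference energies, and conclude through the stopped-sum criterion \eqref{eq:stopped-convergence}.
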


\begin{proof}
For corresponding increments \(a_{k,n}(t)\) and \(b_{k,n}(t)\),  \(e_{k,n}(t)=b_{k,n}(t)-a_{k,n}(t)\) satisfies
\begin{equation}
|e_{k,n}(t)|\le2\omega_x(\delta_n).
\label{eq:endpoint-matched-error-bound}
\end{equation}
Hence, with \(F_n(t)=\sum_k|e_{k,n}(t)|^p\),
\begin{equation}
\sup_tF_n(t)\le2^pN_n\omega_x(\delta_n)^p\to0.
\label{eq:endpoint-error-energy-zero}
\end{equation}
For fixed \(t\), apply Lemma~\ref{lem:energy-perturbation} to
$
a_k=a_{k,n}(t),
e_k=e_{k,n}(t),
$ so that \(b_{k,n}(t)=a_{k,n}(t)+e_{k,n}(t)\).  Since
\[
\sum_{k=0}^{N_n-1}|a_{k,n}(t)|^p
=
\V^p_{\lambda_n}(x;t),
\qquad
F_n(t):=
\sum_{k=0}^{N_n-1}|e_{k,n}(t)|^p,
\]
we obtain
\begin{equation}
\left|
\V^p_{\pi_n}(x;t)-\V^p_{\lambda_n}(x;t)
\right|
\le
c_p\left(
\V^p_{\lambda_n}(x;t)^{(p-1)/p}
F_n(t)^{1/p}
+
F_n(t)
\right).
\label{eq:matched-partition-energy-perturbation}
\end{equation}
Since \(x\in V_p(\lambda)\), the reference energies are uniformly bounded by the stopped-sum criterion \eqref{eq:stopped-convergence}.  This proves $$
\sup_{t\in[0,T]}
|\V^p_{\pi_n}(x;t)-\V^p_{\lambda_n}(x;t)|
\longrightarrow0,$$ and the stopped-sum criterion \eqref{eq:stopped-convergence} yields \eqref{eq:endpoint-variation-stability}.
\end{proof}

\subsection{Coarse-graining estimate}

For the aligned construction above, define
\begin{equation}
\Delta_{k,n}^{\pi}(t)
:=x(t_{k+1}^n\wedge t)-x(t_k^n\wedge t),
\qquad
e_{k,n}(t):=\Delta_{k,n}^{\pi}(t)-C_{k,n}(t),
\label{eq:coarse-increment-error}
\end{equation}
and
\begin{equation}
E_n(t):=\sum_{k=0}^{N_n-1}|e_{k,n}(t)|^p.
\label{eq:aggregate-endpoint-error}
\end{equation}
By \eqref{eq:alignment-endpoint-distance}, each endpoint contribution to \(e_{k,n}(t)\) is bounded by \(\omega_x(h_n)\).  Hence, uniformly in \(t\),
\begin{equation}
|e_{k,n}(t)|\le2\omega_x(h_n),
\qquad
E_n(t)\le2^p\Theta_n(x;\pi,\rho^\star).
\label{eq:endpoint-mismatch-bound}
\end{equation}
Along an alignment, \(\sup_tE_n(t)\to0\).

\begin{proposition}[Coarse-graining estimate]
\label{prop:coarse-graining-estimate}
For every selected reference subsequence \(\rho_n^\star=\rho_{r(n)}\),
\begin{equation}
\begin{aligned}
&\left|
\V^p_{\pi_n}(x;t)
-
\V^p_{\rho_n^\star}(x;t)
-
\Def^p_{\pi,\rho^\star;n}(x;t)
\right|
\\
&\qquad\le
C_p\left(
A_n(t)^{(p-1)/p}\Theta_n(x;\pi,\rho^\star)^{1/p}
+
\Theta_n(x;\pi,\rho^\star)
\right).
\end{aligned}
\label{eq:coarse-graining-estimate-theta}
\end{equation}
\end{proposition}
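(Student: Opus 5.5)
The estimate follows from two facts already available in the paper: the exact identity \eqref{eq:exact-error}, and the perturbation bound of Lemma~\ref{lem:energy-perturbation} applied to the decomposition \(\Delta_{k,n}^{\pi}(t)=C_{k,n}(t)+e_{k,n}(t)\) from \eqref{eq:coarse-increment-error}.

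\textbf{Step 1 (reduction).} By \eqref{eq:exact-error},
\[
\V^p_{\rho_n^\star}(x;t)+\Def^p_{\pi,\rho^\star;n}(x;t)=A_n(t)=\sum_{k=0}^{N_n-1}|C_{k,n}(t)|^p .
\]
The left-hand side of \eqref{eq:coarse-graining-estimate-theta} is therefore exactly
\[
\Bigl|\sum_{k}|C_{k,n}(t)+e_{k,n}(t)|^p-\sum_k|C_{k,n}(t)|^p\Bigr|,
\]
because \(\V^p_{\pi_n}(x;t)=\sum_k|\Delta^\pi_{k,n}(t)|^p\) by definition of the stopped energy.

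\textbf{Step 2 (perturbation bound).} Fix \(t\) and \(n\). Apply Lemma~\ref{lem:energy-perturbation} with index set \(I=\{0,\dots,N_n-1\}\), \(a_k=C_{k,n}(t)\) and \(e_k=e_{k,n}(t)\). This gives the bound
\[
c_p\bigl(A_n(t)^{(p-1)/p}E_n(t)^{1/p}+E_n(t)\bigr),
\]
with \(E_n\) as in \eqref{eq:aggregate-endpoint-error}.

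\textbf{Step 3 (endpoint mismatch).} Insert \(E_n(t)\le 2^p\Theta_n(x;\pi,\rho^\star)\) from \eqref{eq:endpoint-mismatch-bound}. The factors \(2\) and \(2^p\) are absorbed into \(C_p\), which yields \eqref{eq:coarse-graining-estimate-theta}.

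\textbf{Main obstacle.} The only real check is \eqref{eq:endpoint-mismatch-bound} in the stopped setting. By \eqref{eq:grouped-increment}, \(C_{k,n}(t)=x(s^n_{\kappa_n(k+1)}\wedge t)-x(s^n_{\kappa_n(k)}\wedge t)\). Hence
\[
e_{k,n}(t)=\bigl[x(t^n_{k+1}\wedge t)-x(s^n_{\kappa_n(k+1)}\wedge t)\bigr]-\bigl[x(t^n_k\wedge t)-x(s^n_{\kappa_n(k)}\wedge t)\bigr].
\]
Since \(r\mapsto r\wedge t\) is \(1\)-Lipschitz, \eqref{eq:alignment-endpoint-distance} bounds each bracket by \(\omega_x(h_n)\). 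The conventions \(\kappa_n(0)=0\) and \(\kappa_n(N_n)=M_n\) make the endpoint terms vanish.

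For the count, note that the sum defining \(E_n\) has exactly \(N_n=N(\pi_n)\) terms. This gives \(E_n(t)\le N_n 2^p\omega_x(h_n)^p=2^p\Theta_n\), and the bound holds for any subsequence \(r\), with no alignment needed.

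One subtlety: the blocks \(\{\kappa_n(k),\dots,\kappa_n(k+1)-1\}\) may be empty when several points of \(\pi_n\) fall in the same \(\rho^\star_n\)-cell. In that case \(C_{k,n}=0\), and the argument above still applies verbatim.
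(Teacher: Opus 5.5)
Your proposal is correct and follows the paper's proof essentially line by line. You apply Lemma~\ref{lem:energy-perturbation} with \(a_k=C_{k,n}(t)\) and \(e_k=e_{k,n}(t)\), rewrite \(A_n(t)\) via the identity \eqref{eq:exact-error}, and insert \(E_n(t)\le 2^p\Theta_n(x;\pi,\rho^\star)\); your explicit check of the stopped endpoint-mismatch bound (including empty blocks, and the remark that the estimate itself needs no alignment) fills in what the paper simply asserts in \eqref{eq:endpoint-mismatch-bound} before the proposition.
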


\begin{proof}
For fixed \(t\), apply Lemma~\ref{lem:energy-perturbation} with
$
a_k=C_{k,n}(t),
e_k=e_{k,n}(t).$
Since
\[
\Delta^\pi_{k,n}(t)=C_{k,n}(t)+e_{k,n}(t),
\qquad
\sum_{k=0}^{N_n-1}|C_{k,n}(t)|^p=A_n(t),\qquad
\sum_{k=0}^{N_n-1}|e_{k,n}(t)|^p=E_n(t),
\]
Lemma~\ref{lem:energy-perturbation} yields
\[
\left|
\V^p_{\pi_n}(x;t)-A_n(t)
\right|
\le
c_p\left(
A_n(t)^{(p-1)/p}E_n(t)^{1/p}
+
E_n(t)
\right).
\]
Using~\eqref{eq:exact-error}, this becomes
\begin{equation}
\left|
\V^p_{\pi_n}(x;t)
-
\V^p_{\rho_n^\star}(x;t)
-
R^p_{\pi,\rho^\star;n}(x;t)
\right|
\le
c_p\left(
A_n(t)^{(p-1)/p}E_n(t)^{1/p}
+
E_n(t)
\right).
\label{eq:relative-energy-perturbation}
\end{equation}
Finally, Equation~\eqref{eq:endpoint-mismatch-bound} gives
\[
E_n(t)\le 2^p\Theta_n(x;\pi,\rho^\star),
\]
and substitution proves the stated estimate.
\end{proof}

\subsection{Invariance of \(p\)-th variation}

\begin{theorem}[Partition invariance of \(p\)-th variation]
\label{thm:p-rough-invariance}
Let \(p>1\), let \(x\in\mathscr R_p([0,T])\), and let \(\pi\in\mathcal A_{\rm unif}\) be associated with shifted uniform grids \(\widehat\pi_n=\Pi(\ell_n,a_n)\), and endpoint displacement \(\delta_n\) as in Equation~\eqref{eq:uniform-grid-perturbation}.  Assume
\begin{equation}
N(\pi_n)\ \omega_x(\delta_n)^p\longrightarrow0.
\label{eq:p-rough-transfer-modulus}
\end{equation}
Then
\begin{equation}
x\in V_p(\pi),
\qquad
[x]^p_\pi=[x]^p.
\label{eq:p-rough-invariance}
\end{equation}
\end{theorem}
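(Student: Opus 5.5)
The plan is to reduce the statement to the endpoint-perturbation result, Proposition~\ref{prop:endpoint-perturbation}, with reference sequence $\lambda_n:=\widehat\pi_n=\Pi(\ell_n,a_n)$ and comparison sequence $\pi_n$. The theorem then comes down to two observations.

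First, $x\in V_p(\widehat\pi)$ with $[x]^p_{\widehat\pi}=[x]^p$. Since $x\in\mathscr R_p([0,T])$ and $\ell_n\to0$ with $0\le a_n<\ell_n$, Equation~\eqref{eq:all-shifted-uniform-sequences} gives
\[
\sup_{t\in[0,T]}\bigl|\V^p_{\widehat\pi_n}(x;t)-[x]^p(t)\bigr|\to0 .
\]
The limit $[x]^p$ is continuous, nondecreasing and vanishes at $0$. The stopped-sum characterization \eqref{eq:stopped-convergence} therefore yields $x\in V_p(\widehat\pi)$ and $[x]^p_{\widehat\pi}=[x]^p$, exactly as in \eqref{eq:intrinsic-compatible-partitionwise}. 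As a by-product, the reference energies $\sup_t\V^p_{\widehat\pi_n}(x;t)$ are uniformly bounded for large $n$; this boundedness is the input the perturbation lemma needs.

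Second, the hypotheses of Proposition~\ref{prop:endpoint-perturbation} hold. By the definition of $\mathcal A_{\rm unif}$, the sequences $\pi_n$ and $\widehat\pi_n$ have the same number $N_n=N(\pi_n)$ of intervals. Their points are matched index-by-index, with maximal displacement $\delta_n$ as in \eqref{eq:uniform-grid-perturbation}, and this coincides with \eqref{eq:endpoint-displacement}. The mesh of $\pi_n$ vanishes, since it is at most $\ell_n+2\delta_n\to0$. Assumption \eqref{eq:p-rough-transfer-modulus} is literally condition \eqref{eq:endpoint-perturbation-condition}. Hence Proposition~\ref{prop:endpoint-perturbation} gives $x\in V_p(\pi)$ and $[x]^p_\pi=[x]^p_{\widehat\pi}=[x]^p$.

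There is no serious obstacle; the only points to check are bookkeeping. The first is that the cardinality matching and the common endpoints $0,T$ are built into $\mathcal A_{\rm unif}$. The second is that the stopped increments satisfy $|e_{k,n}(t)|\le 2\omega_x(\delta_n)$ uniformly in $t$; this holds because $r\mapsto r\wedge t$ is $1$-Lipschitz, and it is already handled inside the proof of Proposition~\ref{prop:endpoint-perturbation}. If one prefers a self-contained argument, one can apply Lemma~\ref{lem:energy-perturbation} directly with $A=\V^p_{\widehat\pi_n}(x;t)$ and $F\le 2^pN_n\omega_x(\delta_n)^p$. This gives uniform convergence of $\V^p_{\pi_n}(x;\cdot)$ to $[x]^p$, and the stopped-sum criterion concludes.
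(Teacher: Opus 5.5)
Your proposal is correct and is the paper's proof: $p$-roughness along $\widehat\pi_n=\Pi(\ell_n,a_n)$ gives $x\in V_p(\widehat\pi)$ with $[x]^p_{\widehat\pi}=[x]^p$, and Proposition~\ref{prop:endpoint-perturbation} applied to the matched pair $(\widehat\pi_n,\pi_n)$ then concludes. You also check that the mesh of $\pi_n$ vanishes (via the bound $\ell_n+2\delta_n$) and that the reference energies are uniformly bounded; the paper leaves both points implicit.
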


\begin{proof}
By Definition~\ref{def:p-roughness-intrinsic},
\begin{equation}
\sup_{t\in[0,T]}
\left|
\V^p_{\widehat\pi_n}(x;t)-[x]^p(t)
\right|
\longrightarrow0.
\label{eq:uniform-reference-invariance}
\end{equation}
Hence \(x\in V_p(\widehat\pi)\) with \([x]^p_{\widehat\pi}=[x]^p\).  Equation~\eqref{eq:p-rough-transfer-modulus} is exactly the hypothesis of Proposition~\ref{prop:endpoint-perturbation} for the matched pair \((\widehat\pi_n,\pi_n)\).  Applying Proposition~\ref{prop:endpoint-perturbation} proves ~\eqref{eq:p-rough-invariance}.
\end{proof}
\begin{corollary}[Invariance for H\"older paths]
\label{cor:critical-holder-invariance}
Let $p>1$, let
$x\in\mathscr R_p([0,T])\cap C^{1/p}([0,T])$,
and let $\pi\in\mathcal A_{\mathrm{unif}}$ be associated
with shifted uniform grids
$\widehat\pi_n=\Pi(\ell_n,a_n)$.
Write $N_n$ for their common number of intervals and
$\delta_n$ for the maximal displacement of corresponding
partition points. If
\begin{equation}
  N_n\delta_n\longrightarrow0,
\label{eq:critical-holder-geometric-condition}
\end{equation}
then
\[
  x\in V_p(\pi),
  \qquad [x]^p_\pi=[x]^p,
\]
and the stopped $p$-variation sums converge uniformly
on $[0,T]$ to $[x]^p$.
In particular, the displacement condition holds whenever
$\delta_n=o(\ell_n)$.

\noindent
For $T=1$, the same conclusions hold for any
$x\in C([0,1])$ whose Faber--Schauder coefficients satisfy
\[
  \sup_{m\ge0}\max_{0\le k<2^m}
  2^{m(1/p-1/2)}|\theta_{m,k}|<\infty,
\]
together with 
\eqref{eq:schauder-coefficient-energy-cauchy},
\eqref{eq:schauder-coefficient-energy-positive}, and
\eqref{eq:schauder-roughness-criterion}.
In this case,
\[
  [x]^p_\pi(t)=\lim_{m\to\infty}A_m(t)
  \qquad\text{uniformly for }t\in[0,1],
\]
where $A_m$ is defined by
\eqref{eq:schauder-cumulative-coefficient-energy}.
\end{corollary}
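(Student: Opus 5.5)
The corollary follows by reducing to Theorem~\ref{thm:p-rough-invariance}: it is enough to check the modulus condition \eqref{eq:p-rough-transfer-modulus} from the geometric hypothesis \eqref{eq:critical-holder-geometric-condition} and the Hölder bound.

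\textbf{First part (general $T$).} Since $x\in C^{1/p}([0,T])$, there is a constant $K$ with $\omega_x(h)\le K h^{1/p}$. Hence
\[
N(\pi_n)\,\omega_x(\delta_n)^p\le K^p N_n\delta_n\longrightarrow0 .
\]
Here we use that $\pi_n$ and $\widehat\pi_n$ have the same cardinality $N_n$. Theorem~\ref{thm:p-rough-invariance} then gives $x\in V_p(\pi)$ and $[x]^p_\pi=[x]^p$.

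For uniform convergence of the stopped sums, I would argue in two ways:
\begin{itemize}
\item either appeal to the stopped-sum characterization \eqref{eq:stopped-convergence}, which yields uniform convergence on $[0,T]$ once pointwise convergence to a continuous nondecreasing limit holds;
\item or read it off directly from the proof of Proposition~\ref{prop:endpoint-perturbation}, which already shows $\sup_t|\V^p_{\pi_n}-\V^p_{\widehat\pi_n}|\to0$, and combine this with \eqref{eq:all-shifted-uniform-sequences}.
\end{itemize}

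For the remark that $\delta_n=o(\ell_n)$ suffices: by \eqref{eq:regular-block-count}, $N_n\delta_n\le (T/\ell_n+2)\delta_n\to0$, since $\ell_n\to0$ implies $\delta_n\to0$.

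\textbf{Second part ($T=1$, Faber--Schauder formulation).} Suppose
\[
\sup_{m,k}2^{m(1/p-1/2)}|\theta_{m,k}|<\infty .
\]
By \eqref{eq:critical-schauder-bound} (Ciesielski), $x\in C^{1/p}([0,1])$. Conditions \eqref{eq:schauder-coefficient-energy-cauchy}, \eqref{eq:schauder-coefficient-energy-positive} and \eqref{eq:schauder-roughness-criterion} give $x\in\mathscr R_p([0,1])$ by Proposition~\ref{prop:schauder-roughness-criterion}. That proposition also gives $[x]^p=\lim_m A_m$ uniformly, by \eqref{eq:schauder-intrinsic-energy-limit}. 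Applying the first part then yields $[x]^p_\pi=[x]^p=\lim_m A_m$ uniformly on $[0,1]$.

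I expect no step to be a real obstacle. The only point needing care is that the constant in \eqref{eq:critical-schauder-bound} genuinely produces a global Hölder modulus $\omega_x(h)\le Kh^{1/p}$ for all $h\le T$. That holds for $1/p<1$, which is the standing assumption $p>1$.
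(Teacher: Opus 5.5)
Your proposal is correct and follows the paper's proof essentially line by line: the $1/p$-H\"older bound turns $N_n\delta_n\to0$ into the modulus condition of Theorem~\ref{thm:p-rough-invariance}, the count $N_n\le T/\ell_n+2$ handles the case $\delta_n=o(\ell_n)$, and the coefficient version reduces to the first part via the Ciesielski bound and Proposition~\ref{prop:schauder-roughness-criterion}. You also justify the uniform convergence of the stopped sums explicitly, by combining the proof of Proposition~\ref{prop:endpoint-perturbation} with \eqref{eq:all-shifted-uniform-sequences}, which the paper leaves implicit.
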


\begin{proof}
If $K_x$ is a $1/p$-H\"older seminorm of $x$, then
$ N_n\omega_x(\delta_n)^p
  \le K_x^p N_n\delta_n
  \longrightarrow0.$
Theorem~\ref{thm:p-rough-invariance} therefore applies.
Since a shifted uniform grid satisfies
$N_n\le T/\ell_n+2$, the condition
$\delta_n=o(\ell_n)$ implies $N_n\delta_n\to0$.

For the coefficient formulation, the displayed coefficient
bound implies $x\in C^{1/p}([0,1])$. The three referenced
coefficient conditions imply $x\in\mathscr R_p([0,1])$
by the Faber--Schauder characterization, with intrinsic
energy given by \eqref{eq:schauder-intrinsic-energy-limit}.
The first part of the corollary now applies.
\end{proof}
The following more refined statement applies in particular to sample paths of fractional Brownian motion:
\begin{corollary}[Invariance for critical H\"older paths]
Let $p>1$ and
\[
x\in \mathscr R_p([0,T])\cap C^{1/p-}([0,T]),
\qquad
C^{1/p-}([0,T])
:=\bigcap_{0<\alpha<1/p}C^\alpha([0,T]).
\]
Let $\pi\in\mathcal A_{\mathrm{unif}}$ be associated with
shifted uniform grids $\widehat\pi_n=\Pi(\ell_n,a_n)$.
Write $N_n$ for their common number of intervals and
$\delta_n$ for the maximal displacement of corresponding
partition points.
If
\[
N_n\delta_n^{\,1-\varepsilon}\longrightarrow0,
\qquad {\rm for\ some}\quad 0<\varepsilon<1,
\]
then $
x\in V_p(\pi),$ and $ [x]^p_\pi=[x]^p,$
with uniform convergence of the stopped $p$-variation sums
on $[0,T]$.
In particular, the conclusion holds if $\delta_n=O(\ell_n^{\,1+\eta})$ for some $\eta>0.$
\end{corollary}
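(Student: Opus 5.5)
The plan is to reduce the statement to Theorem~\ref{thm:p-rough-invariance} by verifying its single hypothesis \eqref{eq:p-rough-transfer-modulus}, namely $N(\pi_n)\,\omega_x(\delta_n)^p\to0$, exactly as in the proof of Corollary~\ref{cor:critical-holder-invariance}. The only change is that the sharp $1/p$-H\"older seminorm is replaced by a slightly weaker H\"older exponent, and the $\varepsilon$-room in the geometric condition absorbs the loss. Since $\pi$ and $\widehat\pi$ have the same cardinality, $N(\pi_n)=N_n$.

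\textbf{Step 1: choose the H\"older exponent.} Fix the given $\varepsilon\in(0,1)$ and set $\alpha:=(1-\varepsilon)/p$. Then $0<\alpha<1/p$, so $x\in C^\alpha([0,T])$ by the assumption $x\in C^{1/p-}$. Let $K_\alpha$ be its $\alpha$-H\"older seminorm. Then $\omega_x(h)\le K_\alpha h^{\alpha}$ for all $h\in[0,T]$, and hence
\[
N_n\,\omega_x(\delta_n)^p\le K_\alpha^p\,N_n\,\delta_n^{\alpha p}=K_\alpha^p\,N_n\,\delta_n^{1-\varepsilon}\longrightarrow0 .
\]
This is \eqref{eq:p-rough-transfer-modulus}.

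\textbf{Step 2: apply the invariance results.} Theorem~\ref{thm:p-rough-invariance} now gives $x\in V_p(\pi)$ and $[x]^p_\pi=[x]^p$. For uniform convergence of the stopped sums, I would go through the proof of Proposition~\ref{prop:endpoint-perturbation}. There, $\sup_t|\V^p_{\pi_n}(x;t)-\V^p_{\widehat\pi_n}(x;t)|\to0$. Combined with \eqref{eq:all-shifted-uniform-sequences}, which gives $\sup_t|\V^p_{\widehat\pi_n}(x;t)-[x]^p(t)|\to0$, the triangle inequality yields uniform convergence to $[x]^p$.

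\textbf{Step 3: the special case $\delta_n=O(\ell_n^{1+\eta})$.} By \eqref{eq:regular-block-count}, $N_n\le T/\ell_n+2\le C/\ell_n$ for small $\ell_n$. Choose $\varepsilon\in(0,1)$ with $(1+\eta)(1-\varepsilon)>1$, for instance $\varepsilon=\eta/(2(1+\eta))$. Then
\[
N_n\delta_n^{1-\varepsilon}\le C\,\ell_n^{(1+\eta)(1-\varepsilon)-1}\to0,
\]
since $\ell_n\to0$. Steps~1 and~2 then apply.

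\textbf{Main obstacle.} There is no genuine obstacle; the argument is short. The only point needing care is matching $\alpha$ to $\varepsilon$. The exponent must be strictly below $1/p$ to lie in $C^{1/p-}$, and it must equal (or exceed) $(1-\varepsilon)/p$ so that $\alpha p\ge 1-\varepsilon$. This is also where the assumption $\varepsilon>0$ is essential. One must also remember that $\delta_n\to0$, so that $\delta_n\le T$ and the H\"older bound on $\omega_x$ applies for large $n$.
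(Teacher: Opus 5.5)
Your proposal is correct and follows essentially the same route as the paper. Both take $\alpha=(1-\varepsilon)/p$ to get $N_n\omega_x(\delta_n)^p\le K_\alpha^pN_n\delta_n^{1-\varepsilon}\to0$ and invoke Theorem~\ref{thm:p-rough-invariance}, and for the special case both combine $N_n\le T/\ell_n+2$ with a choice $\varepsilon<\eta/(1+\eta)$, which your $\varepsilon=\eta/(2(1+\eta))$ satisfies.
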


\begin{proof}
Choose $\alpha=(1-\varepsilon)/p$. Since
$x\in C^{1/p-}([0,T])$, there exists $K_\alpha<\infty$
such that
\[
N_n\omega_x(\delta_n)^p
\le K_\alpha^p N_n\delta_n^{\alpha p}
=K_\alpha^p N_n\delta_n^{1-\varepsilon}
\longrightarrow0.
\]
Theorem~5.5 therefore applies.
For the final assertion, we use $N_n\le T/\ell_n+2$ and
choose $0<\varepsilon<\eta/(1+\eta)$. Then
\[
N_n\delta_n^{1-\varepsilon}
=O\!\left(
\ell_n^{(1+\eta)(1-\varepsilon)-1}
\right)\longrightarrow0.
\]
\end{proof}

\subsection{Relative coarse-graining error}

For a fixed pair \((\pi,\rho)\), the coarse-graining estimate gives the following  transfer criterion.  
\begin{proposition}[Two-sequence relative coarse-graining error criterion]
\label{thm:main-equivalence}
\label{thm:sufficiency}
\label{thm:necessity}
Let \(x\in C([0,T])\cap V_p(\rho)\).  Then
\begin{equation}
 x\in V_p(\pi),\ [x]^p_\pi=[x]^p_\rho
 \quad\Longleftrightarrow\quad
 x\in\RelDef^p_{\pi\mid\rho}.
\label{eq:main-equivalence-invariance}
\end{equation}
If the left-hand side holds, then for every alignment \(\rho^\star\) of \(\rho\) with \(\pi\) along \(x\),
\begin{equation}
\sup_{t\in[0,T]}|\Def^p_{\pi,\rho^\star;n}(x;t)|\longrightarrow0.
\label{eq:necessity-error}
\end{equation}
\end{proposition}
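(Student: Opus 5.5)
Both directions rest on Proposition~\ref{prop:coarse-graining-estimate}, combined with the identity \eqref{eq:exact-error} and the stopped-sum criterion \eqref{eq:stopped-convergence}. Fix any alignment \(\rho^\star\) of \(\rho\) with \(\pi\) along \(x\); such an alignment exists by the construction following Definition~\ref{def:alignment}. Since \(\rho^\star\) is a subsequence of \(\rho\) and \(x\in V_p(\rho)\), we have \(\V^p_{\rho^\star_n}(x;\cdot)\to[x]^p_\rho\) uniformly on \([0,T]\), and in particular \(\sup_n\sup_t\V^p_{\rho^\star_n}(x;t)<\infty\). Along the alignment, \(\Theta_n(x;\pi,\rho^\star)\to0\). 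The whole argument then reduces to controlling the factor \(A_n(t)^{(p-1)/p}\) in \eqref{eq:coarse-graining-estimate-theta}.

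\emph{Necessity, with the uniform statement \eqref{eq:necessity-error}.} Assume \(x\in V_p(\pi)\) and \([x]^p_\pi=[x]^p_\rho\).
\begin{itemize}
\item By \eqref{eq:stopped-convergence}, \(\V^p_{\pi_n}(x;\cdot)\to[x]^p_\pi\) uniformly, so \(\V^p_{\pi_n}\) is uniformly bounded.
\item To bound \(A_n\), use Minkowski's inequality on \(C_{k,n}=\Delta^\pi_{k,n}-e_{k,n}\):
\[
A_n(t)^{1/p}\le \V^p_{\pi_n}(x;t)^{1/p}+E_n(t)^{1/p},
\qquad E_n(t)\le 2^p\Theta_n(x;\pi,\rho^\star)\to0,
\]
where the bound on \(E_n\) is \eqref{eq:endpoint-mismatch-bound}. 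Hence \(\sup_n\sup_tA_n(t)<\infty\).
\item The right-hand side of \eqref{eq:coarse-graining-estimate-theta} therefore tends to \(0\) uniformly in \(t\).
\item So \(\Def^p_{\pi,\rho^\star;n}(x;t)\) equals \(\V^p_{\pi_n}(x;t)-\V^p_{\rho^\star_n}(x;t)+o(1)\) uniformly in \(t\). Both energies converge uniformly to the same function, so \(\sup_t|\Def^p_{\pi,\rho^\star;n}|\to0\).
\end{itemize}
This argument works for every alignment, which gives \eqref{eq:necessity-error}. Taking one alignment also shows \(x\in\RelDef^p_{\pi\mid\rho}\).

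\emph{Sufficiency.} Assume \(x\in\RelDef^p_{\pi\mid\rho}\), witnessed by an alignment \(\rho^\star\) with \(\Def^p_{\pi,\rho^\star;n}(x;t)\to0\) for each \(t\).
\begin{itemize}
\item Fix \(t\). By \eqref{eq:exact-error}, \(A_n(t)=\V^p_{\rho^\star_n}(x;t)+\Def^p_{\pi,\rho^\star;n}(x;t)\) converges to \([x]^p_\rho(t)\). In particular \(A_n(t)\) is bounded in \(n\) for this \(t\); pointwise boundedness is enough here.
\item Inserting this into \eqref{eq:coarse-graining-estimate-theta} with \(\Theta_n\to0\) gives \(\V^p_{\pi_n}(x;t)\to[x]^p_\rho(t)\) for every \(t\).
\item The limit \([x]^p_\rho\) is continuous and nondecreasing with value \(0\) at \(0\). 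The stopped-sum criterion \eqref{eq:stopped-convergence} then yields \(x\in V_p(\pi)\) with \([x]^p_\pi=[x]^p_\rho\).
\end{itemize}

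The main obstacle is the \(A_n\) bound. In the necessity direction no a priori bound on the grouped energies is available, because \(\Def^p\) is not yet known to be small; this is why the Minkowski comparison with \(\V^p_{\pi_n}\) is needed there. In the sufficiency direction the bound comes from the identity \eqref{eq:exact-error} instead. The rest is bookkeeping: checking that the stopped increments \(C_{k,n}(t)\) and \(e_{k,n}(t)\) obey the same endpoint estimates for every \(t\), which \eqref{eq:endpoint-mismatch-bound} already records uniformly in \(t\).
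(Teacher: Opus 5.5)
Your proposal is correct and follows the paper's proof essentially step for step. Sufficiency uses \eqref{eq:exact-error} to get pointwise boundedness of \(A_n(t)\) and then Proposition~\ref{prop:coarse-graining-estimate}. Necessity, along an arbitrary alignment, bounds \(A_n\) uniformly by the coarse energies and \(E_n\); the paper states this bound without detail, and your Minkowski estimate simply makes it explicit.
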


\begin{proof}
Suppose first that the relative coarse-graining error vanishes along one alignment.  By \eqref{eq:exact-error}, for every fixed \(t\in[0,T]\) the grouped energies \(A_n(t)\) are bounded, because the aligned reference energies converge and the coarse-graining error tends to zero.  Proposition~\ref{prop:coarse-graining-estimate} therefore gives
\begin{equation*}
\V^p_{\pi_n}(x;t)-\V^p_{\rho_n^\star}(x;t)\longrightarrow0
\qquad\text{for every }t\in[0,T].
\end{equation*}
Since \(\V^p_{\rho_n^\star}(x;t)\to[x]^p_\rho(t)\), the stopped-sum criterion \eqref{eq:stopped-convergence} yields \(x\in V_p(\pi)\) and \([x]^p_\pi=[x]^p_\rho\).

Conversely, assume \(x\in V_p(\pi)\) and the two limiting energies agree.  For any alignment, the two discrete energy families converge uniformly to the same limit.  Equation~\eqref{eq:endpoint-mismatch-bound} gives \(E_n\to0\), while \(A_n\) is uniformly bounded by the coarse energies and \(E_n\).  Proposition~\ref{prop:coarse-graining-estimate} therefore yields \eqref{eq:necessity-error}.  Existence of at least one alignment was noted after Definition~\ref{def:alignment}.
\end{proof}

\subsection{Balanced partitions and H\"older-continuous paths}
\label{subsec:holder}
 A partition sequence \(\pi=(\pi_n)_{n\geq 1}\) is called \emph{balanced} \cite{ContDas2023} if
\begin{equation}
\frac{|\pi_n|}{\underline\pi_n}\le B_\pi\qquad{\rm where}\qquad \underline\pi_n
:=
\min_k(t_{k+1}^n-t_k^n)
\label{eq:balanced}
\end{equation}
for some constant \(B_\pi\).  Then
\begin{equation}
N(\pi_n)\le\frac{B_\pi T}{|\pi_n|}.
\label{eq:balanced-counting}
\end{equation}
\begin{proposition}[Alignment criterion for balanced partitions]\label{prop:holder-alignment}
Let $p>1$,   $x\in C^\alpha([0,T])$,  
$\pi=(\pi_n)_{n\ge1}$   a balanced partition sequence, and $\rho=(\rho_m)_{m\ge1}$ a partition sequence
 with $|\rho_m|\to0$.
Let $
\rho_n^\star=\rho_{r(n)}$
be a   subsequence of $\rho$ and
$h_n:=|\rho_n^\star|$ such that
\[
h_n^\beta\le C_{\mathrm{sc}}\,|\pi_n|,\qquad{\rm with} \quad\beta<p\alpha
\]
for sufficiently large $n$, for some constant  $C_{\mathrm{sc}}<\infty$.
Then $\rho^\star$ is an alignment of $\rho$ with $\pi$ along $x$.
\end{proposition}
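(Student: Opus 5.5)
We must show $\Theta_n(x;\pi,\rho^\star)=N(\pi_n)\,\omega_x(h_n)^p\to0$, which is exactly the alignment condition of Definition~\ref{def:alignment}. The plan is a direct two-sided estimate: bound $N(\pi_n)$ from above using the balance condition, bound $\omega_x(h_n)$ from above using H\"older continuity, and then use the scale relation $h_n^\beta\le C_{\mathrm{sc}}|\pi_n|$ to compare the two exponents.

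\textbf{Step 1: counting intervals.} Equation~\eqref{eq:balanced-counting} gives $N(\pi_n)\le B_\pi T/|\pi_n|$.

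\textbf{Step 2: modulus of continuity.} If $K_x$ denotes the $\alpha$-H\"older seminorm of $x$, then $\omega_x(h)\le K_x h^\alpha$, so
\[
\omega_x(h_n)^p\le K_x^p\,h_n^{p\alpha}.
\]

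\textbf{Step 3: combining the scales.} For $n$ large, the hypothesis gives $1/|\pi_n|\le C_{\mathrm{sc}}\,h_n^{-\beta}$. Therefore
\[
\Theta_n(x;\pi,\rho^\star)\le B_\pi T\,C_{\mathrm{sc}}\,K_x^p\,h_n^{p\alpha-\beta}.
\]
Since $\beta<p\alpha$, the exponent $p\alpha-\beta$ is strictly positive. Since $r$ is strictly increasing and $|\rho_m|\to0$, we have $h_n=|\rho_{r(n)}|\to0$. Hence $\Theta_n\to0$.

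\textbf{Main obstacle.} The argument has essentially no difficult step. The only care needed is to avoid reversing the direction of the scale hypothesis. The condition $h_n^\beta\lesssim|\pi_n|$ controls $1/|\pi_n|$ from above by $h_n^{-\beta}$, and this is precisely the quantity that the H\"older factor $h_n^{p\alpha}$ must beat. No lower bound on $h_n$ is required.

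Two degenerate cases should be checked separately. If $\beta\le0$, then $h_n^\beta\le C_{\mathrm{sc}}|\pi_n|$ would force $|\pi_n|$ to stay bounded below, which contradicts vanishing mesh; I will take this case as implicitly excluded. If $h_n=0$, the estimate is trivial.
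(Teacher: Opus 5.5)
Your proof is correct and follows the paper's argument exactly: bound $N(\pi_n)\le B_\pi T/|\pi_n|$ by balance, bound $\omega_x(h_n)^p\le K_x^p h_n^{p\alpha}$ by H\"older continuity, and use $1/|\pi_n|\le C_{\mathrm{sc}}h_n^{-\beta}$ to get $\Theta_n\lesssim h_n^{p\alpha-\beta}\to0$. Your separate treatment of $\beta\le0$ is unnecessary, since the same estimate covers it: only $p\alpha-\beta>0$ and $h_n>0$ are used, and $h_n>0$ always holds for a finite partition of $[0,T]$.
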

\begin{proof}
Since $\pi$ is balanced,
\[
N(\pi_n)\le \frac{B_\pi T}{|\pi_n|}.
\]
Moreover, since $x\in C^\alpha([0,T])$,
$
\omega_x(h_n)\le K_\alpha h_n^\alpha.$
Therefore
\[
\Theta_n(x;\pi,\rho^\star)
=
N(\pi_n)\,\omega_x(h_n)^p
\le
\frac{B_\pi T}{|\pi_n|}\,
K_\alpha^p h_n^{p\alpha}.
\]
Using
$
h_n^\beta\le C_{\mathrm{sc}}\,|\pi_n|$
we obtain
\[
\Theta_n(x;\pi,\rho^\star)
\le
B_\pi T K_\alpha^p C_{\mathrm{sc}}\,
h_n^{p\alpha-\beta}.
\]
Since $h_n\to0$, the right-hand side tends to zero whenever
$\beta<p\alpha$. Hence $\rho^\star$ is an alignment of $\rho$ with
$\pi$ along $x$.
\end{proof}

At \(p=2\), quadratic roughness \cite{ContDas2023} requires cancellation of cross-products of dyadic fine increments grouped inside the cells of a prescribed balanced sequence, together with a scale condition of the form \(|\rho_n^\star|^\beta=O(|\pi_n|)\).  The intrinsic condition of Section~\ref{sec:intrinsic-roughness} is different: it tests all regular mesoscopic blockings of a canonical multiresolution and is independent of the later comparison sequence.  
 The   threshold \(\beta<2\alpha\) already observed in \cite{ContDas2023} reappears in Proposition~\ref{prop:holder-alignment}.

For integer \(p\), Avilez \cite{Avilez2021} formulated roughness through mixed monomials in a polynomial expansion.  For even integer \(p=m\),
\begin{equation}
\Dp(a_1,\ldots,a_q)
=
\sum_{\substack{k_1+\cdots+k_q=m\\\text{at least two }k_j>0}}
\binom{m}{k_1,\ldots,k_q}
\prod_{j=1}^q a_j^{k_j}.
\label{eq:multinomial-error}
\end{equation}
The decomposition \eqref{eq:coarse-graining-error-decomposition} packages these interactions into iterated two-increment coarse-graining errors and remains valid for every real \(p>1\).

\section{$p$-roughness of stochastic processes and fractal functions}
\label{sec:examples}
We now give some fundamental examples of stochastic processes whose sample paths (almost-surely) satisfy Definition \ref{def:p-roughness-intrinsic}.
\subsection{Brownian motion: intrinsic quadratic roughness}
\label{subsec:brownian-intrinsic}

The motivation for Definition~\ref{def:p-roughness-intrinsic} is to somehow capture the 'roughness' properties of typical Brownian paths. We now close the circle by showing that, in fact, 
Brownian motion almost-surely satisfies Definition~\ref{def:p-roughness-intrinsic}.  
In fact, in Theorem~\ref{thm:brownian-intrinsic-roughness} we show a stronger  quantitative self-averaging estimate from which \(2\)-roughness follows directly.
Define
\begin{equation}
\Omega_2(W;\delta):=\Omega_2^{A_0}(W;\delta),
\qquad A_0(t):=t.
\label{eq:brownian-Omega-shorthand}
\end{equation}
We will use the following  observation:
\begin{lemma}[Entropy net for shifted uniform grids]
\label{lem:shifted-grid-net}
Let
\begin{equation}
\delta_k:=T2^{-k},
\qquad
\eta_k:=T2^{-3k},
\qquad
s_k:=\frac{\eta_k\delta_k}{4T}.
\label{eq:shifted-grid-net-scales}
\end{equation}
For each \(k\ge1\), there is a finite set \(\mathcal N_k\) of parameter pairs \((\ell,a)\) with
\begin{equation}
\delta_{k+1}\le\ell\le\delta_k,
\qquad
0\le a<\ell,
\label{eq:shifted-grid-shell}
\end{equation}
such that
\begin{equation}
|\mathcal N_k|\le C_T2^{5k},
\label{eq:shifted-grid-net-cardinality}
\end{equation}
and every \((\ell,a)\) satisfying Equation~\eqref{eq:shifted-grid-shell} can be matched with some \((\ell',a')\in\mathcal N_k\) for which
\begin{equation}
|a-a'|\le\eta_k,
\qquad
|\ell-\ell'|\le s_k.
\label{eq:shifted-grid-net-parameter-distance}
\end{equation}
Moreover, for every sufficiently large \(k\), the following holds.
For every \((\ell,a)\) satisfying \eqref{eq:shifted-grid-shell}, one may
choose \((\ell',a')\in\mathcal N_k\) and subpartitions
\[
\widetilde\lambda\subseteq\Pi(\ell,a),
\qquad
\widetilde\lambda'\subseteq\Pi(\ell',a'),
\]
obtained by deleting from each grid at most one interior point in each
of the boundary regions
\[
(0,\delta_k),
\qquad
(T-\delta_k,T),
\]
such that
\[
\widetilde\lambda
=
\{0=\widetilde u_0<\cdots<\widetilde u_N=T\},
\qquad
\widetilde\lambda'
=
\{0=\widetilde u'_0<\cdots<\widetilde u'_N=T\},
\]
and
\begin{equation}
|\widetilde u_j-\widetilde u'_j|
\le 2\eta_k,
\qquad
0\le j\le N.
\label{eq:entropy-net-edited-matching}
\end{equation}
\end{lemma}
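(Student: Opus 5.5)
The plan is to build $\mathcal N_k$ as a product grid and then carry out a lattice-matching argument of the same kind as in the proof of Proposition~\ref{prop:microscope-free-characterization}.

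\emph{Construction of the net.} Take $\ell'$ to range over the points $\delta_{k+1}+is_k$, $i=0,1,\ldots,\lceil(\delta_k-\delta_{k+1})/s_k\rceil$, truncated to the shell $[\delta_{k+1},\delta_k]$. The number of such values is at most $\delta_k/s_k+2=4T/\eta_k+2=O(2^{3k})$. For each $\ell'$, take the phases $a'=j\eta_k\in[0,\ell')$; there are at most $\ell'/\eta_k+1\le\delta_k/\eta_k+1=O(2^{2k})$ of these. The resulting cardinality is $O(2^{5k})$, which gives \eqref{eq:shifted-grid-net-cardinality}.

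Given $(\ell,a)$ in the shell, choose $\ell'$ with $|\ell-\ell'|\le s_k$ and $a'$ the nearest phase point to $a$ below $\ell'$. This requires a little care when $a\in[\ell',\ell)$. In that case $a-\ell'<s_k\le\eta_k$, so taking the largest admissible $a'$ (which lies within $\eta_k$ of $\ell'$) gives $|a-a'|\le 2\eta_k$. To obtain exactly $\eta_k$, I would refine the phase step to $\eta_k/2$; this only changes constants in $C_T$. This gives \eqref{eq:shifted-grid-net-parameter-distance}.

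\emph{Matching.} I pair the infinite lattice points $a+n\ell$ and $a'+n\ell'$ for $n\in\mathbb Z$. Whenever one of them lies in $[0,T]$, we have $|n|\le T/\delta_{k+1}+1\le 2T/\delta_k+1$. Hence
\[
|(a+n\ell)-(a'+n\ell')|\le \eta_k+|n|s_k\le \eta_k+\Bigl(\frac{2T}{\delta_k}+1\Bigr)\frac{\eta_k\delta_k}{4T}\le 2\eta_k
\]
for $k$ large; this is exactly why $s_k$ was chosen as $\eta_k\delta_k/(4T)$. Since $2\eta_k\ll\delta_{k+1}\le\ell,\ell'$, a matched pair can straddle $0$ or $T$ only if both points lie within $2\eta_k<\delta_k$ of that endpoint. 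Because consecutive points are at least $\delta_{k+1}>2\cdot2\eta_k$ apart, at most one such point per grid occurs near each endpoint. I delete those unmatched points from each grid; they lie in $(0,\delta_k)$ or $(T-\delta_k,T)$. The remaining interior points are matched monotonically, in the same order. Adjoining the common endpoints $0$ and $T$ then gives equal cardinality $N$ and the bound \eqref{eq:entropy-net-edited-matching}.

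\emph{Main obstacle.} The delicate step is the boundary bookkeeping. I must check that after deletion the index correspondence is a bijection preserving order. I also have to handle the case where one grid has a point in $(0,2\eta_k)$ whose partner is negative, while the other grid's partner is not deleted. This is resolved by deleting both members of any pair with at least one member outside $(0,T)$; each such deletion still removes at most one point per grid per boundary region. Everything else is routine counting.
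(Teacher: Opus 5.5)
Your proposal is correct and follows the paper's proof essentially step for step. Both use a product net with $s_k$-spacing in mesh and $\eta_k$-spacing in phase, the same-index drift bound $\eta_k+(2T/\delta_k+1)s_k\le 2\eta_k$, and deletion of at most one unmatched point near each endpoint. Your handling of phases $a\in[\ell',\ell)$ by halving the phase step, and your order-preserving index bookkeeping, fill in details the paper leaves implicit.
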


\begin{proof}
Take an \(s_k\)-net of \([\delta_{k+1},\delta_k]\) for the mesh variable and, for each mesh value, an \(\eta_k\)-net of the phase interval.  The number of mesh values is at most \(C_T\eta_k^{-1}\), because \(\delta_k/s_k=4T/\eta_k\) by Equation~\eqref{eq:shifted-grid-net-scales}.  The number of phase values for each mesh is at most \(C_T\delta_k\eta_k^{-1}\); multiplying the two bounds gives Equation~\eqref{eq:shifted-grid-net-cardinality}.  If \(a+j\ell\) and \(a'+j\ell'\) are corresponding interior points, then for \(j\le2T/\delta_k+1\),
\begin{equation}
|(a+j\ell)-(a'+j\ell')|
\le
\eta_k+
\left(\frac{2T}{\delta_k}+1\right)s_k
\le2\eta_k
\label{eq:shifted-grid-drift}
\end{equation}
for all sufficiently large \(k\).  
A discrepancy in the number of interior points can occur only in the
two truncated boundary cells. Since both mesh sizes are at most
\(\delta_k\), every such unmatched point lies in
\[
(0,\delta_k)\cup(T-\delta_k,T).
\]
Deleting at most one unmatched interior point from each boundary
region of each grid gives subpartitions with equal cardinality.
The lattice matching above   gives, for every corresponding partition point
\[
|\widetilde u_j-\widetilde u'_j|
\le2\eta_k
\]
 while \(0\) and \(T\) are
matched to themselves. This proves
\eqref{eq:entropy-net-edited-matching}.
\end{proof}

To extend the estimate from the finite net \(\mathcal N_k\) to all shifted uniform grids with \(\delta_{k+1}\le \ell\le\delta_k\), we use the finite-level stability estimate of Lemma~\ref{lem:finite-shifted-grid-stability}.
\begin{theorem}[Brownian motion is \(2\)-rough]
\label{thm:brownian-intrinsic-roughness}
Let \(W\) be a standard Brownian motion on \([0,T]\).  There is an event of probability one on which
\begin{equation}
\Omega_2(W;\delta)
=
O\left(
\sqrt{T\delta\log\frac{eT}{\delta}}
\right)
\qquad{\rm as}\quad \delta\downarrow0,
\label{eq:brownian-shifted-grid-rate}
\end{equation}
using the notation in Equation~\eqref{eq:brownian-Omega-shorthand}.  Brownian paths are therefore almost-surely 2-rough:
\begin{equation}
\mathbb{P}\left( W\in\mathscr R_2([0,T])\ \right)=1
\label{eq:brownian-R2}
\end{equation}
and
\begin{equation}
\mathfrak R_{2,L}(W)
=
O\left(
T\sqrt{\frac{\log(eL)}{L}}
\right)
\qquad{\rm a.s.}\quad{\rm as}\  L\to\infty.
\label{eq:brownian-roughness-rate}
\end{equation}
\end{theorem}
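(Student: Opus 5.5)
The plan is to prove the quantitative bound \eqref{eq:brownian-shifted-grid-rate} first; \eqref{eq:brownian-R2} and \eqref{eq:brownian-roughness-rate} then follow from it. We work shell by shell: fix $k$, look at grids with $\delta_{k+1}\le\ell\le\delta_k$, and control them through the finite net $\mathcal N_k$ of Lemma~\ref{lem:shifted-grid-net}.

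\emph{Step 1: fixed grid and fixed time.} Fix $(\ell,a)\in\mathcal N_k$ with cells $v_0<\dots<v_N$. Then $\V^2_{\Pi(\ell,a)}(W;t)-t=\sum_i(\xi_i^2-|I_i\cap[0,t]|)$, where the $\xi_i$ are independent centred Gaussians with variance $|I_i\cap[0,t]|\le\ell$. By Bernstein's (or Hanson--Wright) inequality for centred $\chi^2$ sums,
\[
\mathbb P\bigl(|\V^2_{\Pi(\ell,a)}(W;t)-t|>u\bigr)\le2\exp\bigl(-c\min(u^2/(T\ell),u/\ell)\bigr).
\]
Take $u=K\sqrt{T\delta_k\,k}$. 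For large $k$ this falls in the Gaussian regime and gives probability at most $2e^{-cK^2k}$.

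\emph{Step 2: supremum over $t$.} The process $t\mapsto\V^2_{\Pi(\ell,a)}(W;t)-t$ is a continuous martingale, namely $2\int_0^t(W_s-W_{\tau(s)})dW_s$ with $\tau$ the left grid foot. So Doob's inequality, or the exponential martingale bound using the quadratic variation $4\int_0^t(W_s-W_{\tau(s)})^2ds$, gives the same tail for $\sup_t$. Concentration of that quadratic variation around $\int(s-\tau(s))ds\le T\ell$ is again a $\chi^2$-type bound. The simpler route is to discretise $t$ on a mesh of size $2^{-5k}$ and absorb the gaps by the Lévy modulus.

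\emph{Step 3: union bound and transfer to all grids.} Sum over the $C_T2^{5k}$ net points and the time net. With $K$ large the failure probabilities are summable in $k$, so by Borel--Cantelli, almost surely for all large $k$
\[
\sup_{(\ell',a')\in\mathcal N_k}\sup_t|\V^2_{\Pi(\ell',a')}(W;t)-t|\le K\sqrt{T\delta_k k}.
\]
To pass to every $(\ell,a)$ in the shell, use the matched subpartitions of Lemma~\ref{lem:shifted-grid-net} and apply Lemma~\ref{lem:finite-shifted-grid-stability} with $\delta=\delta_k$, $\varepsilon=2\eta_k$ and $M\le T+1$. Lévy's modulus $\omega_W(h)\le C\sqrt{h\log(1/h)}$ bounds the error terms:
\begin{itemize}
\item $\omega_W(4\eta_k)^2/\delta_k\lesssim 2^{-2k}k$, which is negligible;
\item its square root $\lesssim 2^{-k}\sqrt k$, which is of lower order than $\sqrt{\delta_k k}\sim2^{-k/2}\sqrt k$;
\item $\omega_W(4\delta_k)^2\lesssim\delta_k k$, which is also of lower order.
\end{itemize}
Since $\ell\le\delta$ ranges over shells with $\delta_k\le2\delta$, taking the supremum over those shells gives $\Omega_2(W;\delta)=O(\sqrt{T\delta\log(eT/\delta)})$, which is \eqref{eq:brownian-shifted-grid-rate}. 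Because the limit $A_0(t)=t$ has $A_0(T)=T>0$, Definition~\ref{def:p-roughness-intrinsic} holds and we get \eqref{eq:brownian-R2}.

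\emph{Step 4: the roughness modulus.} For \eqref{eq:brownian-roughness-rate}, the right inequality of \eqref{eq:Omega-by-R} gives $\mathfrak R_{2,L}(W)\le2\Omega_2(W;T/L)=O\bigl(T\sqrt{\log(eL)/L}\bigr)$.

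The main obstacle is Step 2 combined with the union bound: we need a uniform-in-$t$ deviation bound whose tail is sharp enough to survive the polynomial $2^{5k}$ entropy while still producing the $\sqrt{\delta\log}$ rate. My first attempt would be the martingale exponential inequality conditioned on a high-probability bound for the bracket, rather than crude time discretisation.
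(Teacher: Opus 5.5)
Your proposal is correct and follows the paper's proof essentially step for step. The shared steps are: Bernstein-type tails for the centred $\chi^2$ sums at level $u_k=A\sqrt{T\delta_k k}$, a union bound over the net $\mathcal N_k$ with Borel--Cantelli, transfer to the whole shell via Lemma~\ref{lem:finite-shifted-grid-stability} and L\'evy's modulus, and finally $\mathfrak R_{2,L}(W)\le 2\Omega_2(W;T/L)$.

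The only real difference is the supremum over $t$. The paper handles it more simply than either of your options. It applies Doob's maximal inequality to the discrete martingale $M_j$ at the grid points, and it absorbs the unfinished cell into an error $\omega_W(\delta_k)^2+\delta_k=O(k\delta_k)=o(u_k)$. So the Step~2 obstacle you flag does not actually arise, and your time-discretisation route also works, since the roughly $2^{10k}$ grid--time pairs are beaten by the tail $e^{-cK^2k}$.
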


\begin{proof}
Fix a shifted grid \(\Pi(\ell,a)=\{0=v_0<\cdots<v_N=T\}\) with \(\ell\le\delta\).  At its grid points define
\begin{equation}
M_j
:=
\sum_{i=0}^{j-1}
\left(
(W_{v_{i+1}}-W_{v_i})^2-(v_{i+1}-v_i)
\right).
\label{eq:brownian-qv-martingale}
\end{equation}
The summands are independent and centered.  If \(h_i=v_{i+1}-v_i\le\delta\), then, for \(|\lambda|\le(4\delta)^{-1}\),
\begin{equation}
\mathbb E\exp\left(
\lambda\bigl((W_{v_{i+1}}-W_{v_i})^2-h_i\bigr)
\right)
=
\frac{e^{-\lambda h_i}}{\sqrt{1-2\lambda h_i}}
\le
\exp(2\lambda^2h_i^2).
\label{eq:brownian-square-mgf}
\end{equation}
Since \(\sum_i h_i^2\le T\delta\), the corresponding exponential process is a supermartingale.  Doob's maximal inequality, applied with \(\lambda>0\) and with \(-\lambda\), and optimization over \(0<\lambda\le(4\delta)^{-1}\), give
\begin{equation}
\mathbb P\left(
\max_{0\le j\le N}|M_j|>u
\right)
\le
2\exp\left[
-c\min\left
\{
\frac{u^2}{T\delta},
\frac{u}{\delta}
\right\}
\right]
\label{eq:brownian-maximal-bernstein}
\end{equation}
with a universal \(c>0\).

Apply Equation~\eqref{eq:brownian-maximal-bernstein} to every grid in the net \(\mathcal N_k\) of Lemma~\ref{lem:shifted-grid-net}, with \(\delta=\delta_k\) and
\begin{equation}
u_k
:=
A\sqrt{T\delta_k k}.
\label{eq:brownian-net-rate}
\end{equation}
For sufficiently large \(A\), Equations~\eqref{eq:shifted-grid-net-cardinality} and \eqref{eq:brownian-maximal-bernstein} imply
\begin{equation}
\sum_{k=1}^{\infty}
\mathbb P\left(
\max_{(\ell,a)\in\mathcal N_k}
\max_j |M_j|>u_k
\right)
<\infty.
\label{eq:brownian-net-BC}
\end{equation}
Hence, by Borel--Cantelli, almost surely all sufficiently fine net grids satisfy \(\max_j|M_j|\le u_k\).

For an arbitrary stopping time \(t\), let \(v_j\le t<v_{j+1}\).  The completed-cell quadratic energy differs from \(t\) by at most \(|M_j|+\delta_k\), while the stopped energy differs from the completed-cell energy by at most \(\omega_W(\delta_k)^2\).  L\'evy's modulus of continuity gives, almost surely,
\begin{equation}
\omega_W(h)^2
=O\left(h\log\frac{eT}{h}\right).
\label{eq:brownian-Levy-modulus}
\end{equation}
Thus, uniformly over all net grids in the \(k\)-th shell,
\begin{equation}
\sup_t
\left|
\V^2_{\Pi(\ell,a)}(W;t)-t
\right|
\le
u_k+O(k\delta_k).
\label{eq:brownian-net-uniform-time}
\end{equation}
It remains to pass from the net to the whole shell.  Fix
\((\ell,a)\) with
$
\delta_{k+1}\le\ell\le\delta_k,$
and choose its net representative \((\ell',a')\in\mathcal N_k\)
together with the edited subpartitions supplied by
Lemma~\ref{lem:shifted-grid-net}.
We apply Lemma~\ref{lem:finite-shifted-grid-stability} with
\[
\delta=\frac{\delta_k}{2},
\qquad
\varepsilon=2\eta_k.
\]
Indeed,
\[
\frac{\delta}{2}
=
\frac{\delta_k}{4}
\le
\ell,\ell'
\le
\delta_k
=
2\delta,
\]
and the deleted points supplied by Lemma~\ref{lem:shifted-grid-net} lie in
\[
(0,\delta_k)\cup(T-\delta_k,T)
=
(0,2\delta)\cup(T-2\delta,T).
\]
Thus all the geometric hypotheses of
Lemma~\ref{lem:finite-shifted-grid-stability} are satisfied.
Equation~\eqref{eq:brownian-Levy-modulus} and the choice
\(\eta_k=T2^{-3k}\) give
\begin{equation}
\frac{\omega_W(4\eta_k)^2}{\delta_k}
=
O(k2^{-2k}),
\qquad
\omega_W(2\delta_k)^2
=
O(k\delta_k).
\label{eq:brownian-net-transfer}
\end{equation}
Almost surely, the net energies are uniformly bounded for all
sufficiently large \(k\).  Lemma~\ref{lem:finite-shifted-grid-stability}
therefore gives, uniformly over the \(k\)-th shell,
\[
\left|
\V^2_{\Pi(\ell,a)}(W;t)
-
\V^2_{\Pi(\ell',a')}(W;t)
\right|
=
o(u_k).
\]
Hence, on one probability-one event, the supremum over the entire shell \(\delta_{k+1}\le\ell\le\delta_k\) is bounded by \(C(\omega)u_k\) for all sufficiently large \(k\).  The sequence
\(
 u_k= A\sqrt{T\delta_k k}
\)
is eventually decreasing.  Therefore, if \(\delta_{k+1}<\delta\le\delta_k\), taking the supremum over all finer shells gives
\begin{equation}
\Omega_2(W;\delta)
\le
C(\omega)\sup_{j\ge k}u_j
\le
C'(\omega)\sqrt{T\delta\log\frac{eT}{\delta}}.
\label{eq:brownian-shell-to-full-modulus}
\end{equation}
This proves Equation~\eqref{eq:brownian-shifted-grid-rate}.

Equation~\eqref{eq:brownian-shifted-grid-rate} is precisely Definition~\ref{def:p-roughness-intrinsic} with intrinsic energy \([W]^2(t)=t\), and therefore proves Equation~\eqref{eq:brownian-R2}.  The quantitative block estimate follows from Proposition~\ref{prop:microscope-free-characterization}:
\begin{equation}
\mathfrak R_{2,L}(W)
\le
2\Omega_2(W;T/L),
\label{eq:brownian-R-bound-by-Omega}
\end{equation}
which gives Equation~\eqref{eq:brownian-roughness-rate}.
\end{proof}

\begin{corollary}[Partition stability for Brownian paths]
\label{cor:brownian-regular-block-invariance}
Let \(\pi\in\mathcal A_{\rm unif}\) be associated with \(\widehat\pi_n=\Pi(\ell_n,a_n)\) and endpoint displacement \(\delta_n\).  If
\begin{equation}
\frac{
\delta_n\log\!\left(\frac{eT}{\delta_n}\right)
}{\ell_n}
\longrightarrow0,
\label{eq:brownian-endpoint-transfer-rate}
\end{equation}
then, almost surely,
\begin{equation}
W\in V_2(\pi),
\qquad
[W]^2_\pi(t)=t,
\qquad t\in[0,T].
\label{eq:brownian-regular-block-invariance}
\end{equation}
\end{corollary}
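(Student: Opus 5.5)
The plan is to reduce the statement to Theorem~\ref{thm:p-rough-invariance}, applied with \(p=2\) to the path \(x=W\). By Theorem~\ref{thm:brownian-intrinsic-roughness}, there is a probability-one event \(\Omega_0\) on which \(W\in\mathscr R_2([0,T])\) with \([W]^2(t)=t\). On the same event, L\'evy's modulus \eqref{eq:brownian-Levy-modulus} holds in the form
\[
\omega_W(h)^2\le C(\omega)\,h\log\frac{eT}{h},\qquad 0<h\le T.
\]
We may intersect the two events, so both facts hold simultaneously on \(\Omega_0\). Since the partition sequence \(\pi\) is deterministic, it then suffices to verify the transfer condition \eqref{eq:p-rough-transfer-modulus} pathwise on \(\Omega_0\).

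For the key estimate, the count \eqref{eq:regular-block-count} gives \(N(\pi_n)=N_n\le T/\ell_n+2\). Combining this with L\'evy's modulus,
\[
N(\pi_n)\,\omega_W(\delta_n)^2
\le C(\omega)\Bigl(\frac{T}{\ell_n}+2\Bigr)\delta_n\log\frac{eT}{\delta_n}.
\]
The right-hand side splits into two terms.
\begin{itemize}
\item The first term, \(C(\omega)\,T\,\delta_n\log(eT/\delta_n)/\ell_n\), tends to zero by hypothesis \eqref{eq:brownian-endpoint-transfer-rate}.
\item The second term, \(2C(\omega)\,\delta_n\log(eT/\delta_n)\), tends to zero because \(\delta_n\to0\) and \(h\log(eT/h)\to0\) as \(h\downarrow0\). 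Alternatively, it is dominated by the first term, since \(\ell_n\le T\).
\end{itemize}

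The only care needed is the degenerate case \(\delta_n=0\), where \(\omega_W(0)=0\) and the term vanishes trivially. One should also check that \(\delta_n\le T\), which holds because all partition points lie in \([0,T]\), so the logarithm is nonnegative. With the transfer condition verified, Theorem~\ref{thm:p-rough-invariance} gives \(W\in V_2(\pi)\) and \([W]^2_\pi=[W]^2=t\) on \(\Omega_0\).

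There is no real obstacle here. The one subtle point is the order of quantifiers: the null set must not depend on \(\pi\). This is guaranteed because both ingredients, the 2-roughness event and L\'evy's modulus, are obtained on a single event chosen before \(\pi\). Consequently the conclusion in fact holds simultaneously for all such perturbation sequences \(\pi\).
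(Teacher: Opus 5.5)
Your proposal is correct and follows the paper's proof essentially verbatim. On the probability-one event of Theorem~\ref{thm:brownian-intrinsic-roughness} together with L\'evy's modulus, the count \(N_n\le T/\ell_n+2\) gives \(N_n\,\omega_W(\delta_n)^2\le C(\omega)(T/\ell_n+2)\,\delta_n\log(eT/\delta_n)\to0\), and Theorem~\ref{thm:p-rough-invariance} concludes. Your added remark that the exceptional null set does not depend on \(\pi\), so the conclusion holds simultaneously for all admissible perturbation sequences, is a correct and slightly sharper reading of the same argument.
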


\begin{proof}
On the probability-one event of Theorem~\ref{thm:brownian-intrinsic-roughness} and Equation~\eqref{eq:brownian-Levy-modulus}, Equation~\eqref{eq:regular-block-count} gives
\begin{equation}
N_n\omega_W(\delta_n)^2
\le
C(\omega)
\left(\frac{T}{\ell_n}+2\right)
\delta_n\log\!\left(\frac{eT}{\delta_n}\right)
\longrightarrow0.
\label{eq:brownian-main-transfer-condition}
\end{equation}
Theorem~\ref{thm:p-rough-invariance} then yields Equation~\eqref{eq:brownian-regular-block-invariance}.
\end{proof}

Theorem~\ref{thm:brownian-intrinsic-roughness} is stronger in one direction and weaker in another than \cite[Theorem~3.4]{ContDas2023}: it is intrinsic and uniform over all regular mesoscopic block sizes and phases, whereas the Cont--Das theorem treats each prescribed balanced comparison sequence and allows variable block geometry.  Neither statement formally contains the other.

\subsection{Critical roughness of Fractional Brownian motion}
\label{subsec:fbm-p-roughness}

Let \(B^H\) be fractional Brownian motion with Hurst parameter \(H\in(0,1)\), normalized by
\begin{equation}
\mathbb E[(B_t^H-B_s^H)^2]=|t-s|^{2H},
\label{eq:fbm-normalization}
\end{equation}
and set
\begin{equation}
p:=\frac1H,
\qquad
m_p:=\mathbb E|Z|^p,
\qquad
Z\sim N(0,1),
\qquad
\vartheta_H:=H\wedge(1-H).
\label{eq:fbm-critical-p}
\end{equation}

Write
\begin{equation}
A_H(t):=m_pt,
\qquad
\Omega_p(B^H;\delta):=\Omega_p^{A_H}(B^H;\delta).
\label{eq:fbm-Omega-shorthand}
\end{equation}
The key estimate is a concentration bound for the \(p\)-energy on an arbitrary deterministic partition.

\begin{lemma}[Gaussian concentration for critical fBm energy]
\label{lem:fbm-critical-concentration}
Let
\begin{equation}
\lambda=\{0=t_0<t_1<\cdots<t_N=t\}
\label{eq:fbm-deterministic-partition}
\end{equation}
be a deterministic partition of \([0,t]\) with mesh at most \(\delta\), and set
\begin{equation}
F_\lambda
:=
\left(
\sum_{i=0}^{N-1}|B^H_{t_{i+1}}-B^H_{t_i}|^p
\right)^{1/p}.
\label{eq:fbm-energy-norm}
\end{equation}
Then 
\begin{equation}
\mathbb P\left(
|F_\lambda-\mathbb EF_\lambda|>u
\right)
\le
2\exp\left(
-\frac{c_{H,T}u^2}{\delta^{2\vartheta_H}}
\right).
\label{eq:fbm-F-concentration}
\end{equation}
For every \(\varepsilon>0\) there exist \(c_{\varepsilon,H,T}>0\) and \(\delta_0>0\) such that, uniformly over \(t\in[0,T]\) and over all such partitions with \(\delta\le\delta_0\),
\begin{equation}
\mathbb P\left(
\left|
\sum_i|B^H_{t_{i+1}}-B^H_{t_i}|^p-m_pt
\right|>\varepsilon
\right)
\le
2\exp\left(
-c_{\varepsilon,H,T}\delta^{-2\vartheta_H}
\right).
\label{eq:fbm-energy-concentration}
\end{equation}
\end{lemma}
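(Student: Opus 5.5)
Write the fBm increments as a linear image of a standard Gaussian vector and apply Gaussian concentration for Lipschitz functions; then control the mean of the $p$-energy separately.

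\textbf{Step 1: Lipschitz constant.} Let $\Delta=(B^H_{t_{i+1}}-B^H_{t_i})_{i<N}$, a centered Gaussian vector with covariance $\Sigma$. Write $\Delta=\Sigma^{1/2}G$ with $G$ standard Gaussian in $\R^N$. Then $F_\lambda=\|\Sigma^{1/2}G\|_{\ell^p}$, and $G\mapsto F_\lambda$ is Lipschitz with constant
\[
L:=\|\Sigma^{1/2}\|_{\ell^2\to\ell^p}=\sup_{\|u\|_{p'}\le1}\Bigl(\mathbb E\Bigl|\sum_i u_i\Delta_i\Bigr|^2\Bigr)^{1/2}.
\]
By the Borell--Tsirelson--Ibragimov--Sudakov inequality,
\[
\mathbb P(|F_\lambda-\mathbb EF_\lambda|>u)\le2e^{-u^2/(2L^2)}.
\]
It therefore suffices to show $L^2\le C_{H,T}\delta^{2\vartheta_H}$.

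\textbf{Step 2: bounding $L^2$.} We need $\mathrm{Var}(\sum_iu_i\Delta_i)\le C\delta^{2\vartheta_H}$ whenever $\|u\|_{p'}\le1$. Split by regime.
\begin{itemize}[leftmargin=1.5em]
\item \emph{Case $H\le1/2$.} Increments over disjoint intervals are nonpositively correlated, and $\sum_j|\mathrm{Cov}(\Delta_i,\Delta_j)|\le C h_i^{2H}$ (using $\mathrm{Var}(B_t-B_s)$ additivity bounds). A Schur test then gives $\mathrm{Var}\le C\sum_i u_i^2h_i^{2H}\le C\delta^{2H}\sum_iu_i^2$. Since $p'\le2$ here, $\|u\|_2\le\|u\|_{p'}\le1$, which gives the bound $\delta^{2H}$.
\item \emph{Case $H>1/2$.} Use the representation $\sum_iu_i\Delta_i=\int f\,dB^H$ with the step function $f=\sum u_i\mathbf 1_{(t_i,t_{i+1}]}$, so the variance is $c_H\iint f(s)f(r)|s-r|^{2H-2}\,ds\,dr$. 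The Hardy--Littlewood--Sobolev inequality bounds this by $C\|f\|_{L^{1/H}}^2$. Now $\|f\|_{1/H}^{1/H}=\sum|u_i|^{1/H}h_i$. Since $p'=1/(1-H)$, Hölder's inequality gives $\sum|u_i|^{1/H}h_i\le\max_i|u_i|^{1/H-p'}\cdots$. More simply, using $|u_i|\le1$ and $1/H<p'$, we get $\sum|u_i|^{1/H}h_i\le\delta\sum|u_i|^{1/H}\le\delta\sum|u_i|^{p'}$ only when $1/H\ge p'$, which fails in this regime. So a finer interpolation is needed: apply Hölder to obtain
\[
\sum|u_i|^{1/H}h_i\le\Bigl(\sum|u_i|^{p'}\Bigr)^{\frac{1-H}{H}}\Bigl(\sum h_i^{s}\Bigr)^{\frac{2H-1}{H}},
\qquad s=\tfrac{H}{2H-1}>1.
\]
Here $\sum h_i^s\le T\delta^{s-1}$, which yields $\|f\|_{1/H}^2\le C\delta^{2(1-H)}$.
\end{itemize}
This exponent bookkeeping is the step most likely to need care. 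It is where $\vartheta_H=H\wedge(1-H)$ comes from.

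\textbf{Step 3: the mean.} By self-similarity and stationarity of increments, $\mathbb E\sum_i|\Delta_i|^p=m_p\sum_ih_i^{pH}=m_pt$ exactly, since $pH=1$. Concentration of $F_\lambda$ gives $\mathrm{Var}F_\lambda\le CL^2\to0$, so $\mathbb EF_\lambda^p-(\mathbb EF_\lambda)^p\to0$ uniformly. Hence $(\mathbb EF_\lambda)^p\in[m_pt-o(1),\,m_pt]$ uniformly, and in particular $\mathbb EF_\lambda\le(m_pT)^{1/p}$.

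\textbf{Step 4: from $F_\lambda$ to $F_\lambda^p$.} On the event $\{|F_\lambda-\mathbb EF_\lambda|\le u\}$ with $u\le1$, the mean value theorem gives
\[
|F_\lambda^p-(\mathbb EF_\lambda)^p|\le C_{p,T}\,u.
\]
Choose $u=\varepsilon/(2C_{p,T})$ and take $\delta_0$ small enough that the Step 3 error is below $\varepsilon/2$. Then \eqref{eq:fbm-energy-concentration} follows from \eqref{eq:fbm-F-concentration} with $c_{\varepsilon,H,T}=c_{H,T}u^2$.
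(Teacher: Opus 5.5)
Your proof is correct, and its architecture matches the paper's. Both express $F_\lambda$ as a Lipschitz function of a standard Gaussian vector, obtain a Lipschitz constant of order $\delta^{\vartheta_H}$ separately for $H\le1/2$ and $H>1/2$, use the exact identity $\mathbb E F_\lambda^p=m_pt$, and then transfer the bound from $F_\lambda$ to $F_\lambda^p$.

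Two steps are done differently:
\begin{itemize}
\item \emph{The case $H>1/2$.} The paper uses Young's convolution inequality, since the kernel $|u|^{2H-2}$ is integrable on $[-T,T]$. This gives $\Gamma\le C_{H,T}D$ in the positive-semidefinite order; the paper then factors $X=D^{1/2}R^{1/2}Z$ and applies H\"older to $\|D^{1/2}w\|_p$. You instead work on the dual side, $\|\Sigma^{1/2}\|_{2\to p}^2=\sup_{\|u\|_{p'}\le1}u^\top\Sigma u$. You bound the variance by Hardy--Littlewood--Sobolev as $C_H\|f\|_{L^{1/H}}^2$ and then apply H\"older in $\ell^{p'}$. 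Both routes give $\delta^{1-H}$; Young is the more elementary input, while HLS is the sharp, scale-invariant bound.
\item \emph{The final transfer.} The paper compares $\mathbb EF_\lambda$ with $(m_pt)^{1/p}$ for $t$ bounded away from zero and treats small $t$ separately via Jensen's inequality. You compare $p$-th powers directly: the mean value theorem bounds $|F_\lambda^p-(\mathbb EF_\lambda)^p|$ on $\{|F_\lambda-\mathbb EF_\lambda|\le u\}$, and $|(\mathbb EF_\lambda)^p-m_pt|=\mathbb EF_\lambda^p-(\mathbb EF_\lambda)^p$. This needs no case split in $t$ and is slightly cleaner.
\end{itemize}

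Two small points to tighten:
\begin{itemize}
\item \emph{Step 3.} A variance bound alone does not force $\mathbb EF_\lambda^p-(\mathbb EF_\lambda)^p\to0$ when $p>2$, i.e.\ when $H<1/2$, since rare large values can carry $p$-th moment without much variance. What you actually need is $\|F_\lambda-\mathbb EF_\lambda\|_{L^p}\le C_pL$, with $L$ your Lipschitz constant; the sub-Gaussian tail supplies this. Combine it with $\|F_\lambda\|_{L^p}=(m_pt)^{1/p}$ and H\"older, as in the paper's moment estimate.
\item \emph{Case $H\le1/2$.} Your justification of the row-sum bound (``variance additivity bounds'') is vague. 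It is most cleanly argued as in the paper: the off-diagonal entries are nonpositive and $\sum_j\Sigma_{ij}=\operatorname{Cov}(\Delta_i,B^H_t)\ge0$. Hence $\sum_j|\Sigma_{ij}|\le2\Sigma_{ii}\le2\delta^{2H}$.
\end{itemize}
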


\begin{proof}
Let \(X_i=B^H_{t_{i+1}}-B^H_{t_i}\) and let \(\Gamma\) be the covariance matrix of \(X=(X_i)\).
If \(0<H<1/2\), disjoint fBm increments are negatively correlated.  Moreover,
\begin{equation}
\sum_j\Gamma_{ij}
=
\operatorname{Cov}(X_i,B_t^H)\ge0.
\label{eq:fbm-row-sum-positive}
\end{equation}
Hence
\begin{equation}
\sum_j|\Gamma_{ij}|
\le2\Gamma_{ii}
\le2\delta^{2H},
\label{eq:fbm-cov-op-small-H}
\end{equation}
so \(\|\Gamma\|_{\mathrm{op}}\le2\delta^{2H}\).  Since \(p=1/H>2\), \(\|v\|_p\le\|v\|_2\), and a representation \(X=\Gamma^{1/2}Z\) shows that \(z\mapsto\|\Gamma^{1/2}z\|_p\) is \(\sqrt2\delta^H\)-Lipschitz.  The case \(H=1/2\) is the same calculation with independent increments and Lipschitz constant at most \(\delta^{1/2}\).

If \(1/2<H<1\), let \(D=\operatorname{diag}(t_{i+1}-t_i)\).  The covariance-kernel representation gives, for every vector \(a\),
\begin{equation}
a^\top\Gamma a
=
H(2H-1)
\int_0^t\int_0^t
f_a(u)f_a(v)|u-v|^{2H-2}\,du\,dv,
\label{eq:fbm-cov-kernel}
\end{equation}
where \(f_a=\sum_i a_i\mathbf 1_{(t_i,t_{i+1}]}\).  Since \(|u-v|^{2H-2}\in L^1([-T,T])\), Young's inequality yields
\begin{equation}
\Gamma\le C_{H,T}D
\label{eq:fbm-cov-dominance}
\end{equation}
in the positive-semidefinite order.  Then
\begin{equation}
X=D^{1/2}R^{1/2}Z,
\qquad
R=D^{-1/2}\Gamma D^{-1/2},
\qquad
\|R\|_{\mathrm{op}}\le C_{H,T},
\label{eq:fbm-cov-factorization}
\end{equation}
where $Z\sim N(0,{\rm Id}_N)$. Using \(p<2\), H\"older's inequality gives
\begin{equation}
\|D^{1/2}w\|_p
\le
\|w\|_2
\left(
\sum_i(t_{i+1}-t_i)^{p/(2-p)}
\right)^{(2-p)/(2p)}
\le
C_{H,T}\delta^{1-H}\|w\|_2.
\label{eq:fbm-weighted-lp-bound}
\end{equation}
Thus $F_\lambda= g(Z)$ where $g$ is Lipschitz-continuous with Lipschitz constant  bounded by \(C_{H,T}\delta^{1-H}\).  The  Gaussian concentration inequality then gives Equation~\eqref{eq:fbm-F-concentration}.
At the critical order \(Hp=1\),
\begin{equation}
\mathbb E F_\lambda^p
=
\sum_i\mathbb E|X_i|^p
=
m_p\sum_i(t_{i+1}-t_i)
=
m_pt.
\label{eq:fbm-critical-expectation}
\end{equation}
Integrating Equation~\eqref{eq:fbm-F-concentration} gives, for every fixed \(r\ge1\),
\begin{equation}
\|F_\lambda-\mathbb EF_\lambda\|_{L^r}
\le
C_{r,H,T}\delta^{\vartheta_H}.
\label{eq:fbm-F-moments}
\end{equation}
Since \(\|F_\lambda\|_{L^p}=(m_pt)^{1/p}\le(m_pT)^{1/p}\), Lemma~\ref{lem:power-increment}, H\"older's inequality, and Equation~\eqref{eq:fbm-F-moments} yield the uniform estimate
\begin{equation}
\left|
\mathbb EF_\lambda^p-(\mathbb EF_\lambda)^p
\right|
\le
C_{H,p,T}\left(
\delta^{\vartheta_H}+\delta^{p\vartheta_H}
\right).
\label{eq:fbm-center-power-error}
\end{equation}
Together with Equation~\eqref{eq:fbm-critical-expectation}, this implies, uniformly for \(t\) bounded away from zero,
\begin{equation}
\left|\mathbb EF_\lambda-(m_pt)^{1/p}\right|
\longrightarrow0
\qquad(\delta\downarrow0).
\label{eq:fbm-F-center}
\end{equation}
Hence a fixed deviation of the \(p\)-energy from \(m_pt\) forces a fixed deviation of \(F_\lambda\) from its mean, and Equation~\eqref{eq:fbm-F-concentration} applies.  For small \(t\), choose \(t_0=t_0(\varepsilon)\) so that \(m_pt_0<\varepsilon/4\).  Jensen's inequality gives \(\mathbb EF_\lambda\le(m_pt)^{1/p}\); therefore an upper energy deviation larger than \(\varepsilon\) again forces a fixed positive deviation of \(F_\lambda\) from its mean, while the lower deviation is impossible when \(t\le t_0\).  Combining the two ranges proves Equation~\eqref{eq:fbm-energy-concentration} uniformly in \(t\).
\end{proof}

\begin{theorem}[Fractional Brownian motion is critically \(p\)-rough]
\label{thm:fbm-intrinsic-p-roughness}
Let \(B^H\) be fractional Brownian motion with Hurst parameter \(H\in(0,1)\), and let \(p=1/H\).  There exists an event \(\Omega_H\) of probability one such that, for every sample path on \(\Omega_H\),
\begin{equation}
\lim_{\delta\downarrow0}
\sup_{0<\ell\le\delta}
\sup_{0\le a<\ell}
\sup_{t\in[0,T]}
\left|
\V^p_{\Pi(\ell,a)}(B^H;t)-m_pt
\right|
=0.
\label{eq:fbm-uniform-shifted-grids}
\end{equation}
Consequently,
\begin{equation}
B^H\in\mathscr R_p([0,T])
\qquad\text{almost surely},
\label{eq:fbm-intrinsic-roughness}
\end{equation}
and
\begin{equation}
[B^H]^p_{\Tdyad}(t)=m_pt,
\qquad t\in[0,T].
\label{eq:fbm-dyadic-critical-variation}
\end{equation}
\end{theorem}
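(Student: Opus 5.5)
The argument follows the pattern of Theorem~\ref{thm:brownian-intrinsic-roughness}: concentration on a finite net of grids and stopping times, Borel--Cantelli, then transfer to all grids in a shell by Lemma~\ref{lem:finite-shifted-grid-stability}. The martingale maximal inequality used for Brownian motion is replaced by the super-polynomial Gaussian concentration of Lemma~\ref{lem:fbm-critical-concentration}.

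\textbf{Step 1: the net.} Fix $\varepsilon>0$ and use the shells $\delta_{k+1}\le\ell\le\delta_k$ with the net $\mathcal N_k$ of Lemma~\ref{lem:shifted-grid-net}, so $|\mathcal N_k|\le C_T2^{5k}$. For each net grid $\Pi(\ell',a')=\{v_0<\cdots<v_N\}$ and each grid index $j$, the partition $\{v_0,\dots,v_j\}$ of $[0,v_j]$ is deterministic with mesh at most $\delta_k$. Lemma~\ref{lem:fbm-critical-concentration}, uniform in $t=v_j$, gives
\[
\mathbb P\Bigl(\Bigl|\textstyle\sum_{i<j}|B^H_{v_{i+1}}-B^H_{v_i}|^p-m_pv_j\Bigr|>\varepsilon\Bigr)\le 2\exp\bigl(-c_\varepsilon\delta_k^{-2\vartheta_H}\bigr).
\]
There are at most $C_T2^{5k}\cdot C2^{k}$ pairs (grid, index). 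Since $\delta_k^{-2\vartheta_H}=c2^{2\vartheta_Hk}$ grows exponentially in $k$, the union bound is summable in $k$. Borel--Cantelli then yields an a.s.\ event on which, for all large $k$, every net grid satisfies $\max_j|\V^p(\text{completed cells to }v_j)-m_pv_j|\le\varepsilon$.

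\textbf{Step 2: arbitrary stopping times.} For $v_j\le t<v_{j+1}$, the stopped energy differs from the completed-cell energy at $v_j$ by at most $\omega_{B^H}(\delta_k)^p$. Also $m_p|t-v_j|\le m_p\delta_k$. By Kolmogorov--Chentsov, $B^H\in C^{\alpha}$ for every $\alpha<H$, so $\omega(\delta_k)^p\le C\delta_k^{\alpha p}\to0$. Hence on the net, $\sup_t|\V^p_{\Pi(\ell',a')}(B^H;t)-m_pt|\le 2\varepsilon$ eventually.

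\textbf{Step 3: transfer to the whole shell.} This is the main obstacle. Apply Lemma~\ref{lem:finite-shifted-grid-stability} with $\delta=\delta_k/2$ and $\varepsilon=2\eta_k$, using the edited subpartitions of Lemma~\ref{lem:shifted-grid-net}. The energy bound $M$ is supplied by Step 2, since the net energies are bounded by $m_pT+2\varepsilon$. The error terms are controlled only through Hölder regularity $\alpha<H$, since $B^H$ is not $C^{1/p}$:
\[
\frac{\omega(4\eta_k)^p}{\delta_k}\le C\,\frac{2^{-3k\alpha p}}{2^{-k}}=C\,2^{k(1-3\alpha p)}.
\]
This tends to $0$ once $\alpha>1/(3p)=H/3$, which is possible. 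The last term $\omega(2\delta_k)^p\to0$ trivially. So the net spacing $\eta_k=T2^{-3k}$ suffices, and this is exactly where care is needed.

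\textbf{Conclusion.} Step 3 shows that, a.s., the shell supremum is eventually at most $3\varepsilon$. Intersect the a.s.\ events over $\varepsilon=1/n$ and the Hölder event to obtain $\Omega_H$; this gives \eqref{eq:fbm-uniform-shifted-grids}. The function $A_H(t)=m_pt$ is continuous and increasing with $A_H(T)>0$, so $B^H\in\mathscr R_p$. Finally, \eqref{eq:intrinsic-compatible-partitionwise} applied to $\Tdyad_m=\Pi(h_m,0)$ gives $[B^H]^p_{\Tdyad}(t)=m_pt$.
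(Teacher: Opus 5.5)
Your proposal is correct and follows the paper's proof essentially step for step: concentration on the entropy net via Lemma~\ref{lem:fbm-critical-concentration}, Borel--Cantelli intersected over $\varepsilon=1/j$, the stopping-time correction, and transfer to the whole shell via Lemma~\ref{lem:finite-shifted-grid-stability} with $\delta=\delta_k/2$ and $\varepsilon=2\eta_k$. The only difference is that you bound the modulus using Kolmogorov--Chentsov H\"older regularity for some $\alpha\in(H/3,H)$, where the paper uses the sharp modulus $h^H\sqrt{\log(eT/h)}$; this suffices because $\eta_k=T2^{-3k}$ gives $\omega(4\eta_k)^p/\delta_k\to0$ as soon as $3\alpha p>1$.
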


\begin{proof}
Using the uniform almost-sure modulus of continuity of fBm  \cite[Theorem 4.1]{qian2019}, there exists a set  of trajectories with  probability one, on which 
\begin{equation}
\omega_{B^H}(h)
\le
C_H h^H\sqrt{\log\frac{eT}{h}}
\label{eq:fbm-modulus}
\end{equation}
holds for all sufficiently small \(h\).  Fix \(\varepsilon>0\) and consider the shell \(\delta_{k+1}\le\ell\le\delta_k\) and the net \(\mathcal N_k\) from Lemma~\ref{lem:shifted-grid-net}.  Each net grid has at most \(C_T2^k\) grid points.  For each of its grid points \(v\), apply ~\eqref{eq:fbm-energy-concentration} to the deterministic subpartition of \([0,v]\).  By ~\eqref{eq:shifted-grid-net-cardinality}, a union bound gives
\begin{equation}
\mathbb P\left(
\max_{(\ell,a)\in\mathcal N_k}
\max_{v\in\Pi(\ell,a)}
\left|
\V^p_{\Pi(\ell,a)}(B^H;v)-m_pv
\right|>\varepsilon
\right)
\le
C_T2^{6k}
\exp\left(-c_{\varepsilon,H,T}2^{2\vartheta_Hk}\right).
\label{eq:fbm-net-union-bound}
\end{equation}
The series on the right-hand side is summable.  For each \(j\in\N\), apply Borel--Cantelli with \(\varepsilon=1/j\), and intersect the resulting countable family of probability-one events.  On this single event the maximum in Equation~\eqref{eq:fbm-net-union-bound} tends to zero.

For a fixed net grid and arbitrary \(t\), let \(v\le t\) be the preceding grid point.  The completed-cell energy at time \(v\) is already controlled, \(|t-v|\le\delta_k\), so the target values differ by at most \(m_p\delta_k\); the stopped energy differs from the completed-cell energy by at most \(\omega_{B^H}(\delta_k)^p\).  Equation~\eqref{eq:fbm-modulus} and \(Hp=1\) give
\begin{equation}
\omega_{B^H}(\delta_k)^p
\le
C_H\delta_k k^{p/2}
\longrightarrow0.
\label{eq:fbm-stopping-error}
\end{equation}
Thus convergence is uniform in \(t\) on every net grid.
We now use Lemma~\ref{lem:finite-shifted-grid-stability}.  Since \(\eta_k=T2^{-3k}\), Equation~\eqref{eq:fbm-modulus} gives
\begin{equation}
\frac{\omega_{B^H}(4\eta_k)^p}{\delta_k}
\le
C_H2^{-2k}k^{p/2}
\longrightarrow0,
\qquad
\omega_{B^H}(2\delta_k)^p
\le
C_H\delta_k k^{p/2}
\longrightarrow0.
\label{eq:fbm-parameter-perturbation}
\end{equation}
We now transfer the convergence from the entropy net to the whole
shell.  For a grid \(\Pi(\ell,a)\) with
$
\delta_{k+1}\le\ell\le\delta_k,$
we choose the net representative and edited subpartitions supplied by
Lemma~\ref{lem:shifted-grid-net}.  As in the Brownian case,
Lemma~\ref{lem:finite-shifted-grid-stability} applies with
\[
\delta=\frac{\delta_k}{2},
\qquad
\varepsilon=2\eta_k.
\]
Indeed, both grid meshes lie in
$
[\delta_k/2,\delta_k]
\subseteq
[\delta/2,2\delta],$
the deleted points lie in
$
(0,\delta_k)\cup(T-\delta_k,T)
=
(0,2\delta)\cup(T-2\delta,T),$
and the corresponding edited partition points are at distance at
most \(2\eta_k\).

Since \(\eta_k=T2^{-3k}\), Equation~\eqref{eq:fbm-modulus} gives
\begin{equation}
\frac{\omega_{B^H}(4\eta_k)^p}{\delta_k}
\le
C_H2^{-2k}k^{p/2}
\longrightarrow0,
\qquad
\omega_{B^H}(2\delta_k)^p
\le
C_H\delta_k k^{p/2}
\longrightarrow0.
\label{eq:fbm-net-transfer}
\end{equation}
The net energies are eventually uniformly bounded, so
Lemma~\ref{lem:finite-shifted-grid-stability} implies
\[
\sup_{\delta_{k+1}\le\ell\le\delta_k}
\sup_{0\le a<\ell}
\sup_{t\in[0,T]}
\left|
\V^p_{\Pi(\ell,a)}(B^H;t)
-
\V^p_{\Pi(\ell',a')}(B^H;t)
\right|
\longrightarrow0.
\]
Together with the uniform convergence on the net, this transfers the
convergence to every shifted uniform grid in the shell.
The preceding countable intersection over \(\varepsilon=1/j\), together with the fact that the shells exhaust all sufficiently small \(\ell\), gives ~\eqref{eq:fbm-uniform-shifted-grids} with probability one.

Equation~\eqref{eq:fbm-uniform-shifted-grids} is exactly Definition~\ref{def:p-roughness-intrinsic} with intrinsic energy \([B^H]^p(t)=m_pt\), and therefore gives  ~\eqref{eq:fbm-intrinsic-roughness}.  Taking \(a=0\) and \(\ell=h_m\) gives Equation~\eqref{eq:fbm-dyadic-critical-variation}.  Proposition~\ref{prop:microscope-free-characterization} also yields
\begin{equation}
\mathfrak R_{p,L}(B^H)
\le
2
\sup_{0<\ell\le T/L}
\sup_{0\le a<\ell}
\sup_t
\left|
\V^p_{\Pi(\ell,a)}(B^H;t)-m_pt
\right|.
\label{eq:fbm-R-bound-by-shifted}
\end{equation}
\end{proof}

 The new point in Theorem~\ref{thm:fbm-intrinsic-p-roughness} is the maximal uniformity in the block mesh and phase, which yields the intrinsic cancellation condition rather than only convergence along one geometric sequence.  The case \(H=1/2\) contains the qualitative Brownian assertion of Theorem~\ref{thm:brownian-intrinsic-roughness}, but the latter provides a sharper explicit rate in Equation~\eqref{eq:brownian-shifted-grid-rate}.

\begin{corollary}[Partition invariance for $p-$th variation of fBm]
\label{cor:fbm-partition-stability}
Let \(p=1/H\), and let \(\pi\in\mathcal A_{\rm unif}\) be associated with \(\widehat\pi_n=\Pi(\ell_n,a_n)\) and endpoint displacement \(\delta_n\).  If
\begin{equation}
\frac{
\delta_n
\left(\log\frac{eT}{\delta_n}\right)^{p/2}
}{\ell_n}
\longrightarrow0,
\label{eq:fbm-partition-transfer-condition}
\end{equation}
then almost surely
\begin{equation}
B^H\in V_p(\pi),
\qquad
[B^H]^p_\pi(t)=m_pt.
\label{eq:fbm-partition-stability}
\end{equation}
\end{corollary}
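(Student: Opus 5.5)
The plan mirrors the proof of Corollary~\ref{cor:brownian-regular-block-invariance}, with the Brownian inputs replaced by their fractional counterparts. I will work on the probability-one event \(\Omega_H\) of Theorem~\ref{thm:fbm-intrinsic-p-roughness}, intersected with the event on which the uniform modulus bound \eqref{eq:fbm-modulus} holds for all sufficiently small \(h\). On this event \(B^H\in\mathscr R_p([0,T])\) with \([B^H]^p(t)=m_pt\). Here \(m_pT>0\), so the nondegeneracy in Definition~\ref{def:p-roughness-intrinsic} holds.

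By Theorem~\ref{thm:p-rough-invariance}, it suffices to verify the transfer condition \eqref{eq:p-rough-transfer-modulus}, namely \(N(\pi_n)\,\omega_{B^H}(\delta_n)^p\to0\).

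\emph{Counting intervals.} Since \(\pi_n\) and \(\widehat\pi_n\) have the same cardinality, \eqref{eq:regular-block-count} gives \(N(\pi_n)\le T/\ell_n+2\le C_T/\ell_n\) for large \(n\), because \(\ell_n\to0\).

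\emph{Bounding the modulus.} The hypothesis \eqref{eq:fbm-partition-transfer-condition} forces \(\delta_n\to0\), since \(\ell_n\to0\) and the logarithmic factor is bounded below. Hence \eqref{eq:fbm-modulus} applies for large \(n\). Using \(Hp=1\), it gives
\[
\omega_{B^H}(\delta_n)^p\le C_H^p\,\delta_n\Bigl(\log\tfrac{eT}{\delta_n}\Bigr)^{p/2}.
\]

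\emph{Conclusion.} Multiplying the two bounds,
\[
N(\pi_n)\,\omega_{B^H}(\delta_n)^p\le C\,\frac{\delta_n\bigl(\log\frac{eT}{\delta_n}\bigr)^{p/2}}{\ell_n}\longrightarrow0
\]
by \eqref{eq:fbm-partition-transfer-condition}. Theorem~\ref{thm:p-rough-invariance} then yields \(B^H\in V_p(\pi)\) with \([B^H]^p_\pi=[B^H]^p=m_pt\). Because the exceptional set does not depend on \(\pi\), the conclusion holds almost surely.

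There is no real obstacle. The only points needing care are the ones already handled: the modulus bound holds only for small \(h\), which is why \(\delta_n\to0\) is checked first, and the probability-one event must be chosen independently of \(\pi\), which is why it is fixed before the partition sequence enters the argument.
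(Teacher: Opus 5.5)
Your proposal is correct and matches the paper's proof: on the almost-sure event of Theorem~\ref{thm:fbm-intrinsic-p-roughness} combined with the modulus bound \eqref{eq:fbm-modulus}, you bound $N_n\,\omega_{B^H}(\delta_n)^p$ by $C\,(T/\ell_n+2)\,\delta_n\bigl(\log\frac{eT}{\delta_n}\bigr)^{p/2}$ via \eqref{eq:regular-block-count}, and then invoke Theorem~\ref{thm:p-rough-invariance}. (Your separate check that $\delta_n\to0$ is not needed, since this is already part of the definition of $\mathcal A_{\rm unif}$ in \eqref{eq:uniform-grid-perturbation}.)
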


\begin{proof}
On the probability-one event of Theorem~\ref{thm:fbm-intrinsic-p-roughness} and Equation~\eqref{eq:fbm-modulus}, Equation~\eqref{eq:regular-block-count} gives
\begin{equation}
N_n\omega_{B^H}(\delta_n)^p
\le
C_H
\left(\frac{T}{\ell_n}+2\right)
\delta_n
\left(\log\frac{eT}{\delta_n}\right)^{p/2}
\longrightarrow0.
\label{eq:fbm-main-transfer-condition}
\end{equation}
Applying Theorem~\ref{thm:p-rough-invariance} gives the result.
\end{proof}

\subsection{Faber-Schauder series: partition stability and intrinsic roughness}
\label{sec:randomsign}

We now compare natural-grid variation, stability under small endpoint
perturbations, and intrinsic roughness for explicit fractal series.
Schied \cite{Schied2016}, Mishura and Schied \cite{Mishura-Schied2016,Mishura-Schied2019}, Han and Schied \cite{han2021} and Schied and Zhang \cite{SchiedZhang2020} have proposed various constructions of irregular functions and random processes with prescribed variation
along partitions;
more general constructions on non-dyadic refining partitions are given in
\cite{ContDas2022}. As we will see, these examples may or may not possess intrinsic roughness in the sense of Definition \ref{def:p-roughness-intrinsic}.

\paragraph{Two classes of fractal series.}
The general Schied--Zhang series \cite{SchiedZhang2020} is defined as follows.
Fix an integer \(b\ge2\), let \(\phi:\R\to\R\) be \(1\)-periodic and Lipschitz with \(\phi(k)=0\) for \(k\in\mathbb Z\), and let
\begin{equation}
\frac1b<|\alpha|<1.
\label{eq:SZ-alpha-range}
\end{equation}
Define
\begin{equation}
f(t):=\sum_{m=0}^{\infty}\alpha^m\phi(b^m t),
\qquad t\in[0,1],
\label{eq:SZ-function}
\end{equation}
and set
\begin{equation}
H:=-\log_b|\alpha|,
\qquad
p:=\frac1H.
\label{eq:SZ-critical-exponents}
\end{equation}

The signed Takagi--Landsberg class of \cite{Mishura-Schied2019} is instead
defined, in the dyadic Schauder normalization of
Section~\ref{sec:schauder-representation}, by
\begin{equation}
\mathfrak X^H
:=\left\{
x_\sigma(t)=\sum_{m=0}^{\infty}2^{m(1/2-H)}
\sum_{k=0}^{2^m-1}\sigma_{m,k}e^{\mathbb T}_{m,k}(t):
\ \sigma_{m,k}\in\{-1,1\}
\right\},\qquad 0<H<1.
\label{eq:signed-TL-class}
\end{equation}
The series converges uniformly and satisfies the critical Schauder bound
\eqref{eq:critical-schauder-bound} when \(p=1/H\); in particular,
every member is \(H\)-H\"older. The all-positive member is
\begin{equation}
T_H(t):=\sum_{m=0}^{\infty}2^{-mH}\phi_\triangle(2^m t),
\qquad \phi_\triangle(u):=\operatorname{dist}(u,\mathbb Z).
\label{eq:TL-positive-function}
\end{equation}
Thus \(T_H\) is the specialization of \eqref{eq:SZ-function} with
\(b=2\), \(\alpha=2^{-H}\), and \(\phi=\phi_\triangle\).
The arbitrary location-dependent signs in \eqref{eq:signed-TL-class}
are additional freedom: the signed class is not being identified with
the general periodic-seed series \eqref{eq:SZ-function}.

\paragraph{Natural-grid variation and endpoint stability.}
Let
\begin{equation}
\mathbb T_n^{(b)}:=\{kb^{-n}:0\le k\le b^n\}.
\label{eq:b-adic-partition}
\end{equation}
Under the nondegeneracy assumptions in \cite{SchiedZhang2020},
\begin{equation}
[f]^p_{\mathbb T^{(b)}}(t)=c_pt,
\qquad c_p>0.
\label{eq:SZ-linear-variation}
\end{equation}
For the normalized path
\begin{equation}
x(t):=c_p^{-1/p}f(t),
\label{eq:SZ-normalization}
\end{equation}
one has
\begin{equation}
[x]^p_{\mathbb T^{(b)}}(t)=t.
\label{eq:SZ-unit-variation}
\end{equation}
The standard scale decomposition for the Schied--Zhang series gives \(x\in C^H([0,1])\), with \(H\) as in \eqref{eq:SZ-critical-exponents}; see \cite{SchiedZhang2020}.  Since \(pH=1\), one has \(\omega_x(h)^p\le Ch\).

For the signed class, the natural grid is \(\mathbb T=\mathbb T^{(2)}\).
Let \((\varepsilon_j)_{j\ge1}\) be independent symmetric
\(\{-1,1\}\)-valued random variables, and define
\begin{equation}
q:=2^{H-1},\qquad
S_H:=\sum_{j=1}^{\infty}q^j\varepsilon_j,\qquad
C_p:=\mathbb E|S_H|^p,\qquad p=1/H.
\label{eq:TL-variation-constant}
\end{equation}
Then \(0<C_p<\infty\), and Theorem~2.1 of
\cite{Mishura-Schied2019} gives, for every sign array,
\begin{equation}
[x_\sigma]^p_{\mathbb T}(t)=C_pt,
\qquad x_\sigma\in\mathfrak X^{1/p}.
\label{eq:signed-TL-linear-variation}
\end{equation}
The signs \(\varepsilon_j\) in \eqref{eq:TL-variation-constant}
represent the Bernoulli-convolution constant; the coefficient array
\(\sigma\) in \eqref{eq:signed-TL-linear-variation} need not be random.
For the tent specialization \(T_H\), the constant \(c_p\) in
\eqref{eq:SZ-linear-variation} equals \(C_p\).

The critical H\"older estimate and the endpoint perturbation theorem
give the following stability statement without assuming intrinsic roughness.

\begin{corollary}[Stability of normalized Schied--Zhang paths under perturbed grids]
\label{cor:SZ-perturbed-grid}
Let
\begin{equation}
\pi_n=\{0=t_0^n<\cdots<t_{b^n}^n=1\},
\qquad
\delta_n:=\max_k|t_k^n-kb^{-n}|,
\label{eq:SZ-perturbed-partition}
\end{equation}
and assume
\begin{equation}
\delta_n=o(b^{-n}).
\label{eq:SZ-small-perturbation}
\end{equation}
Then
\begin{equation}
x\in V_p(\pi),
\qquad
[x]^p_\pi(t)=t.
\label{eq:SZ-perturbed-invariance}
\end{equation}
Consequently, the relative coarse-graining error with respect to \(\mathbb T^{(b)}\) vanishes along every alignment.
\end{corollary}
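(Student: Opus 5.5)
The plan is to reduce the statement to Proposition~\ref{prop:endpoint-perturbation}, applied with the reference sequence $\lambda_n=\mathbb T^{(b)}_n$ and the comparison sequence $\pi_n$. Both partitions have exactly $b^n$ intervals, and the endpoints are matched in order, since $t^n_k$ is paired with $kb^{-n}$. The quantity $\delta_n$ in \eqref{eq:SZ-perturbed-partition} is therefore precisely the displacement \eqref{eq:endpoint-displacement}. Vanishing mesh of $\pi_n$ also follows: every cell of $\pi_n$ has length at most $b^{-n}+2\delta_n\to0$.

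Next I check the two hypotheses of Proposition~\ref{prop:endpoint-perturbation}.
\begin{enumerate}[label=\textup{(\roman*)},leftmargin=2.1em]
\item Membership $x\in V_p(\mathbb T^{(b)})$ with $[x]^p_{\mathbb T^{(b)}}(t)=t$ is exactly \eqref{eq:SZ-unit-variation}. I take this from \cite{SchiedZhang2020}, using the normalization \eqref{eq:SZ-normalization}.
\item For the modulus condition \eqref{eq:endpoint-perturbation-condition}, I use $x\in C^H([0,1])$ with $pH=1$, which gives $\omega_x(h)^p\le Ch$. Hence
\[
N_n\,\omega_x(\delta_n)^p\le C\,b^n\delta_n=C\,\frac{\delta_n}{b^{-n}}\longrightarrow0
\]
by \eqref{eq:SZ-small-perturbation}.
\end{enumerate}
Proposition~\ref{prop:endpoint-perturbation} then yields $x\in V_p(\pi)$ and $[x]^p_\pi=[x]^p_{\mathbb T^{(b)}}$, which equals $t$. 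This is \eqref{eq:SZ-perturbed-invariance}.

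For the final sentence, I invoke Proposition~\ref{thm:main-equivalence} with $\rho=\mathbb T^{(b)}$. We have $x\in V_p(\rho)$, and we have just shown $x\in V_p(\pi)$ with $[x]^p_\pi=[x]^p_\rho$. The necessity part \eqref{eq:necessity-error} then gives $\sup_t|\Def^p_{\pi,\rho^\star;n}(x;t)|\to0$ along every alignment $\rho^\star$ of $\rho$ with $\pi$ along $x$.

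I expect the main point needing care to be the Hölder bound $\omega_x(h)^p\le Ch$, since it is cited rather than proved. If it needed justification, I would split the series at level $n$ with $b^{-n}\approx h$. The low-frequency part is Lipschitz with constant $\sum_{m\le n}(|\alpha|b)^m\mathrm{Lip}(\phi)\lesssim(|\alpha|b)^n$, and this is where $|\alpha|b>1$ is used. The tail is bounded by $\|\phi\|_\infty\sum_{m>n}|\alpha|^m\lesssim|\alpha|^n$. Both pieces are $O(h^H)$ because $|\alpha|=b^{-H}$.

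Everything else is a direct substitution into earlier results. In particular, intrinsic $p$-roughness of $x$ is not needed, consistent with the remark preceding the corollary.
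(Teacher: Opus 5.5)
Your proof is correct and follows the paper's argument: the critical H\"older bound $\omega_x(h)^p\le Ch$ gives $b^n\omega_x(\delta_n)^p\le Cb^n\delta_n\to0$, and then Proposition~\ref{prop:endpoint-perturbation} followed by the necessity part of Proposition~\ref{thm:necessity} yields both conclusions. Your extra checks are sound details the paper leaves implicit or cites: the vanishing mesh of $\pi_n$, and the scale-splitting sketch of the $H$-H\"older bound.
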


\begin{proof}
The critical H\"older estimate gives \(\omega_x(h)^p\le Ch\).  Therefore
\begin{equation}
b^n\omega_x(\delta_n)^p\le Cb^n\delta_n\to0.
\label{eq:SZ-endpoint-condition}
\end{equation}
Apply Proposition~\ref{prop:endpoint-perturbation} and then Proposition~\ref{thm:necessity}.
\end{proof}

The same proof applies to every normalized signed path
\(C_p^{-1/p}x_\sigma\), with \(b=2\), by
\eqref{eq:signed-TL-linear-variation} and its critical H\"older bound.

\begin{example}[Explicit critical orders]
\label{ex:SZ-p-greater-two}
For \(b=2\) and the tent map, \(\alpha=2^{-1/3}\) gives \(p=3\), while \(\alpha=2^{-2/7}\) gives \(p=7/2\).  More generally \(\alpha=b^{-1/p}\) produces any prescribed critical order \(p>1\) for which the Schied--Zhang nondegeneracy condition \cite{SchiedZhang2020} holds.
\end{example}

\paragraph{Deterministic phase dependence.}
Endpoint stability does not imply stability under shifts comparable to
the mesh. For the all-positive member this distinction is explicit.

\begin{proposition}[Phase dependence of the Takagi--Landsberg function]
\label{prop:TL-phase-dependence}
Let \(p>1\), \(H=1/p\), and let \(T_H\) and \(C_p\) be defined by
\eqref{eq:TL-positive-function} and \eqref{eq:TL-variation-constant}.
Then
\begin{equation}
\lim_{n\to\infty}\V^p_{\Pi(2^{-n},0)}(T_H;1)=C_p,
\qquad
\lim_{n\to\infty}\V^p_{\Pi(2^{-n},2^{-n-1})}(T_H;1)
=\frac{C_p}{2^p-1}.
\label{eq:TL-phase-limits}
\end{equation}
In particular, \(T_H\notin\mathscr R_p([0,1])\).
\end{proposition}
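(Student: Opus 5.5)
The plan is to compute both limits in \eqref{eq:TL-phase-limits} explicitly, using the fact that on either grid only finitely many levels of $T_H$ contribute to each increment. On those levels $T_H$ is piecewise linear, so each increment becomes a finite signed sum whose signs are binary digits of the cell index. Averaging over cells turns these digits into independent Rademacher variables, and the energy becomes a Bernoulli-convolution moment as in \eqref{eq:TL-variation-constant}.

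\emph{Unshifted grid.} On $\Pi(2^{-n},0)=\Tdyad_n$, the levels $m\ge n$ of \eqref{eq:TL-positive-function} vanish at all points $k2^{-n}$. Each level $m<n$ is affine on $[k2^{-n},(k+1)2^{-n}]$, with slope $s_m(k)2^{m(1-H)}$, where $s_m(k)\in\{\pm1\}$ is determined by the $(m+1)$-th binary digit of $k2^{-n}$. Setting $j=n-m$ and using $(1-H)p=p-1$, I get
\[
|\Delta_{n,k}T_H|^p=2^{-n}\Bigl|\sum_{j=1}^{n}q^j s_{n-j}(k)\Bigr|^p .
\]
As $k$ ranges over $0,\dots,2^n-1$, the digit vector $(s_{n-j}(k))_{j\le n}$ runs through all of $\{\pm1\}^n$ exactly once. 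Hence
\[
\V^p_{\Tdyad_n}(T_H;1)=\mathbb E\Bigl|\sum_{j=1}^n q^j\varepsilon_j\Bigr|^p .
\]
This tends to $C_p$, because $q<1$ makes the partial sums uniformly bounded and they converge to $S_H$.

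\emph{Half-shifted grid.} Take $\Pi(2^{-n},2^{-n-1})$, and consider an interior cell $[(k+\tfrac12)2^{-n},(k+\tfrac32)2^{-n}]$, symmetric about $v=(k+1)2^{-n}$. The endpoints are level-$(n+1)$ dyadics, so the levels $m\ge n+1$ vanish there. Level $m=n$ takes the value $\tfrac12$ at both endpoints and so contributes nothing.

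For a level $m<n$, the tent $\phi_\triangle(2^m\cdot)$ has breakpoints at multiples of $2^{-m-1}$. If $v$ is such a breakpoint, the contribution vanishes, since the interval is symmetric about a peak or valley. Otherwise the level is affine on the cell and contributes $\pm2^{-n}2^{m(1-H)}$.

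Write $\nu$ for the $2$-adic valuation of $k+1$. Then $v$ is a level-$m$ breakpoint exactly when $j=n-m\le\nu+1$, so only the indices $j\ge\nu+2$ survive, with signs again given by digits. Among the cells, a fraction $2^{-r-1}$ (up to $O(2^{-n})$) have $\nu=r$. Conditionally on $\nu=r$, the remaining higher digits are again exhaustively and uniformly distributed. The energy therefore converges to
\[
\sum_{r\ge0}2^{-r-1}\,\mathbb E\Bigl|\sum_{j\ge r+2}q^j\varepsilon_j\Bigr|^p
=\sum_{r\ge0}2^{-r-1}q^{(r+1)p}C_p
=C_p\sum_{r\ge0}2^{-(r+1)p}=\frac{C_p}{2^p-1},
\]
where I used $q^p=2^{1-p}$ together with stationarity of the shifted Rademacher sum. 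The two truncated boundary cells contribute at most $2\omega_{T_H}(2^{-n})^p=O(2^{-n})$. Truncating at $j\le n$ and the digit-counting errors vanish by dominated convergence, since $\sum q^j<\infty$ and $p\ge1$.

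\emph{Conclusion.} Since $2^p-1>1$ for $p>1$, the two limits differ. Both grid sequences have $\ell_n=2^{-n}\to0$ and an admissible phase $0\le a_n<\ell_n$, so by \eqref{eq:all-shifted-uniform-sequences} they would both have to converge to $[T_H]^p(1)$ if $T_H$ were $p$-rough. This contradiction gives $T_H\notin\mathscr R_p([0,1])$.

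The main obstacle is the half-shift bookkeeping. Two things need checking: that symmetry around a breakpoint really kills exactly the levels with $n-m\le\nu+1$, and that after conditioning on $\nu$ the surviving signs are still exhaustively distributed over the cells. I would first verify both on small $n$ by direct enumeration before writing the general digit argument.
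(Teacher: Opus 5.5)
Your proposal is correct and follows essentially the paper's argument. Your level-by-level symmetry about $v=(k+1)2^{-n}$ is the same cancellation the paper obtains by writing the half-shift increment as $(d_{n,k}+d_{n,k+1})/2$ and flipping the first $r+1$ binary digits of $k$, and it leads to the same sum $\sum_{r}2^{-(r+1)}q^{p(r+1)}\mathbb E|S_{H,n-r-1}|^p$ and the same dominated-convergence limit. The only differences are that you compute the unshifted limit directly from the binary digits instead of citing the Mishura--Schied theorem, and that you should state explicitly that $C_p>0$ so that the two limits really differ.
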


\begin{proof}
The first limit follows from \eqref{eq:signed-TL-linear-variation}.
For the second, put \(h=2^{-n}\) and
\(d_{n,k}=T_H((k+1)h)-T_H(kh)\), \(0\le k<2^n\).
The terms of levels below \(n\) are affine on each dyadic cell.
At all cell midpoints the level-\(n\) term has the same value, and
all higher-level terms vanish. Hence each interior half-shift increment is
\begin{equation}
T_H((k+3/2)h)-T_H((k+1/2)h)
=\frac{d_{n,k}+d_{n,k+1}}2,
\qquad 0\le k\le2^n-2.
\label{eq:TL-half-shift-increment}
\end{equation}
Writing \(b_j(k)\in\{0,1\}\) for the \(j\)-th binary digit of
\(k\), counted from the least significant digit, gives
\begin{equation}
h^{-H}d_{n,k}=\sum_{j=1}^{n}q^j(1-2b_j(k)).
\label{eq:TL-dyadic-digit-increment}
\end{equation}
If \(k\) has exactly \(r\) trailing ones, then addition of one
flips precisely its first \(r+1\) digits. The corresponding terms
in the average in \eqref{eq:TL-half-shift-increment} cancel.
Among \(0\le k\le2^n-2\), there are \(2^{n-r-1}\) such indices,
and their remaining \(n-r-1\) digits run through all binary patterns.
Set \(S_{H,N}=\sum_{j=1}^{N}q^j\varepsilon_j\), with
\(S_{H,0}=0\). Since \(Hp=1\), the sum over interior cells is exactly
\begin{equation}
\sum_{k=0}^{2^n-2}
\left|\frac{d_{n,k}+d_{n,k+1}}2\right|^p
=\sum_{r=0}^{n-1}2^{-(r+1)}q^{p(r+1)}
\mathbb E|S_{H,n-r-1}|^p.
\label{eq:TL-half-shift-finite-energy}
\end{equation}
The two boundary cells contribute \(O(h)\), by the critical
H\"older bound. Also \(|S_{H,N}|\le q/(1-q)\) and
\(S_{H,N}\to S_H\) uniformly in the signs. Dominated convergence
therefore gives
\begin{equation}
\lim_{n\to\infty}\V^p_{\Pi(2^{-n},2^{-n-1})}(T_H;1)
=C_p\sum_{r=0}^{\infty}2^{-(r+1)}q^{p(r+1)}
=C_p\frac{q^p}{2-q^p}
=\frac{C_p}{2^p-1},
\label{eq:TL-half-shift-series}
\end{equation}
where \(q^p=2^{1-p}\). The two limits in
\eqref{eq:TL-phase-limits} are different for every \(p>1\), so
Definition~\ref{def:p-roughness-intrinsic} cannot hold.
\end{proof}

\begin{remark}[Partition stability without intrinsic roughness]
\label{rem:SZ-intrinsic-verification}{\em 
For \(p=3\), \(C_3\simeq0.7469\), whereas the half-shift limit is
exactly \(C_3/7\simeq0.1067\). Thus the Schied--Zhang class is not
contained in \(\mathscr R_p\). This does not contradict
Corollary~\ref{cor:SZ-perturbed-grid}: the half-shift has displacement
\(2^{-n-1}\), which is comparable to the mesh rather than
\(o(2^{-n})\). The example shows that its small-displacement
assumption cannot in general be relaxed to an \(O(2^{-n})\) bound.
The failure also occurs for the case with all-positive coefficients at \(p=2\), while the next theorem shows that independent signs give a different result
at that order.}
\end{remark}

\paragraph{Independent signs, $p=2$.}
Choosing the signs in \(\mathfrak X^{1/2}\) independently gives a
non-Gaussian example of intrinsic quadratic roughness. 
\begin{theorem}[Rademacher Faber--Schauder paths are \(2\)-rough]
\label{thm:rademacher-schauder-rough}
Let \((\xi_{m,k})_{m\ge0,\ 0\le k<2^m}\) be independent Rademacher
random variables,
\[
\mathbb P(\xi_{m,k}=1)=\mathbb P(\xi_{m,k}=-1)=\frac12,
\]
and let \(c\neq0\) and \(a\in\mathbb R\). Consider the random
Faber--Schauder series
\begin{equation}
X(t)
=
X(0)+at
+
c\sum_{m=0}^{\infty}\sum_{k=0}^{2^m-1}
\xi_{m,k}e^{\mathbb T}_{m,k}(t),
\qquad t\in[0,1].
\label{eq:rademacher-schauder-path}
\end{equation}
Then
\[
\mathbb{P}\left( X\in\mathscr R_2([0,1])\right)=1
\qquad{\rm and}\qquad 
\mathbb{P}\left([X]^2(t)=c^2t\text{ for all }t\in[0,1]\right)=1.
\]
More precisely,
\begin{equation}
\Omega_2^{c^2{\rm id}}(X;\delta)
=
O_{\rm a.s.}\left(
\sqrt{\delta\log\frac{e}{\delta}}
\right),
\qquad \delta\downarrow0,
\label{eq:rademacher-schauder-self-averaging}
\end{equation}
and consequently
\begin{equation}
\mathfrak R_{2,L}(X)
=
O_{\rm a.s.}\left(
\sqrt{\frac{\log(eL)}{L}}
\right),
\qquad L\to\infty.
\label{eq:rademacher-schauder-mesoscopic}
\end{equation}
In particular, independent random signs almost surely produce the
mesoscopic cancellation of coarse-graining errors required in
Proposition~\ref{prop:microscope-free-characterization}.
\end{theorem}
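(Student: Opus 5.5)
We follow the template of the proof of Theorem~\ref{thm:brownian-intrinsic-roughness}, replacing Gaussian independence of increments by a martingale structure in the Rademacher signs. First we dispose of the trivial parts. By Proposition~\ref{prop:first-order-stability}(ii), the drift \(at\) does not affect \(2\)-roughness or the intrinsic energy. By Remark~\ref{prop:roughness-scaling}, we may take \(c=1\). Since \(|\theta_{m,k}|=1\), we have \(\sup_{m,k}2^{m(1/2-1/2)}|\theta_{m,k}|<\infty\). Hence the critical Schauder bound \eqref{eq:critical-schauder-bound} holds, and \(X\in C^{1/2}\) deterministically. Consequently \(\omega_X(h)^2\le Kh\), which replaces L\'evy's modulus in every boundary and stopping estimate.

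The core step is a maximal Bernstein-type inequality for a single deterministic shifted grid \(\Pi(\ell,a)=\{v_i\}\). We will show that
\[
M_j:=\sum_{i<j}\bigl((X(v_{i+1})-X(v_i))^2-(v_{i+1}-v_i)\bigr)
\]
satisfies \(\mathbb P(\max_j|M_j|>u)\le 2\exp(-c\min\{u^2/\delta,u/\delta\})\) for \(\ell\le\delta\).

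The idea is to expand each increment \(D_i\) in the Rademacher variables \(\xi_{m,k}\). On the cell \([v_i,v_{i+1}]\), the increment is \(D_i=\sum_{m,k}\xi_{m,k}w^{i}_{m,k}\), where \(w^i_{m,k}=e_{m,k}(v_{i+1})-e_{m,k}(v_i)\). The orthonormality of the Haar system gives \(\sum_{m,k}(w^i_{m,k})^2\) close to \(h_i\). More precisely, the Parseval identity for \(\mathbf 1_{[v_i,v_{i+1}]}\) yields \(\sum_{m,k}(w^i_{m,k})^2=h_i-h_i^2\), and the correction is harmless. Therefore
\[
D_i^2-h_i=\sum_{(m,k)\ne(m',k')}\xi_{m,k}\xi_{m',k'}w^i_{m,k}w^i_{m',k'}+O(h_i^2).
\]
Summing over \(i\), \(M_j\) is a Rademacher chaos of order two with coefficient matrix \(A^{(j)}=\sum_{i<j}w^i(w^i)^\top\) with zero diagonal. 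The Hanson--Wright inequality for Rademacher chaos then gives the tail with parameters \(\|A\|_{HS}^2\) and \(\|A\|_{op}\). The operator norm is bounded by \(\max_i\|w^i\|^2\le\delta\), because the vectors \(w^i\) are Haar coefficients of disjoint indicators and are therefore orthogonal up to the correction. The Hilbert--Schmidt norm satisfies \(\|A\|_{HS}^2\le\sum_i\|w^i\|^4\le T\delta\). This reproduces \eqref{eq:brownian-maximal-bernstein}.

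The maximum over \(j\) is handled by a union bound over the at most \(C/\ell\) grid points, which costs only a polynomial factor. This suffices, since the net step already absorbs polynomial factors. The deterministic error \(\sum_i h_i^2\le\delta\) is of lower order.

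Next comes the net step. We use Lemma~\ref{lem:shifted-grid-net} with \(u_k=A\sqrt{\delta_k k}\). The union bound over \(|\mathcal N_k|\,C2^k\le C2^{6k}\) events is summable for \(A\) large, and Borel--Cantelli gives \(\max_j|M_j|\le u_k\) eventually, on all net grids. To pass to arbitrary stopping times \(t\), we use \(\omega_X(\delta_k)^2\le K\delta_k=o(u_k)\). To pass from the net to the whole shell, we apply Lemma~\ref{lem:finite-shifted-grid-stability} with \(\delta=\delta_k/2\) and \(\varepsilon=2\eta_k\). The error terms are \(\omega_X(4\eta_k)^2/\delta_k\le C2^{-2k}\) and \(\omega_X(4\delta_k)^2\le C\delta_k\), both \(o(u_k)\). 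Taking the supremum over finer shells gives \eqref{eq:rademacher-schauder-self-averaging} with target \(A(t)=t\) (that is, \(c^2t\) before rescaling). This establishes \(2\)-roughness and \([X]^2(t)=c^2t\). Finally, \eqref{eq:rademacher-schauder-mesoscopic} follows from the right inequality in \eqref{eq:Omega-by-R}, applied with \(\delta=1/L\).

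The main obstacle is the concentration step. One must verify carefully that the vectors \(w^i\) have operator and Hilbert--Schmidt norms of the Brownian order. The grid points \(v_i\) are not dyadic, so each \(w^i\) involves tents at all levels, and the Parseval computation must be done exactly, including the mean-zero correction from the constant Haar function. If Hanson--Wright proves awkward, we would instead write \(M_j\) as a martingale along levels \(m\) and apply Freedman's inequality, using the bound \(|w^i_{m,k}|\le 2^{-m/2}\wedge h_i2^{m/2}\).
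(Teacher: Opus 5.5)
Your proposal is correct and follows essentially the same route as the paper. Your chaos matrix $\sum_{i<j}w^i(w^i)^\top$ is exactly the paper's quadratic-form matrix, whose nonzero spectrum is that of the Brownian-bridge covariance $c^2(\operatorname{diag}(h_i)-hh^\top)$; this gives operator norm and squared Hilbert--Schmidt norm of order $\delta$ for Hanson--Wright, followed by the same entropy net, Borel--Cantelli, the Lemma~\ref{lem:finite-shifted-grid-stability} transfer and Proposition~\ref{prop:microscope-free-characterization}. The differences are minor:
\begin{itemize}
\item You strip the drift and normalize $c$ first, instead of carrying the affine cross term through the concentration bound. To keep the stated rate you should use the quantitative bound of Lemma~\ref{lem:energy-perturbation}, which gives an $O(\sqrt{\delta})$ error for $z(t)=at$, rather than the purely qualitative Proposition~\ref{prop:first-order-stability}(ii).
\item Your bound $\|A\|_{\mathrm{HS}}^2\le\sum_i\|w^i\|^4$ omits the off-diagonal Gram terms $\sum_{i\neq i'}h_i^2h_{i'}^2\le\delta^2$. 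These are harmless but should be included.
\end{itemize}
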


\begin{proof}
Since the coefficients in \eqref{eq:rademacher-schauder-path} satisfy
\(\theta_{m,k}=c\xi_{m,k}\), the critical Schauder bound
\eqref{eq:critical-schauder-bound} holds with \(p=2\). Hence the sample
paths are \(1/2\)-H\"older, with a deterministic bound depending only on
\(|a|\) and \(|c|\).
Write
\[
Y(t)
:=
c\sum_{m\ge0}\sum_{k<2^m}
\xi_{m,k}e^{\mathbb T}_{m,k}(t),
\qquad
X(t)-X(0)=at+Y(t).
\]
Since the Haar system together with the constant function is an
orthonormal basis of \(L^2([0,1])\), Parseval's identity gives
\begin{equation}
\mathbb E[Y(s)Y(t)]
=
c^2\bigl(s\wedge t-st\bigr).
\label{eq:rademacher-bridge-covariance}
\end{equation}
Thus \(Y\) has the covariance kernel of a Brownian bridge.
Let
\[
\lambda=\{0=t_0<t_1<\cdots<t_N=t\}
\]
be a deterministic partition of \([0,t]\) with mesh at most \(\delta\),
and put \(h_i=t_{i+1}-t_i\). For
\[
\Delta_iY=Y(t_{i+1})-Y(t_i),
\]
the covariance matrix is
\begin{equation}
\Sigma_\lambda
=
c^2\bigl(\operatorname{diag}(h_i)-hh^\top\bigr),
\qquad h=(h_0,\ldots,h_{N-1})^\top.
\label{eq:rademacher-increment-covariance}
\end{equation}
In particular,
\[
\|\Sigma_\lambda\|_{\rm op}\le c^2\delta,
\qquad
\operatorname{tr}(\Sigma_\lambda^2)
\le c^4\delta,
\]
uniformly over all such partitions.

The vector \((\Delta_iY)_i\) is a linear image of the independent
Rademacher family. For a finite truncation, write this image as
\(A_\lambda\xi\). The quadratic-form matrix
\(A_\lambda^\top A_\lambda\) has the same nonzero eigenvalues as
the truncated increment covariance, which is bounded above by
\(\Sigma_\lambda\). Its operator norm is therefore at most
\(c^2\delta\), and its squared Hilbert--Schmidt norm is at most
\(c^4\delta\). The affine cross term has sub-Gaussian variance
proxy bounded by a constant times
\(h^\top\Sigma_\lambda h\le c^2\delta\sum_i h_i^2\le c^2\delta^2\).
The Hanson--Wright inequality \cite{HansonWright1971}, together with the
corresponding sub-Gaussian bound for the affine cross term, therefore
yields constants \(C,c_0>0\), depending only on \(a\) and \(c\), such
that
\begin{equation}
\mathbb P\left(
\left|
\V_\lambda^2(X;t)
-
\mathbb E \V_\lambda^2(X;t)
\right|>u
\right)
\le
C\exp\left[
-c_0\min\left\{
\frac{u^2}{\delta},
\frac{u}{\delta}
\right\}
\right].
\label{eq:rademacher-hanson-wright}
\end{equation}
The bound passes to the full series by uniform convergence of the
Schauder truncations. Moreover, \eqref{eq:rademacher-increment-covariance} gives
\begin{align}
\mathbb E \V_\lambda^2(X;t)
&=
c^2\left(t-\sum_i h_i^2\right)
+a^2\sum_i h_i^2 =
c^2t+(a^2-c^2)\sum_i h_i^2,
\label{eq:rademacher-mean-energy}
\end{align}
and hence
\begin{equation}
\left|
\mathbb E \V_\lambda^2(X;t)-c^2t
\right|
\le C\delta.
\label{eq:rademacher-mean-error}
\end{equation}
We now apply \eqref{eq:rademacher-hanson-wright} to the entropy net of
Lemma~\ref{lem:shifted-grid-net}, including the grid stopping points, with
\[
u_k=A\sqrt{\delta_k k}.
\]
The number of resulting grid--stopping-time pairs grows at most
exponentially in \(k\), whereas
\eqref{eq:rademacher-hanson-wright} is bounded by
\(C\exp(-c_0A^2k)\). Choosing \(A\) sufficiently large and applying
Borel--Cantelli gives, almost surely, for all sufficiently large \(k\),
\begin{equation}
\sup_{(\ell,a)\in\mathcal N_k}
\sup_{t\in[0,1]}
\left|
\V_{\Pi(\ell,a)}^2(X;t)-c^2t
\right|
\le C(\omega)\sqrt{\delta_k k}.
\label{eq:rademacher-net-bound}
\end{equation}
The \(1/2\)-H\"older bound controls the incomplete terminal cell.
To pass from the entropy net to the whole shell, let
\(\Pi(\ell,a)\) satisfy
$ \delta_{k+1}\le\ell\le\delta_k $
and choose the net representative and edited subpartitions supplied
by Lemma~\ref{lem:shifted-grid-net}.  Lemma~
\ref{lem:finite-shifted-grid-stability} applies with
\[
\delta=\frac{\delta_k}{2},
\qquad
\varepsilon=2\eta_k,
\]
since the two mesh sizes lie in
\([\delta_k/2,\delta_k]\), the deleted points lie in
$(0,\delta_k)\cup(1-\delta_k,1), $
and the corresponding edited partition points are at distance at
most \(2\eta_k\).
Since every sample path is \(1/2\)-H\"older, $
\omega_X(h)^2\le C h.$
Consequently the three error terms in
Lemma~\ref{lem:finite-shifted-grid-stability} are bounded by
\[
C\left[
\left(\frac{\eta_k}{\delta_k}\right)^{1/2}
+
\frac{\eta_k}{\delta_k}
+
\delta_k
\right].
\]
Here
\[
\eta_k=2^{-3k}=\delta_k^3,
\]
so the right-hand side is \(O(\delta_k)\).
The net energies are uniformly bounded by
Equation~\eqref{eq:rademacher-net-bound}; hence the convergence in
that equation extends uniformly to every shifted grid in the
\(k\)-th shell. Taking the supremum over finer shells proves
\eqref{eq:rademacher-schauder-self-averaging}.
Thus Definition~\ref{def:p-roughness-intrinsic} gives
\(X\in\mathscr R_2([0,1])\) and \([X]^2(t)=c^2t\).
Finally Proposition~\ref{prop:microscope-free-characterization} yields
\[
\mathfrak R_{2,L}(X)
\le
2\Omega_2^{c^2{\rm id}}(X;1/L),
\]
which is \eqref{eq:rademacher-schauder-mesoscopic}.
\end{proof}

\begin{remark}[Independent signs beyond the quadratic order]
\label{rem:randomsign-p}
{\em For \(p>1\), consider the critical random-sign series
\begin{equation}
X_p(t):=\sum_{m=0}^{\infty}2^{m(1/2-1/p)}
\sum_{k=0}^{2^m-1}\xi_{m,k}e^{\mathbb T}_{m,k}(t),
\qquad t\in[0,1],
\label{eq:rademacher-critical-p-path}
\end{equation}
with independent symmetric signs. Every realization belongs to
\(\mathfrak X^{1/p}\), so its dyadic \(p\)-th variation is \(C_pt\)
by \eqref{eq:signed-TL-linear-variation}.

The proof of Theorem~\ref{thm:rademacher-schauder-rough} uses the
quadratic structure of the energy and Parseval's identity. For
\(X_2\), corresponding to \(a=0\) and \(c=1\),
\eqref{eq:rademacher-increment-covariance} gives
\(\mathbb E|\Delta_I X_2|^2=|I|-|I|^2\).
There is no corresponding Parseval identity for the critical
\(p\)-th absolute moment when \(p\ne2\). In particular, independence
of the coefficient signs does not by itself establish the uniform
energy limit or the power-discrepancy hypothesis of
Proposition~\ref{prop:sign-discrepancy}.

For \(p=3\), numerical averages over \(200\) independent sign arrays
at level \(n=9\) give the following energies on
\(\Pi(2^{-n},\beta2^{-n})\) at time \(1\):
\begin{center}
\begin{tabular}{lcccc}
 & \(\beta=0\) & \(\beta=\tfrac14\) & \(\beta=\tfrac12\) & \(\beta=\tfrac34\)\\[2pt]\hline
Independent signs & \(0.7466\) & \(1.14\) & \(1.03\) & \(1.14\)
\end{tabular}
\end{center}
For comparison, the deterministic all-positive path at \(n=14\)
gives \(0.7469\), \(0.2229\), \(0.1067\), and \(0.2229\) at
the same four phases. Its dyadic and half-shift limits are proved in
Proposition~\ref{prop:TL-phase-dependence}.

These finite-resolution averages suggest phase dependence for
independent signs at \(p=3\), despite their common dyadic limit.
They do not prove almost-sure failure of intrinsic \(3\)-roughness,
and do not establish a claim for every \(p\ne2\). Such a conclusion
would require an asymptotic analysis of the shifted energies, beyond
the numerical evidence given here.

At \(p=2\), the finite-mesh expectation is described exactly by
\eqref{eq:rademacher-mean-energy}. With \(h=2^{-n}\), the two
boundary cells of a shifted grid give
\[
\mathbb E\V^2_{\Pi(h,\beta h)}(X_2;1)
=1-h+2\beta(1-\beta)h^2,\qquad 0\le\beta<1.
\]
Thus the value is exactly \(1-h\) at phase zero and differs by
only \(O(h^2)\) at other phases; all phases have limit \(1\).
Theorem~\ref{thm:rademacher-schauder-rough} supplies the stronger
almost-sure uniform convergence required for intrinsic roughness.}
\end{remark}

\begin{example}[Independent signs do not imply fourth-order roughness]
\label{ex:rademacher-p4}
Let $(\xi_{m,k})_{m\geq0,\,0\leq k<2^m}$ be independent symmetric
Rademacher random variables, and consider
\[
  X_4(t)
  :=\sum_{m=0}^{\infty}2^{m/4}
       \sum_{k=0}^{2^m-1}\xi_{m,k}e^{\mathbb T}_{m,k}(t),
  \qquad t\in[0,1].
\]
Every realization is $1/4$-H\"older continuous and has dyadic fourth
variation
\[
  [X_4]^4_{\mathbb T}(t)=C_4t,
  \qquad
  C_4:=\frac{13+12\sqrt2}{49}.
\]
Nevertheless,
\[
  \mathbb P\bigl(X_4\in\mathcal R_4([0,1])\bigr)=0.
\]
More precisely, the expected fourth energy along the half-shifted
dyadic grids satisfies
\begin{equation}
  \lim_{n\to\infty}
  \mathbb E\!\left[
    V^4_{\Pi(2^{-n},\,2^{-n-1})}(X_4;1)
  \right]
  =K_4
  :=\frac{6481+2364\sqrt2}{7595}
  >C_4.
  \label{eq:rademacher-p4-half-shift}
\end{equation}

\begin{proof}
Set
\[
  q:=2^{-3/4},\qquad
  a:=q^2=2^{-3/2},\qquad
  v:=\frac{a}{1-a}.
\]
The critical Schauder bound gives a deterministic constant $L$ such
that every realization satisfies
\[
  |X_4(t)-X_4(s)|\leq L|t-s|^{1/4}.
\]
In particular, for every finite partition $\lambda$ of $[0,1]$,
\begin{equation}
  V^4_\lambda(X_4;1)\leq L^4.
  \label{eq:rademacher-p4-energy-bound}
\end{equation}
The dyadic variation formula for the signed Takagi--Landsberg class
identifies its limiting energy constant with
\[
  C_4=\mathbb E|S|^4,
  \qquad S:=\sum_{j=1}^{\infty}q^j\varepsilon_j,
\]
where $(\varepsilon_j)_{j\geq1}$ are independent symmetric
Rademacher variables. For any square-summable real sequence $(c_j)$,
\begin{equation}
  \mathbb E\!\left(\sum_jc_j\varepsilon_j\right)^4
  =3\left(\sum_jc_j^2\right)^2-2\sum_jc_j^4.
  \label{eq:rademacher-fourth-moment}
\end{equation}
Since $q^4=1/8$, this gives
\[
  C_4=3v^2-\frac{2}{7}
     =\frac{13+12\sqrt2}{49}.
\]

Fix $n\geq1$, write $h:=2^{-n}$, and set
\[
  d_{n,k}:=X_4((k+1)h)-X_4(kh),\qquad 0\leq k<2^n.
\]
The terms of levels below $n$ are affine on each dyadic cell; the
level-$n$ tent has height $h^{1/4}/2$, and all higher-level terms
vanish at the cell midpoint. Consequently,
\begin{align*}
  &X_4((k+3/2)h)-X_4((k+1/2)h)=
    \frac{d_{n,k}+d_{n,k+1}}{2}
    +\frac{h^{1/4}}{2}(\xi_{n,k+1}-\xi_{n,k}),
    \qquad 0\leq k\leq2^n-2.
\end{align*}
Suppose that the binary expansion of $k$ has exactly $r$ trailing
ones, where $0\leq r\leq n-1$. In the average of the two adjacent
dyadic increments, the contribution from their common ancestor at
level $n-r-1$ cancels. Contributions from finer levels involve
distinct independent coefficients, while those from coarser levels
coincide. Thus the normalized half-shift increment has the same law as
\[
  Y_{r,n}
  :=\frac12\sum_{j=0}^{r}q^j(\varepsilon_j+\varepsilon'_j)
    +\sum_{j=r+2}^{n}q^j\eta_j,
\]
where all variables on the right are independent symmetric
Rademacher variables, and an empty sum is zero. There are exactly
$2^{n-r-1}$ such indices $k$.

The two boundary cells of the shifted grid contribute $O(h)$, by the
uniform H\"older bound. Hence
\[
  \mathbb E\!\left[V^4_{\Pi(h,h/2)}(X_4;1)\right]
  =\sum_{r=0}^{n-1}2^{-r-1}\mathbb E|Y_{r,n}|^4+O(h).
\]
For each fixed $r$, $Y_{r,n}$ converges uniformly in the signs to
\[
  Y_r
  :=\frac12\sum_{j=0}^{r}q^j(\varepsilon_j+\varepsilon'_j)
    +\sum_{j=r+2}^{\infty}q^j\eta_j.
\]
Moreover, $|Y_{r,n}|\leq(1-q)^{-1}$ uniformly in $r$ and $n$.
Dominated convergence therefore yields
\begin{equation}
  \lim_{n\to\infty}
  \mathbb E\!\left[V^4_{\Pi(2^{-n},2^{-n-1})}(X_4;1)\right]
  =\sum_{r=0}^{\infty}2^{-r-1}\mathbb E|Y_r|^4.
  \label{eq:rademacher-p4-carry-sum}
\end{equation}

The variance of $Y_r$ is
\[
  v_r
  :=\frac12\sum_{j=0}^{r}a^j+\sum_{j=r+2}^{\infty}a^j
   =\frac{1}{2(1-a)}
     +\frac{a(a-1/2)}{1-a}\,a^r.
\]
The sum of the fourth powers of its Rademacher coefficients equals
\[
  \frac18\sum_{j=0}^{r}q^{4j}
  +\sum_{j=r+2}^{\infty}q^{4j}
  =\frac17.
\]
Equation~\eqref{eq:rademacher-fourth-moment} consequently gives
\[
  \mathbb E|Y_r|^4=3v_r^2-\frac27.
\]
Writing
\[
  A:=\frac{1}{2(1-a)},\qquad
  B:=\frac{a(a-1/2)}{1-a},
\]
and summing the geometric series in
\eqref{eq:rademacher-p4-carry-sum}, we obtain
\[
  K_4
  =3\left(A^2+\frac{2AB}{2-a}+\frac{B^2}{2-a^2}\right)-\frac27
  =\frac{6481+2364\sqrt2}{7595}.
\]
The strict inequality also follows directly from
\[
  v_r-v
  =\frac{1/2-a}{1-a}\bigl(1-a^{r+1}\bigr)>0,
\]
which implies $\mathbb E|Y_r|^4>C_4$ for every $r\geq0$.

Finally, changing finitely many coefficient levels adds a
piecewise-linear path and therefore preserves membership in
$\mathcal R_4$. Independence across levels and the tail-event
property of $p$-roughness imply
\[
  \mathbb P\bigl(X_4\in\mathcal R_4([0,1])\bigr)\in\{0,1\}.
\]
If this probability were one, the dyadic fourth variation would
identify the intrinsic energy as $C_4t$. The definition of
$4$-roughness would then force
\[
  V^4_{\Pi(2^{-n},2^{-n-1})}(X_4;1)\longrightarrow C_4
  \qquad\text{almost surely}.
\]
The deterministic bound~\eqref{eq:rademacher-p4-energy-bound} would
imply convergence of the expectations to $C_4$, contradicting
\eqref{eq:rademacher-p4-half-shift}. The probability must therefore
be zero.
\end{proof}
\end{example}

\section{Energy occupation measures and local times}
\label{sec:fourier}
\subsection{Multidimensional paths and scalar \(p\)-energy}
\label{sec:multidimensional-scalar-energy}
The scalar theory may be extended to vector-valued paths  by replacing absolute values
of increments with their Euclidean norms in Definition \ref{def:p-roughness-intrinsic}. We denote
the resulting class by
$\mathscr R^{sc}_p([0,T];\mathbb R^d)$.
Let \(x\in C([0,T],\R^d)\) and define
\begin{equation}
\mu_{\lambda_n}^{p,x}
:=\sum_i |x(u_{i+1}^n)-x(u_i^n)|^p\,\delta_{u_i^n}.
\label{eq:vector-discrete-time-energy}
\end{equation}
We write \(x\in V_{p,\mathrm{sc}}(\lambda;\R^d)\) when these measures converge weakly to a finite nonatomic measure, and denote the cumulative limit by \([x]^p_{\lambda,\mathrm{sc}}\).  The stopped-sum criterion \eqref{eq:stopped-convergence} remains valid without change.
Similarly,  $x\in \mathscr R^{sc}_p([0,T];\mathbb R^d)$ if there exists a continuous
nondecreasing function $[x]^p_{\mathrm{sc}}:[0,T]\mapsto \mathbb{R}_+$ satisfying
$
  [x]^p_{\mathrm{sc}}(0)=0, [x]^p_{\mathrm{sc}}(T)>0
$ and
$$ 
\sup_{0<\ell\le\delta}
\sup_{0\le a<\ell}
\sup_{t\in[0,T]}
\left|
\sum_{\Pi(\ell,a)} \|x(u_{i+1}^n\wedge t )-x(u_i^n\wedge t)\|^p-[x]^p_{sc}(t)
\right|\  \mathop{\longrightarrow}^{\delta \to 0} \ 0.$$
Then $[x]^p_{sc}$ is the intrinsic $p-$energy of $x$.
For \(a_1,\ldots,a_m\in\R^d\), set
\begin{equation}
\mathfrak D_{p,d}(a_1,\ldots,a_m)
:=\left|\sum_{j=1}^m a_j\right|^p-\sum_{j=1}^m|a_j|^p.
\label{eq:vector-error}
\end{equation}
The only analytic input needed in the scalar proof is still available in Euclidean space:
\begin{equation}
\bigl||a+b|^p-|a|^p\bigr|
\le C_p\bigl(|a|^{p-1}|b|+|b|^p\bigr),
\qquad a,b\in\R^d,
\label{eq:vector-power-increment}
\end{equation}
which follows from the fundamental theorem of calculus for \(z\mapsto|z|^p\).  Consequently the alignment, endpoint-perturbation and coarse-graining arguments of Sections~\ref{sec:intrinsic-roughness}--\ref{sec:invariance} carry over with the Euclidean norm and constants independent of the dimension.

\begin{theorem}[Partition invariance of scalar $p-$energy]
\label{thm:vector-main-equivalence}
Let $x\in\mathscr R^{sc}_p([0,T];\mathbb R^d)$ with $p-$energy $[x]^p_{sc}$. Let \(\pi\in\mathcal A_{\rm unif}\) be a matched endpoint perturbation with displacement \(\delta_n\).  If
\begin{equation}
N_n\omega_x(\delta_n)^p\to0,
\label{eq:vector-main-roughness}
\end{equation}
then
\begin{equation}
x\in V_{p,\mathrm{sc}}(\pi;\R^d),
\qquad
[x]^p_{\pi,\mathrm{sc}}=[x]^p_{\mathrm{sc}}.
\label{eq:vector-main-invariance}
\end{equation}
The corresponding fixed-pair relative coarse-graining error criterion is obtained from Proposition~\ref{thm:main-equivalence} by replacing \(\mathfrak D_p\) with \(\mathfrak D_{p,d}\).
\end{theorem}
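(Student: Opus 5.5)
The proof follows the scalar proof of Theorem~\ref{thm:p-rough-invariance} almost verbatim, with absolute values replaced by Euclidean norms. First I extract the reference convergence from the hypothesis. Since $x\in\mathscr R^{sc}_p([0,T];\R^d)$ and $\widehat\pi_n=\Pi(\ell_n,a_n)$ with $\ell_n\to0$ and $0\le a_n<\ell_n$, the defining supremum over meshes $\ell\le\delta$ and phases gives
\[
\sup_{t\in[0,T]}\Bigl|\V^p_{\widehat\pi_n}(x;t)-[x]^p_{\mathrm{sc}}(t)\Bigr|\longrightarrow0,
\]
where $\V^p$ now denotes the stopped sums built with Euclidean norms. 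The limit $[x]^p_{\mathrm{sc}}$ is continuous and nondecreasing with value $0$ at $0$. By the vector stopped-sum criterion (stated in the paper to remain valid), it follows that $x\in V_{p,\mathrm{sc}}(\widehat\pi;\R^d)$ and $[x]^p_{\widehat\pi,\mathrm{sc}}=[x]^p_{\mathrm{sc}}$. In particular, the reference energies $\sup_t\V^p_{\widehat\pi_n}(x;t)$ are bounded uniformly in $n$ for all large $n$.

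Next I transfer this convergence to the perturbed sequence $\pi_n$ by the vector analogue of Proposition~\ref{prop:endpoint-perturbation}. For matched points $t^n_k$ and $\widehat t^n_k$, define the stopped increments $a_{k,n}(t)=x(\widehat t^n_{k+1}\wedge t)-x(\widehat t^n_k\wedge t)$ and $b_{k,n}(t)=x(t^n_{k+1}\wedge t)-x(t^n_k\wedge t)$. The map $r\mapsto r\wedge t$ is $1$-Lipschitz, so the error $e_{k,n}=b_{k,n}-a_{k,n}$ satisfies $|e_{k,n}(t)|\le2\omega_x(\delta_n)$, where $\omega_x$ is the Euclidean modulus of continuity. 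Summing over the $N_n$ intervals gives
\[
F_n(t)=\sum_k|e_{k,n}(t)|^p\le 2^pN_n\omega_x(\delta_n)^p,
\]
and this bound tends to zero by \eqref{eq:vector-main-roughness}.

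The key step is the vector version of Lemma~\ref{lem:energy-perturbation}. Inequality \eqref{eq:vector-power-increment} gives
\[
\bigl||a_k+e_k|^p-|a_k|^p\bigr|\le C_p\bigl(|a_k|^{p-1}|e_k|+|e_k|^p\bigr)
\]
for vectors $a_k,e_k\in\R^d$. Summing over $k$ and applying H\"older's inequality to the scalar sequences $|a_k|^{p-1}$ and $|e_k|$ yields
\[
\bigl|\V^p_{\pi_n}(x;t)-\V^p_{\widehat\pi_n}(x;t)\bigr|\le C_p\Bigl(\V^p_{\widehat\pi_n}(x;t)^{(p-1)/p}F_n(t)^{1/p}+F_n(t)\Bigr).
\]
The reference energies are uniformly bounded and $\sup_tF_n(t)\to0$, so the right-hand side tends to zero uniformly in $t$. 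The stopped-sum criterion then gives \eqref{eq:vector-main-invariance}.

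For the relative criterion, I repeat the alignment construction and Proposition~\ref{prop:coarse-graining-estimate} with $\mathfrak D_{p,d}$ in place of $\Dp$. The exact identity \eqref{eq:exact-error} is purely additive over blocks, so it holds verbatim for vectors. The bound \eqref{eq:endpoint-mismatch-bound} uses only the modulus of continuity, so it also carries over unchanged. The perturbation step uses the vector lemma above. The argument of Proposition~\ref{thm:main-equivalence} then goes through word for word.

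I do not expect a real obstacle. The only point needing care is the vector stopped-sum criterion: it requires pointwise convergence of stopped sums to a continuous nondecreasing limit. Its scalar proof uses only nonnegativity of the summands and continuity of $x$, both of which hold here. Since the paper asserts this criterion without proof, I will invoke it rather than reprove it.
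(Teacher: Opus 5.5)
Your proposal is correct and follows the same route as the paper. The paper's proof simply notes that the endpoint-perturbation argument behind Theorem~\ref{thm:p-rough-invariance} uses only \eqref{eq:vector-power-increment} and H\"older's inequality, and that the relative criterion rests on the same additive grouped-increment identity, so the scalar arguments carry over word for word; your write-up makes exactly these steps explicit.
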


\begin{proof}
The endpoint perturbation proof uses only \eqref{eq:vector-power-increment} and H\"older's inequality; the relative statement uses the same grouped-increment identity.  Thus the scalar arguments apply word for word.
\end{proof}

At \(p=2\), this scalar energy contains only the trace of the matrix-valued quadratic variation.  Indeed, if
\begin{equation}
Q_{\lambda_n}^x:=\sum_i \Delta_i x(\Delta_i x)^\top\delta_{u_i^n}
\label{eq:vector-matrix-qv}
\end{equation}
converges entrywise to \(Q_\lambda^x\), then
\begin{equation}
\mu_\lambda^{2,x}=\operatorname{tr}Q_\lambda^x.
\label{eq:vector-trace-qv}
\end{equation}
Hence scalar-energy invariance does not imply invariance of the full quadratic-variation matrix.  We will re-examine this issue in Section \ref{sec:applications}.



\subsection{Energy occupation measures}
The scalar \(p\)-th variation may be used to define a weighted occupation measure, which retains the spatial positions at which the $p-$th variation  accumulates. We call this the {\it energy occupation measure}.
The energy occupation measure is defined exactly as in the scalar case by pushing \(\mu^{p,x}\) forward under \(t\mapsto x(t)\).

For a partition \(\lambda_n=\{u_i^n\}\) and \(t\in[0,T]\), define
\begin{equation}
\nu_{\lambda_n,t}^{p,x}
:=
\sum_i
|x(u_{i+1}^n\wedge t)-x(u_i^n\wedge t)|^p
\,\delta_{x(u_i^n\wedge t)}.
\label{eq:discrete-energy-occupation}
\end{equation}
For a finite signed Borel measure $\nu$ on $\mathbb{R}^d$,
we use the Fourier-transform convention
\[
  \widehat{\nu}(\xi)
  := \int_{\mathbb{R}^d} e^{i\xi\cdot y}\,\nu(dy),
  \qquad \xi\in\mathbb{R}^d.
\]
At zero frequency,
\begin{equation}
\widehat\nu_{\lambda_n,t}^{p,x}(0)=\V^p_{\lambda_n}(x;t).
\label{eq:occupation-zero-frequency}
\end{equation}
If \(x\in V_p(\lambda)\), set
\begin{equation}
\nu_t^{p,x;\lambda}
:=
x_\#\bigl(\mu_\lambda^{p,x}|_{[0,t]}\bigr).
\label{eq:limiting-energy-occupation-general}
\end{equation}

\begin{proposition}[Occupation-measure convergence]
\label{prop:occupation-convergence}
If \(x\in V_p(\lambda)\), then for every \(t\in[0,T]\),
\begin{equation}
\nu_{\lambda_n,t}^{p,x}\Longrightarrow\nu_t^{p,x;\lambda},
\label{eq:occupation-convergence}
\end{equation}
and hence \(\widehat\nu_{\lambda_n,t}^{p,x}(\xi)\to\widehat\nu_t^{p,x;\lambda}(\xi)\) for every fixed \(\xi\).
\end{proposition}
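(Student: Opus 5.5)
The plan is to test the measures against bounded continuous functions, identify the limit by a Riemann–Stieltjes type argument against $\mu_\lambda^{p,x}$, and then read off the Fourier statement. Fix $t\in[0,T]$ and $\varphi\in C_b(\R^d)$; by continuity of $x$ we may restrict $\varphi$ to the compact set $K=x([0,T])$, where it is uniformly continuous with modulus $\omega_\varphi$. Write
\[
\int\varphi\,d\nu^{p,x}_{\lambda_n,t}=\sum_i \varphi\bigl(x(u_i^n\wedge t)\bigr)\,\bigl|x(u_{i+1}^n\wedge t)-x(u_i^n\wedge t)\bigr|^p .
\]
The target is $\int_{[0,t]}\varphi(x(s))\,\mu^{p,x}_\lambda(ds)$, which is exactly $\int\varphi\,d\nu_t^{p,x;\lambda}$ by the definition of the pushforward.

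First, only indices with $u_i^n<t$ contribute, since the increment vanishes otherwise. The index with $u_i^n<t\le u_{i+1}^n$ contributes at most $\|\varphi\|_\infty\,\omega_x(|\lambda_n|)^p\to0$. The sum therefore equals
\[
\int_{[0,t)} g\bigl(s\bigr)\,\mu^{p,x}_{\lambda_n}(ds)+o(1),\qquad g(s):=\varphi(x(s)),
\]
where we use that $\mu^{p,x}_{\lambda_n}$ places mass $|\Delta_i x|^p$ at $u_i^n$.

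The core step is to pass from $\mu^{p,x}_{\lambda_n}$ restricted to $[0,t)$ to $\mu^{p,x}_\lambda$ restricted to $[0,t]$. Since $g$ is continuous on $[0,T]$, $\mu^{p,x}_{\lambda_n}\Rightarrow\mu^{p,x}_\lambda$ weakly, and $\mu^{p,x}_\lambda$ is nonatomic, the boundary $\{t\}$ of $[0,t]$ is a null set for the limit. The portmanteau theorem, applied to the bounded function $g\mathbf 1_{[0,t)}$ whose discontinuity set is $\{t\}$ and hence $\mu^{p,x}_\lambda$-null, gives
\[
\int_{[0,t)} g\,d\mu^{p,x}_{\lambda_n}\to\int_{[0,t]} g\,d\mu^{p,x}_\lambda .
\]
If one prefers to avoid portmanteau, the uniform convergence $\V^p_{\lambda_n}(x;\cdot)\to[x]^p_\lambda$ from \eqref{eq:stopped-convergence} can be used instead. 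One approximates $g$ uniformly by step functions on a fixed finite partition of $[0,t]$, exactly as in the proof of \eqref{eq:weighted-intrinsic-energy}. The cells straddling step boundaries then cost $O(\omega_x(|\lambda_n|)^p)$, and the approximation error is bounded by the oscillation of $g$ times a uniform bound on the energies.

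Combining these steps gives convergence of $\int\varphi\,d\nu^{p,x}_{\lambda_n,t}$ for every $\varphi\in C_b$, which is \eqref{eq:occupation-convergence}. The total masses $\V^p_{\lambda_n}(x;t)$ converge and are in particular bounded, and the measures are supported in the compact set $K$, so no tightness issue arises. The Fourier statement follows by taking $\varphi(y)=e^{i\xi\cdot y}$, treating real and imaginary parts separately as bounded continuous functions.

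The only delicate point is the treatment of the endpoint $t$: the interplay between the half-open restriction $[0,t)$ and the stopped last increment. This is handled by nonatomicity of $\mu^{p,x}_\lambda$ and the bound $\omega_x(|\lambda_n|)^p$ on the straddling cell.
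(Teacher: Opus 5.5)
Your proof is correct and follows the same route as the paper. You isolate the single unfinished cell containing $t$, bound its contribution by $\|\varphi\|_\infty\,\omega_x(|\lambda_n|)^p$, identify the remaining sum as the push-forward under $x$ of the discrete time-energy measure, and pass to the limit using weak convergence of $\mu^{p,x}_{\lambda_n}$. Your explicit appeal to nonatomicity of $\mu^{p,x}_\lambda$ (portmanteau applied to $g\mathbf 1_{[0,t)}$) to handle the restriction to $[0,t]$ fills in a step the paper's one-line proof leaves implicit.
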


\begin{proof}
Apart from the single cell containing \(t\), Equation~\eqref{eq:discrete-energy-occupation} is the push-forward of the discrete time-energy measure under the continuous map \(s\mapsto x(s)\).  The unfinished-cell error is bounded by \(C\omega_x(|\lambda_n|)^p\), and weak convergence of \(\mu_{\lambda_n}^{p,x}\) gives the result.
\end{proof}

For the remainder of the section the reference sequence is \(\rho\), and we write \(\nu_t^{p,x}:=\nu_t^{p,x;\rho}\).

\subsection{Fourier-localized coarse-graining error}

For an alignment \(\rho^\star\) of \(\rho\) with \(\pi\), define the signed atomic coarse-graining error measure
\begin{equation}
\eta_{\pi,\rho^\star;n}^{p,x}(t)
:=
\sum_{k=0}^{N(\pi_n)-1}
\mathfrak D_{k,n}^p(x;t)\,\delta_{x(t_k^n)}.
\label{eq:fourier-error-measure}
\end{equation}
Its Fourier transform satisfies
\begin{equation}
\widehat\eta_{\pi,\rho^\star;n}^{p,x}(t,0)
=
\Def^p_{\pi,\rho^\star;n}(x;t).
\label{eq:fourier-error-zero}
\end{equation}

The scalar coarse-graining estimate admits the following Fourier version.  Write \(\Theta_n=\Theta_n(x;\pi,\rho^\star)\).  If \(A_n(t)\) is the grouped energy of Section~\ref{sec:invariance}, then
\begin{equation}
\begin{aligned}
&\left|
\widehat\nu_{\pi_n,t}^{p,x}(\xi)
-
\widehat\nu_{\rho_n^\star,t}^{p,x}(\xi)
-
\widehat\eta_{\pi,\rho^\star;n}^{p,x}(t,\xi)
\right|
\\
&\qquad\le
C_p\bigl(A_n(t)^{(p-1)/p}\Theta_n^{1/p}+\Theta_n\bigr)
+
|\xi|\,\omega_x(|\pi_n|)\V^p_{\rho_n^\star}(x;t).
\end{aligned}
\label{eq:fourier-coarse-graining-theta}
\end{equation}
Indeed, the first term is exactly the endpoint error from Proposition~\ref{prop:coarse-graining-estimate}; the additional term comes from relocating each fine energy atom from \(x(s_j^n)\) to its coarse representative \(x(t_k^n)\), using \(|e^{ia}-e^{ib}|\le|a-b|\).

\begin{theorem}[Cancellation of scalar coarse-graining errors]
\label{thm:fourier-equivalence}
Let \(x\in C([0,T])\cap V_p(\rho)\).  For any partition sequence \(\pi\) with vanishing mesh, the following are equivalent:
\begin{enumerate}[label=(\roman*)]
\item \(x\in\RelDef^p_{\pi\mid\rho}\);
\item \(x\in V_p(\pi)\) and \([x]^p_\pi=[x]^p_\rho\);
\item for every \(t\), \(\nu_{\pi_n,t}^{p,x}\Longrightarrow\nu_t^{p,x}\);
\item 
\  $ \widehat\eta_{\pi,\rho^\star;n}^{p,x}(t,\xi)\longrightarrow0$
 for some alignment \(\rho^\star\) and every $(t,\xi).$
 \item 
\  $ \widehat\eta_{\pi,\rho^\star;n}^{p,x}(t,\xi)\longrightarrow0$
 for every alignment \(\rho^\star\) and  every $(t,\xi)$.
\end{enumerate}
\end{theorem}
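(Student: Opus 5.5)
The plan is to chain the equivalences through (ii), using Proposition~\ref{thm:main-equivalence} for (i)$\Leftrightarrow$(ii) and the Fourier coarse-graining estimate \eqref{eq:fourier-coarse-graining-theta} for the Fourier statements. Proposition~\ref{thm:main-equivalence} already gives (i)$\Leftrightarrow$(ii). It also shows that under (ii) the total relative error tends to zero uniformly in time along \emph{every} alignment, with $\sup_t A_n(t)$ bounded. The order of proof will be (ii)$\Rightarrow$(iii), (iii)$\Rightarrow$(ii), (ii)$\Rightarrow$(v), (v)$\Rightarrow$(iv) (which is trivial, since an alignment exists), and finally (iv)$\Rightarrow$(i).

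For (ii)$\Rightarrow$(iii), apply Proposition~\ref{prop:occupation-convergence} to $\pi$. Since $\mu^{p,x}_\pi=\mu^{p,x}_\rho$, the limit $x_\#(\mu_\pi^{p,x}|_{[0,t]})$ equals $\nu_t^{p,x}$. For (iii)$\Rightarrow$(ii), test weak convergence against the constant function $1$. This is legitimate because the measures are finite and, by \eqref{eq:occupation-zero-frequency}, their total masses are exactly $\V^p_{\pi_n}(x;t)$. Thus $\V^p_{\pi_n}(x;t)\to\nu_t^{p,x}(\R^d)=[x]^p_\rho(t)$ for every $t$. The limit is continuous, nondecreasing and vanishes at $0$, so the stopped-sum criterion \eqref{eq:stopped-convergence} yields (ii).

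For (ii)$\Rightarrow$(v), fix any alignment $\rho^\star$ and a pair $(t,\xi)$. By (ii)$\Rightarrow$(iii) and Proposition~\ref{prop:occupation-convergence} applied to the sequence $\rho^\star$ (a subsequence of $\rho$), both $\widehat\nu_{\pi_n,t}^{p,x}(\xi)$ and $\widehat\nu_{\rho^\star_n,t}^{p,x}(\xi)$ converge to $\widehat\nu_t^{p,x}(\xi)$. In \eqref{eq:fourier-coarse-graining-theta}, $\Theta_n\to0$ by the definition of alignment, $A_n(t)$ is bounded by \eqref{eq:exact-error}, and $\omega_x(|\pi_n|)\V^p_{\rho^\star_n}(x;t)\to0$. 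Hence $\widehat\eta^{p,x}_{\pi,\rho^\star;n}(t,\xi)\to0$.

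For (iv)$\Rightarrow$(i), specialise to $\xi=0$. By \eqref{eq:fourier-error-zero}, $\widehat\eta(t,0)=\Def^p_{\pi,\rho^\star;n}(x;t)\to0$ for every $t$, which is exactly Definition~\ref{def:relative-error-cancellation}.

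The main obstacle is justifying \eqref{eq:fourier-coarse-graining-theta} itself, specifically the relocation term. A fine atom sits at $x(s_j^n\wedge t)$ while its coarse representative sits at $x(t_k^n)$. The distance between these points is controlled by the oscillation of $x$ over a $\pi_n$-cell enlarged by $h_n$, so the bound really involves $\omega_x(|\pi_n|+h_n)$ rather than $\omega_x(|\pi_n|)$. Since $h_n\to0$ this changes nothing in the limit, and I would state the term as $|\xi|\,\omega_x(2|\pi_n|+2h_n)\V^p_{\rho_n^\star}(x;t)$, with analogous care for the unfinished cell at $t$. The remaining terms come from applying Lemma~\ref{lem:energy-perturbation} atom by atom, exactly as in Proposition~\ref{prop:coarse-graining-estimate}, with all atoms for the $\pi_n$-increments placed at $x(t_k^n)$. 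As written above, only the $\xi=0$ case of the estimate is needed for the implications (iv)$\Rightarrow$(i)$\Rightarrow$(ii); the general-$\xi$ estimate is needed only for (ii)$\Rightarrow$(v).
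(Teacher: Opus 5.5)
Your proposal is correct and follows the paper's own argument. The steps match: (i)\(\Leftrightarrow\)(ii) from Proposition~\ref{thm:main-equivalence}; (ii)\(\Leftrightarrow\)(iii) via Proposition~\ref{prop:occupation-convergence} and total masses; the Fourier coarse-graining estimate \eqref{eq:fourier-coarse-graining-theta}, together with the alignment condition and boundedness of \(A_n\), for the Fourier statements; and \(\xi=0\) for (iv)\(\Rightarrow\)(i). Your explicit route (ii)\(\Rightarrow\)(v)\(\Rightarrow\)(iv) makes visible that the paper's argument works for every alignment.

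Your worry about the relocation term is unnecessary. For \(j\) in the \(k\)-th block one has \(t_k^n\le s^n_{\kappa_n(k)}\le s_j^n<t_{k+1}^n\), so \(|x(t_k^n)-x(s_j^n)|\le\omega_x(|\pi_n|)\) and the estimate holds as stated. Your enlarged modulus would serve equally well.
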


\begin{proof}
The equivalence of (i) and (ii) is Proposition~\ref{thm:main-equivalence}.  Equality of the time-energy measures in (ii), followed by Proposition~\ref{prop:occupation-convergence}, gives (iii); taking total masses gives the converse.  Under these conditions both discrete occupation transforms converge to the same limit.  Equation~\eqref{eq:fourier-coarse-graining-theta}, the alignment condition and boundedness of \(A_n\) then give (iv).  Finally, \eqref{eq:fourier-error-zero} gives (iv)\(\Rightarrow\)(i).
\end{proof}
Thus Fourier cancellation at fixed frequency  is not a stronger  roughness notion: it is an equivalent representation of the same relative partition-stability class.

\subsection{Quantitative Fourier windows and energy irregularity}
\label{subsec:energy-irregularity}

A finite atomic coarse-graining error measure cannot satisfy a useful global Fourier decay estimate.  The natural finite-level notion is therefore mesoscopic.  Let \(\Lambda_n\uparrow\infty\), \(\zeta_n\downarrow0\), and \(\varrho>0\).  We say that the alignment has mesoscopic Fourier \((p,\varrho)\)-coarse-graining error cancellation if
\begin{equation}
\sup_{t\in[0,T]}
\sup_{|\xi|\le\Lambda_n}
(1+|\xi|)^\varrho
\left|\widehat\eta_{\pi,\rho^\star;n}^{p,x}(t,\xi)\right|
\le\zeta_n.
\label{eq:mesoscopic-fourier-error}
\end{equation}

For \(0\le s<t\le T\), define the intrinsic energy occupation transform
\begin{equation}
\Phi_{s,t}^{x,p}(\xi)
:=
\int_{(s,t]}e^{i\xi\cdot x(r)}\,\mu_\rho^{p,x}(dr).
\label{eq:energy-occupation-transform}
\end{equation}
We call \(x\) \(p\)-energy \((\gamma,\varrho)\)-irregular when the energy is nontrivial and
\begin{equation}
|\Phi_{s,t}^{x,p}(\xi)|
\le
C|t-s|^\gamma(1+|\xi|)^{-\varrho}
\label{eq:energy-irregularity}
\end{equation}
for every \(s<t\) and \(\xi\).

\begin{theorem}[Mesoscopic transfer of energy irregularity]
\label{thm:mesoscopic-to-irregularity}
Assume \(x\in C([0,T])\cap V_p(\rho)\), let \(\rho^\star\) be an alignment with \(\pi\), and suppose \eqref{eq:mesoscopic-fourier-error} holds.  If
\begin{equation}
(1+\Lambda_n)^\varrho\Theta_n^{1/p}\to0,
\qquad
(1+\Lambda_n)^\varrho\Lambda_n\omega_x(|\pi_n|)\to0,
\label{eq:mesoscopic-window-scales}
\end{equation}
then any uniform discrete occupation estimate of the form
\begin{equation}
(1+|\xi|)^\varrho
\left|\widehat\nu_{\lambda_n,t}^{p,x}(\xi)-
\widehat\nu_{\lambda_n,s}^{p,x}(\xi)\right|
\le C|t-s|^\gamma+o(1),
\qquad |\xi|\le\Lambda_n,
\label{eq:mesoscopic-occupation-estimate}
\end{equation}
valid for either \(\lambda_n=\pi_n\) or \(\lambda_n=\rho_n^\star\), transfers to the other family and passes to the limit.  In particular, if \eqref{eq:mesoscopic-occupation-estimate} holds for one of the two discrete occupation families and \([x]^p_\rho(T)>0\), then \(x\) is \(p\)-energy \((\gamma,\varrho)\)-irregular in the sense of \eqref{eq:energy-irregularity}.
\end{theorem}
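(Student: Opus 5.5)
The plan is to compare the two discrete occupation transforms at a single frequency and then pass to the limit. Both steps use the Fourier coarse-graining estimate \eqref{eq:fourier-coarse-graining-theta} together with the mesoscopic hypothesis \eqref{eq:mesoscopic-fourier-error}.

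\textbf{Step 1 (uniform comparison).} Fix \(t\) and \(|\xi|\le\Lambda_n\). By \eqref{eq:fourier-coarse-graining-theta},
\[
\bigl|\widehat\nu_{\pi_n,t}^{p,x}(\xi)-\widehat\nu_{\rho_n^\star,t}^{p,x}(\xi)\bigr|
\le
|\widehat\eta_{\pi,\rho^\star;n}^{p,x}(t,\xi)|
+C_p\bigl(A_n(t)^{(p-1)/p}\Theta_n^{1/p}+\Theta_n\bigr)
+|\xi|\,\omega_x(|\pi_n|)\,\V^p_{\rho_n^\star}(x;t).
\]
We multiply through by \((1+|\xi|)^\varrho\le(1+\Lambda_n)^\varrho\) and treat the three terms in turn.
\begin{enumerate}[label=\textup{(\alph*)},leftmargin=2em]
\item The first term is at most \(\zeta_n\to0\) by \eqref{eq:mesoscopic-fourier-error}.
\item For the second term we need \(\sup_{n,t}A_n(t)<\infty\). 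Since \(x\in V_p(\rho)\), the energies \(\V^p_{\rho_n^\star}(x;t)\) are uniformly bounded by \eqref{eq:stopped-convergence}. Evaluating \eqref{eq:mesoscopic-fourier-error} at \(\xi=0\) and using \eqref{eq:fourier-error-zero} gives \(\sup_t|\Def^p_{\pi,\rho^\star;n}(x;t)|\le\zeta_n\). Identity \eqref{eq:exact-error} then bounds \(A_n\) uniformly. With \(\Theta_n\to0\), the first condition in \eqref{eq:mesoscopic-window-scales} kills this term.
\item The third term is bounded by \(C(1+\Lambda_n)^\varrho\Lambda_n\,\omega_x(|\pi_n|)\), which vanishes by the second condition in \eqref{eq:mesoscopic-window-scales}.
\end{enumerate}
Hence the weighted discrepancy is \(o(1)\) uniformly in \(t\in[0,T]\) and \(|\xi|\le\Lambda_n\). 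Applying this at the two times \(s\) and \(t\) and using the triangle inequality transfers \eqref{eq:mesoscopic-occupation-estimate} from either family to the other, with an enlarged \(o(1)\) term.

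\textbf{Step 2 (passage to the limit).} Take the estimate for \(\lambda_n=\rho_n^\star\) and fix \(\xi\); since \(\Lambda_n\uparrow\infty\), \(\xi\) eventually lies in every window. Proposition~\ref{prop:occupation-convergence}, applied along the subsequence \(\rho^\star\) of \(\rho\), gives \(\widehat\nu^{p,x}_{\rho_n^\star,t}(\xi)\to\widehat\nu_t^{p,x}(\xi)\). Letting \(n\to\infty\) yields
\[
(1+|\xi|)^\varrho\,\bigl|\widehat\nu_t^{p,x}(\xi)-\widehat\nu_s^{p,x}(\xi)\bigr|\le C|t-s|^\gamma .
\]
It remains to identify the left side with \(|\Phi_{s,t}^{x,p}(\xi)|\). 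By \eqref{eq:limiting-energy-occupation-general},
\[
\widehat\nu_t^{p,x}(\xi)-\widehat\nu_s^{p,x}(\xi)
=\int_{(s,t]}e^{i\xi\cdot x(r)}\,\mu_\rho^{p,x}(dr)
=\Phi_{s,t}^{x,p}(\xi),
\]
where nonatomicity of \(\mu_\rho^{p,x}\) removes any ambiguity at the endpoints. This is \eqref{eq:energy-irregularity}. Nontriviality of the energy is the hypothesis \([x]^p_\rho(T)>0\).

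\textbf{Main obstacle.} The delicate point is the uniform bound on \(A_n\) and its uniformity in \(t\). It relies on using the \(\xi=0\) case of the mesoscopic hypothesis, rather than only pointwise cancellation, together with the uniform-in-time stopped-sum convergence along \(\rho\). A secondary check is the atom-relocation error. Each fine atom at \(x(s_j^n)\) is moved to \(x(t_k^n)\) with \(|s_j^n-t_k^n|\le|\pi_n|+h_n\), so the bound should really involve \(\omega_x(|\pi_n|+h_n)\). Since \(h_n\le|\pi_n|\) eventually for a fine enough alignment, this is absorbed into \(\omega_x(2|\pi_n|)\). I would either adopt that convention or note that \eqref{eq:fourier-coarse-graining-theta} is used exactly as stated.
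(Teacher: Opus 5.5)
Your proposal is correct and follows the paper's proof essentially step for step: the $\xi=0$ case of \eqref{eq:mesoscopic-fourier-error} together with \eqref{eq:exact-error} bounds $A_n$ uniformly, the weighted form of \eqref{eq:fourier-coarse-graining-theta} combined with \eqref{eq:mesoscopic-window-scales} transfers the occupation estimate between the two families, and Proposition~\ref{prop:occupation-convergence} along $\rho^\star$ passes it to the limit. Your explicit identification of the limit with $\Phi^{x,p}_{s,t}$ and your caveat that the atom relocation really costs $\omega_x(|\pi_n|+h_n)$ are refinements the paper leaves implicit. On that caveat, the alignment is fixed, so you cannot make it finer; the gap closes anyway when $[x]^p_\rho(T)>0$, because $h_n<|\pi_n|$ then holds eventually for any alignment and $\omega_x(2h)\le 2\omega_x(h)$, while the paper simply invokes \eqref{eq:fourier-coarse-graining-theta} as stated.
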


\begin{proof}
At \(\xi=0\), \eqref{eq:mesoscopic-fourier-error} gives uniform scalar coarse-graining error convergence, hence boundedness of \(A_n\) by \eqref{eq:exact-error}.  Multiplying \eqref{eq:fourier-coarse-graining-theta} by \((1+|\xi|)^\varrho\) and taking \(|\xi|\le\Lambda_n\) gives an error tending to zero by \eqref{eq:mesoscopic-window-scales}.  This transfers \eqref{eq:mesoscopic-occupation-estimate} between the two discretizations.  Proposition~\ref{prop:occupation-convergence} along the aligned reference sequence then gives \eqref{eq:energy-irregularity} for each fixed \(\xi\).
\end{proof}

The mesoscopic coarse-graining error cancellation controls stability under coarse graining, whereas energy irregularity is an additional spatial cancellation property.

\begin{theorem}[H\"older--Sobolev regularity of the \(p\)-energy local time]
\label{thm:holder-sobolev-local-time}
Let \(x:[0,T]\to\R^d\) satisfy \eqref{eq:energy-irregularity} for some \(0<\gamma\le1\) and \(\varrho>d/2\).  Then \(\nu_t^{p,x}\) has an \(L^2(\R^d)\)-density \(L_t^{p,x}\), and for every
\(r<\varrho-d/2\),
\begin{equation}
\|L_t^{p,x}-L_s^{p,x}\|_{H^r(\R^d)}
\le C_r|t-s|^\gamma.
\label{eq:local-time-holder-sobolev}
\end{equation}
If \(\varrho>d\), Sobolev embedding yields a spatially continuous version.  Partition invariance of the energy occupation measure implies invariance of this density up to Lebesgue-null sets.
\end{theorem}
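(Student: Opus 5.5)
The plan is the standard Fourier route to local times. Fix $s<t$ and write $\nu_{s,t}:=\nu_t^{p,x}-\nu_s^{p,x}$. Since $\nu^{p,x}_t=x_\#(\mu^{p,x}_\rho|_{[0,t]})$, this is the push-forward of $\mu_\rho^{p,x}|_{(s,t]}$ under $x$. Its Fourier transform is therefore exactly
\[
\widehat{\nu_{s,t}}(\xi)=\int_{(s,t]}e^{i\xi\cdot x(r)}\,\mu^{p,x}_\rho(dr)=\Phi^{x,p}_{s,t}(\xi).
\]
The endpoint $s=0$ needs separate care, because $\nu_0^{p,x}$ may carry the atom $\mu(\{0\})$. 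This atom vanishes since $\mu$ is nonatomic, so we may take $s=0$ with $\nu_0=0$, and the case $s=0$ gives $\nu_t$ itself.

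By \eqref{eq:energy-irregularity},
\[
|\widehat{\nu_{s,t}}(\xi)|^2(1+|\xi|)^{2r}\le C^2|t-s|^{2\gamma}(1+|\xi|)^{2r-2\varrho}.
\]
The right-hand side is integrable over $\R^d$ exactly when $2(\varrho-r)>d$, that is, $r<\varrho-d/2$. This is the key computation. We first apply it with $r=0$, which is allowed because $\varrho>d/2$: $\widehat{\nu_{s,t}}\in L^2$, and since $\nu_{s,t}$ is a finite (nonnegative) measure whose Fourier transform is in $L^2$, Plancherel shows it is given by an $L^2$ density. Concretely, we would pair the measure with Schwartz test functions, identify the tempered distribution with $\mathcal F^{-1}\widehat{\nu_{s,t}}\in L^2$, and conclude absolute continuity. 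Applying this to $\nu_t=\nu_{0,t}$ gives the density $L^{p,x}_t$ with $L^{p,x}_t-L^{p,x}_s$ the density of $\nu_{s,t}$. The Sobolev estimate then follows from the definition
\[
\|f\|_{H^r}^2=\int(1+|\xi|^2)^r|\hat f|^2\,d\xi,
\]
which yields
\[
\|L_t-L_s\|_{H^r}^2\le C^2|t-s|^{2\gamma}\int(1+|\xi|)^{2r-2\varrho}\,d\xi=C_r^2|t-s|^{2\gamma}.
\]

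If $\varrho>d$, then we can choose $d/2<r<\varrho-d/2$. The embedding $H^r\hookrightarrow C_0(\R^d)$ for $r>d/2$, or more directly $\widehat{L_t}\in L^1$ (since $(1+|\xi|)^{-\varrho}$ is integrable), gives a continuous version by Fourier inversion. For the last assertion, we would use Theorem~\ref{thm:fourier-equivalence}. If $\nu^{p,x}_{\pi_n,t}\Rightarrow\nu_t^{p,x}$ for another sequence $\pi$, then the limiting occupation measures coincide as measures. Densities of equal measures agree Lebesgue-a.e., so $L_t$ is partition invariant.

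The main obstacle is mild. It is to justify rigorously that a finite measure with $L^2$ Fourier transform is absolutely continuous with $L^2$ density, and to keep track of the normalization constants in the Fourier convention. Both are handled by the tempered-distribution argument above. Note also that \eqref{eq:energy-irregularity} is assumed for all $\xi$, not only on mesoscopic windows, so no limiting argument is needed here.
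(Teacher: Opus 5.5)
Your proposal is correct and follows the paper's route. You identify the Fourier transform of $\nu_t^{p,x}-\nu_s^{p,x}$ with $\Phi^{x,p}_{s,t}$, use \eqref{eq:energy-irregularity} to get square-integrability of $(1+|\xi|)^{r}\Phi^{x,p}_{s,t}$ exactly when $r<\varrho-d/2$, apply Plancherel for the density, Sobolev embedding when $\varrho>d$, and equality of the limiting measures for the invariance claim. Beyond the paper, you also treat the $s=0$ endpoint via nonatomicity of $\mu^{p,x}_\rho$ and justify absolute continuity through the tempered-distribution argument, both of which the paper leaves implicit.
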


\begin{proof}
By \eqref{eq:energy-irregularity}, \(|\widehat\nu_t^{p,x}(\xi)|\lesssim(1+|\xi|)^{-\varrho}\), which belongs to \(L^2(\R^d)\) when \(\varrho>d/2\).  Plancherel gives the density.  The Fourier transform of \(L_t^{p,x}-L_s^{p,x}\) is \(\Phi_{s,t}^{x,p}\), and the Fourier characterization of \(H^r\) gives \eqref{eq:local-time-holder-sobolev} exactly when \(2r-2\varrho<-d\).  The spatial continuity statement follows from Sobolev embedding; the final invariance statement follows from partition invariance of the energy occupation measure.
\end{proof}
 
\begin{remark}[Relation with $\rho$ -irregularity]
\label{rem:cg-besov-obstruction}
Catellier and Gubinelli~\cite{CatellierGubinelli2016} call a path
$\rho $-irregular if, for some $\gamma>1/2$,
\[
  \left|\int_s^t e^{i\xi\cdot x(r)}\,dr\right|
  \le C|t-s|^\gamma(1+|\xi|)^{-\rho}
  \qquad\text{for all }s<t\text{ and }\xi.
\]
This property concerns Fourier decay of the (unweighted) occupation
measure, whereas $p$-roughness concerns stability of discrete
$p$-energies across sampling grids: the energy-weighted condition above
uses $d[x]^p(r)$ in place of $dr$.  When $[x]^p(t)=ct$, the energy-weighted condition \eqref{eq:energy-irregularity}  is equivalent to the $\rho-$irregularity \cite{CatellierGubinelli2016,GaleatiGubinelli2024} with the same exponents.

  \(p\)-roughness and \(\rho \)-irregularity
control different aspects of the path, but  impose compatible
constraints on its time regularity.
Recall (Lemma \ref{lem:p-rough-besov}) that  \(p\)-roughness implies 
\begin{equation}
x\in B^{1/p}_{1,\infty}([0,T]).
\label{eq:p-rough-besov}
\end{equation}
On the other hand
 $\rho$-irregularity implies \cite[Proposition~3.8(c)]{galeati2023}
\begin{equation}
x\notin B^\alpha_{1,\infty}([0,T])
\qquad\text{for every}\qquad
\alpha>
\frac{1-\gamma}{\rho};
\label{eq:cg-besov-obstruction}
\end{equation}
Consequently, if a path is simultaneously \(p\)-rough and
\((\rho,\gamma)\)-irregular, then
\[
\frac1p\le\frac{1-\gamma}{\rho},
\qquad\text{equivalently}\qquad
\rho\le p(1-\gamma).
\]
Since \(\gamma>1/2\) in the usual definition of \(\rho\)-irregularity,
this yields \(\rho<p/2\).
\end{remark}

\subsection{Relation with higher-order pathwise local time}
\label{subsec:energy-higher-order-local-time}

For even integers $p\ge2$, the energy occupation measure is directly
related to the order-$p$ local time defined in~\cite{ContPerkowski2019}. For a partition
$\lambda_n=\{u_i^n\}$ and $x\in V_p(\lambda)\cap C([0,T],\mathbb{R})$ let
$a_i^n(t)=x(u_i^n\wedge t)$, $b_i^n(t)=x(u_{i+1}^n\wedge t)$,
and $I(u,v)=(\min\{u,v\},\max\{u,v\}]$. The discrete level-crossing
local time, in the normalization of \cite[Definition~3.1]{ContPerkowski2019}, is
\begin{equation}
 \ell_{\lambda_n,t}^{p,x}(a)
 :=\sum_i\mathbf 1_{I(a_i^n(t),b_i^n(t))}(a)
                 |b_i^n(t)-a|^{p-1}.
 \label{eq:energy-CP-discrete-local-time}
\end{equation}
Each crossing kernel has integral $|v-u|^p/p$. Thus, whereas
$\nu_{\lambda_n,t}^{p,x}$ places each increment's energy at its
starting point, $p\ell_{\lambda_n,t}^{p,x}(a)\,da$ distributes that
energy over the   span of the increment.

Let $g$ be continuous on $K=x([0,T])$, with modulus of continuity 
$\omega_g$. Then
\begin{equation}
 \left|
 p\int_{\mathbb R}g(a)\ell_{\lambda_n,t}^{p,x}(a)\,da
 -\int_{\mathbb R}g(a)\nu_{\lambda_n,t}^{p,x}(da)
 \right|
 \le
 \omega_g\bigl(\omega_x(|\lambda_n|)\bigr)
 \V_{\lambda_n}^p(x;t).
 \label{eq:energy-CP-test-function-bound}
\end{equation}
Indeed, on each crossing interval the difference between $g(a)$
and its value at the starting point is bounded by the displayed
modulus, and integration of the kernel gives its energy divided by
$p$. If $x\in V_p(\lambda)$, the right-hand side tends to zero
uniformly in $t$. The convergence of the occupation measure 
therefore implies, for every $t$
\begin{equation}
 p\ell_{\lambda_n,t}^{p,x}(a)\,da
 \Longrightarrow\nu_t^{p,x;\lambda}.
 \label{eq:energy-CP-measure-convergence}
\end{equation}

Consequently, whenever the  $p$-local time
$\ell_t^{p,x;\lambda}$ exists, it satisfies the occupation identity
\begin{equation}
 p\int_{\mathbb R}g(a)\ell_t^{p,x;\lambda}(a)\,da
 =\int_{[0,t]}g(x(s))\,\mu_\lambda^{p,x}(ds).
 \label{eq:energy-CP-occupation-identity}
\end{equation}
In particular, if $\mu_\lambda^{p,x}$ is the common energy measure
considered above and $\nu_t^{p,x}(da)=L_t^{p,x}(a)\,da$, then
$L_t^{p,x}=p\ell_t^{p,x;\lambda}$ almost everywhere. Existing
order-$p$ local times along two sequences with the same limiting
$p$-energy measure therefore coincide almost everywhere in space,
at each time.

Existence of the occupation density alone does not establish the
weak $L^q$ convergence and weak continuity in time required by this definition. The Fourier--Sobolev criterion above
identifies the possible limit; an additional bound on the discrete
crossing local times is needed to obtain the stronger convergence.
For example, when $L^{p,x}$ is continuous in time with values in
$L^2$, the bound
$\sup_n\sup_{t\in[0,T]}\|\ell_{\lambda_n,t}^{p,x}\|_{L^2}<\infty$
upgrades \eqref{eq:energy-CP-measure-convergence} to weak $L^2$
convergence to $L_t^{p,x}/p$ at every $t$.

The normalization also agrees with the higher-order Tanaka formula of \cite[Thm 3.2]{ContPerkowski2019}:
for  $f\in W^{p,2}_{\rm loc}$, the local-time correction becomes
\begin{equation}
 \frac1{(p-1)!}\int_{\mathbb R}f^{(p)}(a)
                    \ell_t^{p,x;\lambda}(a)\,da
 =\frac1{p!}\int_{[0,t]}f^{(p)}(x(s))\,\mu_\lambda^{p,x}(ds).
 \label{eq:energy-CP-Tanaka-normalization}
\end{equation}
This connects the local time to the $p$-th variation term ("It\^o term") appearing in the change of variable formula of \cite{ContPerkowski2019}.
 The energy occupation measure itself
remains meaningful for any $p>1$. We will examine the case $p\notin \mathbb{N}$ in the next section.
\section{Application to pathwise calculus}\label{sec:applications}

\subsection{Robust formulation of higher-order F\"ollmer--It\^o calculus}
\label{sec:robust-fractional-calculus}

Let \(p=m+\alpha\), with \(m=\lfloor p\rfloor\) and \(0<\alpha<1\), and let \(\mathcal E_p\) denote the zero-remainder class of \cite[Theorem~2.12]{ContJin2024}.  For a partition \(\lambda_n=\{u_i^n\}\), write
\begin{equation}
\mathcal I_{\lambda_n}^{p,f}(x;t)
:=
\sum_i\sum_{j=1}^m
\frac{f^{(j)}(x(u_i^n\wedge t))}{j!}
\bigl(x(u_{i+1}^n\wedge t)-x(u_i^n\wedge t)\bigr)^j.
\label{eq:CJ-compensated-sum}
\end{equation}
The additional pathwise hypothesis in the zero-remainder theorem is the level-set condition
\begin{equation}
\int_{[0,T]}\mathbf 1_{\{x(s)=a\}}\,\mu_\lambda^{p,x}(ds)=0
\qquad\text{for every }a\in\R.
\label{eq:CJ-level-set-condition}
\end{equation}
Thus $\mu^{p,x}_\lambda(\{t, x(t)=a\})=0$ whenever $\nu_T^{p,x;\lambda}$ is absolutely continuous.

Under \(x\in V_p(\lambda)\), \eqref{eq:CJ-level-set-condition}, and \(f\in\mathcal E_p\), \cite[Theorem~2.12]{ContJin2024} gives
\begin{equation}
\lim_{n\to\infty}\mathcal I_{\lambda_n}^{p,f}(x;t)
=
f(x(t))-f(x(0)).
\label{eq:CJ-zero-remainder-formula}
\end{equation}

\begin{corollary}[Partition-robust zero-remainder calculus]
\label{cor:intrinsic-fractional-calculus}
Let \(p>1\) be noninteger.

\begin{enumerate}[label=(\roman*)]
\item If \(x\in V_p(\rho)\cap V_p(\pi)\), \([x]^p_\rho=[x]^p_\pi\), and the common energy measure satisfies \eqref{eq:CJ-level-set-condition}, then for every \(f\in\mathcal E_p\) the compensated sums along \(\rho\) and \(\pi\) converge to the same limit \(f(x(t))-f(x(0))\).
\item If \(x\in\mathscr R_p([0,T])\), \(\pi\in\mathcal A_{\rm unif}\), the intrinsic energy measure \(\mu^{p,x}\) satisfies \eqref{eq:CJ-level-set-condition} and 
\begin{equation}
N_n\omega_x(\delta_n)^p\to0,
\label{eq:intrinsic-calculus-endpoint-condition}
\end{equation}
then the same conclusion holds along \(\pi\).
\end{enumerate}
\end{corollary}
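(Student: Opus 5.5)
The plan is to reduce both parts to the zero-remainder theorem \cite[Theorem~2.12]{ContJin2024} quoted in \eqref{eq:CJ-zero-remainder-formula}. That theorem needs, along a given partition sequence $\lambda$, only three inputs: $x\in V_p(\lambda)$, the level-set condition \eqref{eq:CJ-level-set-condition} for $\mu^{p,x}_\lambda$, and $f\in\mathcal E_p$. Its conclusion, $\lim_n\mathcal I_{\lambda_n}^{p,f}(x;t)=f(x(t))-f(x(0))$, does not depend on $\lambda$ at all. Since $p$ is noninteger, no $p$-th order correction term appears, so no matching of limits beyond this is required.

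For (i), I apply \cite[Theorem~2.12]{ContJin2024} separately along $\rho$ and along $\pi$. By hypothesis $x\in V_p(\rho)\cap V_p(\pi)$. Equality of the cumulative functions $[x]^p_\rho=[x]^p_\pi$ gives equality of the limiting measures, $\mu^{p,x}_\rho=\mu^{p,x}_\pi$, because both measures are nonatomic and determined by their distribution functions. The level-set condition, assumed for the common measure, therefore holds along both sequences. Both compensated sums then converge to the same limit $f(x(t))-f(x(0))$.

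For (ii), I first check the hypotheses of part (i), taking the reference sequence to be the shifted uniform grids $\widehat\pi_n=\Pi(\ell_n,a_n)$ associated with $\pi$.
\begin{enumerate}[label=(\alph*)]
\item By \eqref{eq:all-shifted-uniform-sequences} and \eqref{eq:intrinsic-compatible-partitionwise}, $x\in V_p(\widehat\pi)$ with $[x]^p_{\widehat\pi}=[x]^p$.
\item By Theorem~\ref{thm:p-rough-invariance}, whose hypothesis is exactly \eqref{eq:intrinsic-calculus-endpoint-condition}, $x\in V_p(\pi)$ with $[x]^p_\pi=[x]^p$.
\item Hence the common energy measure along $\widehat\pi$ and $\pi$ is the intrinsic measure $\mu^{p,x}$, which satisfies \eqref{eq:CJ-level-set-condition} by assumption.
\end{enumerate}
Part (i) with $\rho=\widehat\pi$ then gives the conclusion along $\pi$, and also along every shifted uniform sequence.

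The main point to verify carefully is that \cite[Theorem~2.12]{ContJin2024} requires nothing about the partition sequence beyond vanishing mesh and $x\in V_p(\lambda)$. In particular, it should impose no refining or nestedness assumption, since shifted uniform grids and their perturbations are not nested. If the cited theorem does assume refining partitions, the fallback is a transfer argument. One would compare $\mathcal I^{p,f}_{\pi_n}$ with $\mathcal I^{p,f}_{\widehat\pi_n}$ term by term, using the matched endpoints and a Taylor bound of the form $|\Delta|^{j}\omega_x(\delta_n)$. This has to be controlled by $N_n\omega_x(\delta_n)^p\to0$ together with Lemma~\ref{lem:energy-perturbation}, and I expect this comparison to be the delicate step. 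A secondary point is that equality of the stopped-sum limits determines the measures, which the stopped-sum criterion \eqref{eq:stopped-convergence} provides.
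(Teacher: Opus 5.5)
Your proposal is correct and follows the paper's proof. Equality of the cumulative $p$-th variations identifies the limiting measures, so the level-set condition \eqref{eq:CJ-level-set-condition} holds along each sequence and \cite[Theorem~2.12]{ContJin2024} applies to both, while part (ii) follows from Theorem~\ref{thm:p-rough-invariance}; the paper applies the Cont--Jin theorem directly to these non-nested sequences without comment, so your fallback transfer argument does not appear there.
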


\begin{proof}
Equality of the cumulative \(p\)-variation functions is equality of the corresponding nonatomic time-energy measures, so the level-set condition is transported unchanged.  Apply \cite[Theorem~2.12]{ContJin2024}.  Part~(ii) follows from Theorem~\ref{thm:p-rough-invariance}.
\end{proof}

For even integer \(p\), the same argument applied to the higher-order F\"ollmer formula of Cont--Perkowski \cite{ContPerkowski2019} shows that the compensated integral and its \(p\)-energy correction are invariant whenever the limiting \(p\)-energy measure is invariant.  We do not restate that partitionwise formula here.

\subsection{Nonzero fractional remainders and marked energy measures}

Outside the zero-remainder class, Cont--Jin associate the fractional remainder with a two-point marking.  We retain only the measure-theoretic object needed for partition stability.  Let \(P_f^p\) be a Cont--Jin marking with state space \(X_f\).  For a partition \(\lambda_n\), define
\begin{equation}
\Xi_{\lambda_n}^{p,f,x}
:=
\sum_i
|x(u_{i+1}^n)-x(u_i^n)|^p
\,\delta_{u_i^n}\otimes
\delta_{P_f^p(x(u_i^n),x(u_{i+1}^n))},
\label{eq:CJ-marked-energy-measure}
\end{equation}
with zero increments omitted.  Its first marginal is exactly the discrete time-energy measure \(\mu_{\lambda_n}^{p,x}\).

\begin{proposition}[Transfer under marked-energy invariance]
\label{prop:CJ-marked-robustness}
Suppose \(x\in C([0,T])\cap V_p(\rho)\) and
\begin{equation}
\Xi_{\rho_n}^{p,f,x}\Longrightarrow\Xi^{p,f,x},
\qquad
\Xi_{\pi_n}^{p,f,x}\Longrightarrow\Xi^{p,f,x}
\label{eq:CJ-marked-invariance}
\end{equation}
for the same finite limit.  Then \(x\in V_p(\pi)\) and \(\mu_\pi^{p,x}=\mu_\rho^{p,x}\).  Moreover, any continuous remainder observable \(\widehat G_f^p\) may be integrated against the two discrete marked measures with the same limit whenever either \(\widehat G_f^p\) is bounded or the family is uniformly integrable with respect to \(|\widehat G_f^p|\).
\end{proposition}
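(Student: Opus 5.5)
The argument rests on the first-marginal identity. Projecting the marked measure $\Xi_{\lambda_n}^{p,f,x}$ onto the time coordinate recovers $\mu_{\lambda_n}^{p,x}$ exactly. The only exception is the omitted zero increments, and these carry zero energy anyway. Since the projection $(s,y)\mapsto s$ is continuous, the continuous mapping theorem for weak convergence of finite measures gives the following: weak convergence of $\Xi_{\pi_n}^{p,f,x}$ to $\Xi^{p,f,x}$ implies weak convergence of $\mu_{\pi_n}^{p,x}$ to the first marginal of $\Xi^{p,f,x}$. The same argument applied to $\rho$ shows that this marginal equals $\mu_\rho^{p,x}$, which is finite and nonatomic because $x\in V_p(\rho)$. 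Definition~\ref{def:pth-variation} then yields $x\in V_p(\pi)$ with $\mu_\pi^{p,x}=\mu_\rho^{p,x}$. The stopped-sum characterization \eqref{eq:stopped-convergence} is available if one prefers to phrase the conclusion through $[x]^p_\pi=[x]^p_\rho$.

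I will check one point before applying the continuous mapping theorem. The time coordinate lives in the compact set $[0,T]$, but the mark space $X_f$ may be noncompact. Testing against $g(s)$ with $g\in C_b([0,T])$ is still a bounded continuous test function on the product space, so no tightness issue arises.

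For the remainder observables, bounded continuous $\widehat G_f^p$ on the product (or on $X_f$) is immediate from the definition of weak convergence. Both discrete families converge to the same $\Xi^{p,f,x}$, so the integrals share a limit. For unbounded continuous $\widehat G_f^p$, I plan the standard truncation. Write $\widehat G=\widehat G\wedge K\vee(-K)+(\widehat G-\text{trunc}_K\widehat G)$. The truncated part converges by the bounded case. The tail is controlled uniformly in $n$ by $\int|\widehat G|\mathbf 1_{\{|\widehat G|>K\}}\,d\Xi_{\lambda_n}$, which tends to zero as $K\to\infty$ by the assumed uniform integrability. For the limit measure, the tail is controlled by Fatou or the portmanteau lower semicontinuity, which also shows $\widehat G\in L^1(\Xi^{p,f,x})$.

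The main obstacle is the careful statement of the uniform-integrability hypothesis. It must hold for both families $(\Xi_{\rho_n})$ and $(\Xi_{\pi_n})$, and the mass near truncation levels must be handled. I would choose $K$ outside the countable set of atoms of the law of $|\widehat G|$ under the limit, so that truncation discontinuities are avoided. Alternatively, I would use the continuous truncation above, which sidesteps this entirely.
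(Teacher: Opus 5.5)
Your first assertion follows the paper's own argument: project onto the time coordinate, identify the common marginal with the finite nonatomic $\mu_\rho^{p,x}$, and invoke Definition~\ref{def:pth-variation}.

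The gap is in the ``Moreover'' part, in the stopped case. Read literally, the statement only asks for integrals over all of $[0,T]\times X_f$, and your argument does prove that. But a remainder observable is used in the Cont--Jin formula at an arbitrary time $t$, and the paper's proof is explicitly about the stopped test function $(s,z)\mapsto\mathbf 1_{[0,t]}(s)\,\widehat G_f^p(z)$. That function is discontinuous along $\{t\}\times X_f$. So convergence of $\sum_{u_i^n\le t}|x(u_{i+1}^n)-x(u_i^n)|^p\,\widehat G_f^p\bigl(P_f^p(x(u_i^n),x(u_{i+1}^n))\bigr)$ does not follow from weak convergence alone. Your ``immediate from the definition'' covers only continuous test functions. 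If the limit marked measure charged $\{t\}\times X_f$, the $\rho$- and $\pi$-atoms lying just before or just after $t$ could produce different stopped limits.

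The missing idea is a second use of nonatomicity. Since the time marginal of $\Xi^{p,f,x}$ is $\mu_\rho^{p,x}$, one has $\Xi^{p,f,x}(\{t\}\times X_f)=\mu_\rho^{p,x}(\{t\})=0$. The discontinuity set of the stopped test function is therefore $\Xi^{p,f,x}$-null, and the Portmanteau theorem gives the common limit for bounded continuous $\widehat G_f^p$.

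Your continuous truncation then carries over to the uniformly integrable case unchanged:
\begin{itemize}
\item the truncated, stopped part converges by the bounded stopped case;
\item the stopped tails are dominated by the unstopped tails $\int(|\widehat G_f^p|-K)^+\,d\Xi_{\lambda_n}^{p,f,x}$, which you already control uniformly in $n$;
\item the stopped tail in the limit is dominated by $\int(|\widehat G_f^p|-K)^+\,d\Xi^{p,f,x}$, which you control by lower semicontinuity.
\end{itemize}
In your write-up nonatomicity serves only to meet Definition~\ref{def:pth-variation}. In the paper it is also what makes the time-localized remainder transfer valid.
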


\begin{proof}
Projection of \eqref{eq:CJ-marked-invariance} onto the time coordinate gives the same weak limit for the two scalar time-energy measures.  Since this common first marginal is nonatomic, \(\{t\}\times X_f\) has zero mass under the limiting marked measure.  Hence the Portmanteau theorem applies to the stopped test function
\(
(s,z)\mapsto \mathbf 1_{[0,t]}(s)\widehat G_f^p(z)
\)
when \(\widehat G_f^p\) is bounded and continuous.  The uniformly-integrable case follows by truncation.
\end{proof}

Thus common marked-measure convergence is the substantive extra assumption: scalar partition invariance alone determines only the first marginal of the remainder measure.

A useful sufficient condition reduces marked invariance back to scalar energy invariance.

\begin{proposition}[Diagonal criterion for marked-energy invariance]
\label{prop:CJ-diagonal-marking}
Let \(K=x([0,T])\).  Suppose there is a continuous map \(P_{f,0}^p:K\to X_f\) such that
\begin{equation}
\sup_{\substack{a,b\in K\\0<|a-b|\le\varepsilon}}
 d_{X_f}\bigl(P_f^p(a,b),P_{f,0}^p(a)\bigr)
\longrightarrow0
\qquad(\varepsilon\downarrow0).
\label{eq:CJ-mark-diagonal-continuity}
\end{equation}
If \(x\in V_p(\lambda)\), then
\begin{equation}
\Xi_{\lambda_n}^{p,f,x}
\Longrightarrow
\bigl(t\mapsto(t,P_{f,0}^p(x(t)))\bigr)_\#\mu_\lambda^{p,x}.
\label{eq:CJ-mark-diagonal-limit}
\end{equation}
Thus, scalar \(p\)-energy invariance implies marked-energy invariance for markings satisfying \eqref{eq:CJ-mark-diagonal-continuity}.
\end{proposition}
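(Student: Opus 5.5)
The plan is to compare each discrete marked measure with the pushforward of the discrete time-energy measure under the diagonal map. Then we pass to the limit using the weak convergence $\mu_{\lambda_n}^{p,x}\Rightarrow\mu_\lambda^{p,x}$ given by $x\in V_p(\lambda)$. Define
\[
\widetilde\Xi_{\lambda_n}:=\bigl(t\mapsto(t,P_{f,0}^p(x(t)))\bigr)_\#\mu_{\lambda_n}^{p,x}
=\sum_i|x(u_{i+1}^n)-x(u_i^n)|^p\,\delta_{u_i^n}\otimes\delta_{P_{f,0}^p(x(u_i^n))},
\]
where zero increments again contribute nothing. The map $\Psi:t\mapsto(t,P_{f,0}^p(x(t)))$ is continuous from $[0,T]$ into $[0,T]\times X_f$, since $x$ is continuous with values in $K$ and $P_{f,0}^p$ is continuous on $K$. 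The continuous mapping theorem for weak convergence of finite measures then gives $\widetilde\Xi_{\lambda_n}\Rightarrow\Psi_\#\mu_\lambda^{p,x}$.

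It remains to show that $\Xi_{\lambda_n}^{p,f,x}$ and $\widetilde\Xi_{\lambda_n}$ have the same weak limit. I would test against bounded Lipschitz functions $\phi$ on $[0,T]\times X_f$, with the product metric; by the Portmanteau theorem these determine weak convergence. The two measures carry identical weights $w_i:=|x(u_{i+1}^n)-x(u_i^n)|^p$ at the same time coordinates. Only the marks differ: $P_f^p(a_i,b_i)$ versus $P_{f,0}^p(a_i)$, with $a_i=x(u_i^n)$ and $b_i=x(u_{i+1}^n)$. Since $0<|a_i-b_i|\le\omega_x(|\lambda_n|)=:\varepsilon_n$ for every nonzero term, we obtain
\[
\Bigl|\int\phi\,d\Xi_{\lambda_n}^{p,f,x}-\int\phi\,d\widetilde\Xi_{\lambda_n}\Bigr|
\le\mathrm{Lip}(\phi)\,\sup_{\substack{a,b\in K\\0<|a-b|\le\varepsilon_n}}d_{X_f}\bigl(P_f^p(a,b),P_{f,0}^p(a)\bigr)\,\V^p_{\lambda_n}(x;T).
\]
The supremum tends to zero by \eqref{eq:CJ-mark-diagonal-continuity}, because $\varepsilon_n\to0$ by uniform continuity of $x$ and vanishing mesh. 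The total mass $\V^p_{\lambda_n}(x;T)$ is bounded by the stopped-sum criterion \eqref{eq:stopped-convergence}. Together with the bounded Lipschitz characterization of weak convergence, this proves \eqref{eq:CJ-mark-diagonal-limit}.

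For the final assertion, note that the limit $\Psi_\#\mu_\lambda^{p,x}$ depends on $\lambda$ only through $\mu_\lambda^{p,x}$. So if $\mu_\rho^{p,x}=\mu_\pi^{p,x}$, both marked families converge to the same measure, which is exactly hypothesis \eqref{eq:CJ-marked-invariance}.

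The main technical point is justifying the bounded Lipschitz test class when $X_f$ is a general metric space. If $X_f$ is not separable, I would instead use uniformly continuous bounded $\phi$ and the modulus of $\phi$ in place of $\mathrm{Lip}(\phi)$. The estimate goes through unchanged, and uniformly continuous bounded functions still suffice for the Portmanteau theorem.
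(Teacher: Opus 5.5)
Your proof is correct and follows essentially the paper's route. You compare $\Xi_{\lambda_n}^{p,f,x}$ with the diagonal pushforward of $\mu_{\lambda_n}^{p,x}$, control the mark error uniformly over cells via \eqref{eq:CJ-mark-diagonal-continuity}, and conclude from weak convergence of the scalar time-energy measures. The only difference is how uniform continuity of the test function is secured: the paper tests against all bounded continuous functions and uses compactness of $P_{f,0}^p(K)$ to get uniform continuity near the diagonal marks, whereas you restrict to bounded Lipschitz test functions. Your restriction is valid in any metric space, since the functions $(1-d(\cdot,F)/\epsilon)^+$ together with constants already determine weak convergence of finite measures, so your separability caveat is unnecessary.
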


\begin{proof}
Vanishing mesh and continuity of \(x\) make the marking error uniform over the partition cells.  Since \(P_{f,0}^p(K)\) is compact, the relevant continuity is uniform near the limiting diagonal marks.  Testing against bounded continuous functions then reduces \eqref{eq:CJ-mark-diagonal-limit} to weak convergence of the scalar time-energy measures.
\end{proof}

\subsection{Intrinsic $p$-th variation tensor }
Another viewpoint is to examine the $p$-roughness of one-dimensional projections of a vector-valued function $x\in C([0,T];\mathbb R^d)$.
In the case where $p$ is an even integer,  we can associate  an intrinsic $p-$variation tensor  to $x$ provided that it has  $p$-rough projections.  This  $p$-variation tensor is the object which appears in the higher-order change of variable formula \cite{ContPerkowski2019}.

\begin{proposition}[Intrinsic tensor variation by polarization]
\label{prop:intrinsic-tensor-variation}
Let $p=2k\geq 2$ be an even integer,  $x\in C([0,T];\mathbb R^d)$ and
\[
  N=\dim\operatorname{Sym}^p(\mathbb R^d)
   =\binom{d+p-1}{p}.
\]
For a partition $\pi=\{0=t_0<\cdots<t_M=T\}$, define
\[
  Q_\pi^{(p)}(x;t)
  :=
  \sum_{j=0}^{M-1}
  \bigl(x(t_{j+1}\wedge t)-x(t_j\wedge t)\bigr)^{\otimes p}.
\]
Assume  there exist vectors
$v_1,\ldots,v_N\in\mathbb R^d$ such that
$
  \{v_r^{\otimes p}:1\leq r\leq N\}
  \quad\text{is a basis of }
  \operatorname{Sym}^p(\mathbb R^d),$
and $A_r\in C([0,T],\mathbb{R}_+)$ such that scalar projections $x_r=x.v_r$ satisfy
\[ 
\Omega_p^{A_r}(x_r;\delta)
=
\sup_{0<\ell\le\delta}
\sup_{0\le a<\ell}
\sup_{t\in[0,T]}
\left|
\V^p_{\Pi(\ell,a)}(x_r;t)-A_r(t)
\right|\mathop{\longrightarrow}^{\delta \to 0}0
\quad{\rm with}\quad  A_r(0)=0.
\]
The functions $A_r$ are allowed to vanish.
Then there exists a unique continuous function of bounded
variation
\[
  Q^{(p)}\in C([0,T],\operatorname{Sym}^p(\mathbb R^d)\ )
  \qquad Q^{(p)}(0)=0,
\]
such that, for any  tensor norm,
\[
  \lim_{\delta\downarrow0}
  \sup_{0<\ell\leq\delta}
  \sup_{0\leq a<\ell}
  \sup_{t\in[0,T]}
  \left\|
    Q_{\Pi(\ell,a)}^{(p)}(x;t)-Q^{(p)}(t)
  \right\|
  =0
\]
and the following properties hold:
\begin{enumerate}
\item
Every linear projection $v\cdot x$ has a continuous
limiting $p$-energy, uniformly over shifted uniform grids
and stopping times, given by
\[
  A_v(t):=Q^{(p)}(t)[v,\ldots,v].
\]
In particular, $A_{v_r}=A_r$.

\item
For any $v_1',\ldots,v_p'\in\mathbb R^d$,
\[
  Q^{(p)}(t)[v_1',\ldots,v_p']
  =
  \frac1{p!}
  \sum_{J\subseteq\{1,\ldots,p\}}
  (-1)^{p-|J|}
  A_{\sum_{j\in J}v_j'}(t),
\]
where $A_0=0$.

\item
For $0\leq s<t\leq T$, the increment
$Q^{(p)}(t)-Q^{(p)}(s)$ belongs to the closed convex cone
\[
  \mathcal K_p
  :=
  \overline{\operatorname{cone}
  \{z^{\otimes p}:z\in\mathbb R^d\}}.
\]
Consequently, every $A_v$ is nondecreasing.

\item
The Euclidean scalar $p$-energy of $x$  converges
uniformly over shifted uniform grids and stopping times.
Its limit is
\[
  A_{\mathrm{sc}}(t)
  :=
  \sum_{i_1,\ldots,i_k=1}^d
  Q^{(p)}(t)
  [e_{i_1},e_{i_1},\ldots,e_{i_k},e_{i_k}],
\]
where $(e_1,\ldots,e_d)$ is the canonical basis.
If $A_r(T)>0$ for at least one $r$, i.e $x_r\in \Rough_p([0,T],\mathbb{R})$ then
\[
  x\in\mathcal R_{p}^{\mathrm{sc}}([0,T];\mathbb R^d),
  \qquad [x]_{p,\mathrm{sc}}=A_{\mathrm{sc}}.
\]
\end{enumerate}
\end{proposition}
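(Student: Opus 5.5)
The proof is linear algebra on top of the scalar convergence hypotheses. The key observation is that for even $p$ and every vector $v$,
\[
Q^{(p)}_\pi(x;t)[v,\ldots,v]=\sum_j\bigl(v\cdot(x(t_{j+1}\wedge t)-x(t_j\wedge t))\bigr)^p=\V^p_\pi(v\cdot x;t),
\]
because $|y|^p=y^p$ for even $p$. In particular, $Q^{(p)}_{\Pi(\ell,a)}(x;t)[v_r,\ldots,v_r]=\V^p_{\Pi(\ell,a)}(x_r;t)$.

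\textbf{Step 1: construct $Q^{(p)}$.} Since $\{v_r^{\otimes p}\}$ is a basis of $\operatorname{Sym}^p(\mathbb R^d)$, the linear functionals $T\mapsto T[v_r,\ldots,v_r]=\langle T,v_r^{\otimes p}\rangle$ form a basis of the dual space. So there is a linear isomorphism $\Phi:\operatorname{Sym}^p(\mathbb R^d)\to\mathbb R^N$ with $\Phi(T)_r=T[v_r,\ldots,v_r]$. Define $Q^{(p)}(t):=\Phi^{-1}(A_1(t),\ldots,A_N(t))$. All norms on $\operatorname{Sym}^p$ are equivalent, so
\[
\|Q_{\Pi(\ell,a)}^{(p)}(x;t)-Q^{(p)}(t)\|\le C\max_r|\V^p_{\Pi(\ell,a)}(x_r;t)-A_r(t)|\le C\max_r\Omega^{A_r}_p(x_r;\delta).
\]
This gives the uniform convergence. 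Continuity of $Q^{(p)}$ and $Q^{(p)}(0)=0$ follow from the same properties of the $A_r$. Uniqueness follows from uniqueness of uniform limits, as in \eqref{eq:intrinsic-energy-uniqueness}.

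\textbf{Step 2: items 1 and 2.} For any $v$, contract the uniform convergence with $v^{\otimes p}$. This shows $\V^p_{\Pi(\ell,a)}(v\cdot x;t)\to Q^{(p)}(t)[v,\ldots,v]=:A_v(t)$, uniformly in $\ell\le\delta$, $a$ and $t$, and $A_{v_r}=A_r$ by construction. Item 2 is the standard polarization identity for symmetric $p$-linear forms, $T[v_1',\ldots,v_p']=\frac1{p!}\sum_J(-1)^{p-|J|}T[w_J,\ldots,w_J]$ with $w_J=\sum_{j\in J}v_j'$. Applying it to $T=Q^{(p)}(t)$ and using $A_{w}(t)=Q^{(p)}(t)[w,\ldots,w]$ gives the stated formula.

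\textbf{Step 3: item 3 and bounded variation.} I expect this to be the main point needing care, because the sums are stopped at $t$ and at $s$. For a fixed grid, each summand of $Q^{(p)}_\pi(x;t)-Q^{(p)}_\pi(x;s)$ has the form $\Delta_t^{\otimes p}-\Delta_s^{\otimes p}$. For the cells lying beyond $s$ this is $\Delta_t^{\otimes p}$, which lies in the cone. For the single cell containing $s$ it is not a cone element, but it has norm $O(\omega_x(\ell)^p)$. Hence the difference equals an element of $\mathcal K_p$ plus a remainder that vanishes as $\ell\to0$. Passing to the limit and using that $\mathcal K_p$ is closed gives $Q^{(p)}(t)-Q^{(p)}(s)\in\mathcal K_p$. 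Every element of the cone evaluates nonnegatively on $[v,\ldots,v]$, so each $A_v$ is nondecreasing.

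For bounded variation, I would fix a positive definite functional, namely contraction with $\sum_{i_1,\ldots,i_k}e_{i_1}\otimes e_{i_1}\otimes\cdots\otimes e_{i_k}\otimes e_{i_k}$, which equals $|z|^p$ on $z^{\otimes p}$. Since $\mathcal K_p$ is a pointed closed cone, a functional that is strictly positive on the cone's unit sphere controls the norm of cone elements. So $\sum_i\|Q(t_{i+1})-Q(t_i)\|\le C\,\ell_0(Q(T))<\infty$, where $\ell_0$ denotes this functional.

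\textbf{Step 4: item 4.} The Euclidean identity $\|z\|^p=(\sum_i z_i^2)^k=\sum_{i_1,\ldots,i_k}z^{\otimes p}[e_{i_1},e_{i_1},\ldots,e_{i_k},e_{i_k}]$ shows that the scalar $p$-energy is a fixed linear functional of $Q^{(p)}_\pi(x;t)$. Uniform convergence of $Q^{(p)}_{\Pi(\ell,a)}$ therefore gives uniform convergence of the scalar $p$-energy to $A_{\mathrm{sc}}$.

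$A_{\mathrm{sc}}$ is continuous, vanishes at $0$, and is nondecreasing by Step 3. For positivity, suppose $A_r(T)>0$ for some $r$. Cauchy--Schwarz gives $|v_r\cdot z|^p\le|v_r|^p\|z\|^p$, so $A_r(T)\le|v_r|^pA_{\mathrm{sc}}(T)$ and hence $A_{\mathrm{sc}}(T)>0$. This yields $x\in\mathcal R^{\mathrm{sc}}_p$ with $[x]_{p,\mathrm{sc}}=A_{\mathrm{sc}}$.
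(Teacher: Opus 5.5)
Your proposal is correct and follows the paper's proof essentially step by step: the same isomorphism $\mathsf E(S)=(S[v_r,\ldots,v_r])_r$ defining $Q^{(p)}:=\mathsf E^{-1}(A_1,\ldots,A_N)$, contraction and polarization for items 1--2, closedness of $\mathcal K_p$ after discarding the $O(\omega_x(\ell)^p)$ error from the cell containing $s$ for item 3 (the paper does this via completed-cell sums), and the functional $\mathsf L_p$ together with $|v_r\cdot z|^p\le\|v_r\|^p\|z\|^p$ for item 4. The only real variation is in the bounded-variation claim: the paper uses linearity of $\mathsf E^{-1}$ and monotonicity of each $A_r$ to get $\operatorname{Var}(Q^{(p)})\le C\sum_r A_r(T)$, whereas your bound $\|S\|\le C\,\mathsf L_p(S)$ on $\mathcal K_p$ also works, since it holds on conic combinations by the triangle inequality and passes to the closure by continuity.
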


\begin{proof}
Equip $\operatorname{Sym}^p(\mathbb R^d)$ with its
canonical Euclidean inner product, so that
\[
  S[v,\ldots,v]=\langle S,v^{\otimes p}\rangle.
\]
The linear map
\[
  \mathsf E:
  \operatorname{Sym}^p(\mathbb R^d)\longrightarrow\mathbb R^N,
  \qquad
  \mathsf E(S)
  :=
  \bigl(S[v_r,\ldots,v_r]\bigr)_{r=1}^N,
\]
is an isomorphism by the choice of the vectors $v_r$.
Such a choice always exists: if a symmetric tensor $S$
is orthogonal to every $v^{\otimes p}$, then the
homogeneous polynomial $v\mapsto S[v,\ldots,v]$
vanishes identically, and hence $S=0$.
Since $p$ is even, for every partition $\pi$,
\[
  Q_\pi^{(p)}(x;t)[v,\ldots,v]
  =
  \sum_{j=0}^{M-1}
  \left|
    v\cdot\bigl(x(t_{j+1}\wedge t)-x(t_j\wedge t)\bigr)
  \right|^p
  =
  V_\pi^p(v\cdot x;t).
\]
Define
\[
  Q^{(p)}(t)
  :=
  \mathsf E^{-1}\bigl(A_1(t),\ldots,A_N(t)\bigr).
\]
For a constant $C_{\mathsf E}$ depending only on the
chosen directions and tensor norm,
\[
  \left\|Q_\pi^{(p)}(x;t)-Q^{(p)}(t)\right\|
  \leq
  C_{\mathsf E}
  \max_{1\leq r\leq N}
  \left|V_\pi^p(v_r\cdot x;t)-A_r(t)\right|.
\]
The assumed uniform convergence proves the 
tensor convergence, as well as uniqueness.
Continuity and $Q^{(p)}(0)=0$ follow from the definition.
Furthermore,
\[
  \operatorname{Var}_{[0,T]}(Q^{(p)})
  \leq
  C\sum_{r=1}^N A_r(T)<\infty,
\]
because the functions $A_r$ are nondecreasing.
For any $v\in\mathbb R^d$, contraction against
$v^{\otimes p}$ is a continuous linear functional.
Applying it to the tensor convergence yields the
uniform convergence of the scalar energies to $A_v$.
The formula in (ii) is the polarization identity for
the symmetric $p$-linear form $Q^{(p)}(t)$.

To prove (iii), introduce the completed-cell tensor sum
\[
  \overline Q_\pi^{(p)}(x;t)
  :=
  \sum_{j:\,t_{j+1}\leq t}
  \bigl(x(t_{j+1})-x(t_j)\bigr)^{\otimes p}.
\]
Denoting by $ \omega_x$ the modulus of continuity of $x$ we have
\[
  \sup_{t\in[0,T]}
  \left\|
    Q_\pi^{(p)}(x;t)
    -\overline Q_\pi^{(p)}(x;t)
  \right\|
  \leq C\,\omega_x(|\pi|)^p.
\]
Thus the completed-cell sums have the same uniform
limit. For every $s<t$,
\[
  \overline Q_\pi^{(p)}(x;t)
  -\overline Q_\pi^{(p)}(x;s)
  \in\mathcal K_p.
\]
Passing to the limit and using closedness of
$\mathcal K_p$ proves (iii).
For even $p$, contraction of any element of
$\mathcal K_p$ against $v^{\otimes p}$ is nonnegative;
hence $A_v(t)-A_v(s)\geq0$.
Define the linear functional
\[
  \mathsf L_p(S)
  :=
  \sum_{i_1,\ldots,i_k=1}^d
  S[e_{i_1},e_{i_1},\ldots,e_{i_k},e_{i_k}].
\]
Then for every $z\in\mathbb R^d$,
\[
  \mathsf L_p(z^{\otimes p})
  =
  \left(\sum_{i=1}^d z_i^2\right)^k
  =
  \|z\|^p.
\]
Composing   the tensor convergence with $\mathsf L_p$
therefore proves the  convergence of the
Euclidean scalar energies, with limit
$A_{\mathrm{sc}}=\mathsf L_p(Q^{(p)})$.
This limit is continuous, non-decreasing, and vanishes
at zero. Moreover,  we can pass to the limit in the inequality
\[
  V_\pi^p(v_r\cdot x;t)
  \leq
  \|v_r\|^p V_{\pi,\mathrm{sc}}^p(x;t)
\]
to obtain
$  A_r(t)\leq\|v_r\|^p A_{\mathrm{sc}}(t).$
Thus if $A_r(T)>0$ for some $r$ then
$A_{\mathrm{sc}}(T)>0$.
\end{proof}

\section{Renormalization group interpretation}
\label{sec:renormalization}

The coarse-graining mechanism of Sections~\ref{sec:pth-variation}--\ref{sec:invariance}
is  formulated through block maps on increment fields, which relates to the real-space renormalization group of Kadanoff \cite{Kadanoff1966} and Wilson and Kogut \cite{WilsonKogut1974}.
The $p$-roughness property   expresses  uniform asymptotic cancellation of the coarse-graining error along the renormalization flow.
 
The  class $\mathscr R_p^+$ of {\it strictly $p$-rough} paths defined in \eqref{def.strictlyrough}, whose
intrinsic energy is positive on every initial interval, is invariant
under the  critical scaling semigroup  
$x\mapsto\lambda^{1/p}x(\cdot/\lambda)$. Its energy transforms covariantly, with linear
profiles as the nonzero fixed energy profiles. When the initial energy
density exists and is finite and positive, the rescaled energy converges
to a linear profile. These are statements about energy profiles, not
convergence or exact self-similarity of the paths.  

\subsection{Coarse-graining as a renormalization group flow}
\label{rg:sec-flow}

Let $\lambda=\{0=s_0<\cdots<s_M=T\}$ be a finite partition and
$\pi=\{0=t_0<\cdots<t_N=T\}\subseteq\lambda$ a coarsening. For $t\in[0,T]$
the stopped fine increment field is
$\delta^\lambda_j(x;t):=x(s_{j+1}\wedge t)-x(s_j\wedge t)$, $0\le j<M$, and
$J_k^{\pi\leftarrow\lambda}:=\{j:[s_j,s_{j+1}]\subseteq[t_k,t_{k+1}]\}$; these
index sets partition $\{0,\dots,M-1\}$ because $\pi\subseteq\lambda$.

\begin{definition}[Block map, energy, drift]\label{rg:def-block}
For an increment field $a=(a_j)_{0\le j<M}$ indexed by $\lambda$, define the
\emph{block map} $\cB_{\pi\leftarrow\lambda}a:=\bigl(\sum_{j\in J_k^{\pi\leftarrow\lambda}}a_j\bigr)_{0\le k<N}$,
the \emph{$p$-energy} $\cE_p(a):=\sum_j|a_j|^p$, and, for $x\in C([0,T])$, the
\emph{energy drift}
\begin{equation}
\Delta_p(\pi,\lambda;x;t):=\V^p_\pi(x;t)-\V^p_\lambda(x;t).
\label{rg:eq-drift}
\end{equation}
\end{definition}

Telescoping gives $\cB_{\pi\leftarrow\lambda}\delta^\lambda(x;t)=\delta^\pi(x;t)$,
so the drift is the change of the observable $\cE_p$ under one block step:
$\Delta_p(\pi,\lambda;x;t)=\cE_p(\cB_{\pi\leftarrow\lambda}\delta^\lambda(x;t))-\cE_p(\delta^\lambda(x;t))$.

\begin{lemma}[Exact block flow]\label{rg:lem-flow}
Let $\pi\subseteq\mu\subseteq\lambda$ be finite partitions of $[0,T]$,
$x\in C([0,T])$ and $t\in[0,T]$. Then
\begin{enumerate}[label=(\roman*)]
\item $\cB_{\pi\leftarrow\lambda}=\cB_{\pi\leftarrow\mu}\circ\cB_{\mu\leftarrow\lambda}$
on increment fields indexed by $\lambda$;
\item $\Delta_p(\pi,\lambda;x;t)=\sum_{k=0}^{N-1}\Dp\bigl((\delta^\lambda_j(x;t))_{j\in J_k^{\pi\leftarrow\lambda}}\bigr)$,
and each block term is a sum of two-increment interactions $d_p(P,\delta)$
by the exact identity
\[
D_p(a_1,\ldots,a_q)=\sum_{j=2}^{q}d_p(a_1+\cdots+a_{j-1},a_j);
\]
\item $\Delta_p(\pi,\lambda;x;t)=\Delta_p(\pi,\mu;x;t)+\Delta_p(\mu,\lambda;x;t)$.
\end{enumerate}
\end{lemma}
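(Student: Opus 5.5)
The plan is to check each of the three assertions of Lemma~\ref{rg:lem-flow} directly from Definition~\ref{rg:def-block}. The only structural fact used throughout is that, because $\pi\subseteq\mu\subseteq\lambda$, the index sets are nested. For each coarse cell $[t_k,t_{k+1}]$ of $\pi$, the set $J_k^{\pi\leftarrow\lambda}$ is the disjoint union of the sets $J_i^{\mu\leftarrow\lambda}$ over those $\mu$-cells $[r_i,r_{i+1}]\subseteq[t_k,t_{k+1}]$. These are exactly the indices $i\in J_k^{\pi\leftarrow\mu}$. This holds because every $\lambda$-cell lies in a unique $\mu$-cell, and every $\mu$-cell lies in a unique $\pi$-cell.

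For (i), fix an increment field $a$ indexed by $\lambda$ and a coarse index $k$. By definition,
\[
(\cB_{\pi\leftarrow\mu}\cB_{\mu\leftarrow\lambda}a)_k=\sum_{i\in J_k^{\pi\leftarrow\mu}}\ \sum_{j\in J_i^{\mu\leftarrow\lambda}}a_j .
\]
By the disjoint-union decomposition above, the double sum equals $\sum_{j\in J_k^{\pi\leftarrow\lambda}}a_j=(\cB_{\pi\leftarrow\lambda}a)_k$.

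For (ii), use the telescoping identity $\cB_{\pi\leftarrow\lambda}\delta^\lambda(x;t)=\delta^\pi(x;t)$ noted after Definition~\ref{rg:def-block}. It gives
\[
\V^p_\pi(x;t)=\sum_k\Bigl|\sum_{j\in J_k^{\pi\leftarrow\lambda}}\delta^\lambda_j(x;t)\Bigr|^p .
\]
The sets $J_k^{\pi\leftarrow\lambda}$ partition $\{0,\dots,M-1\}$, so $\V^p_\lambda(x;t)=\sum_k\sum_{j\in J_k^{\pi\leftarrow\lambda}}|\delta^\lambda_j(x;t)|^p$. Subtracting block by block gives the claimed sum of $\Dp$ terms, with the convention $\Dp(a_1)=0$ for one-element blocks. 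The two-increment expansion of each block term is then Lemma~\ref{lem:coarse-graining-error-decomposition}, applied to the block's increments in their time order. This is also how Proposition~\ref{prop:lookback-representation} was obtained.

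For (iii), the drift is a difference of energies, so it telescopes:
\[
\V^p_\pi-\V^p_\lambda=(\V^p_\pi-\V^p_\mu)+(\V^p_\mu-\V^p_\lambda),
\]
evaluated at the same $t$. The only point to verify is that each $\V^p$ here is the stopped energy of $x$ along the stated partition, independent of which finer partition it was computed from. This follows from the telescoping identity applied to each of the pairs $(\pi,\mu)$, $(\mu,\lambda)$ and $(\pi,\lambda)$.

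There is no genuine obstacle. The only care needed is bookkeeping with stopped increments: when $t$ falls inside a cell, the telescoping $\cB\delta^\lambda=\delta^\pi$ must still hold. It does, because $r\mapsto x(r\wedge t)$ is a single function, and all increments in question are increments of that function over $\lambda$-cells.
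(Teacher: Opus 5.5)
Your proposal is correct and follows essentially the same route as the paper. Part (i) rests on writing $J_k^{\pi\leftarrow\lambda}$ as the disjoint union of the $J_i^{\mu\leftarrow\lambda}$, part (ii) on the telescoping identity, block-by-block subtraction and Lemma~\ref{lem:coarse-graining-error-decomposition}, and part (iii) on adding and subtracting $\V^p_\mu(x;t)$; your extra remark in (iii), that the energies do not depend on a finer partition, is unnecessary, since each $\V^p$ is defined directly along its own partition.
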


\begin{proof}
(i) The $\lambda$-intervals inside a $\pi$-cell are partitioned by the
$\mu$-intervals inside that cell; summing over the former within the
latter gives the sum over the former. (ii) The sets
$J_k^{\pi\leftarrow\lambda}$ partition the fine indices, so
$\cE_p(\cB_{\pi\leftarrow\lambda}\delta^\lambda)-\cE_p(\delta^\lambda)
=\sum_k\bigl(|\sum_{j\in J_k}\delta_j|^p-\sum_{j\in J_k}|\delta_j|^p\bigr)$,
and the displayed identity expands each term. (iii) Add and
subtract $\V^p_\mu(x;t)$.
\end{proof}
Nested block maps thus form a discrete coarse-graining semigroup, and the
$p$-energy drift is an additive cocycle over it whose value is exactly the
block coarse-graining error of Section~\ref{sec:pth-variation}.
\subsection{$p$-roughness as fixed-point stability of the energy}
\label{rg:sec-fixed}
$p$-roughness may be interpreted as a fixed-point statement for the observable $\cE_p$ under the flow of
Lemma~\ref{rg:lem-flow}:   a block step changes the stopped
$p$-energy by an amount tending to zero.

\emph{Intrinsic fixed points.} For the dyadic multiresolution, the regular
$(b,r)$-blocking $\mathbb T_m^{b,r}\subseteq\mathbb T_m$ is one block step,
and its drift is
$C_m^p(x;b,r;t)=\Delta_p(\mathbb T_m^{b,r},\mathbb T_m;x;t)$.
Proposition~\ref{prop:microscope-free-characterization} says that
$x\in\mathscr R_p([0,T])$ if and only if
\[
 x\in V_p(\mathbb T),\qquad [x]^p_{\mathbb T}(T)>0,
 \qquad\lim_{L\to\infty}\mathfrak R_{p,L}(x)=0.
\]
The last condition tests all regular block sizes and phases in the
mesoscopic window $L\le b\le\lfloor2^m/L\rfloor$, uniformly in stopping
time. The continuous, nonzero reference energy is essential: constant
paths also have zero drift. The resulting characterization is unchanged
if the dyadic reference is replaced by any fixed $q$-adic uniform
multiresolution.

\emph{Relative fixed points.} Let $x\in C([0,T])\cap V_p(\rho)$, and let
$\rho^\star=(\rho_n^\star)$ be an alignment of $\rho$ with a comparison
sequence $\pi$. Using the alignment indices $\kappa_n(k)$ of Section~\ref{sec:invariance},
form the exact coarsening
$\widetilde\pi_n:=\{s^n_{\kappa_n(k)}:0\le k\le N_n\}\subseteq\rho_n^\star$
with repeated points deleted. Its stopped energy is the grouped energy
$A_n(t)$, since its increments are the grouped increments
$C_{k,n}(t)$ of \eqref{eq:grouped-increment}, with any empty groups omitted.
Adding and subtracting this energy gives
\begin{equation}
\begin{split}
\V^p_{\pi_n}(x;t)-\V^p_{\rho_n^\star}(x;t)
={}&\underbrace{\Delta_p(\widetilde\pi_n,\rho_n^\star;x;t)}_
 {=\ R^p_{\pi,\rho^\star;n}(x;t)} +\underbrace{\bigl(\V^p_{\pi_n}(x;t)-\V^p_{\widetilde\pi_n}(x;t)\bigr)}_
 {\text{endpoint perturbation}}.
\end{split}
\label{rg:eq-decomposition}
\end{equation}
By \eqref{eq:coarse-graining-estimate-theta}, the absolute value of the
second term is bounded by
$C_p(A_n(t)^{(p-1)/p}\Theta_n^{1/p}+\Theta_n)$, where
$\Theta_n=\Theta_n(x;\pi,\rho^\star)$. It tends to zero along an alignment
whenever $A_n(t)$ is bounded; the conclusion is uniform in time if these
energies are uniformly bounded in time.
Proposition~\ref{thm:necessity} identifies vanishing relative drift along
one alignment with
\[
 x\in\mathcal D^p_{\pi\mid\rho}
 \quad\Longleftrightarrow\quad
 x\in V_p(\pi),\qquad [x]^p_\pi=[x]^p_\rho,
\]
and shows that the coarse-graining error then vanishes uniformly in time along every
alignment. For a family $\mathfrak A$ of comparison sequences, the
corresponding relative class is
$\bigcap_{\pi\in\mathfrak A}\mathcal D^p_{\pi\mid\rho}$.
Theorem~\ref{thm:fourier-equivalence} gives the occupation-measure version
of this equivalence. The mesoscopic Fourier cancellation condition
introduced in   Section \ref{sec:fourier} controls the coarse-graining on growing
frequency windows.

The critical zoom below is a separate one-parameter semigroup on paths.
It rescales time and amplitude, and induces an explicit action on the
limiting energy profiles.

\subsection{The critical scaling flow}
\label{rg:sec-scaling}

For $\lambda\ge1$ and $x\in C([0,T])$ define the \emph{critical zoom}
\begin{equation}
(R_\lambda x)(t):=\lambda^{1/p}\,x(t/\lambda),\qquad t\in[0,T],
\label{rg:eq-zoom}
\end{equation}
which uses only $x|_{[0,T/\lambda]}$, and for a continuous nondecreasing
$A$ with $A(0)=0$ put $A_\lambda(t):=\lambda A(t/\lambda)$. For $\beta\in(0,1]$
write $K_\beta(z):=\sup_{s\ne t}|z(t)-z(s)|/|t-s|^\beta$.

\begin{theorem}[$p$-roughness and the critical scaling flow]\label{rg:thm-scaling}
Let $p>1$ and $T>0$.
\begin{enumerate}[label=(\roman*)]
\item \textup{(Exact covariance.)} $R_\lambda R_\mu=R_{\lambda\mu}$, and for
every $x\in C([0,T])$, every shifted uniform grid $\Pi(\ell,a)$ on $[0,T]$
and every $t\in[0,T]$,
\begin{equation}
\V^p_{\Pi(\ell,a)}(R_\lambda x;t)=\lambda\,\V^p_{\Pi(\ell/\lambda,\,a/\lambda)}(x;t/\lambda).
\label{rg:eq-covariance}
\end{equation}
Consequently, for every continuous nondecreasing $A$ with $A(0)=0$,
\begin{equation}
\Omega^{A_\lambda}_p(R_\lambda x;\delta)\le\lambda\,\Omega^{A}_p(x;\delta/\lambda),
\qquad 0<\delta\le T.
\label{rg:eq-Omega-covariance}
\end{equation}

\item \textup{(Invariance of $\mathscr R_p^+$.)} If
$x\in\mathscr R_p^+([0,T])$ with intrinsic energy $A$, then, for every
$\lambda\ge1$,
\[
 R_\lambda x\in\mathscr R_p^+([0,T]),\qquad [R_\lambda x]^p=A_\lambda.
\]
More generally, for $x\in\mathscr R_p([0,T])$ and a fixed $\lambda\ge1$,
$R_\lambda x\in\mathscr R_p([0,T])$ if and only if $A(T/\lambda)>0$,
and in that case the same energy identity holds.

\item \textup{(Energy fixed points.)} $A_\lambda=A$ for all $\lambda\ge1$ if
and only if $A(t)=A(T)\,t/T$. Hence the $p$-rough paths whose intrinsic
energy is invariant under the  flow are exactly those with linear
intrinsic energy.

\item \textup{(Attraction of energy profiles.)} Let
$x\in\mathscr R_p([0,T])$ with intrinsic energy $A$. If
$\alpha:=\lim_{s\downarrow0}A(s)/s\in(0,\infty)$, then
$x\in\mathscr R_p^+([0,T])$, $R_\lambda x\in\mathscr R_p^+([0,T])$
for every $\lambda\ge1$, and
\begin{equation}
\sup_{t\in[0,T]}\bigl|[R_\lambda x]^p(t)-\alpha t\bigr|\longrightarrow0
\qquad(\lambda\to\infty).
\label{rg:eq-attraction}
\end{equation}

\item \textup{(Uniqueness of the critical exponent.)} For $\gamma>0$ let
$(R^{(\gamma)}_\lambda x)(t):=\lambda^{\gamma}x(t/\lambda)$. Under the hypotheses
of \textup{(iv)}, $R^{(\gamma)}_\lambda x\in\mathscr R_p([0,T])$ with intrinsic
energy $\lambda^{\gamma p-1}A_\lambda$, which converges uniformly on $[0,T]$ to
$\alpha t$ if $\gamma=1/p$, to $0$ if $\gamma<1/p$, and diverges at every
$t>0$ if $\gamma>1/p$.

\item \textup{(Relevance of perturbations.)} Let $x\in\mathscr R_p([0,T])$ and  $z\in C([0,T])$.
\begin{enumerate}[label=(\alph*)]
\item If $K_\beta(z)<\infty$ for some $\beta\in(1/p,1]$, then for all
$\lambda\ge1$ and $\delta\in(0,T]$,
\begin{equation}
\sup_{0<\ell\le\delta}\ \sup_{0\le a<\ell}\ \sup_{t\in[0,T]}
\V^p_{\Pi(\ell,a)}(R_\lambda z;t)\ \le\ K_\beta(z)^p\,(T+2\delta)\,\delta^{\beta p-1}\,\lambda^{1-\beta p},
\label{rg:eq-irrelevant}
\end{equation}
and $x+z\in\mathscr R_p([0,T])$ with $[x+z]^p=[x]^p$.
\item If $z\in\mathscr R_q([0,T])$ with $q>p$, then
$x+z\in\mathscr R_q([0,T])$ with $[x+z]^q=[z]^q$, and
$\inf_{0\le a<\ell}\V^{p'}_{\Pi(\ell,a)}(x+z;T)\to\infty$ as $\ell\downarrow0$
for every $1<p'<q$; in particular $x+z\notin\mathscr R_p([0,T])$.
\end{enumerate}
\end{enumerate}
\end{theorem}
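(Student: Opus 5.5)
The plan is to derive everything from the exact covariance identity \eqref{rg:eq-covariance} in (i), which is a change of variables. For (i), note that $\Pi(\ell,a)\cap[0,T]$ is the image under $s\mapsto\lambda s$ of the grid $\{a/\lambda+k\ell/\lambda\}$. Stopping at $t$ corresponds to stopping at $t/\lambda$. Since the zoom only sees $x$ on $[0,T/\lambda]$, the $R_\lambda x$-increments beyond $t\le T$ are increments of $x$ over cells of $\Pi(\ell/\lambda,a/\lambda)$ inside $[0,T/\lambda]$, stopped at $t/\lambda$. Each is multiplied by $\lambda^{1/p}$, so $p$-th powers pick up the factor $\lambda$. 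One bookkeeping point needs care: the boundary cell at $T$ of the rescaled grid. I would check that the stopped sum of $x$ on $[0,T]$ at time $t/\lambda\le T/\lambda$ only involves cells before $T/\lambda$, modulo the truncation point. If needed, I would read $\V^p_{\Pi(\ell/\lambda,a/\lambda)}(x;\cdot)$ as computed on the grid on $[0,T]$, where the extra points beyond $T/\lambda$ contribute zero once stopped. The semigroup law $R_\lambda R_\mu=R_{\lambda\mu}$ is immediate. Then \eqref{rg:eq-Omega-covariance} follows by taking suprema: $\ell\le\delta$ corresponds to $\ell/\lambda\le\delta/\lambda$, phases correspond bijectively, and $\sup_{t\le T}$ of $|\lambda\V(x;t/\lambda)-\lambda A(t/\lambda)|$ is at most $\lambda$ times the supremum over $[0,T]$.

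For (ii), the bound \eqref{rg:eq-Omega-covariance} gives $\Omega_p^{A_\lambda}(R_\lambda x;\delta)\to0$. The function $A_\lambda$ is continuous and nondecreasing with $A_\lambda(0)=0$, and $A_\lambda(T)=\lambda A(T/\lambda)$. This is positive exactly when $A(T/\lambda)>0$, which always holds for $\mathscr R_p^+$. Strict positivity of $A_\lambda(t)=\lambda A(t/\lambda)$ on $(0,T]$ is inherited. Uniqueness of the intrinsic energy, \eqref{eq:intrinsic-energy-uniqueness}, identifies $[R_\lambda x]^p=A_\lambda$. The converse direction of the "iff" uses the same uniqueness: if $A(T/\lambda)=0$, the only possible limit is $A_\lambda\equiv0$, which violates nondegeneracy.

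For (iii), $A_\lambda=A$ for all $\lambda$ means $A(s)/s$ is constant along every ray $s=t/\lambda$. Hence $A(t)=tA(T)/T$, and the converse is trivial. For (iv), I would write $A_\lambda(t)=t\cdot A(t/\lambda)/(t/\lambda)$ and show $\sup_t|A_\lambda(t)-\alpha t|\le T\sup_{0<s\le T/\lambda}|A(s)/s-\alpha|\to0$. Strict roughness follows since $A(s)>0$ near $0$ and $A$ is nondecreasing. For (v), the same covariance computation gives a factor $\lambda^{\gamma p}$ in place of $\lambda$, so the energy is $\lambda^{\gamma p-1}A_\lambda$. Combining this with (iv) gives the three regimes. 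The divergence for $\gamma>1/p$ uses $A_\lambda(t)\to\alpha t>0$. For $\gamma<1/p$, the limit energy is zero, so the statement should be read as convergence of the energy profile, as in the degenerate case of Proposition~\ref{prop:first-order-stability}.

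For (vi)(a), I would bound increments of $R_\lambda z$ over cells of length at most $\ell$ by $K_\beta(z)^p\lambda^{1-\beta p}\ell^{\beta p}$. The number of cells is at most $T/\ell+2$. This gives \eqref{rg:eq-irrelevant} after using $\ell\le\delta$ and $\beta p>1$, with $\ell^{\beta p-1}\le\delta^{\beta p-1}$ and $2\ell^{\beta p}\le2\delta\cdot\delta^{\beta p-1}$. The claim $x+z\in\mathscr R_p$ is then Proposition~\ref{prop:first-order-stability}(ii).

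Part (vi)(b) is the main obstacle. To show $[x+z]^q=[z]^q$, I would show that $x$ has vanishing $q$-energy uniformly over shifted grids, and then apply Proposition~\ref{prop:first-order-stability}(ii) with the roles of $x$ and $z$ swapped. The estimate is $\V^q(x)\le\omega_x(\ell)^{q-p}\V^p(x)\to0$ uniformly, since the $p$-energies of $x$ are eventually bounded. For the divergence of $p'$-energies with $1<p'<q$, I would use the reverse comparison $\V^q(y)\le\omega_y(\ell)^{q-p'}\V^{p'}(y)$ with $y=x+z$. This gives $\V^{p'}_{\Pi(\ell,a)}(y;T)\ge\V^q_{\Pi(\ell,a)}(y;T)/\omega_y(\ell)^{q-p'}$. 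The numerator tends to $[z]^q(T)>0$ uniformly in $a$ and the denominator tends to $0$, so the infimum over phases diverges. Taking $p'=p$ (note $p<q$) shows that the $p$-energies are unbounded, hence $x+z\notin\mathscr R_p$.
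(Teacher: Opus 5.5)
Your proposal is correct and follows essentially the same route as the paper. The common steps are: the change-of-variables covariance identity, with the cell of $\Pi(\ell/\lambda,a/\lambda)$ straddling $T/\lambda$ matched to the truncated last cell because $u_{i+1}\wedge t/\lambda=t/\lambda$; uniqueness of the intrinsic energy for (ii); the ratio $A(s)/s$ for (iii)--(v); the cell-count H\"older bound plus first-order stability for (vi)(a); and the comparison $\V^q\le\omega_y(\ell)^{q-p'}\V^{p'}$ for (vi)(b). The only cosmetic difference is that in (vi)(a) you bound the increments of $R_\lambda z$ directly instead of passing through \eqref{rg:eq-covariance}, which gives the same constant.
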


The invariant path class in (ii) is $\mathscr R_p^+$, while the fixed
objects in (iii) are energy profiles in the original time coordinate.
The attraction in (iv) asserts uniform convergence of energies, not of
paths. The uniqueness assertion in (v) uses the positive finite initial
energy density in (iv). If instead $A(s)\sim cs^\kappa$ as $s\downarrow0$, with $c,\kappa>0$,
then $\lambda^{\gamma p}A(t/\lambda)$ has a finite nonzero limit for
$t>0$ when $\gamma=\kappa/p$.
In (vi), smoother perturbations are irrelevant for the limiting
$p$-energy, and $q$-rough perturbations with $q>p$ dominate. Perturbations
at the same order admit no general addition rule: $x+(-x)=0$ whereas
$x+x$ has energy $2^p[x]^p$. These uses of relevance concern the energy
observable.

\begin{proof}
\emph{(i).} The semigroup property is immediate from the definition.
Fix $\Pi(\ell,a)$ on $[0,T]$ and $t\in[0,T]$. Dividing every point by
$\lambda$ maps the cells of $\Pi(\ell,a)$, stopped at $t$, onto the cells
of the grid $\{0,T/\lambda\}\cup\{(a+k\ell)/\lambda\in(0,T/\lambda)\}$, stopped
at $t/\lambda\le T/\lambda$. Each increment of $R_\lambda x$ is
$\lambda^{1/p}$ times the corresponding increment of $x$; hence
$\V^p_{\Pi(\ell,a)}(R_\lambda x;t)$ equals $\lambda$ times the stopped
$p$-energy of $x$ along that grid at $t/\lambda$. The grid
$\Pi(\ell/\lambda,a/\lambda)$ on $[0,T]$ has the same points in $(0,T/\lambda)$;
its cells beyond $T/\lambda$ contribute nothing when stopped at
$t/\lambda\le T/\lambda$, and its cell $[u_i,u_{i+1}]$ with
$u_i<T/\lambda<u_{i+1}$, if any, contributes $|x(t/\lambda)-x(u_i)|^p$ when
$u_i<t/\lambda$, exactly as the last cell $[u_i,T/\lambda]$ of the grid on
$[0,T/\lambda]$ does. This proves \eqref{rg:eq-covariance}. Taking
$0<\ell\le\delta$, $0\le a<\ell$, $t\in[0,T]$ on the left corresponds to
$0<\ell'\le\delta/\lambda$, $0\le a'<\ell'$, $t'\in[0,T/\lambda]$ on the right,
and $\lambda A(t/\lambda)=A_\lambda(t)$; enlarging the range of $t'$ to $[0,T]$
gives \eqref{rg:eq-Omega-covariance}.

\emph{(ii).} For each fixed $\lambda$, the covariance bound implies
$\Omega_p^{A_\lambda}(R_\lambda x;\delta)\to0$ as $\delta\downarrow0$.
If $x\in\mathscr R_p^+$, then
$A_\lambda(t)=\lambda A(t/\lambda)>0$ for every $t>0$, which proves
invariance of $\mathscr R_p^+$ and the energy identity.
For a general $x\in\mathscr R_p$, nondegeneracy of $A_\lambda$ is
exactly $A(T/\lambda)>0$. If this fails, monotonicity gives
$A_\lambda\equiv0$, so the rescaled path has zero limiting energy and
cannot belong to $\mathscr R_p$.

\emph{(iii).} If $A_\lambda=A$ for all $\lambda\ge1$ then
$A(T/\lambda)=A(T)/\lambda$, i.e.\ $A(s)=A(T)s/T$ for $0<s\le T$. Conversely a
linear $A$ satisfies $\lambda A(t/\lambda)=A(t)$.

\emph{(iv).} Since $A$ is nondecreasing and $A(s)>0$ for small $s>0$,
$A(T/\lambda)>0$ for every $\lambda$, so (ii) applies. For $t\in(0,T]$,
\[
\bigl|A_\lambda(t)-\alpha t\bigr|=t\,\Bigl|\frac{A(t/\lambda)}{t/\lambda}-\alpha\Bigr|
\le T\sup_{0<s\le T/\lambda}\Bigl|\frac{A(s)}{s}-\alpha\Bigr|\longrightarrow0 .
\]

\emph{(v).} $R^{(\gamma)}_\lambda x=\lambda^{\gamma-1/p}R_\lambda x$, so by (ii)
and homogeneity of the discrete $p$-energy it is $p$-rough
with energy $\lambda^{(\gamma-1/p)p}A_\lambda=\lambda^{\gamma p-1}A_\lambda$, and (iv)
gives $A_\lambda(t)=\alpha t+o(1)$ uniformly on $[0,T]$.

\emph{(vi)(a).} By \eqref{rg:eq-covariance},
$\V^p_{\Pi(\ell,a)}(R_\lambda z;t)=\lambda\V^p_{\Pi(\ell/\lambda,a/\lambda)}(z;t/\lambda)$.
The stopped sum on the right has at most $(t/\lambda)/(\ell/\lambda)+2\le(T+2\ell)/\ell$
nonempty cells, each of length at most $\ell/\lambda$, so each increment is
bounded by $K_\beta(z)(\ell/\lambda)^\beta$. Hence the right-hand side is at
most $\lambda\,\frac{T+2\ell}{\ell}K_\beta(z)^p(\ell/\lambda)^{\beta p}
=K_\beta(z)^p(T+2\ell)\ell^{\beta p-1}\lambda^{1-\beta p}$, and $\beta p>1$ gives
\eqref{rg:eq-irrelevant}. At $\lambda=1$, the bound tends to zero as $\delta\downarrow0$.
The stability under perturbations with vanishing energy proved in
Section~3 therefore gives $x+z\in\mathscr R_p$ with $[x+z]^p=A$.
If $x\in\mathscr R_p^+$, the same energy identity also gives
$x+z\in\mathscr R_p^+$.

\emph{(vi)(b).} For the stopped increments $a_i$ of $x$ along $\Pi(\ell,a)$,
$\ell\le\delta$,
\[
\sum_i|a_i|^q\le\max_i|a_i|^{q-p}\sum_i|a_i|^p
\le\omega_x(\delta)^{q-p}\bigl(A(T)+\Omega^A_p(x;\delta)\bigr)\longrightarrow0,
\]
by the definition of $\Omega_p^A$. Thus $x$ has vanishing $q$-energy
uniformly in mesh, phase and time. Applying the same perturbation
stability result with exponent $q$ to the path $z$ gives
$x+z\in\mathscr R_q$ with energy $A_z$. For $y:=x+z$ and $1<p'<q$,
$\V^q_{\Pi(\ell,a)}(y;T)\le\omega_y(\ell)^{q-p'}\V^{p'}_{\Pi(\ell,a)}(y;T)$ and
$\V^q_{\Pi(\ell,a)}(y;T)\ge A_z(T)-\Omega^{A_z}_q(y;\ell)$, whence
$\V^{p'}_{\Pi(\ell,a)}(y;T)\ge\bigl(A_z(T)-\Omega^{A_z}_q(y;\ell)\bigr)/\omega_y(\ell)^{q-p'}\to\infty$
uniformly in $a$; here $\omega_y(\ell)>0$ because $A_z(T)>0$ forces $y$ to
be nonconstant. A path whose $p'$-energies diverge is not in
$\mathscr R_{p'}$.
\end{proof}

\subsection{Linear energy profiles as RG fixed points}
\label{rg:sec-examples}

Brownian motion (Theorem~\ref{thm:brownian-intrinsic-roughness}), fractional
Brownian motion at the critical order
(Theorem~\ref{thm:fbm-intrinsic-p-roughness}), and the Rademacher
Faber--Schauder paths (Theorem~\ref{thm:rademacher-schauder-rough}) have
linear $p$-energy. Their sample paths belong almost surely to $\mathscr R_p^+$
at their respective critical orders and their energy profiles are fixed
by the scaling semigroup. For Brownian motion and the Rademacher series, the quantitative
estimates in Section~\ref{sec:examples} give mesoscopic coarse-graining error of order
$O(\sqrt{\log(eL)/L})$ for fixed $T$. The fractional Brownian theorem
provides qualitative convergence at every critical order.

The Takagi--Landsberg function $T_H$ illustrates why an aligned
coarse-graining scheme is insufficient. On $[0,1]$, set $p=1/H$ and
choose $m>n$, $b=2^{m-n}$. Then
\[
 \mathbb T_m^{b,0}=\Pi(2^{-n},0),\qquad
 \mathbb T_m^{b,b/2}=\Pi(2^{-n},2^{-n-1}).
\]
Proposition~\ref{prop:TL-phase-dependence} gives $p-$energies
$C_p$ and $C_p/(2^p-1)$, respectively, while the fine dyadic energy
converges to $C_p$. Therefore the coarse-graining error at the terminal time \(T\) tends to zero for the aligned blocking, whereas for the half-shifted blocking it converges to
\[
 \frac{C_p}{2^p-1}-C_p=-\frac{C_p(2^p-2)}{2^p-1}<0.
\]
Thus $T_H\notin\mathscr R_p$. By contrast,
Corollary~\ref{cor:SZ-perturbed-grid} gives stability under perturbations
with $\delta_n=o(b^{-n})$. In the dyadic counterexample the displacement
is of order $2^{-n}$; deleting the final interior point of the
half-shifted grid restores the required cardinality and changes the
terminal energy by only $O(2^{-n})$. 

For $p=2$, independent Rademacher signs produce intrinsic
roughness almost surely. This conclusion uses both covariance and the
concentration and continuity estimates in
Theorem~\ref{thm:rademacher-schauder-rough}; it does not follow from
independence and unit variance alone. For example, let independent
centered coefficients satisfy
\[
 \mathbb P(\theta_{m,k}=2^m)=\mathbb P(\theta_{m,k}=-2^m)=2^{-2m-1},
 \qquad\mathbb P(\theta_{m,k}=0)=1-2^{-2m}.
\]
Each has variance one, but
$\sum_m\sum_{k<2^m}\mathbb P(\theta_{m,k}\ne0)=\sum_m2^{-m}<\infty$.
Almost surely the series has only finitely many nonzero coefficients,
so it defines a piecewise-linear path with zero quadratic energy.
For \(p=4\), Example \ref{ex:rademacher-p4} proves almost-sure failure of intrinsic roughness for the critical independent-sign series. For \(p=3\), Remark \ref{rem:randomsign-p} provides numerical evidence of phase dependence.

The block-map interpretation  characterizes $p$-roughness by vanishing
mesoscopic coarse-graining error together with a nonzero continuous reference
energy. The scaling    preserves $\mathscr R_p^+$ and describes the
transformation of its energy profiles; paths with positive  initial energy
density   are attracted to a nonzero linear profile.

\end{document}